\newcommand{\C}{\mathbb{C}}
\newcommand{\csE}{{\bf E}}
\newcommand{\cF}{\mathfrak{F}}
\newcommand{\N}{\mathbb{N}}
\newcommand{\R}{\mathbb{R}}
\newcommand{\Rplus}{\mathbb{R}^+}
\newcommand{\Rzeroplus}{\mathbb{R}_0^+}
\newcommand{\Z}{\mathbb{Z}}
\newcommand{\lb}{\{}
\newcommand{\rb}{\}}
\def\supp{{\text{\rm supp }}}
\newcommand{\starshaped}{\text{\sc Star}}
\newcommand{\stredge}{\text{\sc Straight}}
\newcommand{\smooth}{C^\beta([-1,1]^2;\nu)}
\newcommand{\cart}{\mathcal{E}^\beta([-1,1]^2;\nu)}
\newcommand{\cartbin}{\mathcal{E}_{bin}^\beta([-1,1]^2;\nu)}
\newcommand{\straight}{{\bf E}^\beta([-1,1]^2;\nu)}
\newcommand{\fqtilde}{f^{\raisebox{-0.45em}[0.4mm][0.4mm]{\textasciitilde}}_Q}
\newcommand{\djtilde}{d^{\raisebox{-0.45em}[0.4mm][0.4mm]{\textasciitilde}}_j}
\newcommand{\fjtilde}{f^{\raisebox{-0.45em}[0.4mm][0.4mm]{\textasciitilde}}_j}
\newcommand{\bessel}{\mathcal{J}_1}
\newcommand{\tile}{\mathcal{W}}
\newcommand{\tileext}{\mathcal{W}^+}
\newcommand{\tileint}{\mathcal{W}^-}
\newcommand{\tilefunc}{W}
\newcommand{\tilefuncint}{W^-}
\newcommand{\curvesys}{\mathfrak{C}_{s,\alpha}}
\newcommand{\curveind}{\mathbb{M}}
\newcommand{\circind}{\mathbb{J}_{\scriptscriptstyle{+}}}
\newcommand{\corona}{\mathcal{C}}
\newcommand{\raisemath}[1]{\mathpalette{\raisem@th{#1}}}
\newcommand{\raisem@th}[3]{\raisebox{#1}{$#2#3$}}
\newcommand{\phiJ}{\varphi_{\hspace*{-1.5pt}\raisemath{-1.5pt}{J}}}
\newcommand{\phiJplus}{\varphi_{\hspace*{-1.5pt}\raisemath{-1.5pt}{J_+}}}
\newcommand{\phiJdelJ}{\varphi_{\hspace*{-1.5pt}\raisemath{-1.5pt}{J+\Delta J}}}
\newcommand{\ident}{\textsl{Id}}
\newtheorem{theorem}{Theorem}[section]
\newtheorem{prop}[theorem]{Proposition}
\newtheorem{lemma}[theorem]{Lemma}
\newtheorem{cor}[theorem]{Corollary}
\newtheorem{definition}[theorem]{Definition}
\newtheorem{remark}[theorem]{Remark}
\author{Martin Sch\"afer\footnote{\tt
Email:\ schaefer@math.tu-berlin.de}
\\\\
Institute of Mathematics, Technische Universit\"{a}t Berlin\\
Stra{\ss}e des 17.\ Juni 136, 10623 Berlin, Germany
}
\title{The Role of $\alpha$-Scaling for Cartoon Approximation}
\begin{document}

\maketitle

\begin{abstract}
The class of cartoon-like functions, classicly defined as piecewise $C^2$ functions
consisting of smooth regions separated by $C^2$ discontinuity curves,
is a well-established model for image data. The quest for frames
providing optimal approximation for this class has among others led to
the development of curvelets, contourlets, and shearlets.
Due to parabolic scaling, these systems are able to provide $N$-term approximations
converging with a quasi-optimal rate of order $N^{-2}$.
Replacing parabolic scaling by $\alpha$-scaling, one can construct
$\alpha$-curvelet and $\alpha$-shearlet frames which interpolate between
wavelet-type systems for $\alpha=1$, the classic parabolically scaled systems for $\alpha=\frac12$, and
ridgelet-type systems for $\alpha=0$.
Previous research shows that if $\alpha\in[\frac{1}{2},1)$ they
provide quasi-optimal approximation for cartoons of regularity $C^{1/\alpha}$ with a rate of order $N^{-1/\alpha}$.

In this work we continue the exploration of approximation properties of
$\alpha$-scaled representation systems, with the aim to better understand
the role of the parameter $\alpha$. Concerning $\alpha$-curvelets with $\alpha<1$,
we prove that the best possible $N$-term approximation rate achievable for cartoons with curved edges
is limited to at most $N^{-1/(1-\alpha)}$, independent of the smoothness of the cartoons.
The maximal rate that can be obtained by simple thresholding of the frame coefficients is even bounded by
$N^{-1/\max\{\alpha,1-\alpha\}}$.
Systems of $\alpha$-curvelets thus cannot take advantage of regularity higher than $C^{1/\alpha}$ if $\alpha\in[\frac{1}{2},1)$,
the rate of $N^{-1/\alpha}$ cannot be surpassed. For $C^\beta$ cartoons with $\beta\ge2$
the classic $\frac12$-curvelets provide the best performance with a rate of order $N^{-2}$, however below the optimal rate of order $N^{-\beta}$ if $\beta>2$.
In the range $\alpha\in[0,\frac{1}{2}]$ the achievable rate cannot exceed $N^{-1/(1-\alpha)}$ and deteriorates as $\alpha$ approaches $0$.

The approximation performance of $\alpha$-curvelets is different
if the edges of the cartoons are straight.
Assuming $C^\beta$ regularity, we establish an approximation rate of order $N^{-\min\{\alpha^{-1},\beta\}}$,
which improves as $\alpha$ tends to $0$. In the range $\alpha\in [0,\beta^{-1}]$ it is even quasi-optimal,
generalizing optimality results for ridgelets.
By applying the framework of $\alpha$-molecules, we finally extend the obtained results to other $\alpha$-scaled representation systems, including for instance
$\alpha$-shearlet frames.
\end{abstract}

\begin{center}{\it Keywords:} {Cartoon Images, Nonlinear Approximation, 
Wavelets, Curvelets, Shearlets, Ridgelets, Anisotropic Scaling, $\alpha$-Molecules.}
\end{center}

{\it MSC2000 Subject Classification:} {41A25, 41A30, 42C40.}

\section{Introduction}

In the age of `big data', efficient data representation
is an objective of an ever increasing importance. Not only does it simplify the handling of the data due to
the reduction of needed storage space or
the possible speed-up of processing times.
The knowledge of a `good' representation also gives valuable information about the structure of
the data itself, simplifying certain processing tasks or even just enabling them in the first place. As an example we
may think of the restoration of corrupted signals
or the separation of several superimposed signals of distinct types.

Often, the data of interest
can be modeled in a linear space, for instance a Hilbert space as exemplified by the Lebesgue spaces of square-integrable functions.
In this setting
the standard approach for the representation of a signal
is its expansion with respect to a fixed family
of basic elements,
a so-called dictionary for the data.
In practice, one usually needs to contend with approximations and therefore resorts to approximation schemes,
i.e., algorithms that deliver for each signal $f$ a sequence of approximants $(f_N)_{N\in\N}$ converging to the signal.
A standard choice here is to use $N$-term approximations in the respective dictionary, i.e., approximants being
built from just $N$ dictionary elements.

A main goal of approximation theory is the development of approximation schemes with
a best possible
speed of convergence, commonly
quantified by the asymptotic decay
of the approximation error $\|f-f_N\|$ as $N \to \infty$.
With regard to $N$-term approximations, the achievable rate is determined by
the utilized
dictionary in the background and one
aims to find
dictionaries
providing high approximation rates for the data.
Such dictionaries are
said to sparsely approximate the corresponding signals and clearly need to be chosen
depending on the considered signal class.
For efficient data representation it is therefore essential, first, to be able to precisely specify the type of data under consideration, e.g., in the form of an appropriate model,
and, second, to develop dictionaries, well adapted to the
specific data class, providing sparse approximations.

\subsection{Approximation of Image Data}

Subsequently, we are interested in the sparse approximation of image data. In our investigation, we will always stay in the continuum setting, where
images
are as usual represented as functions supported on some compact image domain $\Omega\subset\R^2$ with values containing pixel information at the respective positions, such as e.g.\ color or brightness information.
Being compactly supported and bounded,
the image data can conveniently be
modeled as a subset of the Hilbert space $L^2(\Omega)$, which in turn is considered as a subspace of $L^2(\R^2)$.
Hence, we are in a concrete Hilbert space scenario and can resort to the methodology described above, i.e., we aim for appropriate image models and sparsifying dictionaries.

For the space $L^2(\Omega)$ the classic Fourier systems constitute an orthonormal basis, providing a straight-forward procedure for representation.
However, Fourier systems work well only if the functions under consideration are smooth. For general images such smoothness assumptions are certainly not fulfilled.

As another popular representation system
wavelets~\cite{Dau92,Mal08} come to mind. Nowadays, they are one of the most widely used
systems in applied harmonic analysis, with various applications ranging from signal compression
(e.g.\ JPEG2000~\cite{CSE2000}) and restoration~\cite{CDOS12}
to PDE solvers~\cite{CDD01}.
In particular, they have the ability to
sparsely approximate
functions, which are smooth apart from isolated point singularities.
For general image data, however, such regularity assumptions are still too strict.
A characteristic feature of images are edges, leading to curvilinear discontinuities in the data.
With respect to such
line singularities, wavelet systems do not perform optimally any more.
The isotropy of their scaling prohibits an optimal
resolution of these kind of anisotropic structures.

With the desire
to specifically model
the occurrence of edges, the concept of cartoon-like functions emerged.
These are piecewise smooth functions featuring discontinuities along lower-dimensional manifolds, in our case along the $1$-dimensional edge curves of the image.
Based on such functions, suitable models for natural images have been conceived and
different model classes have been introduced. Typically, these classes are characterized by a specific smoothness
of the regions and by certain conditions on the separating edges. As examples, let us mention the classic cartoons~\cite{CD04} with $C^2$ regularity of the regions and the discontinuity curves,
or the horizon classes considered e.g.\ in \cite{D99,CWBB04c,PM05}.


The achievable approximation rate for a class of cartoon-like functions
essentially depends on the regularity of the cartoons, including
both the smoothness of the edge curves
and the smoothness of the regions in between. It was shown in \cite{PM05,PennecM} that $C^\beta$ regularity of the regions and the separating edges with $\beta>0$ allows
for an asymptotic rate of order $N^{-\beta}$.
By information theoretic arguments, it has further been established that this rate cannot be surpassed \cite{Don01}, at least in a class-wise sense.
Interestingly, the benchmark $N^{-\beta}$
is the same for the class of so-called binary cartoons, i.e., cartoon-like functions with
constant regions, and it also does not change if one restricts to
$C^\beta$ smooth functions without any edges.


With the model of cartoon-like functions at hand,
let us turn again to the question of efficient image representation.
In the past, a great amount of energy has been devoted to the effort of constructing
dictionaries well-suited for cartoon approximation.
Thereby, many different paths have been pursued
and the developed methods can be divided into two general categories: adaptive and nonadaptive methods.

Adaptive methods are by nature more flexible and have the inherent advantage of being able to adjust to the given data.
On the downside, the increased flexibility typically comes at the cost
of higher computational complexity of the employed approximation and reconstruction schemes.
Some prominent examples of adaptive methods
for image data are based on wedgelet dictionaries~\cite{D99} and their higher-order relatives, so-called surflets~\cite{CWBB04b,CWBB04c}.
They have been shown to reach the optimality bound $N^{-\beta}$ for binary cartoons with $C^\beta$ regularity~\cite{CWBB04tech,CWBB04proc}.
Other notable dictionaries used for adaptive approximation
include beamlets~\cite{DH00}, platelets~\cite{WN03}, and derivatives of wedgelets such as multiwedgelets~\cite{L13} or smoothlets~\cite{L11}.
More recently, new adaptive schemes have emerged that use bases, e.g., bandelets~\cite{PM05}, grouplets~\cite{M09}, and tetrolets~\cite{K09}.
Quasi-optimal approximation for $C^\beta$ cartoons with $\beta>0$ has been proved e.g.\ in~\cite{PennecM} for bandelets.

Nonadaptive methods are usually much simpler than adaptive schemes, at least from an algorithmic perspective. Mainly, they are based on frames and
the corresponding
reconstruction formulas. An easy path to approximation is thus provided by
simply thresholding
the frame coefficients.
Surprisingly, despite the simplicity of such schemes, there exist frames with
quasi-optimal approximation performance
for certain cartoon classes.

If the edges of the cartoons are
straight, different variants of so-called
ridgelet frames have been shown to yield quasi-optimal approximation~\cite{C99,GO15,GOtech16}.
Originally, the notion of a ridgelet was
introduced by Cand{\`e}s~\cite{Can98} in 1998, who defined them as bivariate ridge functions obtained by tensoring a univariate wavelet with a constant.
Since these `pure ridgelets' are not square-integrable, the concept was later modified in order to obtain frames or bases for $L^2(\R^2)$.
By giving them a slow decay along the ridge, Donoho constructed an orthonormal basis
whose elements are called `orthonormal ridgelets'~\cite{D98}. Their close relationship to the original concept has been analyzed in~\cite{DonXX}.
Another construction, based on directional scaling,
goes back to Grohs, providing tight frames \cite{GrohsRidLT}.
This kind of construction coincides with the concept of `$0$-curvelets' presented below.

To deal with curved edges, numerous types of frames
have been developed.
An important
milestone was the introduction of the first generation of curvelets~\cite{CD2000} by Cand{\`e}s and Donoho in 1999.
They represent the first frame to reach the optimal approximation order
of $N^{-2}$ for $C^2$ cartoons via simple thresholding.
A modification of this system, the second generation of curvelets~\cite{CD04}, was introduced in 2002 by the same authors.
It is based on a more elegant and simpler construction principle, yet features
the same quasi-optimal approximation properties.
Following this early breakthrough, other constructions better suited for digital implementation
were developed. Let us mention contourlets~\cite{DV05} by Do and Vetterli and shearlets, whose construction goes back mainly to Guo, Kutyniok, Labate, Lim, and Weiss.
The first shearlet construction consisted of band-limited functions and was
presented in~\cite{KuLaLiWe,GKL05}. Later, more sophisticated shearlet systems were developed, such as e.g.\ the well-localized band-limited Parseval frame in \cite{Guo2012a}
or even systems of compactly supported shearlets~\cite{Kittipoom2010}.
Like curvelets, those systems provide quasi-optimal approximation for $C^2$ cartoons.
For the classic band-limited shearlets this was established in~\cite{GL07}, for those with compact support in~\cite{Kutyniok2010}.

A common principle underlying the above constructions is parabolic scaling, a
type of scaling optimally adapted to
$C^2$ singularity curves. It is essential for the quasi-optimal approximation of $C^2$ cartoons
and led to the notion of parabolic molecules~\cite{Grohs2011}. This concept unifies various parabolically scaled systems under one roof, in particular
the classic curvelet and shearlet systems,
and is the predecessor of the more general framework of $\alpha$-molecules~\cite{GKKS15}.

\subsection{Multiscale Systems based on $\alpha$-Scaling}

Comparing the approximation properties of wavelets, curvelets, and ridgelets,
a distinct behavior with respect to their ability to resolve edges is characteristic.
Ridgelets are optimally adapted to straight edges, curvelets are optimal for $C^2$ line singularities, and wavelets for point singularities.
This distinct behavior is due to the different
scaling laws underlying their respective constructions: isotropic scaling for wavelets, parabolic scaling for curvelets, and directional scaling for ridgelets.

Introducing a parameter $\alpha\in\R$ and associated $\alpha$-scaling matrices
\begin{align}\label{eq:alphamat1}
A_{\alpha,s}=\begin{pmatrix}
s & 0 \\ 0 & s^\alpha
\end{pmatrix}, \qquad s>0,
\end{align}
one can interpolate between these different types of scaling
and construct corresponding $\alpha$-scaled representation systems.
Incorporating $\alpha$-scaling in the original construction of curvelets, for instance,
yields so-called $\alpha$-curvelets~\cite{GKKScurve2014}. For $\alpha\in[0,1]$, they constitute a family of systems which encompass
ridgelets (in the sense of \cite{GrohsRidLT}) for $\alpha=0$, the classic curvelets for $\alpha=\frac12$, and wavelets for $\alpha=1$.
In a similar fashion, $\alpha$-shearlet systems~\cite{Kei13,Kutyniok2012correct} can be obtained by modifying the classic shearlet constructions.

A natural question concerning such $\alpha$-scaled systems is how their approximation properties
are affected by a change of the parameter $\alpha$.
With regard to cartoon approximation, this question has been pursued in \cite{GKKScurve2014} for $\alpha$-curvelet frames and in \cite{Kei13,Kutyniok2012correct} for $\alpha$-shearlet frames.
It was shown that, if $\alpha\in[\frac{1}{2},1)$ and if the cartoon $f$ is of regularity $C^\beta$ with $\beta=\alpha^{-1}$, simple thresholding of the coefficients
yields $N$-term approximations $f_N$ with a convergence of
\begin{align}\label{intro-rate}
\|f-f_N\| \lesssim N^{-\beta} \log(N)^{1+\beta} \quad\text{as }N\to\infty,
\end{align}
which apart from the log-factor is optimal.
Later, these results were further extended
utilizing the theory of $\alpha$-molecules~\cite{GKKS15}. This is a framework providing a unified approach to $\alpha$-scaled systems,
based solely on assumptions on the time-frequency localization of the respective functions.
It allows
to transfer approximation results obtained for one system of $\alpha$-molecules to other
systems, under certain consistency conditions. In particular, the rate~\eqref{intro-rate} for $\alpha$-curvelets
was generalized (in a weak form) to other $\alpha$-scaled representation systems~\cite{GKKS15}, which all achieve a rate of $N^{-\beta+\varepsilon}$ with $\varepsilon>0$ arbitrarily small.

Despite these results, many questions concerning $\alpha$-scaled representation systems and their ability to approximate cartoon-like functions
remain open,
e.g., their performance
in the range $\alpha<\frac12$ or their suitability for the approximation of straight edges.
In this research we want to address these open questions, shedding (even) more light
on the role of the parameter $\alpha$.

\subsection{Outline and Contribution}

Our exposition starts with a short review of $\alpha$-scaled systems in Section~\ref{sec:curvelets}, where also a specific construction of an $\alpha$-curvelet frame for $L^2(\R^2)$
is presented. This frame, denoted by $\curvesys$, will serve as a prototypical system whose properties have ramifications for other $\alpha$-scaled systems, such as for example $\alpha$-shearlets,
due to the transference principle of the framework of $\alpha$-molecules.

In the main part of the article, Sections~\ref{sec:cartoon} and \ref{sec:straight}, we analyze the $N$-term approximation properties of
the frame $\curvesys$ with regard to different classes of cartoon images.
In Section~\ref{sec:cartoon} we start with cartoons with curved edges and first introduce corresponding signal classes of $C^\beta$ regularity for $\beta\in[0,\infty)$.
Theorem~\ref{thm:benchmark} recalls $N^{-\beta}$ as the order of the maximal achievable approximation rate for such $C^\beta$ cartoons, which
cannot be surpassed by any polynomial-depth restricted $N$-term approximation scheme, independent of the utilized dictionary.

Then we recall the quasi-optimal approximation~\eqref{intro-rate} of $\alpha$-curvelets, proved in~\cite{GKKScurve2014},
if $\alpha\in[\frac{1}{2},1)$ and $\beta=\alpha^{-1}$.
Our main findings in Section~3, Theorems~\ref{thm:bound1} and~\ref{thm:bound2}, extend and complement this result.
Theorem~\ref{thm:bound1} shows that the best possible $N$-term approximation rate achievable by
$\curvesys$ for cartoons with curved edges is limited to at most $N^{-\frac{1}{1-\alpha}}$, where $\alpha<1$ and the smoothness of the cartoons is arbitrary.
Moreover, according to Theorem~\ref{thm:bound2}, the achievable rate cannot
exceed $N^{-\frac{1}{\max\{\alpha,1-\alpha\}}}$ if a simple thresholding scheme is used.

These bounds
show that $\alpha$-curvelets with $\alpha\in[\frac{1}{2},1)$ cannot take advantage of regularity higher than $C^{1/\alpha}$.
Furthermore, they prohibit optimal approximation of $C^\beta$ cartoons if $\beta>2$, since decreasing $\alpha$ beyond $\frac12$ deteriorates the achievable rates compared to the classic curvelets.
Hence, with a rate of order $N^{-2}$, these provide the best performance among all $\alpha$-curvelet systems, if the regularity of the cartoons is at least $C^2$ and curved singularities are involved.
As a consequence, no curvelet system
can reach the optimality bound $N^{-\beta}$ if $\beta>2$.
In fact, up to now, no frame construction is known where a nonadaptive approximation scheme can break this $N^{-2}$ barrier and
the quest for such frames remains open.

In Section~\ref{sec:straight} we consider
cartoons featuring only straight edges.
For the corresponding classes of regularity $C^\beta$ the same optimality benchmark $N^{-\beta}$ holds true
as for the cartoons with curved edges. Our main result of Section~\ref{sec:straight}, Theorem~\ref{thm:mainappr1}, shows that a simple thresholding scheme for the $\alpha$-curvelet frame $\curvesys$
yields approximation rates of order $N^{-\min\{\alpha^{-1},\beta\}}$.
Hence, here a smaller $\alpha$ is beneficial and even ensures quasi-optimal approximation if $\alpha\in[0,\beta^{-1}]$. This finding generalizes
earlier results for ridgelets.

We finish with a short discussion of our results in Section~\ref{sec:discussion}. In particular, we point out some ramifications for other $\alpha$-scaled representation systems,
utilizing the framework of $\alpha$-molecules. All $\alpha$-scaled systems which are frames and in a certain sense consistent with $\curvesys$
feature similar properties, formulated in Theorem~\ref{thm:mol_app}
and Corollary~\ref{cor:mol_app}.

Some useful properties of Bessel functions needed in Section~3 are collected in the appendix.

\subsection{Notation}

Before we begin, let us fix some
general notation. Writing $\N$ we will refer to the natural numbers without zero, and we let $\N_0:=\N\cup\{0\}$.
As usual, $\Z$, $\R$ and $\C$ denote the integer, real and complex numbers. Further,
we put $\Rzeroplus:=[0,\infty)$ and $\Rplus:=(0,\infty)$.
We also introduce the `floor' and `ceiling' of $t\in\R$,
$\lfloor t \rfloor:= \max\{ n\in\Z : n\le t \}$ and $\lceil t \rceil:= \min\{ n\in\Z : n\ge t \} $.
The symbol $\mathbb{T}$ is used for the torus obtained from the interval $[0,2\pi]$ by
identifying the endpoints.
The unit-circle in $\C\simeq\R^2$ is denoted by $\mathbb{S}^1$.

The vector space $\R^d$, $d\in\N$, is equipped with the Euclidean scalar product $\langle \cdot,\cdot \rangle$ and associated norm $|\cdot|$.
The notation $|\cdot|_p$, $p\in(0,\infty]$, is used for the $p$-(quasi-)norms on $\R^d$.
For a multi-index $m=(m_1,\ldots,m_d)\in\N_0^d$,
$\partial^{m}:=\partial_1^{m_1}\cdots \partial_d^{m_d}$ is a differential operator with $\partial_i$, $i\in \{1,\dots, d\}$, the partial derivative in the $i$-th coordinate direction.
Given a vector $x=(x_1,\ldots,x_d)\in\R^d$,
we further define $x^m:=x_1^{m_1}\cdots x_d^{m_d}$ (with the convention $0^0:=1$).

If $A(\omega)\le C B(\omega)$ holds true for two quantities $A,B\in \R$ depending on a set of parameters $\omega$ with a uniform constant $C>0$,
we write $A\lesssim B$ or equivalently $B\gtrsim A$. If both, $A\lesssim B$ and $B\lesssim A$,
hold true, we denote this by $A \asymp B$.

For measurable subsets $\Omega\subseteq\R^d$ we let
$L^p(\Omega)$, $p\in(0,\infty]$, denote the usual Lebesgue spaces with respect to the Lebesgue measure.
The corresponding (quasi-)norms are denoted by $\|\cdot\|_{L^p(\Omega)}$, in case $\Omega=\R^d$ we abbreviate $\|\cdot\|_p:=\|\cdot\|_{L^p(\R^d)}$.
For the scalar product on $L^2(\Omega)$ the same notation $\langle \cdot,\cdot \rangle$ as for the Euclidean product on $\R^d$ is used.
The Lebesgue sequence spaces, for a discrete index set $\Lambda$, are denoted by $\ell^p(\Lambda)$ with associated (quasi-)norms $\|\cdot\|_{\ell^p}$.
The definition of their weak counterparts 
$w\ell^p(\Lambda)$, equipped with (quasi-)norms $\|\cdot\|_{w\ell^p}$, are recalled in Section~\ref{sec:straight}.

The space $C_{\rm loc}^\beta(\R^d)$, for an integer $\beta\in\N_0\cup\{\infty\}$, shall comprise
all continuous real-valued functions on $\R^d$, whose
classic derivatives up to order $\beta\in\N_0$ exist.
For $\beta\in[0,\infty)$ we then define
\[
	C^{\beta}(\R^d):=
	\Big\{f\in C_{\rm loc}^{\lfloor \beta \rfloor}(\R^d):\, \|f\|_{C^\beta(\R^d)}:= \|f\|_{C^{\lfloor\beta\rfloor}(\R^d)} +
	\sum_{|m|_1=\lfloor\beta\rfloor}
	\text{\sl Höl}(\partial^m f, \beta- \lfloor\beta\rfloor) <\infty
	\Big\}\,,
\]
where $\|f\|_{C^{\lfloor \beta \rfloor}(\R^d)} := \sum_{|m|_1\le \lfloor\beta\rfloor} \displaystyle{\sup_{x\in\R^d} |\partial^mf(x)| }$ and
the H\"older constant of exponent $\alpha\in[0,1]$ is given by
\[
\text{\sl H\"{o}l}(f,\alpha):=\sup_{x,y\in \R^d} \frac{|f(x)-f(y)|}{|x-y|^{\alpha}}.
\]
The notation $C_0^\beta(\overline{\Omega})$, for some open subset $\Omega\subseteq\R^d$, is used for
functions $f\in C^\beta(\R^d)$ whose support $\supp f$ is compact and contained in the closure $\overline{\Omega}$ of $\Omega$.
Frequently, we also need to measure functions $f\in C_{\rm loc}^{\beta}(\R^d)$, $\beta\in\N_0$, with the following Sobolev norms, where $p\in[1,\infty]$,
\[
\|f\|_{\beta,p}:=\| f \|_{W^{\beta,p}(\R^d)} := \sum_{|m|_1\le \beta} \| \partial^mf \|_{L^p(\R^d)}.
\]

\noindent
Finally, we will use the following version of the Fourier transform. For a Schwartz function
$f\in\mathcal{S}(\R^d)$
\[
\mathcal{F}f(\xi):= \int_{\R^d} f(x) \exp(-2\pi i \langle x,\xi \rangle) \,dx\,, \quad\xi\in\R^d\,.
\]
As usual, $\mathcal{F}$ is extended to the tempered distributions $\mathcal{S}^\prime(\R^d)$, and
we often write $\widehat{f}$ for $\mathcal{F}f$.


\section{The Anchor System: $\alpha$-Curvelets}
\label{sec:curvelets}


Directional multi-scale systems
based on $\alpha$-scaling feature a characteristic tiling of the frequency domain.
The multi-scale structure is reflected by a partition of the Fourier plane into dyadic coronae,
further divided into wedge-like tiles, where the energy of the system elements is concentrated.
In case of 
inhomogeneous systems, a ball around the origin corresponds to the low-frequency base scale.

A prototypical instance of such an $\alpha$-scaled system is
the frame $\curvesys$ of $\alpha$-curvelets, thoroughly defined in this section.
It is prototypical in the sense that many of its properties transfer -- via the framework of $\alpha$-molecules~\cite{GKKS15} -- to other $\alpha$-scaled systems.
Among these are other $\alpha$-curvelet constructions~\cite{CD04,GKKScurve2014}, but also band-limited~\cite{KuLaLiWe,GKL05,Guo2012a} as well as compactly supported~\cite{Kittipoom2010,Kei13,Kutyniok2012correct} $\alpha$-shearlet systems.
This fact gives the system $\curvesys$ a special significance for our purpose and motivates its detailed discussion here.

Before defining $\curvesys$, which is similar to the construction of $\alpha$-curvelets in \cite{GKKScurve2014},
let us first elaborate the geometric aspects of the corresponding frequency tiling.
At scales $j\ge1$ we have the coronae
\begin{align}\label{eqdefcorC}
\corona_j:=\Big\{ \xi\in\R^2 ~:~ C2^{s(j-1)}\le|\xi|_2\le C2^{s(j+1)}\Big\},
\end{align}
where $s>0$ is a fixed parameter and $C>0$ is a constant, specified conveniently later.
These coronae are each uniformly divided into
an even number of wedges, whose angular width at scale $j$ is given by the angle
\begin{align}\label{eqdef:fundangle}
\varphi_j:=\pi 2^{-\lfloor js(1-\alpha) \rfloor-1}
\end{align}
and depends on another parameter $\alpha\in(-\infty,1]$.
The approximate size of the resulting wedges correlates with an $\alpha$-scaled rectangle of dimension $2^{js}\times 2^{js\alpha}$.
By combining opposite wedges to wedge pairs, we obtain the tiles for the scales $j\ge1$.
There is only one tile associated with the base scale $j=0$, the low frequency ball $\corona_0:=\{ \xi\in\R^2 ~:~ |\xi|_2 \le C 2^{s} \}$.

For convenience, let us also introduce the angle $\varphi_0:=\pi$.
According to the above construction,
at each scale $j\in\N_0$ the number of tiles $L_j$ is given by
\begin{align}\label{eq:Lj}
L_0:=\pi\varphi_0^{-1}=1 \quad\text{and}\quad L_j:=\pi\varphi_j^{-1}=2^{\lfloor js(1-\alpha) \rfloor + 1}\,, \quad j\ge1.
\end{align}
In the following, the individual tiles will be denoted by $\tile_{j,\ell}$ and indexed by the set
\[
\mathbb{J}:=\big\{ (j,\ell) ~:~ j\in\N_0,\, \ell\in \{-L^-_j, \ldots, L^+_j \} \big\}
\]
with $L_j^-:=\lfloor L_j/2 \rfloor$ and $L_j^+:=\lceil L_j/2 \rceil -1$.
Hereby we let $\mathcal{W}_{0,0}:=\corona_0$, and in each corona $\corona_j$ with $j\ge1$ the wedge-pair $\tile_{j,0}$ shall be aligned horizontally, i.e.,
\begin{align*}
\tile_{j,0}:=\Big\{ \xi=(\xi_1,\xi_2)\in\corona_j ~:~ |\xi_1| \ge \cos(\varphi_j/2) |\xi|_{2} \Big\}.
\end{align*}
The remaining tiles $\tile_{j,\ell}$, $\ell\neq0$, are obtained via rotations of
$\tile_{j,0}$ by integer multiples $\varphi_{j,\ell}:=\ell\varphi_j$ of the angle $\varphi_j$ defined in~\eqref{eqdef:fundangle}. Hence,
$\tile_{j,\ell}:=R^{-1}_{j,\ell}\tile_{j,0}$ with rotation matrix
\begin{align}\label{eq:matrixrot}
R_{j,\ell}:=R_{\varphi_{j,\ell}} \,,\quad \text{where}\quad R_{\varphi}
:=\begin{pmatrix} \cos(\varphi) & -\sin(\varphi) \\
 \sin(\varphi) & \,\cos(\varphi)
\end{pmatrix}
\,,\quad \varphi\in\R.
\end{align}
The resulting tiling of the Fourier domain is schematically depicted in Figure~\ref{fig:freq_domain} (a).

We remark that in contrast to \cite{GKKScurve2014}, where $\alpha\in[0,1]$,
we allow $\alpha\in(-\infty,1]$ in the $\alpha$-curvelet construction.
This range is natural for the considered inhomogeneous systems. If $\alpha>1$, the number of tiles $L_j$ in each corona decreases with rising scale, and eventually
$L_j=1$. Thus, at high scales, those systems would
behave like isotropically scaled systems with $\alpha=1$.

\begin{figure*}[ht]
 \centering
 \subfigure{
 \includegraphics[width = .45\textwidth]{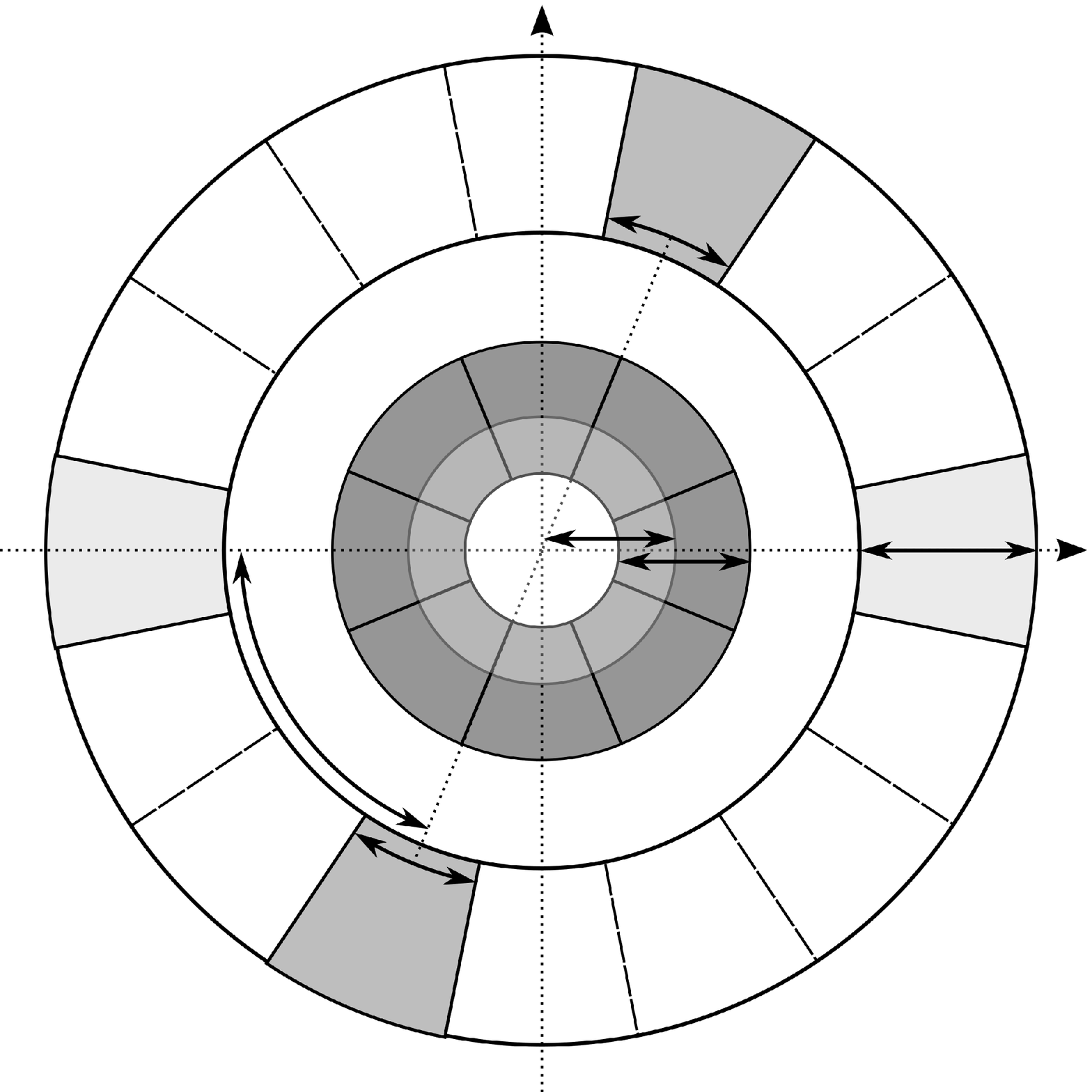}
 \put(-114,105){$\corona_{0}$}
 \put(-101,94.5){$\mathcal{I}_{0}$}
 \put(-114,131){$\corona_1$}
 \put(-77,92){$\mathcal{I}_{1}$}
 \put(-111,147){$\vdots$}
 \put(-114,185){$\corona_j$}
 \put(-32,107){$\tile_{j,0}$}
 \put(-28,93){$\mathcal{I}_{j}$}
 \put(-84,180){$\tile_{j,\ell}$}
 \put(-134,36){$\varphi_j$}
 \put(-149,72){$\varphi_{j,\ell}$}
 \put(-109,-15){(a)}
 }
 \:
 \subfigure{
 \includegraphics[width = .45\textwidth, clip=true]{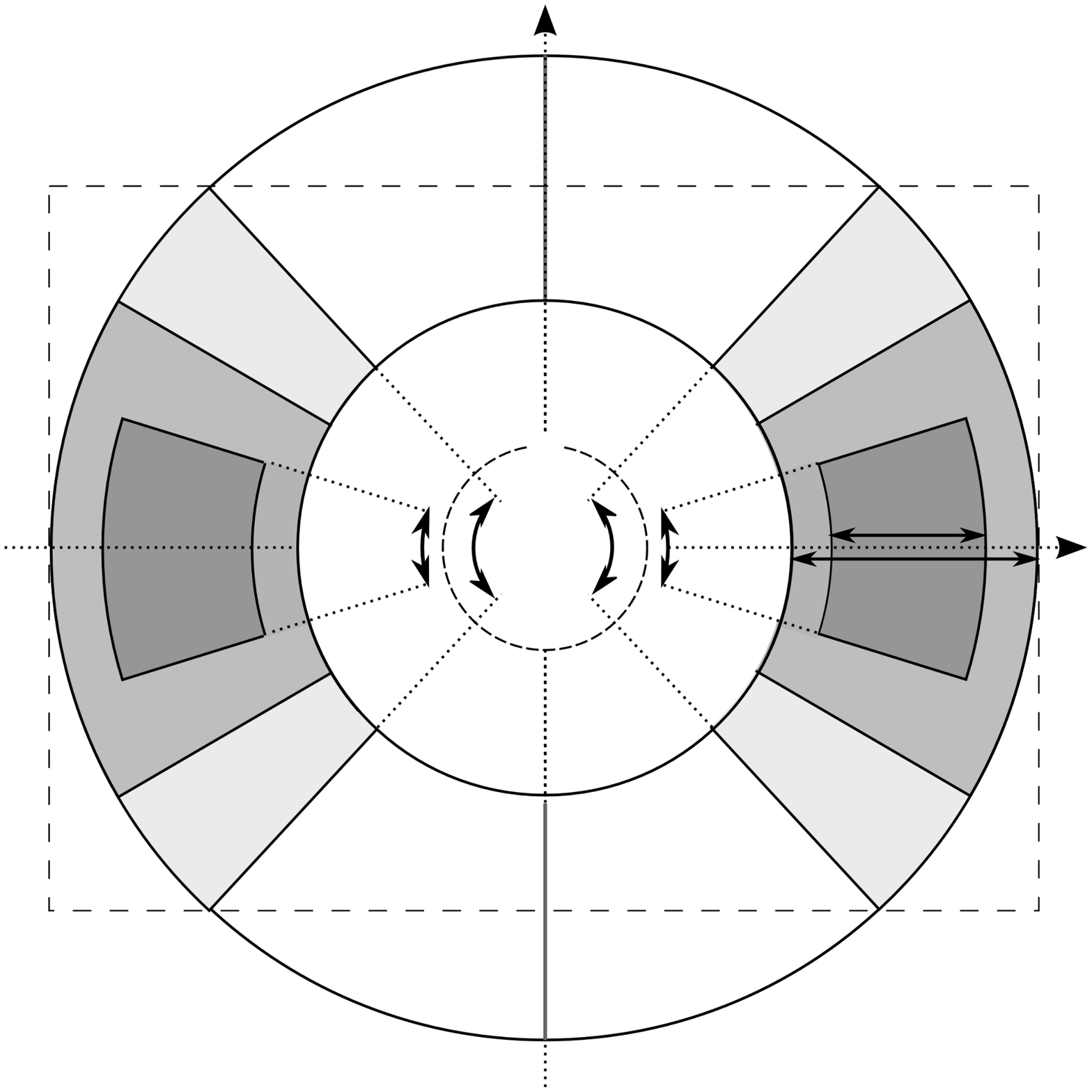}
 \put(-186,133){$\tile_{j,0}$}
 \put(-174,152){$\mathcal{W}_{j,0}^{\boldsymbol{+}}$}
 \put(-181,110){$\mathcal{W}_{j,0}^{\boldsymbol{-}}$}
 \put(-27,160){$\Xi_{j,0}$}
 \put(-99,185){$\corona_j$}
 \put(-106,118){$\boldsymbol{\mathbb{S}}^1$}
 \put(-111,100){$\mathcal{A}_{j,0}$}
 \put(-146,100){$\mathcal{A}^{\boldsymbol{-}}_{j,0}$}
 \put(-36,111){$\mathcal{I}^{\boldsymbol{-}}_{j}$}
 \put(-40,91.2){$\mathcal{I}_{j}$}
 \put(-108,-15){(b)}
 }
 \caption{(a): Tiling of Fourier domain into coronae $\corona_j$ and wedges $\tile_{j,\ell}$.
 (b): Schematic display of the frequency support of a wedge function $W_{j,0}$.}
 \label{fig:freq_domain}
 \end{figure*}

\subsection{The Frame of $\alpha$-Curvelets $\curvesys$}

Let us now turn to the actual
construction of the $\alpha$-curvelet frame $\curvesys$.
To realize the described frequency tiling, smooth functions
$W_J:\R^2\to\C$, $J\in\mathbb{J}$, are used, with compact support approximately given by the tiles $\tile_{J}$.
It is convenient to construct them
as tensor products of a radial and an angular component.
This allows to realize the desired
support separately on the ray $\Rzeroplus=[0,\infty)$ and on the circle $\mathbb{S}^{1}\subset\R^2$.
Projecting the coronae $\corona_j$ onto the ray $\Rzeroplus$ yields the intervals
\begin{align}\label{eqdef:dyintervals}
\mathcal{I}_{0}:=C\cdot[0,2^s] \quad\text{and}\quad\mathcal{I}_{j}:=C\cdot[2^{s(j-1)},2^{s(j+1)}] \,, \quad j\ge1.
\end{align}
For the radial subdivision, we thus utilize nonnegative smooth functions $U_j\in C^\infty(\Rzeroplus)$, $j\in\N_0$, which satisfy
the support condition $\supp\, U_j\subseteq \mathcal{I}_{j}$ and for $r\in\Rzeroplus$
\begin{align}\label{eq:coronalpartition}
A_1\le \sum_{j\ge 0} U^2_j(r) \le B_1 \quad\text{with constants}\quad 0<A_1\le B_1<\infty.
\end{align}
More concretely, we assume that the functions $U_j$, $j\ge1$, are generated by a single function $U\in C^\infty(\Rzeroplus,[0,1])$ via
$U_j(\cdot):=U(2^{-js} \cdot)$ and that there are $1<\tau_1<\tau_2<2^s$ such that
\begin{align}\label{eq:suppcond}
\begin{aligned}
\supp\, U_0\subseteq C\cdot[0,\tau_2],& \quad \sqrt{A_1}\le U_0 \le \sqrt{B_1}  \text{ on } C\cdot[0,\tau_1], \\
\supp\, U\subseteq C\cdot[2^{-s}\tau_1,\tau_2],& \quad \sqrt{A_1}\le U \le\sqrt{B_1} \text{ on } C\cdot[2^{-s}\tau_2,\tau_1].
\end{aligned}
\end{align}
Such functions exist and can even
be constructed with $A_1=B_1=1$ in \eqref{eq:coronalpartition}.

For the angular subdivision, we construct at each scale $j\in\N_0$ a smooth partition
on the unit circle $\mathbb{S}^{1}\subset\R^2$, reflecting the angular support of the tiles $\tile_{j,\ell}$.
We start with a function $\widetilde{V}\in C^\infty(\R,[0,1])$ with the properties
\begin{align*}
\supp\, \widetilde{V}\subseteq [-\textstyle{\frac{3}{4}}\pi,\textstyle{\frac{3}{4}}\pi] ,\quad
\sqrt{A_2}\le \widetilde{V} \le\sqrt{B_2} \text{ on }[-\textstyle{\frac{\pi}{4}},\textstyle{\frac{\pi}{4}}],\quad A_2\le\sum_{k\in\Z} \widetilde{V}^2(\cdot -k\pi)\le B_2,
\end{align*}
where $0<A_2\le B_2<\infty$.
Scaling then gives rise to the functions $\widetilde{V}_j(\cdot):=\widetilde{V}(L_j \cdot)\in C^\infty(\R,[0,1]) $ for $j\in\N_0$.
Via the bijection $t\mapsto e^{it}$ these functions yield functions
$\widetilde{V}_{j,0}\in C^{\infty}(\mathbb{S}^{1},[0,1])$ on the unit circle. We symmetrize
\[
V_{j,0}(\xi):=\widetilde{V}_{j,0}(\xi) + \widetilde{V}_{j,0}(-\xi), \quad\xi\in\mathbb{S}^1,
\]
and note that $\sqrt{A_2}\le V_{0,0}\le\sqrt{B_2}$ on $\mathbb{S}^{1}$.
Applying the rotation~\eqref{eq:matrixrot} then yields functions
$V_{j,\ell}(\cdot):=V_{j,0}(R_{j,\ell} \cdot)$ for every $J=(j,\ell)\in\mathbb{J}$, which satisfy
$A_2\le \sum_{|J|=j} V^2_{J}(\xi)\le B_2$ for all $\xi\in\mathbb{S}^{1}$. Here we use the notation $|J|:=j$ for $J=(j,\ell)\in\mathbb{J}$.

Finally, we are ready to define the wedge functions $W_{j,\ell}\in C^\infty(\R^2)$ as the polar tensor products
\begin{align}\label{eq:suppfunctions}
W_{j,\ell}(\xi):=U_j(|\xi|_{2})V_{j,\ell}(\xi/|\xi|_{2}), \quad \xi\in\R^2.
\end{align}
These functions are non-negative `bumps' approximately supported in the corresponding wedges $\mathcal{W}_{j,\ell}$. They are symmetric, i.e., $W_{j,\ell}(\xi)=W_{j,\ell}(-\xi)$ for $\xi\in\R^2$, and they satisfy
\begin{align}\label{eq:CalderonW}
A:=A_1 A_2\le \sum_{J=(j,\ell)\in\mathbb{J}} W^2_{J}(\xi) \le B_1 B_2 =: B \,,\quad \xi\in\R^2.
\end{align}
Let us analyze the support of $W_J$ in more detail. Recall the angular function $\widetilde{V}_{j,0}$ and note that its support on $\mathbb{S}^1$ covers an angle range of $\varphi_j^+:=\frac{3}{2}\varphi_j$
with $\varphi_j=\pi L_j^{-1}$ as in~\eqref{eqdef:fundangle}. Moreover, $\sqrt{A_2}\le \widetilde{V}_{j,0} \le \sqrt{B_2}$ on a range of size $\varphi_j^-:=\frac{1}{2}\varphi_j$. Hence, $\supp V_{j,\ell} \subseteq \mathcal{A}_{j,\ell}$ and $V_{j,\ell}\asymp1$ on $\mathcal{A}^{-}_{j,\ell}$ for the angular intervals
\begin{align}\label{eq:angularsupp}
\begin{aligned}
\mathcal{A}_{j,\ell}:=R^{-1}_{j,\ell}\mathcal{A}_{j,0} \quad&\text{with}\quad \mathcal{A}_{j,0}:=\Big\{ \xi=(\xi_1,\xi_2)\in\mathbb{S}^1 ~:~  |\xi_1| \ge \cos(\varphi^+_j/2) \Big\}, \\
\mathcal{A}^-_{j,\ell}:=R^{-1}_{j,\ell}\mathcal{A}^-_{j,0} \quad&\text{with}\quad \mathcal{A}^-_{j,0}:=\Big\{ \xi=(\xi_1,\xi_2)\in\mathbb{S}^1 ~:~ |\xi_1|\ge\cos(\varphi^-_j/2) \Big\}.
\end{aligned}
\end{align}
Next, recall the functions $U_j$ on the ray with $\supp U_j\subseteq \mathcal{I}_j$. Due to \eqref{eq:coronalpartition} and \eqref{eq:suppcond} their function values are between $\sqrt{A_1}$ and $\sqrt{B_1}$ on
\begin{align}\label{eq:IIIIII}
\mathcal{I}^{-}_0:=C\cdot[0,\tau_1]  \quad\text{and}\quad \mathcal{I}^{-}_j:=C\cdot[2^{s(j-1)} \tau_2,2^{sj}\tau_1],\quad j\ge1,
\end{align}
respectively.
This leads us to the following definition. For $J=(j,\ell)\in\mathbb{J}$ we introduce the wedge pairs
\begin{align}\label{eq:wedgePJ}
\begin{aligned}
\tileext_{J}:=\Big\{ \xi\in\R^2 ~:~ |\xi|_{2}\in\mathcal{I}_j ,\, \varphi(\xi)\in\mathcal{A}_J \Big\} \quad\text{and} \quad
\tileint_{J}:=\Big\{ \xi\in\R^2 ~:~ |\xi|_{2}\in\mathcal{I}^-_j ,\, \varphi(\xi)\in\mathcal{A}^-_J  \Big\}.
\end{aligned}
\end{align}
The following support properties will be of essential importance later,
\begin{align}\label{eq:suppprop}
\supp W_J \subseteq \tileext_J \qquad\text{and}\qquad \sqrt{A}\le W_J \le \sqrt{B} \text{ on }\tileint_J.
\end{align}
A geometric illustration is displayed in Figure~\ref{fig:freq_domain}~(b).

Now we fix $C=2^{-s}/(3\pi)$ in \eqref{eqdefcorC} such that
each $\tileext_{J}$ is contained in the respective rectangle
\begin{align}\label{eq:supprect}
\Xi_{J}:=R^{-1}_{J}\Xi_{j,0}\,, \quad\text{where}\quad \Xi_{j,0}:= [-2^{js-1},2^{js-1}]\times[-2^{js\alpha-1},2^{js\alpha-1}].
\end{align}
The rectangles $\Xi_{j,0}$ are of size $2^{js}\times2^{js\alpha}$ and hence the Fourier system $\lb u_{j,0,k}\rb_{k\in\Z^2}$ given by
\begin{align*}
u_{j,0,k}(\xi):=2^{-js(1+\alpha)/2}\exp\big(2\pi i (2^{-sj}k_1\xi_1 +  2^{-sj\alpha}k_2\xi_2)\big),\quad \xi\in\R^2,
\end{align*}
constitutes an orthonormal basis for $L^2(\Xi_{j,0})$ . Consequently, the rotated
system $\lb u_{j,\ell,k}\rb_{k\in\Z^2}$ of functions
\begin{align}\label{eq:fouriersys}
u_{j,\ell,k}(\xi):= u_{j,0,k}(R_{j,\ell}\xi),\quad \xi\in\R^2,
\end{align}
is an orthonormal basis for $L^2(\Xi_{J})$.

After this preparation, we are ready to define the $\alpha$-curvelet system $\curvesys$.

\begin{definition}
Let $s>0$, $\alpha\in(-\infty,1]$, and assume that $\lb W_J \rb_{J\in\mathbb{J}}$ is a family of functions of the form~\eqref{eq:suppfunctions} such that~\eqref{eq:CalderonW}
holds for $0<A\le B<\infty$. Further, let $u_{j,\ell,k}$ be the functions defined in \eqref{eq:fouriersys}. The curvelet system $\curvesys(A,B):=\lb\psi_\mu\rb_{\mu\in\curveind}$
with associated index set $\curveind:=\mathbb{J}\times\Z^2$ consists of the functions $\psi_\mu=\psi_{j,\ell,k}$ given by
\begin{align}\label{eqdef:curvelet}
\widehat{\psi}_{j,\ell,k}(\xi):= W_{j,\ell}(\xi) u_{j,\ell,k}(\xi)\,, \quad \xi\in\R^2.
\end{align}
Note that $\curvesys(A,B)$ depends on the utilized family $\lb W_J\rb_{J\in\mathbb{J}}$, which is not accounted for in the notation.
\end{definition}

\noindent
The curvelets $\psi_\mu$ are real-valued due to the symmetry of $W_{j,\ell}$.
Their $L^2$-norms may vary slightly with scale, however there are constants $0<C_1\le C_2<\infty$ such that $C_1 \le  \|\psi_\mu\|_{2} \le C_2 $ holds true for all $\mu\in\curveind$.
Most importantly, the system $\curvesys(A,B)$ is a frame for $L^2(\R^2)$.

\begin{lemma}
The system $\curvesys(A,B)$ given by \eqref{eqdef:curvelet}
is a frame for $L^2(\R^2)$ with frame bounds $A$ and $B$.
\end{lemma}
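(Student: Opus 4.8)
The plan is to reduce the two frame inequalities to the Calder\'on-type condition \eqref{eq:CalderonW} by collapsing, scale by scale, the sum over the translation parameter $k$ via the orthonormal basis property of the local Fourier systems. First I would pass to the frequency side: since the chosen normalization makes $\mathcal{F}$ unitary on $L^2(\R^2)$, Plancherel gives $\langle f,\psi_{j,\ell,k}\rangle=\langle\widehat f,\widehat\psi_{j,\ell,k}\rangle$ for every $f\in L^2(\R^2)$. Because $W_{j,\ell}$ is real-valued and nonnegative, this equals $\int_{\R^2}\widehat f(\xi)\,W_{j,\ell}(\xi)\,\overline{u_{j,\ell,k}(\xi)}\,d\xi$, i.e.\ the inner product $\langle\widehat f\,W_{j,\ell},u_{j,\ell,k}\rangle_{L^2(\Xi_J)}$ for $J=(j,\ell)$. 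Here I would invoke the support property \eqref{eq:suppprop} together with \eqref{eq:supprect}, namely $\supp W_J\subseteq\tileext_J\subseteq\Xi_J$, to regard $\widehat f\,W_{j,\ell}$ as a genuine element of $L^2(\Xi_J)$: it lies in $L^2$ since $W_J$ is bounded, and it is supported in $\Xi_J$.

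Next I would fix a tile index $J\in\mathbb{J}$ and sum over $k\in\Z^2$. By construction \eqref{eq:fouriersys} the system $\{u_{j,\ell,k}\}_{k\in\Z^2}$ is an orthonormal basis of $L^2(\Xi_J)$, so Parseval's identity yields
\[
\sum_{k\in\Z^2}|\langle f,\psi_{j,\ell,k}\rangle|^2=\|\widehat f\,W_{J}\|_{L^2(\Xi_J)}^2=\int_{\R^2}|\widehat f(\xi)|^2\,W_J^2(\xi)\,d\xi,
\]
where the last step again uses $\supp W_J\subseteq\Xi_J$ to extend the integral to all of $\R^2$.

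Then I would sum over all $J\in\mathbb{J}$. As every summand is nonnegative, Tonelli's theorem permits interchanging the sum over $\curveind=\mathbb{J}\times\Z^2$ with the integral, producing
\[
\sum_{\mu\in\curveind}|\langle f,\psi_\mu\rangle|^2=\int_{\R^2}|\widehat f(\xi)|^2\sum_{J\in\mathbb{J}}W_J^2(\xi)\,d\xi.
\]
Inserting the two-sided estimate $A\le\sum_{J}W_J^2\le B$ from \eqref{eq:CalderonW} and applying Plancherel once more, $\|\widehat f\|_2=\|f\|_2$, gives $A\|f\|_2^2\le\sum_\mu|\langle f,\psi_\mu\rangle|^2\le B\|f\|_2^2$, which is precisely the frame property with bounds $A$ and $B$.

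This is essentially a painting/Calder\'on reproducing argument, so I expect no deep obstacle; the delicate points are bookkeeping. The one I would watch most carefully is the support containment $\supp W_J\subseteq\Xi_J$, which is what makes the local Fourier basis on the rectangle $\Xi_J$ applicable and which relies on the specific choice $C=2^{-s}/(3\pi)$ fixed before \eqref{eq:supprect}; without it the orthonormality of $\{u_{j,\ell,k}\}_k$ on $L^2(\Xi_J)$ could not be used to collapse the $k$-sum. Secondary points are the consistent use of the unitary, $2\pi$-normalized Fourier transform in both applications of Plancherel, and the nonnegativity of the terms that legitimizes the interchange of summation and integration.
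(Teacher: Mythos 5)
Your proposal is correct and follows essentially the same route as the paper: both reduce the frame inequalities to the Calder\'on condition \eqref{eq:CalderonW} and collapse the sum over $k$ at each fixed tile $J$ via Parseval for the orthonormal basis $\{u_{J,k}\}_k$ of $L^2(\Xi_J)$, using $\supp(\widehat f\,W_J)\subseteq\Xi_J$. The only difference is the order of the two steps (you collapse the $k$-sum first and then invoke \eqref{eq:CalderonW}, the paper does the reverse), which is immaterial.
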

\begin{proof}
The functions $\tilefunc_J$ satisfy condition \eqref{eq:CalderonW} wherefore
\begin{align*}
A\|f\|^2_2=A\|\widehat{f}\|^2_2\le \sum_{J\in\mathbb{J}} \| \widehat{f}W_J \|_2^2 \le B \|\widehat{f}\|^2_2 = B\|f\|^2_2 \quad\text{for every $f\in L^2(\R^2)$}.
\end{align*}
Since $\supp(\widehat{f}W_J) \subseteq \Xi_J$ and since $\{u_{J,k}\}_{k\in\Z^2}$ is an orthonormal basis of $L^2(\Xi_J)$
we have the orthogonal expansion $\widehat{f}W_J = \sum_k \langle \widehat{f}W_J, u_{J,k} \rangle u_{J,k} \chi_{\Xi_{J}}$. The proof is finished by the following equality,
\[
\| \widehat{f}W_J \|_2^2= \sum_{k\in\Z^2} |\langle \widehat{f}W_J, u_{J,k} \rangle|^2
= \sum_{k\in\Z^2} |\langle \widehat{f}, W_Ju_{J,k} \rangle|^2 = \sum_{k\in\Z^2} |\langle \widehat{f}, \widehat{\psi}_{J,k} \rangle|^2 =  \sum_{k\in\Z^2} |\langle f, \psi_{J,k} \rangle|^2. \qedhere
\]
\end{proof}

\noindent
The Parseval frame $\mathfrak{C}_{s,\alpha}(1,1)$ is of most interest to us and one might wonder why we did not fix the frame bounds $A=B=1$ in the beginning.
The reason is that, in the proof of Lemma~\ref{lem:induction},
we need the additional flexibility provided by variable $A$ and $B$.

\begin{remark}
Subsequently, we will write $\mathfrak{C}_{s,\alpha}$ to refer to the Parseval frame $\mathfrak{C}_{s,\alpha}(1,1)$.
\end{remark}

\noindent
Let us finish this section with a short discussion of the situation in spatial domain.
Here the $\alpha$-curvelets $\lb\psi_{j,\ell,k}\rb_{k\in\Z^2}$ are translates of the functions $\psi_{j,\ell,0}$.
Indeed, since $\widehat{\psi}_{j,\ell,0}= 2^{-js(1+\alpha)/2} W_{j,\ell}$ and
\[
u_{j,\ell,k}(\cdot)= u_{j,0,k}(R_{j,\ell}\cdot)=2^{-js(1+\alpha)/2}\exp\big(2\pi i \langle R^{-1}_{j,\ell} A^{-1}_{j}k,\cdot\rangle\big),
\]
where $R_{j,\ell}$ is the rotation matrix defined in \eqref{eq:matrixrot} and $A_{j}:=A_{\alpha,2^{js}}$ is an $\alpha$-scaling matrix of the form~\eqref{eq:alphamat1}, we have $\widehat{\psi}_{j,\ell,k}=\widehat{\psi}_{j,\ell,0} \exp\big(2\pi i \langle R^{-1}_{j,\ell} A^{-1}_{j}k, \cdot \rangle\big) $ and hence
\begin{align*}
 \psi_{j,\ell,k}= \psi_{j,\ell,0}(\cdot-x_{j,\ell,k})\quad\text{with}\quad x_{j,\ell,k}:=R^{-1}_{j,\ell}A^{-1}_{j}k.
\end{align*}
Since $\psi_{j,\ell,0}$ is the rotation of $\psi_{j,0,0}$ by the angle $\varphi_{j,\ell}=\ell\varphi_j$, we arrive at the representation
\begin{align}\label{eq:spatialrepr}
        \psi_{j,\ell,k} (x) =
        \psi_{j,0,0}
        \left(R_{j,\ell}\left(x - x_{j,\ell,k}\right)\right).
\end{align}
In fact, these systems are instances of $\alpha$-molecules, a concept recalled in the definition below.

\begin{definition}[{\cite[Def.~2.9]{GKKS15}}]
Let $\Lambda$ be a set and $\Phi_\Lambda:\Lambda\to\mathbb{P}$ a map, assigning to each $\lambda\in\Lambda$ a point $(s_\lambda,\theta_\lambda,x_\lambda)\in\mathbb{P}$ in
the so-called phase-space $\mathbb{P}=\Rplus\times\mathbb{T}\times \R^2$.
Futher, assume that $L,M,N_1,N_2\in\N_0$.
A family $\lb m_\lambda\rb_{\lambda \in \Lambda}$ of functions in $L^2(\R^2)$ is called a \emph{family of $\alpha$-molecules
of order $(L,M,N_1,N_2)$ with respect to the parametrization $(\Lambda,\Phi_\Lambda)$},
if there exist generators $a^{(\lambda)}\in L^2(\R^2)$ such that for all $\lambda\in\Lambda$
    \begin{align*}
        m_\lambda (\cdot) =
        s_\lambda^{(1+\alpha)/2}
        a^{(\lambda)}
        \left(A_{\alpha,s_\lambda}R_{\varphi_\lambda}\left(\cdot - x_\lambda\right)\right),
    \end{align*}
    and if for each $\rho\in\N_0^2$, $|\rho|\le L$, there is a constant $C_\rho>0$ such that
    for all $\lambda\in\Lambda$
    \begin{equation}\label{eq:molcond1}
        \big| \partial^{\rho} \hat a^{(\lambda)}(\xi)\big|
        \le C_\rho \min\left\{1,s_\lambda^{-1} + |\xi_1| + s_\lambda^{-(1-\alpha)}|\xi_2|\right\}^M
        \left( 1+ |\xi|^2 \right)^{-N_1/2} ( 1+ |\xi_2|^2)^{-N_2/2}, \quad\xi\in\R^2.
    \end{equation}
\end{definition}
\noindent
We can deduce from \eqref{eq:spatialrepr} that the $\alpha$-curvelets $\psi_{j,\ell,k}$ can be represented in the form
\begin{align}\label{eq:smolrepr}
        \psi_{j,\ell,k} (x) = 2^{js(1+\alpha)/2}
        a_{j}
        \left(A_{j} R_{j,\ell}\left(x - x_{j,\ell,k}\right)\right) = 2^{js(1+\alpha)/2}
        a_{j}
        \left(A_{j} R_{j,\ell} x - k \right)
    \end{align}
with respect to the generators
\begin{align}\label{eq:molgen}
a_{j}:=2^{-js(1+\alpha)/2}\psi_{j,0,0} (A^{-1}_{j} \cdot).
\end{align}
Since these generators fulfill condition~\eqref{eq:molcond1}, as shown in Lemma~\ref{thm:curvmol} below, $\curvesys$
is a system of $\alpha$-molecules of arbitrary order, at least in the range $\alpha\in[0,1]$ for which the concept was formulated.
The associated parametrization, mapping the curvelet index set $\curveind$ into the phase-space $\mathbb{P}=\R^+\times\mathbb{T}\times \R^2$, is given by
\begin{align}\label{eq:curvepara}
\Phi_{\curveind}: \curveind\to \mathbb{P} ,  \,(j,\ell,k)\mapsto (2^{js}, \varphi_{j,\ell}, x_{j,\ell,k}) = (2^{js},\ell \varphi_j, R^{-1}_{j,\ell} A^{-1}_{j}k).
\end{align}

\begin{lemma}\label{thm:curvmol}
Let $M,N_1,N_2\in\N_0$ and $\rho=(\rho_1,\rho_2)\in\N_0^2$ be fixed. There is a constant $C>0$ such that for all $j\in\N_0$ the generators~\eqref{eq:molgen} satisfy the estimate
\begin{align}\label{eq:molcond}
 \big| \partial^\rho \widehat{a}_{j}(\xi)\big|
        \le C \min\big\{1, 2^{-js} + |\xi_1| + 2^{-js(1-\alpha)}|\xi_2|\big\}^M
        (1+ |\xi|^2 )^{-N_1/2}  (1+|\xi_2|^2)^{-N_2/2}.
\end{align}
\end{lemma}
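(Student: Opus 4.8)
The plan is to compute $\widehat{a}_j$ explicitly from the definitions and then verify the three factors on the right-hand side of \eqref{eq:molcond} separately, tracking how the $\alpha$-scaling interacts with the support and smoothness of the wedge functions $W_{j,0}$. Starting from \eqref{eq:molgen} and the relation $\widehat{\psi}_{j,0,0}=2^{-js(1+\alpha)/2}W_{j,0}$, a direct Fourier computation gives $\widehat{a}_j(\xi)=2^{-js(1+\alpha)/2}\,\widehat{\psi}_{j,0,0}(A_j^{-T}\xi)\cdot|\det A_j^{-1}|^{-1}$ up to the standard dilation bookkeeping; the upshot is that $\widehat{a}_j$ is, up to a normalizing constant, the function $W_{j,0}$ composed with the inverse $\alpha$-scaling $A_j^{-1}=A_{\alpha,2^{-js}}$. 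This composition flattens the anisotropic wedge $\tileext_{j,0}$ back to a set of comparable size in both coordinate directions, which is exactly what makes the constants in \eqref{eq:molcond} uniform in $j$.

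First I would establish the \emph{support/cone factor}, i.e.\ the term $\min\{1, 2^{-js}+|\xi_1|+2^{-js(1-\alpha)}|\xi_2|\}^M$. This is governed by the behaviour of $W_{j,0}$ near the origin in frequency: since $\supp U_j\subseteq\mathcal{I}_j=C\cdot[2^{s(j-1)},2^{s(j+1)}]$, the radial profile vanishes for $|\xi|_2\lesssim 2^{js}$, and after rescaling by $A_j^{-1}$ this translates into $\widehat{a}_j$ vanishing on a neighbourhood of the origin whose size matches the $\min\{\cdots\}$ expression. The angular support encoded in $\mathcal{A}_{j,0}$, with aperture $\varphi_j^+\asymp 2^{-js(1-\alpha)}$, controls the $|\xi_2|$-direction and produces precisely the weight $2^{-js(1-\alpha)}|\xi_2|$. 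To get the $M$-th power one exploits the smoothness of $U$ and $\widetilde V$: because these generators are $C^\infty$ and can be taken to vanish to infinite order at the boundary of their support, one obtains vanishing of arbitrarily high order, hence the factor to any power $M$.

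Next I would handle the two \emph{decay factors} $(1+|\xi|^2)^{-N_1/2}$ and $(1+|\xi_2|^2)^{-N_2/2}$. These come from the compact support of $W_{j,0}$ together with its $C^\infty$-smoothness: a compactly supported smooth function has a Fourier transform with rapid decay, and differentiating under the $A_j^{-1}$-rescaling distributes the decay across the isotropic factor $(1+|\xi|^2)^{-N_1/2}$ and the directional factor $(1+|\xi_2|^2)^{-N_2/2}$. The derivative bound $\partial^\rho$ for $|\rho|\le L$ is obtained by the Leibniz rule applied to the product $U_j\cdot V_{j,0}$ after rescaling; each derivative costs at most a bounded factor because the rescaling by $A_j^{-1}$ normalizes the effective frequency-scale of $W_{j,0}$ to order one in both coordinates. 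The key point throughout is that all the $j$-dependent scales are absorbed by $A_j^{-1}$, so the resulting constants depend only on $U$, $\widetilde V$, and the orders $M,N_1,N_2,L$, but not on $j$.

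The main obstacle will be the \emph{uniformity of the constants in $j$}, and in particular making the angular estimate for the $|\xi_2|$-direction precise. The angular partition is built by scaling $\widetilde V$ by $L_j=2^{\lfloor js(1-\alpha)\rfloor+1}$, so the aperture of each wedge is $\asymp 2^{-js(1-\alpha)}$ only up to the floor function in the exponent; one must check that the floor introduces only bounded distortions and that, after passing through the bijection $t\mapsto e^{it}$ from the circle to the tangent line and rescaling by $A_j^{-1}$, the angular variable is correctly linearized into $\xi_2$ with the right power $2^{-js(1-\alpha)}$. Controlling derivatives of $V_{j,0}(\xi/|\xi|_2)$ — a composition with the nonlinear map $\xi\mapsto\xi/|\xi|_2$ — on the rescaled wedge, and verifying that these derivatives remain bounded uniformly in $j$ after the $A_j^{-1}$-normalization, is the technically delicate part; everything else reduces to the rapid decay of Fourier transforms of fixed smooth compactly supported generators.
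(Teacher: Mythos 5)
Your overall strategy coincides with the paper's: compute $\widehat{a}_j$ explicitly as a rescaling of $W_{j,0}$, read off the vanishing near the origin from the radial support of $U_j$, and get uniform derivative bounds from the normalization $\|\partial_1^{\rho_1}\partial_2^{\rho_2}W_{j,0}\|_\infty\lesssim 2^{-js\rho_1}2^{-js\alpha\rho_2}$ combined with the chain rule. (Minor slip: $\widehat{a}_j=W_{j,0}(A_j\,\cdot)$, i.e.\ the composition is with the \emph{expanding} matrix $A_j$, not with $A_j^{-1}$; your geometric conclusion that the support shrinks to a set of unit size is nevertheless correct.)

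There is, however, a genuine confusion in how you obtain the two decay factors $(1+|\xi|^2)^{-N_1/2}(1+|\xi_2|^2)^{-N_2/2}$. You argue that they ``come from the compact support of $W_{j,0}$ together with its $C^\infty$-smoothness: a compactly supported smooth function has a Fourier transform with rapid decay.'' But the object being estimated in \eqref{eq:molcond} is $\partial^\rho\widehat{a}_j(\xi)=\partial^\rho\big(W_{j,0}(A_j\xi)\big)$; it is not the Fourier transform \emph{of} $W_{j,0}$, it \emph{is} $W_{j,0}$ after rescaling. Rapid Fourier decay of $W_{j,0}$ would give spatial decay of $a_j$ (which is what Lemma~\ref{lem:apriori} exploits), not frequency decay of $\widehat{a}_j$. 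The correct — and much simpler — mechanism is that $\supp\widehat{a}_j\subseteq A_j^{-1}\Xi_{j,0}=[-\tfrac12,\tfrac12]^2$ uniformly in $j$, so on the support both decay factors are bounded below by a fixed constant and can simply be absorbed into $C$. A related over-complication occurs in your treatment of the $\min\{\cdot\}^M$ factor: no ``vanishing to infinite order at the boundary of the support'' is needed. Since $\widehat{a}_j$ vanishes \emph{identically} on an anisotropic rectangle $[-c,c]\times 2^{js(1-\alpha)}[-c,c]$ around the origin (inherited from $\supp U_j\subseteq\mathcal{I}_j$), at every point where $\widehat{a}_j(\xi)\neq 0$ one has $|\xi_1|+2^{-js(1-\alpha)}|\xi_2|\gtrsim 1$, so the $\min$ factor is bounded below by a constant and the power $M$ is free once $\|\partial^\rho\widehat{a}_j\|_\infty\lesssim 1$ is established. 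With these two corrections your argument collapses to the paper's proof.
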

\begin{proof}
On the Fourier side the functions \eqref{eq:molgen} have the form
\[
\widehat{a}_{j}=2^{js(1+\alpha)/2}\widehat{\psi}_{j,0,0} (A_{j} \cdot) =  W_{j,0}( A_{j} \cdot).
\]
Let $j\in\N_0$ be arbitrary. We have $\supp W_{j,0} \subseteq \tileext_{j,0}$ and
\[
\tileext_{j,0} \subseteq [-2^{js-1},2^{js-1}]\times[-2^{js\alpha-1},2^{js\alpha-1}]=\Xi_{j,0},
\]
which implies
\begin{align}\label{eq:supphataj}
\supp \widehat{a}_{j} \subseteq [-2^{-1},2^{-1}]\times[-2^{-1},2^{-1}]=\Xi_{0,0}.
\end{align}
Further, if $j>0$ the function $\widehat{\psi}_{j,0,0}$ vanishes on the square $[-2^{s(j-2)-5},2^{s(j-2)-5}]^2$.
Consequently, $\widehat{a}_{j}$ vanishes on $[-2^{-2s-5},2^{-2s-5}]\times \big( 2^{js(1-\alpha)} \cdot [-2^{-2s-5},2^{-2s-5}] \big)$.

The mixed derivatives $\partial_1^{\rho_1}\partial_2^{\rho_2}W_{j,0}$ obey uniformly in $j\in\N_0$
\begin{align}\label{eq:basic_fact}
\|\partial_1^{\rho_1}\partial_2^{\rho_2}W_{j,0}\|_{\infty} \lesssim 2^{-js\rho_1} 2^{-js\alpha \rho_2}.
\end{align}
With the chain rule we deduce
\[
\| \partial^{\rho}\widehat{a}_{j}\|_\infty  = \| \partial_1^{\rho_1}\partial_2^{\rho_2} W_{j,0}( A_{j} \cdot) \|_\infty =
2^{js \rho_1} 2^{js \alpha \rho_2} \| \big( \partial_1^{\rho_1}\partial_2^{\rho_2} W_{j,0} \big) ( A_{j} \cdot) \|_\infty \lesssim 1.
\]
Due to $\supp \partial^\rho \widehat{a}_{j} \subseteq \supp \widehat{a}_{j}$
this estimate together with the support properties of $\widehat{a}_{j}$ implies~\eqref{eq:molcond}.
\end{proof}

\noindent
With the machinery of $\alpha$-molecules at our disposal, it is possible to
use $\curvesys$ as an anchor system whose properties have consequences for other $\alpha$-scaled systems if they fulfill
certain consistency conditions. In particular, approximation properties of $\curvesys$
are shared by other $\alpha$-scaled systems such as e.g.\ $\alpha$-shearlets. A short discussion of this can be found in Section~\ref{sec:discussion}.
For more details on the topic of $\alpha$-molecules we refer to \cite{GKKS15,FS15}.


\section{Curvelet Approximation of General Cartoons}
\label{sec:cartoon}


In the two central sections of this article, Sections~\ref{sec:cartoon} and \ref{sec:straight}, we study the approximation performance of the $\alpha$-curvelet frame $\curvesys$ 
with respect to different cartoon classes.
We begin in this section with classes of general cartoons, used e.g.\ to model natural images.
In Section~\ref{sec:straight} we then turn our focus on cartoons featuring only straight edges.

\subsection{Cartoon-like Functions}

Many suitable and well-established models for natural images are based on the concept of so-called cartoon-like functions.
In a nutshell, such functions can be thought of as a patchwork of smooth regions separated from one another by
piecewise-smooth discontinuity curves.
Their structure imitates the fact that edges, a typical feature of natural images, are characterized by abrupt changes of color and brightness,
whereas changes in the regions in between occur smoothly.

Mathematically, models based on this idea can be concretised in different ways. A classic model~\cite{CD04} postulates a compact image domain
separated into two $C^2$ regions by a closed $C^2$ discontinuity curve.
This model was generalized in various directions, e.g., to take into account piecewise-smooth edges or to allow
more general $C^\beta$ regularity with $\beta\in[0,\infty)$.
Cartoon classes of this kind
have been studied extensively, especially in the range $\beta\in(1,2]$, e.g., in \cite{Kutyniok2012correct,Kei13,GKKScurve2014}.
Another variant are the closely related horizon classes, where the discontinuity is not a closed curve
in the image domain but a (possibly curved) horizontal or vertical line stretching across. Such classes have been investigated e.g.\ in \cite{D99,CWBB04c,PM05}.
Let us also mention that there exist extensions to multi-dimensions, see e.g.\ \cite{Kutyniok2012correct}.
In particular, the corresponding 3D models have been applied in the investigation of
video data.

Since we are concerned with image approximation,
our attention is restricted to the 2-dimensional setting.
The following definition is a template for different classes of bivariate cartoons, comprising many of those mentioned above.
It provides the flexibility to taylor the model to our particular needs in Sections~\ref{sec:cartoon} and~\ref{sec:straight}.

\begin{definition}\label{def:gencart}
Let $\beta\in[0,\infty)$ and $\nu>0$. Given a domain $\Omega\subseteq\R^2$ and a set $\mathcal{A}$ of admissible subsets of $\R^2$,
the class $\mathcal{E}^{\beta}(\Omega;\mathcal{A},\nu)$ consists of all functions $f\in L^2(\R^2)$ of the form
\begin{align*}
f=f_{1}+f_{2}\chi_{\mathcal{D}},
\end{align*}
where $\mathcal{D}\in\mathcal{A}$ and $f_1,\,f_2\in C^{\beta}(\R^2)$ with $\supp f_{1}, f_2 \subseteq \Omega$ and $\|f_1\|_{C^\beta}, \|f_2\|_{C^\beta}\le\nu$.
The class $\mathcal{E}_{\rm bin}^{\beta}(\Omega;\mathcal{A})$ shall be the collection of all `binary functions' $\chi_{\mathcal{D}}$,
where $\mathcal{D}\in\mathcal{A}$ and $\mathcal{D}\subseteq\Omega$.
\end{definition}

For particular choices of $\mathcal{A}$ many of the classes appearing in the literature can be retrieved,
including classes of horizon-type.
In this section we focus on
the class $\mathcal{E}^{\beta}(\Omega;\mathcal{A},\nu)$
with fixed image domain $\Omega=[-1,1]^2$ and certain $C^\beta$ domains as admissible sets $\mathcal{A}$.
Similar to \cite{Don01,CD04,Kutyniok2010,Kutyniok2012correct}, we restrict our investigation to star-shaped domains, since those
allow a simple parametrization of the boundary curve. The results obtained however also hold true for more general domains.

Let us introduce the collection of admissible sets
$\text{\sc Star}^\beta(\nu)$, $\nu>0$,  as all translates of sets $B\subseteq\R^2$, whose boundary $\partial B$
possesses a parametrization $b:\mathbb{T}\to\R^2$ of the form
\[
b(\varphi)=\rho(\varphi) \begin{pmatrix} \cos(\varphi) \\ \sin(\varphi) \end{pmatrix}, \quad \varphi\in\mathbb{T}=[0,2\pi]\,,
\]
where the radius function $\rho:\mathbb{T} \to \R$ is a $C^\beta$ function with
\begin{align}\label{eq:HolCart}
|\partial^{\lfloor\beta\rfloor}\rho(\varphi)- \partial^{\lfloor\beta\rfloor}\rho(\varphi^\prime)| \le \nu \rho_0 |\varphi - \varphi^\prime|^{\beta-\lfloor\beta\rfloor}
\quad\text{for all }\varphi,\varphi^\prime\in\mathbb{T},
\end{align}
where we set $\rho_0:=\min_{\varphi\in\mathbb{T}} \rho(\varphi) \ge \nu^{-1}$.
The condition~\eqref{eq:HolCart} implies that with $C=C(\beta)=(2\pi)^{\beta}\ge1$ we have
$\|\rho^{(k)}\|_{C^0(\mathbb{T})}\le C\rho_0\nu$ for every $k\in\{1,\ldots,\lfloor\beta\rfloor\}$ if $\beta\ge1$, and $|\rho(\varphi)- \rho(\varphi^\prime)| \le C\rho_0\nu$ for $\varphi,\varphi^\prime\in\mathbb{T}$.
In particular $\rho_0 \le \rho(\varphi) \le \rho_0 (1 + C\nu)$ for all $\varphi\in\mathbb{T}$.

Note, that the set $\starshaped^\beta(\nu)$ differs from the set of star-shaped domains used in \cite{Don01,CD04,Kutyniok2010,Kutyniok2012correct}.
The domains in $\starshaped^\beta(\nu)$ are not restricted to subsets of $[-1,1]^2$. In fact, every star-shaped $C^\beta$ domain with center $0$ and $\rho_0>0$ is contained in $\starshaped^\beta(\nu)$ for suitably large $\nu$.
Moreover, the collection $\starshaped^\beta(\nu)$ is scaling invariant in the sense that for $B\in\starshaped^\beta(\nu)$ and $\lambda>0$ also $\lambda B\in\starshaped^\beta(\nu)$, provided $\lambda\rho_0\ge\nu^{-1}$.
In addition, with $B\in\starshaped^\beta(\nu)$ also the complement $B^c=\R^2\backslash B$ is contained in $\starshaped^\beta(\nu)$.

Building upon Definition~\ref{def:gencart}
we now define the class of functions which we want to study in this section. We put $\Omega=[-1,1]^2$ and $\mathcal{A}=\starshaped^\beta(\nu)$.
Further, we assume $\beta\in[0,\infty)$ and $\nu>0$. For the resulting class $\mathcal{E}^\beta([-1,1]^2;\starshaped^\beta(\nu),\nu)$ we simplify the notation
\begin{align}\label{eqdef:cart}
\mathcal{E}^\beta([-1,1]^2;\nu):= \mathcal{E}^\beta([-1,1]^2;\starshaped^\beta(\nu),\nu).
\end{align}
The associated binary class shall be denoted by
$\mathcal{E}_{bin}^\beta([-1,1]^2;\nu):= \mathcal{E}_{bin}^\beta([-1,1]^2;\starshaped^\beta(\nu))$.

\subsection{Class Bounds}

Before we investigate the approximation performance of the $\alpha$-curvelet frame $\curvesys$ with respect to the class $\mathcal{E}^\beta([-1,1]^2;\nu)$,
let us take a broader stance and aim for best possible $N$-term approximation
in case we can freely choose the utilized dictionary.
Of course, a countable dense subset of $L^2(\R^2)$ would yield arbitrarily good $1$-term approximations.
This shows that, without further restrictions, the question of best possible approximation is not well-posed.

To cast a realistic scenario, when computing $N$-term approximations typically a constraint on the search depth is imposed.
More concretely, given a fixed ordering of the dictionary and some polynomial $\pi$, it is common to allow only $N$-term approximants being built from the first $\pi(N)$ elements of the dictionary.
Under this so-called polynomial depth search constraint, an upper bound on the maximal achievable approximation rate was first derived by Donoho~\cite[Thm.~1]{Don01} for
binary $C^\beta$ cartoons in the range $\beta\in(1,2]$. Later similar results were proved for more general cartoon classes~\cite{Kutyniok2012correct,Kei13,GKKScurve2014}.

Theorem~\ref{thm:benchmark} below establishes a bound for the class $\mathcal{E}^\beta([-1,1]^2;\nu)$ specified in \eqref{eqdef:cart}.

\begin{theorem}\label{thm:benchmark}
Let $\beta,\gamma\in[0,\infty)$ and $\nu>0$. Assume that there is a constant $C>0$ such that
\[
\sup_{f \in \mathcal{E}^{\beta}([-1,1]^2;\nu)} \|f-f_N\|_2^2 \le C N^{-\gamma} \quad\text{ for all }N\in\N,
\]
where $f_N$ denotes the best $N$-term approximation of $f$ obtained by polynomial depth search in a fixed dictionary. Then necessarily $\gamma\le\beta$.
\end{theorem}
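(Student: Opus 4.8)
**

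The plan is to establish the lower bound on approximation error via a counting/entropy argument: construct a large family of distinct functions in the class $\mathcal{E}^\beta([-1,1]^2;\nu)$ that are well-separated in $L^2$, and show that if an approximation rate $N^{-\gamma}$ with $\gamma>\beta$ were achievable, then too many of these functions could be encoded using too few bits, contradicting a dimension/volume count. This is the standard Donoho-style information-theoretic argument \cite{Don01}, adapted to the present class.

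First I would reduce to the binary subclass $\mathcal{E}_{bin}^\beta([-1,1]^2;\nu)$, since a lower bound there immediately transfers to the larger class (characteristic functions arise by taking $f_1=0$, $f_2=1$). Then, within $\starshaped^\beta(\nu)$, I would construct a combinatorially rich family of domains by perturbing a fixed reference radius function $\rho_0$ on $\mathbb{T}$. Concretely, I fix a smooth bump supported in an arc of length $\asymp 2^{-j}$, place $\asymp 2^{j}$ disjoint translates of it around the circle, and form perturbations $\rho_0 + \sum_i \varepsilon_i h_i$ with amplitudes $\varepsilon_i$ chosen just large enough to respect the $C^\beta$-H\"older constraint \eqref{eq:HolCart}: each bump of width $2^{-j}$ may carry amplitude $\asymp 2^{-j\beta}$. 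This yields $2^{\asymp 2^j}$ distinct admissible domains. The symmetric difference of two such domains differing in a single bump has area $\asymp 2^{-j}\cdot 2^{-j\beta}=2^{-j(1+\beta)}$, so by a Varshamov--Gilbert type argument one extracts a subfamily of cardinality $2^{\asymp 2^j}$ that is pairwise $L^2$-separated by $\delta_j^2\asymp 2^{-j(1+\beta)}$.

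The crux is the encoding step. Suppose a dictionary with polynomial depth search achieves $\sup_f\|f-f_N\|_2^2\le CN^{-\gamma}$. Choosing $N\asymp 2^{j}$, the error is $\lesssim 2^{-j\gamma}$, which for $\gamma>\beta$ is eventually smaller than a fixed fraction of the separation $\delta_j^2\asymp 2^{-j(1+\beta)}$ once one accounts correctly for the scaling—here I must track the competing powers carefully, as the naive comparison of $2^{-j\gamma}$ against $2^{-j(1+\beta)}$ must be calibrated against the number of functions being distinguished. The point is that each approximant $f_N$ is specified by $N$ indices drawn from the first $\pi(N)$ dictionary elements together with $N$ quantized coefficients; by the polynomial depth constraint each index costs $O(\log N)$ bits, so $f_N$ is encoded in $\asymp N\log N\asymp 2^j\cdot j$ bits. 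If distinct members of the separated family had approximants closer than half the separation radius, they would be distinguishable, forcing the code to address $2^{\asymp 2^j}$ functions, i.e.\ to use $\gtrsim 2^j$ bits. Comparing $2^j\cdot j$ available bits against the requirement, and letting $j\to\infty$, forces $\gamma\le\beta$.

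The main obstacle I anticipate is twofold: first, verifying that the perturbed radius functions genuinely satisfy the H\"older bound \eqref{eq:HolCart} with the prescribed constant $\nu\rho_0$ uniformly—this requires the bump amplitudes and the interaction between neighboring bumps to be controlled, which dictates the precise exponent $2^{-j\beta}$ and is where the value of $\beta$ (rather than $1$ or $2$) enters. Second, the bookkeeping in the entropy argument must correctly convert the $L^2$-separation and the polynomial-depth bit budget into the sharp threshold $\gamma\le\beta$; the delicate part is ensuring the quantization of coefficients does not secretly smuggle in extra resolving power, which is handled by a standard argument bounding the number of distinct quantized approximants. I would structure the proof so that these two estimates are isolated as the key lemmas, with the final contradiction following by the counting comparison as $j\to\infty$.
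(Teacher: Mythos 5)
Your overall strategy---a Donoho-style hypercube/counting argument---is exactly the philosophy behind the paper's proof, which formalizes it via the notion of a ``copy of $\ell_0^p$'' and invokes Theorem~\ref{thm:upperbound} as a black box. The concrete construction differs, though, and this is where the problems lie. The paper embeds the hypercubes into the \emph{smooth} subclass $C_0^\beta([-1,1]^2;\nu)$ (Lemma~\ref{lem:copylp}(i)), using the inclusion \eqref{eq:smoothemb}; no edge is perturbed at all, and the argument works for every $\nu>0$. Your reduction to the binary subclass is not legitimate as stated: taking $f_1=0$, $f_2=1$ violates the requirement $\supp f_2\subseteq[-1,1]^2$ and, for small $\nu$, the bound $\|f_2\|_{C^\beta}\le\nu$; moreover, by Lemma~\ref{lem:copylp}(ii) the binary class $\mathcal{E}^\beta_{\rm bin}([-1,1]^2;\nu)$ contains only the zero function when $\nu<1$, so a lower bound proved there cannot cover the full range $\nu>0$ demanded by the theorem. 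This is precisely why the paper routes the argument through the smooth subclass and relegates the binary statement to a remark valid only for $\nu\ge1$.

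Two further quantitative gaps would need repair even if you restricted to large $\nu$. First, after the Varshamov--Gilbert extraction your codewords differ in $\gtrsim 2^j$ bumps, so the pairwise squared separation is $\asymp 2^j\cdot 2^{-j(1+\beta)}=2^{-j\beta}$, not $2^{-j(1+\beta)}$ as you wrote; with the separation you state, the final comparison only yields $\gamma\le 1+\beta$, which is strictly weaker than the claim. Second, the calibration you flag but do not carry out is essential: with $N\asymp 2^j$ the bit budget $\asymp 2^j\log(2^j)=j\,2^j$ \emph{exceeds} the $\gtrsim 2^j$ bits required to address the family, so no contradiction arises. The correct bookkeeping is to choose $N$ just large enough that the guaranteed error $CN^{-\gamma}$ falls below a quarter of the separation $2^{-j\beta}$, i.e.\ $N\asymp 2^{j\beta/\gamma}$, and then to demand $N\log N\gtrsim 2^j$; letting $j\to\infty$ forces $\beta/\gamma\ge1$. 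Until the separation exponent and this calibration are fixed, and the construction is rehoused in a genuine subclass of $\mathcal{E}^\beta([-1,1]^2;\nu)$ for all $\nu>0$, the argument does not establish the stated bound.
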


\noindent
In principle, this is a known result (see e.g.~\cite{Kutyniok2012correct}). However, for reasons of completeness, we outline a short proof based on the technique used in \cite{Don01}.
It relies on Theorem~\ref{thm:upperbound} below and the fact that the class $\mathcal{E}^{\beta}([-1,1]^2;\nu)$
contains a copy of $\ell_0^p$ for $p=2/(\beta + 1)$. Let us recall this notion introduced in~\cite{Don01}.

\begin{definition}[{\cite[Def.~1\&2]{Don01}}]
A function class $\cF\subseteq L^2(\R^2)$ is said to contain an \emph{embedded orthogonal hypercube} of dimension $m$ and side-length $\delta$ if there exist
$f_0 \in \cF$ and orthogonal functions $\psi_{\ell}\in L^2(\R^2)$, $\ell\in\{1,...,m\}$, with $\|\psi_{\ell}\|_{2}=\delta$ such that the collection of hypercube vertices embeds, i.e.,
\begin{align*}
\Big\{ f_0 + \sum_{\ell=1}^{m} \epsilon_\ell\psi_{\ell} ~:~ \epsilon=(\epsilon_1,\ldots,\epsilon_m) \in \{0,1\}^m \Big\} \subseteq \cF \,.
\end{align*}
It is said to \emph{contain a copy of $\ell_{0}^{p}$, $p>0$,} if it contains a sequence of
embedded orthogonal hypercubes,
whose associated dimensions $m_k$ and side-lengths $\delta_k$ satisfy $\delta_k \rightarrow 0$ for $k\to\infty$ and with a constant $C>0$
\begin{align*}
C\delta_k^{-p} \le m_k \quad\text{for all }k \in\N.
\end{align*}
\end{definition}

\noindent
The significance of this notion is due to the following result, which was first obtained in~\cite[Thm.~2]{Don01}. The reformulated version below can be found in \cite[Thm.~2.2]{GKKScurve2014}.

\begin{theorem}[{\cite[Thm.~2.2]{GKKScurve2014}}]\label{thm:upperbound}
Suppose, that a class of functions $\cF\subseteq L^2(\R^2)$ is uniformly $L^2$-bounded and contains a copy of $\ell^p_0$.
Then, allowing only polynomial depth search in a given dictionary, there is a constant $C>0$ such that for
every $N_0\in\N$ there is a function $f\in\cF$ and an $N\in\N$, $N\ge N_0$ such that
\begin{equation*}
\|f - f_{N} \|^2_2 \ge C \big(N \log_2 (N)\big)^{-(2-p)/p},
\end{equation*}
where $f_N$ denotes the best $N$-term approximation under the polynomial depth search constraint.
\end{theorem}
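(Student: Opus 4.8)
The plan is to follow the counting argument of Donoho~\cite{Don01}, reducing the statement to a comparison between the number of well-approximated hypercube vertices that a polynomial-depth dictionary can produce and the exponential number of well-separated vertices supplied by the copy of $\ell_0^p$. First I would record that a best $N$-term approximation in a fixed dictionary $(g_i)_{i\in\N}$ is nothing but the orthogonal projection of $f$ onto the span of $N$ chosen atoms, and that under the polynomial depth search constraint these atoms are drawn from the first $\pi(N)$ dictionary elements for some fixed polynomial $\pi$. Consequently, every admissible approximant lies in one of at most $K:=\binom{\pi(N)}{N}\le\pi(N)^N$ subspaces $V$ of dimension at most $N$, and each approximant $P_Vf$ satisfies $\|P_Vf\|_2\le\|f\|_2\le R$, where $R$ is the uniform $L^2$-bound on $\cF$.

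Next I fix a scale $k$ and pass from the $k$-th embedded hypercube to a well-separated subset of its vertices. Writing $g_\epsilon=f_0+\sum_{\ell=1}^{m_k}\epsilon_\ell\psi_\ell$ and using the orthogonality together with $\|\psi_\ell\|_2=\delta_k$, one has $\|g_\epsilon-g_{\epsilon'}\|_2^2=\delta_k^2\,d_H(\epsilon,\epsilon')$. The Gilbert--Varshamov bound then yields a code $E_k\subseteq\{0,1\}^{m_k}$ with $|E_k|\ge 2^{m_k/8}$ and pairwise Hamming distance $\ge m_k/8$, so that $d:=\min_{\epsilon\ne\epsilon'}\|g_\epsilon-g_{\epsilon'}\|_2\ge\delta_k\sqrt{m_k/8}$. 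Using the copy-of-$\ell_0^p$ hypothesis $m_k\ge C\delta_k^{-p}$, i.e.\ $\delta_k\gtrsim m_k^{-1/p}$, and $1/2-1/p=-(2-p)/(2p)$, this gives $d\gtrsim m_k^{-(2-p)/(2p)}$.

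The heart of the argument is a covering estimate. For $\eta:=d/8$ and any fixed subspace $V$ of dimension at most $N$, I claim that at most $(3R/\eta)^N$ of the code vertices can be approximated from $V$ with error $<\eta$: two such vertices would have their projections in a common $\eta$-ball of a minimal cover of $V\cap B_R$ (which has cardinality $\le(3R/\eta)^N$), forcing $\|g_\epsilon-g_{\epsilon'}\|_2\le 4\eta=d/2<d$, a contradiction. Summing over the at most $K$ subspaces, the number of code vertices approximable to within $\eta$ by a best $N$-term scheme is at most $\pi(N)^N(3R/\eta)^N$. I would then choose $N=N_k:=\lceil m_k/(A\log_2 m_k)\rceil$ with $A$ a large constant depending only on $\deg\pi$ and $p$; since $\log_2\pi(N_k)\lesssim\log_2 m_k$ and $\log_2(3R/\eta)=\log_2(24R/d)\lesssim\log_2 m_k$, the exponent obeys $N_k\big(\log_2\pi(N_k)+\log_2(3R/\eta)\big)\le\tfrac{1}{16}m_k+o(m_k)<m_k/8$ for all large $m_k$, whence $\pi(N_k)^{N_k}(3R/\eta)^{N_k}<2^{m_k/8}=|E_k|$. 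Therefore at least one code vertex $g_{\epsilon^\ast}\in\cF$ cannot be approximated to within $\eta$, i.e.\ $\|g_{\epsilon^\ast}-(g_{\epsilon^\ast})_{N_k}\|_2^2\ge\eta^2=(d/8)^2\gtrsim m_k^{-(2-p)/p}$.

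Finally I translate this into the asserted rate. With $N_k\asymp m_k/\log_2 m_k$ we have $\log_2 N_k\asymp\log_2 m_k$ and hence $N_k\log_2 N_k\asymp m_k$, so $m_k^{-(2-p)/p}\asymp(N_k\log_2 N_k)^{-(2-p)/p}$, giving the claimed lower bound with an absolute constant $C>0$. Since $\delta_k\to0$ forces $m_k\ge C\delta_k^{-p}\to\infty$, and thus $N_k\to\infty$, for every $N_0$ one may select $k$ with $N_k\ge N_0$, which yields the theorem. I expect the main obstacle to be the bookkeeping in the counting step: establishing the per-subspace covering bound and, above all, calibrating $N_k\asymp m_k/\log_2 m_k$ so that the entropy term $N_k(\log_2\pi(N_k)+\log_2(1/d))$ stays strictly below $m_k/8$ while the surviving error $d^2\asymp m_k^{-(2-p)/p}$ converts \emph{exactly} into the target $(N\log_2 N)^{-(2-p)/p}$; the logarithmic slack between $N_k$ and $m_k$ is precisely what produces the $\log_2 N$ factor in the statement.
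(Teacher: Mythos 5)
Your proposal is correct. Note that the paper itself gives no proof of this statement --- it is quoted verbatim from \cite[Thm.~2.2]{GKKScurve2014}, which in turn reformulates \cite[Thm.~2]{Don01} --- so the relevant comparison is with Donoho's original argument, and your proof is essentially a faithful reconstruction of it: embedded hypercube, a well-separated subset of vertices, and a counting/entropy comparison against the number of approximants reachable under polynomial depth search. The only cosmetic deviation is that you control the approximants per subspace via covering numbers of $V\cap B_R$ rather than Donoho's quantization of coefficients; both devices yield the same $N\bigl(\log_2\pi(N)+\log_2(1/\eta)\bigr)$ entropy bound, and your calibration $N_k\asymp m_k/\log_2 m_k$ correctly produces the $\log_2 N$ factor in the rate.
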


\noindent
It remains to investigate for which $p>0$ the class $\mathcal{E}^{\beta}([-1,1]^2;\nu)$  contains a copy of $\ell_0^p$.
To this end, let us introduce the following subclass of smooth functions for $\beta\in[0,\infty)$ and $\nu>0$,
\begin{align}\label{eqdef:smoothclass}
C_0^\beta([-1,1]^2;\nu):=\big\{ f\in C_0^\beta([-1,1]^2) ~:~ \|f\|_{C^\beta}\le\nu  \big\}.
\end{align}
Note, that the choice $\Omega=[-1,1]^2$ and $\mathcal{A}=\{\emptyset\}$ in Definition~\ref{def:gencart} yields this class. As a consequence,
\begin{align}\label{eq:smoothemb}
C_0^\beta([-1,1]^2;\nu) \subset \mathcal{E}^{\beta}([-1,1]^2;\nu).
\end{align}
Lemma~\ref{lem:copylp} below is the 2D analogon of the statement of \cite[Thm.~3.2]{Kutyniok2012correct}.
It shows, in particular, that $C_0^\beta([-1,1]^2;\nu)$ contains a copy of $\ell_0^{2/(\beta+1)}$.
Hence, as a consequence of \eqref{eq:smoothemb}, also $\mathcal{E}^{\beta}([-1,1]^2;\nu)$ contains a copy of $\ell_0^{2/(\beta+1)}$. An application of Theorem~\ref{thm:upperbound} thus
yields Theorem~\ref{thm:benchmark}.

\begin{lemma}\label{lem:copylp}
Let $\nu>0$, $\beta\in[0,\infty)$, and $p=2/(\beta+1)$. Then the following holds true.
\begin{enumerate}
\item[(i)] The function class $C_0^\beta([-1,1]^2;\nu)$ contains a copy of $\ell_0^p$.
\item[(ii)] The class of binary cartoons $\mathcal{E}_{\rm bin}^{\beta}([-1,1]^2;\nu)$ contains a copy of $\ell_0^p$ if $\nu\ge1$, otherwise it only contains the zero-function.
\end{enumerate}
\end{lemma}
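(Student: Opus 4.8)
The plan is to prove both parts by explicitly constructing sequences of embedded orthogonal hypercubes whose dimensions $m_k$ and side-lengths $\delta_k$ satisfy $m_k\gtrsim\delta_k^{-p}$ with $p=2/(\beta+1)$. The key geometric idea is a standard multiscale bump construction: at a dyadic scale $2^{-k}$, tile the square $[-1,1]^2$ by a grid of roughly $m_k\asymp 2^{2k}$ congruent subsquares of side-length $\asymp 2^{-k}$, and in each subsquare place a single translated-and-rescaled copy of a fixed smooth bump. Adjusting the amplitude of each bump so that the $C^\beta$-norm constraint $\|f\|_{C^\beta}\le\nu$ is respected determines the side-length $\delta_k$ of the hypercube, and checking that this relationship yields exactly the exponent $p=2/(\beta+1)$ is the crux of the counting.

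For part (i), I would first fix a nonzero bump $g\in C_0^\beta([-1,1]^2)$ supported in a reference subsquare, and define the candidate hypercube generators $\psi_{k,i}(x):=c_k\,g(2^k x - t_i)$, where $t_i$ ranges over the $m_k\asymp2^{2k}$ grid translates and $c_k>0$ is an amplitude to be chosen. The disjoint supports guarantee orthogonality. The two competing requirements are: first, the $C^\beta$-norm of any sum $\sum_i\epsilon_i\psi_{k,i}$ must stay $\le\nu$ (disjoint supports mean this reduces to bounding a single summand, and the highest-order term scales like $c_k\,2^{k\lfloor\beta\rfloor}$ together with the Hölder seminorm contributing $c_k\,2^{k\beta}$), forcing $c_k\asymp 2^{-k\beta}$; and second, the $L^2$-norm $\delta_k=\|\psi_{k,i}\|_2=c_k\,2^{-k}\|g\|_2\asymp 2^{-k(\beta+1)}$. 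Combining $m_k\asymp 2^{2k}$ with $\delta_k\asymp 2^{-k(\beta+1)}$ gives $\delta_k^{-p}\asymp 2^{2k}\asymp m_k$ precisely when $p=2/(\beta+1)$, which is the desired embedding. I would take $f_0=0$ here, noting that the all-$\epsilon$ sums lie in $C_0^\beta([-1,1]^2;\nu)$ provided the constant in $c_k$ is chosen small relative to $\nu$.

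For part (ii) the functions must be of the form $\chi_{\mathcal{D}}$ with $\mathcal{D}\in\starshaped^\beta(\nu)$, so smooth bumps are unavailable; instead I would perturb the boundary of a fixed star-shaped reference domain. Starting from a disk of radius $\rho_0$ whose boundary has radius function $\rho_0$, I would add localized angular bumps to the radius function, $\rho(\varphi)=\rho_0+\sum_i\epsilon_i\,\eta_{k,i}(\varphi)$, where each $\eta_{k,i}$ is a smooth bump of angular width $\asymp 2^{-k}$ and height $h_k$, with the $m_k\asymp 2^{k}$ bumps having disjoint angular support. The Hölder condition \eqref{eq:HolCart} of $\starshaped^\beta(\nu)$ forces $h_k\asymp\nu\rho_0\,2^{-k\beta}$ (again the top-order Hölder seminorm is the binding constraint, and this is exactly where $\nu\ge1$ must enter to make the construction admissible and nontrivial), while the resulting symmetric-difference area, hence $\|\chi_{\mathcal{D}}-\chi_{\mathcal{D}'}\|_2^2$, is of order $h_k\cdot 2^{-k}$ per active bump, giving $\delta_k\asymp(h_k 2^{-k})^{1/2}\asymp 2^{-k(\beta+1)/2}$. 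Then $m_k\asymp 2^k\asymp\delta_k^{-p}$ with $p=2/(\beta+1)$, as required; when $\nu<1$ the constraint $\rho_0\ge\nu^{-1}$ together with the admissibility bound leaves no room to perturb, so only the zero function survives. The main obstacle I anticipate is the bookkeeping in part (ii): verifying that the perturbed radius function stays within $\starshaped^\beta(\nu)$ (checking both the Hölder bound on $\partial^{\lfloor\beta\rfloor}\rho$ and the lower bound $\rho_0\ge\nu^{-1}$ simultaneously), and correctly converting the boundary perturbation into a sharp $L^2$ estimate on the indicator functions, since the area of the symmetric difference must be computed carefully in polar coordinates rather than estimated crudely.
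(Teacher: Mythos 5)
Your construction is correct and is essentially the argument the paper relies on: the paper itself gives no details, deferring to a 2D adaptation of the hypercube-embedding proof in the cited reference, and your disjointly supported rescaled bumps for part (i) and angular boundary perturbations of a disk for part (ii) reproduce exactly that scheme with the right exponent bookkeeping ($\delta_k\asymp 2^{-k(\beta+1)}$ with $m_k\asymp 2^{2k}$, respectively $\delta_k\asymp 2^{-k(\beta+1)/2}$ with $m_k\asymp 2^{k}$). The only point worth tightening is that for a sum of disjointly supported $C^\beta$ bumps the H\"older seminorm of $\partial^{\lfloor\beta\rfloor}$ is bounded by \emph{twice} the maximum over the individual summands (split $|\partial^m f(x)-\partial^m f(y)|$ using that each bump and its derivatives vanish outside its own support), so the amplitude $c_k$, respectively the bump height $h_k$, must absorb this harmless constant.
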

\begin{proof}
The proof is a $2$D-adaption of the proof of \cite[Thm.~3.2]{Kutyniok2012correct}.
\end{proof}

\noindent
Summarizing, this establishes $N^{-\beta}$ as an upper bound for the possible order of approximation for general $C^\beta$ cartoons.
This rate is the benchmark, against which the performance of $\curvesys$ has to be measured.
We end this paragraph with the following observation.

\begin{remark}\label{rem:benchmark}
According to Lemma~\ref{lem:copylp}(i), the bound of Theorem~\ref{thm:benchmark} actually holds true for the class $C_0^\beta([-1,1]^2;\nu)$.
This is a stronger statement due to the inclusion \eqref{eq:smoothemb}. Further,
due to Lemma~\ref{lem:copylp}(ii), a statement analogous to Theorem~\ref{thm:benchmark} holds true for the binary class $\mathcal{E}_{\rm bin}^{\beta}([-1,1]^2;\nu)$ if $\nu\ge1$.
\end{remark}

\subsection{Approximation Guarantees}
\label{ssec:guarantees}

According to Theorem~\ref{thm:benchmark} and Remark~\ref{rem:benchmark} the order of the $N$-term approximation rate achievable for the classes $\mathcal{E}_{\rm bin}^{\beta}([-1,1]^2;\nu)$, $\nu\ge1$, and $\mathcal{E}^{\beta}([-1,1]^2;\nu)$, $\nu>0$, cannot exceed
$N^{-\beta}$. This bound
is valid for arbitrary dictionaries and independent of the
approximation scheme employed, as long as it respects a polynomial depth search condition. Even adaptive approximation schemes cannot perform better.

Schemes, where these rates are provably achieved, at least up to order, have been developed for binary cartoons based on
wedgelets~\cite{D99} and surflets~\cite{CWBB04b}, for general cartoons utilizing bandelets~\cite{PennecM,PM05}.
These results show that the optimality benchmark $N^{-\beta}$ can indeed be realized
in practice, at least up to order. However, the utilized schemes are mostly adaptive, only for certain cartoon classes nonadaptive methods with quasi-optimal
performance are known.

A breakthrough concerning the nonadaptive approximation of $C^2$ cartoons with curved edges was the introduction of curvelets by Cand{\`e}s and Donoho~\cite{CD2000,CD04}.
By a simple thresholding scheme, curvelet frames achieve an approximation rate matching the class bound $N^{-2}$ up to a log-factor.
The reason for this performance
is due to the parabolic scaling employed.
The following argument shall heuristically explain, why this type of scaling is ideal for the representation
of $C^2$ edges.

In local Cartesian coordinates, a $C^2$ curve can be represented as the graph $(E(x),x)$ of a function $E\in C^2(\R)$ and one can choose
a coordinate system such that $E^\prime(0)=E(0)=0$. A Taylor expansion then yields approximately $E(x)\approx \frac{1}{2} E^{\prime\prime}(0)x^2$, which matches
the essential support $width\approx length^2$ of parabolically scaled functions. Hence, those can provide optimal resolution of
the curve across all scales.

A similar heuristic applies to $C^\beta$ curves if $\beta\in(1,2]$. A Taylor expansion of $E\in C^\beta(\R)$ yields $|E(x)|\lesssim x^\beta$.
The curve is thus contained in a rectangle of size $width\approx length^{1/\beta}$ which suggests $\alpha$-scaling with $\alpha=\beta^{-1}$ for optimal approximation.
And indeed, the classic approximation result by Cand{\`e}s and Donoho could be extended in~\cite[Thm.~4.1]{GKKScurve2014} to the range $\beta\in(1,2]$.

This generalized result is stated below, slightly modified to fit into the setting of this article. The class $\mathcal{E}^{\beta}([-1,1]^2;\nu)$ used here is not fully identical to
the class in \cite{GKKScurve2014}. Moreover, only curvelet frames of the type $\curvesys$ with $s=1$ were considered there. It is not hard to verify though that
the proof carries over to general $s>0$ and that the statement is also
valid in our setting.

\begin{theorem}[{\cite[Thm.~4.1]{GKKScurve2014}}]\label{thm:oldcurveappr}
   Let $\beta\in(1,2]$, $\nu>0$. For the choice $\alpha=\beta^{-1}$, $s>0$ arbitrary, the frame of $\alpha$-curvelets $\curvesys$
   provides almost optimal sparse approximations for the class $\mathcal{E}^{\beta}([-1,1]^2;\nu)$. More precisely,
   there exists a constant $C>0$ such that for every $f\in \mathcal{E}^{\beta}([-1,1]^2;\nu)$ and $N\in\N$
\begin{align*}
 \|f-f_N\|_2^2\le CN^{-\beta} \log_2(1+N)^{1+\beta} \,,
\end{align*}
where $f_N$ denotes the $N$-term approximation of $f$ obtained by choosing the $N$ largest coefficients.
\end{theorem}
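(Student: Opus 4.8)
The plan is to establish the sparsity of the $\alpha$-curvelet coefficients by analyzing how well the system resolves the two fundamental obstructions present in a cartoon $f=f_1+f_2\chi_{\mathcal{D}}$: the smooth parts $f_1,f_2$ away from the edge, and the singularity along $\partial\mathcal{D}$. Since the claimed rate $N^{-\beta}\log_2(1+N)^{1+\beta}$ follows from a bound on the weak-$\ell^p$ norm of the coefficient sequence $(\langle f,\psi_\mu\rangle)_{\mu\in\curveind}$ for $p=2/(\beta+1)$, I would first reduce the problem to showing that the $n$-th largest coefficient magnitude decays like $n^{-(1+\beta)/2}$ up to logarithmic factors; standard weak-$\ell^p$ theory then converts this into the stated $N$-term error bound. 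The choice $\alpha=\beta^{-1}$ is exactly what makes the anisotropic scaling match the local geometry of a $C^\beta$ edge, as explained in the heuristic preceding the theorem.

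First I would dispose of the smooth contributions. Away from the edge curve, $f$ coincides with a $C^\beta$ function, and the rapid frequency decay of the generators $a_j$ established in Lemma~\ref{thm:curvmol} (the molecule estimate \eqref{eq:molcond} with arbitrarily high order $M,N_1,N_2$) forces the coefficients $\langle f,\psi_{j,\ell,k}\rangle$ to decay superpolynomially in scale for curvelets whose spatial support does not meet $\partial\mathcal{D}$. More precisely, using the representation \eqref{eq:smolrepr} together with the vanishing-moment-type and localization properties encoded in the molecule conditions, one integrates by parts against the smooth $f_i$ to gain arbitrarily many powers of $2^{-js}$. Thus the smooth regions contribute a coefficient sequence lying in $\ell^p$ for every $p>0$, and only the curvelets genuinely interacting with the edge need careful treatment.

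The core of the argument, and the main obstacle, is the analysis of curvelets straddling the discontinuity curve $\partial\mathcal{D}$. Here I would localize: at scale $j$, parametrize a neighborhood of the edge so that it is the graph of a $C^\beta$ function $E$ with $E(0)=E'(0)=0$, whence $|E(x)|\lesssim |x|^\beta$ by Taylor expansion, matching the $2^{js}\times 2^{js\alpha}$ tile dimensions precisely when $\alpha=\beta^{-1}$. One must count how many curvelets at scale $j$ have support meeting the edge (on the order of the edge length divided by the tile width $2^{-js}$, hence $\asymp 2^{js}$ of them across the $\asymp 2^{js(1-\alpha)}$ orientations), and bound the size of each such coefficient using the decay of $\widehat{\psi}_{j,0,0}$ and the jump of $f_2\chi_{\mathcal{D}}$ across the edge. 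The delicate point is separating the curvelets well-aligned with the edge tangent from the misaligned ones, since the former are few but large while the latter are many but small; controlling both families simultaneously and summing the resulting contributions over all scales $j$ is precisely what yields the weak-$\ell^p$ bound with $p=2/(\beta+1)$.

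Since this theorem is quoted from \cite[Thm.~4.1]{GKKScurve2014}, I would in practice cite that proof and only verify the two adaptations noted in the excerpt: that the argument is insensitive to the scale parameter $s>0$ (the estimates in Lemma~\ref{thm:curvmol} and the tile counts \eqref{eq:Lj} are stated for general $s$, so every $2^{j}$ in the original becomes $2^{js}$ without affecting the geometric balance), and that the slightly different class $\mathcal{E}^{\beta}([-1,1]^2;\nu)$ of Definition~\ref{def:gencart} with admissible sets $\starshaped^\beta(\nu)$ still admits the same local parametrization of the boundary by a $C^\beta$ radius function satisfying \eqref{eq:HolCart}. Both adaptations are routine, so I expect the bulk of the effort to remain the edge-coefficient estimate described above; the transference to general $s$ and the reformulated cartoon class change no essential constant.
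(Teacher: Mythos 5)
Your proposal matches the paper's treatment: the paper does not reprove this result but cites \cite[Thm.~4.1]{GKKScurve2014} and merely remarks that the argument carries over to general $s>0$ and to the slightly modified cartoon class, which is exactly the reduction you make in your final paragraph. Your sketch of the underlying edge-fragment/weak-$\ell^p$ argument is also a faithful summary of the cited proof's strategy, so there is nothing to add.
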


\noindent
This theorem naturally raises the question of extendibility beyond the range $\beta\in(1,2]$, a question pursued in the following subsection.
In particular, we investigate if the choice $\alpha=\beta^{-1}$ is still optimal for $\beta>2$.
Obviously, the heuristic consideration from above is not valid any more in this regime. And indeed,
we will see that for $\beta>2$ the optimal choice is not $\alpha=\beta^{-1}$. In fact, it is still $\alpha=\frac{1}{2}$
and choosing $\alpha<\frac{1}{2}$ deteriorates the approximation performance.

\subsection{Approximation Bounds}

The main results of this subsection, Theorems~\ref{thm:bound1} and \ref{thm:bound2}, establish bounds on the achievable $N$-term approximation rate for the class $\cart$, $\beta\in[0,\infty)$, when using the $\alpha$-curvelet frame $\curvesys$ for approximation. Unlike the bounds in Theorem~\ref{thm:benchmark} associated with the signal class the bounds
derived here are tied to the particular approximation system $\curvesys$.
However, via the framework of $\alpha$-molecules they are also effective for other $\alpha$-scaled systems,
such as e.g.\ $\alpha$-shearlets as discussed in Section~\ref{sec:discussion}.

In order to establish these bounds we study the approximability of certain example cartoons.
As a suitable object,
we choose the characteristic function of the ball $B_2(0,\frac{1}{2})\subset\R^2$ of radius $\frac{1}{2}$, for which we subsequently use the symbol
\begin{align}\label{def:Theta}
\Theta(x):=\chi_{B_2(0,\frac{1}{2})}(x_1,x_2)\,, \quad x\in\R^2.
\end{align}
This function embodies an exceptionally regular cartoon with a closed curved $C^\infty$-singularity.
It is radial symmetric and
binary, contained in $\mathcal{E}^\beta_{\rm bin}([-1,1]^2,\nu)$ for arbitrary $\beta\in[0,\infty)$ and $\nu\ge2$.
Furthermore, for every $\beta\in[0,\infty)$ and $\nu\ge2$ there is
$\gamma>0$ such that $\gamma\Theta\in\cart$, wherefore the approximability of $\Theta$ has implications for the approximability of these cartoon classes.

The Fourier transform of $\Theta$ is explicitly known. Let $\bessel$ denote the Bessel function of order 1, then according to~\eqref{eq:Fourier2ball}
\begin{align}\label{eq:ballfou}
\widehat{\Theta}(\xi) = \frac{\bessel(\pi|\xi|)}{2|\xi|} \,, \quad\xi\in\R^2.
\end{align}
Some properties of $\bessel$ and Bessel functions in general are collected in the appendix.

At the center of the following investigation is the lemma below, which estimates the
energy of $\widehat{\Theta}$ contained in the wedges $\mathcal{W}_{J}$, $J\in\mathbb{J}$.
Let $\{W_J\}_{J\in\mathbb{J}}$ be a family of functions of the kind~\eqref{eq:suppfunctions} with property \eqref{eq:CalderonW} for $0<A\le B<\infty$. Further,
let
\begin{align*}
W^-_J:=\chi_{\tileint_J} \quad\text{and}\quad  W^+_J:=\chi_{\tileext_J}
\end{align*}
be the characteristic functions of the sets $\tileint_J$ and $\tileext_J$ defined in \eqref{eq:wedgePJ}.

\begin{lemma}\label{lem:thetaest}
There are constants $0<C_1\le C_2<\infty$, independent of scale $j\ge j_0$, where $j_0\in\N_0$ is a suitable base scale, such that for all $J\in\mathbb{J}$ with $|J|\ge j_0$, where $|J|=j$ for $J=(j,\ell)\in\mathbb{J}$,
\[
AC_1 2^{-js(2-\alpha)} \le A\| \widehat{\Theta} W^{-}_J \|_2^2 \le \| \widehat{\Theta} W_J \|_2^2 \le B \| \widehat{\Theta} W^{+}_J \|_2^2 \le B C_2 2^{-js(2-\alpha)}.
\]
\end{lemma}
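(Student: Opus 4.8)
The plan is to reduce everything to an explicit integral of $|\widehat{\Theta}|^2$ over the supporting rectangle-like regions $\tileext_J$ and $\tileint_J$, exploiting the radial symmetry of $\widehat{\Theta}$ and the known asymptotics of the Bessel function. The two middle inequalities $A\|\widehat{\Theta}W_J^-\|_2^2\le\|\widehat{\Theta}W_J\|_2^2\le B\|\widehat{\Theta}W_J^+\|_2^2$ are immediate from the support property \eqref{eq:suppprop}: on $\tileint_J$ one has $W_J\ge\sqrt{A}$, so $|W_J|^2\ge A\,|W_J^-|^2$ pointwise, while $\supp W_J\subseteq\tileext_J$ forces $|W_J|^2\le B\,|W_J^+|^2$ after bounding $W_J\le\sqrt B$. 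So the real content is the two outer estimates, namely that both $\|\widehat{\Theta}W_J^+\|_2^2$ and $\|\widehat{\Theta}W_J^-\|_2^2$ are comparable to $2^{-js(2-\alpha)}$ up to uniform constants.

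First I would set up polar coordinates $\xi=(r\cos\theta,r\sin\theta)$. By \eqref{eq:ballfou} the integrand is $|\widehat{\Theta}(\xi)|^2 = \bessel(\pi r)^2/(4r^2)$, which is purely radial. The region $\tileext_J$ (resp. $\tileint_J$) is, after undoing the rotation $R_J$, a polar ``box'' with radial extent the interval $\mathcal{I}_j$ (resp. $\mathcal{I}_j^-$) of length $\asymp 2^{js}$ and angular extent $\varphi_j^+=\tfrac32\varphi_j$ (resp. $\varphi_j^-=\tfrac12\varphi_j$), both $\asymp\varphi_j\asymp 2^{-js(1-\alpha)}$ by \eqref{eqdef:fundangle}. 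Since the integrand depends only on $r$, the angular integration simply contributes the factor $\varphi_j^{\pm}\asymp 2^{-js(1-\alpha)}$ in front, and I am left with estimating the one-dimensional radial integral
\begin{align*}
I_j:=\int_{\mathcal{I}_j}\frac{\bessel(\pi r)^2}{4 r^2}\,r\,dr = \frac14\int_{\mathcal{I}_j}\frac{\bessel(\pi r)^2}{r}\,dr,
\end{align*}
and the analogous integral over $\mathcal{I}_j^-$. Combining, $\|\widehat{\Theta}W_J^\pm\|_2^2\asymp 2^{-js(1-\alpha)} I_j^{\pm}$, so the claim $\asymp 2^{-js(2-\alpha)}$ is equivalent to showing $I_j^{\pm}\asymp 2^{-js}$.

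The decisive input is the large-argument asymptotic of the Bessel function, $\bessel(\pi r)^2\asymp r^{-1}$ on average for large $r$ (collected, I assume, in the appendix referenced by \eqref{eq:Fourier2ball}). Plugging this in gives $\bessel(\pi r)^2/r\asymp r^{-2}$, whence $I_j^\pm\asymp\int_{2^{js}}^{c\,2^{js}} r^{-2}\,dr\asymp 2^{-js}$, which is exactly what is needed once $j\ge j_0$ is large enough that the asymptotic regime has been entered. The main obstacle, and the step requiring care, is the \emph{lower} bound: because $\bessel$ oscillates and has zeros, $\bessel(\pi r)^2/r$ vanishes periodically, so one cannot bound it below pointwise. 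The standard remedy is to use that $\mathcal{I}_j$ (and even $\mathcal{I}_j^-$, whose length $\asymp 2^{js}$ also grows) spans many full oscillation periods of $\bessel(\pi r)$ once $j$ is large, so the integral of $\bessel(\pi r)^2$ over such an interval averages out to a definite fraction of the interval length; this is precisely where a uniform base scale $j_0$ must be chosen so that $\mathcal{I}_j^-$ contains at least one or two complete periods. I would therefore extract from the appendix a lower bound of the form $\int_a^b \bessel(\pi r)^2\,dr\gtrsim (b-a)$ valid whenever $b-a$ exceeds a fixed multiple of the period and $a$ is large, apply it on $\mathcal{I}_j^-$ after factoring out $r^{-2}\asymp 2^{-2js}$, and obtain the matching lower constant $C_1$. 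The upper bound, by contrast, is routine since $\bessel$ is uniformly bounded in the relevant regime, giving $C_2$ directly.
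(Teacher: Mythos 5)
Your proposal is correct and follows essentially the same route as the paper: the two middle inequalities come directly from \eqref{eq:suppprop}, polar coordinates reduce everything to the radial integral $\int_{\pi\mathcal{I}_j^{\pm}}\bessel^2(r)r^{-1}\,dr$ times the angular width $\asymp 2^{-js(1-\alpha)}$, and the lower bound is obtained by averaging the Bessel oscillation over the many periods contained in $\mathcal{I}_j^-$ for $j\ge j_0$ (the paper implements this averaging concretely via a square-wave minorant of $2\cos^2(r-\tfrac{3\pi}{4})$ and controls the remainder $R_1$ separately using \eqref{eqapp:est}). One small slip: the displayed inequality $\int_a^b\bessel(\pi r)^2\,dr\gtrsim(b-a)$ is false as written, since $\bessel(\pi r)^2\lesssim r^{-1}$; what you need, and clearly intend given that you factor out $r^{-2}$, is $\int_a^b r\,\bessel(\pi r)^2\,dr\gtrsim(b-a)$, and this averaged lower bound is not stated in the appendix but has to be derived from \eqref{app:import1}, exactly as the paper does in the body of the proof.
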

\begin{proof}
Let us recall the Bessel function $\bessel$ of order 1 and its
asymptotic behavior.
According to \eqref{eqapp:est} there is a constant $C>0$ and a function $R_1$ on $[1,\infty)$ satisfying $|R_1(r)| \le C r^{-3/2}$  such that
\begin{align*}
\bessel(r)=\sqrt{\frac{2}{\pi r}}\cos(r-\frac{3\pi}{4}) + R_1(r) \quad\text{ for }r\ge1.
\end{align*}
This allows to separate terms of higher order from $\bessel^2$. We decompose
\begin{align*}
\bessel^2(r) = \Big[ \frac{2}{\pi} \cos^2(r-\frac{3\pi}{4})r^{-1} \Big] + \Big[ \sqrt{\frac{8}{\pi}} \cos(r-\frac{3\pi}{4}) r^{-1/2}R_1(r)
+ R_1(r)^2  \Big]=: T_1(r) + T_2(r).
\end{align*}
For the following argumentation we need the square wave function $\sqcap:\R\to \{0,1\}$ defined by
\[
\sqcap(r) := \begin{cases} 1 \quad&,\, r\in\bigcup_{k\in\Z} k\pi + [-\frac{\pi}{2},0], \\
0 &,\, r\in\bigcup_{k\in\Z} k\pi + (0,\frac{\pi}{2}).  \end{cases}
\]
For all $r\in\R$ it has the property $2\cos^2(r-3\pi/4) \ge \sqcap(r)$. Therefore we can deduce for $1\le a\le b$
\begin{align*}
\int_a^{b} T_1(r) r^{-1} \,dr
=\frac{1}{\pi} \int_a^{b}  2\cos^2(r-\frac{3\pi}{4})r^{-2} \,dr
\ge\frac{1}{\pi} \int_a^{b}  \sqcap(r)r^{-2} \,dr
\ge\frac{1}{2} \sum_{k\in I_{a,b}} (k\pi)^{-2}
\end{align*}
with $I_{a,b}:=\{ k\in\Z ~:~ k\pi\in [a+\pi, b] \}$. To proceed, we use the relation
\[
\sum_{k=m}^n (k\pi)^{-2} \ge \frac{1}{\pi} \int_{m\pi}^{(n+1)\pi} k^{-2} \,dk,
\]
which is valid for all $m,n\in\N$ and $m\le n$. We obtain
\begin{align*}
\frac{1}{2} \sum_{k\in I_{a,b}} (k\pi)^{-2} \ge \frac{1}{2\pi} \int_{a+2\pi}^{b} k^{-2} \,dk = \frac{1}{2\pi} \big(\int_{a}^{b} k^{-2} \,dk  -  \int_{a}^{a+2\pi} k^{-2} \,dk \Big)
\ge \frac{1}{2\pi}(a^{-1}- b^{-1})  - a^{-2}.
\end{align*}
Next, we see that with a constant $C>0$ independent of $1\le a\le b$
\begin{align*}
\int_a^{b} |T_2(r)| r^{-1} \,dr \le C \int_a^{b} r^{-3} \,dr \le  C \int_a^{\infty} r^{-3} \,dr \le C a^{-2}.
\end{align*}
Altogether, we conclude that
\begin{align*}
\int_a^{b} \frac{\bessel^2(r)}{r} \,dr \ge \frac{1}{2\pi}(1 - ab^{-1})a^{-1} - (1+ C) a^{-2}.
\end{align*}
If $c=ab^{-1}\le 1$ is fixed, we can deduce for $a\ge 4\pi \frac{1+C}{1-c}$ the estimate
\begin{align}\label{eq:besauxrel}
\int_a^{a/c} \frac{\bessel^2(r)}{r} \,dr \ge \frac{1}{4\pi}(1-c)a^{-1}.
\end{align}
After this preparation, we can now turn to the actual proof of the assertion. The relation
\[
A\| \widehat{\Theta} W^{-}_J \|_2^2 \le \| \widehat{\Theta} W_J \|_2^2 \le B \| \widehat{\Theta} W^{+}_J \|_2^2
\]
is a direct consequence of \eqref{eq:suppprop} and $\|W_J\|_\infty \le \sqrt{B}$.
Let $\mathcal{I}_j$ be the intervals defined in \eqref{eqdef:dyintervals}. Further, recall the intervals $\mathcal{I}^-_j \subset \mathcal{I}_j$ defined in \eqref{eq:IIIIII}.
Using \eqref{eq:ballfou} and the definition \eqref{eq:wedgePJ} of $\mathcal{W}_J^-$ we calculate
\begin{align*}
\| \widehat{\Theta} W^{-}_J \|_2^2
= \int_{\tileint_J} \frac{\bessel^2(\pi|\xi|)}{4|\xi|^2} \,d\xi
=  \int_{\mathcal{I}^-_j} \int_{\mathcal{A}^-_J}  \frac{\bessel^2(\pi r)}{4r} \,d\varphi dr
\asymp  2^{-js(1-\alpha)} \int_{\pi\mathcal{I}^-_j} \frac{\bessel^2(r)}{r} \,dr.
\end{align*}
The intervals $\mathcal{I}^-_j$ scale like $\sim 2^{js}$. Hence, if $j\in\N$ is chosen large enough by \eqref{eq:besauxrel}
\begin{align*}
\| \widehat{\Theta} W^{-}_J \|_2^2 \asymp  2^{-js(1-\alpha)} \int_{\pi\mathcal{I}^-_j} \bessel^2(r) r^{-1} \,dr \gtrsim  2^{-js(1-\alpha)}  2^{-js}  = 2^{-js(2-\alpha)}.
\end{align*}
The estimate from above is much easier to establish. If $j\in\N$ such that $\pi\mathcal{I}_j\subset[1,\infty)$ we have
\begin{gather*}
\| \widehat{\Theta} W^+_{J} \|_2^2
= \int_{\tileext_J} \frac{\bessel^2(\pi|\xi|)}{4|\xi|^2} \,d\xi
=  \int_{\mathcal{I}_j} \int_{\mathcal{A}_J} \frac{\bessel^2(\pi r)}{4r} \,d\varphi dr
\asymp  2^{-js(1-\alpha)} \int_{\pi \mathcal{I}_j} \frac{\bessel^2(r)}{r} \,dr \\
\lesssim  2^{-js(1-\alpha)} \int_{\mathcal{I}_j} r^{-2}\,dr \lesssim  2^{-js(2-\alpha)}.
\tag*{\qedhere}
\end{gather*}
\end{proof}

\noindent
Based on Lemma~\ref{lem:thetaest} we can prove the first main result of this article.

\begin{theorem}\label{thm:bound1}
Let $\curvesys$ be the $\alpha$-curvelet frame constructed in Section~\ref{sec:curvelets} for fixed $\alpha\in(-\infty,1)$ and $s>0$.
There exists a constant $C>0$ such that for any given $N\in\N$ every $N$-term approximation $f_N$ of $\Theta$ with respect to
 $\curvesys$ (not even subject to a polynomial depth search constraint) satisfies
\begin{align*}
\|\Theta-f_N\|_2^2 \ge C N^{-\frac{1}{1-\alpha}}.
\end{align*}
\end{theorem}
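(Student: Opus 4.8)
The plan is to pass to the frequency side and exploit the geometry of the $\alpha$-curvelet tiling. By Plancherel, $\|\Theta-f_N\|_2^2=\|\widehat\Theta-\widehat{f_N}\|_2^2$, and writing $f_N=\sum_{(J',k)\in\Lambda}c_{J',k}\psi_{J',k}$ with $|\Lambda|=N$, each Fourier summand $W_{J'}u_{J',k}$ is supported in $\tileext_{J'}$. First I would record two purely geometric facts about the cores $\tileint_J$ of \eqref{eq:wedgePJ}: (i) the family $\{\tileint_J\}_{J\in\mathbb{J}}$ is pairwise disjoint, since the angular cores $\mathcal{A}^-_{j,\ell}$ have width $\tfrac12\varphi_j$ and spacing $\varphi_j$ (hence are disjoint at each fixed scale), while the radial cores $\mathcal{I}^-_j$ of \eqref{eq:IIIIII} are disjoint across scales because $\tau_1<\tau_2$; and (ii) there is a constant $K$, \emph{independent of scale}, such that each extended wedge $\tileext_{J'}$ meets at most $K$ of the cores $\tileint_J$ at any single scale $|J|=j$. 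Fact (ii) follows because radial overlap forces $|\,j-|J'|\,|\le1$ and, for such neighbouring scales, $\varphi_j\asymp\varphi_{|J'|}$, so an angular interval of width $\varphi^+_{|J'|}=\tfrac32\varphi_{|J'|}$ can meet only boundedly many cores of angular spacing $\varphi_j$.

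Next I would call a wedge $J$ \emph{clean} (relative to $\Lambda$) if $\tileext_{J'}\cap\tileint_J=\emptyset$ for every $J'$ occurring in $\Lambda$. For a clean $J$ every Fourier summand of $\widehat{f_N}$ vanishes on $\tileint_J$, hence $\widehat{f_N}\equiv0$ there; combined with the disjointness in (i) and Plancherel this yields
\begin{align*}
\|\Theta-f_N\|_2^2 \;\ge\; \sum_{J\ \mathrm{clean}} \int_{\tileint_J}|\widehat\Theta|^2\,d\xi \;=\; \sum_{J\ \mathrm{clean}} \|\widehat\Theta\,W^-_J\|_2^2 \;\ge\; C_1 \sum_{J\ \mathrm{clean}} 2^{-|J|s(2-\alpha)},
\end{align*}
the last step by the lower bound of Lemma~\ref{lem:thetaest} (with $A=1$, since $\curvesys=\curvesys(1,1)$). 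It therefore remains to produce many clean wedges at a single, well-chosen scale.

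Then I would carry out the counting. By (ii) each of the $\le N$ wedges touched by $\Lambda$ blocks at most $K$ cores at a fixed scale, so the number of clean wedges at scale $j$ is at least $L_j-KN$. I would fix $\bar j\ge j_0$ minimal with $L_{\bar j}\ge 2KN$; then at least $\tfrac12 L_{\bar j}$ wedges at scale $\bar j$ are clean, and by minimality together with the bounded ratio $L_{\bar j}/L_{\bar j-1}\asymp2^{s(1-\alpha)}$ one gets $L_{\bar j}\asymp N$. Restricting the displayed sum to scale $\bar j$ gives
\begin{align*}
\|\Theta-f_N\|_2^2 \;\gtrsim\; L_{\bar j}\,2^{-\bar j s(2-\alpha)} \;\asymp\; 2^{\bar j s(1-\alpha)}\,2^{-\bar j s(2-\alpha)} \;=\; 2^{-\bar j s} \;\asymp\; \big(L_{\bar j}\big)^{-1/(1-\alpha)} \;\asymp\; N^{-1/(1-\alpha)},
\end{align*}
using $L_{\bar j}\asymp2^{\bar j s(1-\alpha)}$ and $\alpha<1$. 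The finitely many small $N$ for which $\bar j$ would fall below $j_0$ are absorbed into the constant.

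The main obstacle I anticipate is the redundancy of the frame: one must rule out that contributions of chosen curvelets from neighbouring, overlapping wedges accidentally reconstruct $\widehat\Theta$ on an untouched core. This is exactly what the disjointness (i) and the bounded overlap (ii) are designed to control, and verifying (ii) \emph{uniformly in the scale}—so that $K$ does not degrade as $j\to\infty$—is the crux of the argument.
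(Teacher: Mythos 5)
Your proof is correct and follows essentially the same route as the paper: Plancherel, the observation that $\widehat{f}_N$ is supported in the union of at most $N$ extended wedges, the lower bound of Lemma~\ref{lem:thetaest} on the wedge cores, and a count of unused wedges yielding $2^{-js}$ at a scale with $2^{js(1-\alpha)}\asymp N$. Your ``clean wedge'' bookkeeping with the scale-uniform overlap constant $K$ is in fact slightly more careful than the paper's corresponding step (which passes directly from $\|\widehat{\Theta}\|^2_{L^2(\mathcal{W}^c_N)}$ to $\sum_{J\in\mathbb{J}^c_N}\|\widehat{\Theta}W^-_J\|_2^2$ without accounting for cores of unused wedges that meet extended wedges of used ones), and your single-scale count at $\bar j$ replaces the paper's tail sum over scales $j>j(N)$; both give the same bound.
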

\begin{proof}
Let $N\in\N$ be fixed and assume that
\[
f_N=\sum_{r=1}^N \theta_{J_r,k_r} \psi_{J_r,k_r}
\]
is a linear combination of $\alpha$-curvelets $\psi_{J_r,k_r}$ with coefficients $\theta_{J_r,k_r}\in\R$. The curvelets $\psi_{J_r,k_r}\in\curvesys$ satisfy $\supp \widehat{\psi}_{J_r,k_r}\subseteq \tileext_{J_r}$
as recorded in \eqref{eq:suppprop}.
It follows $\supp \widehat{f}_N \subseteq \mathcal{W}_N$ where $\mathcal{W}_N:=\bigcup_{J\in \mathbb{J}_N} \tileext_J$ for $\mathbb{J}_N:=\{ J_1,\ldots, J_N \}\subset \mathbb{J}$.
Using the notation $\mathbb{J}^c_N:=\mathbb{J}\backslash\mathbb{J}_N$ and
$\mathcal{W}^c_N:=\R^2\backslash \mathcal{W}_N$ we get with Lemma~\ref{lem:thetaest}
\begin{align*}
\| \Theta - f_N \|_2^2 &= \| \widehat{\Theta} - \widehat{f}_N \|_2^2
\ge \| \widehat{\Theta} \|_{L^2(\mathcal{W}^c_N)}^2
\ge  \sum_{J\in \mathbb{J}^c_N} \| \widehat{\Theta} W^{-}_J \|^2_2
\gtrsim \sum_{J\in \mathbb{J}^c_N} 2^{-js(2-\alpha)}.
\end{align*}
We want to bound the right-hand side from below.
By~\eqref{eq:Lj}, the number of tiles in each corona $\corona_j$, $j\in\N_0$, is given by $L_j$, where $L_0=1$ and $L_j = 2^{\lfloor js(1-\alpha) \rfloor + 1}$ for $j\ge1$.
Let
$j(N)\in\N$ denote the unique number such that
\begin{align*}
\sum_{j=0}^{j(N)-1} L_j <  N \le \sum_{j=0}^{j(N)} L_j.
\end{align*}
Since $2^{-js(2-\alpha)}$ decreases with rising scale we obtain
\begin{align*}
\sum_{J\in \mathbb{J}^c_N} 2^{-js(2-\alpha)}
\ge \sum_{j=j(N)+1}^{\infty} L_j 2^{-js(2-\alpha)}
\ge \sum_{j=j(N)+1}^{\infty}  2^{-js}  \gtrsim  2^{-j(N)s}.
\end{align*}
Here we used $ L_j \ge 2^{ js(1-\alpha)}$.
Since $N\gtrsim\sum_{j=0}^{j(N)-1} 2^{ js(1-\alpha)}  \gtrsim 2^{ j(N)s(1-\alpha)}$ we can finally deduce
\begin{align*}
\| \Theta - f_N \|_2^2 \gtrsim 2^{-j(N)s} = \Big( 2^{ j(N)s(1-\alpha)} \Big)^{-\frac{1}{1-\alpha}} \gtrsim N^{-\frac{1}{1-\alpha}}.
\tag*{\qedhere}
\end{align*}
\end{proof}

\noindent
This result can be strengthened if we restrict to greedy $N$-term approximations obtained by thresholding the coefficients.
Essential is the following observation, which has also been used in~\cite{GKKScurve2014}. Due to its importance we give a rigorous proof here.

\begin{lemma}\label{lem:apriori}
There is a constant $C>0$ such that all curvelets $\psi_\mu\in\curvesys$, $\mu\in\curveind$, satisfy
\[
\|\psi_\mu\|_1 \le C  2^{-js(1+\alpha)/2}.
\]
\end{lemma}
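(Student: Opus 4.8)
The plan is to estimate $\|\psi_\mu\|_1$ by controlling $\psi_\mu$ in the spatial domain via its Fourier-side description. Since $\|\psi_{j,\ell,k}\|_1 = \|\psi_{j,0,0}\|_1$ by \eqref{eq:spatialrepr} (translation and rotation are $L^1$-isometries), it suffices to bound $\|\psi_{j,0,0}\|_1$ uniformly, which reduces the claim to showing $\|\psi_{j,0,0}\|_1 \lesssim 2^{-js(1+\alpha)/2}$. First I would recall from \eqref{eq:molgen} that $\psi_{j,0,0}(x) = 2^{js(1+\alpha)/2} a_j(A_j x)$, so that a change of variables gives $\|\psi_{j,0,0}\|_1 = 2^{js(1+\alpha)/2} \cdot |\det A_j|^{-1} \|a_j\|_1 = 2^{js(1+\alpha)/2} \cdot 2^{-js(1+\alpha)} \|a_j\|_1 = 2^{-js(1+\alpha)/2}\|a_j\|_1$. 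Thus the whole statement collapses to proving that $\|a_j\|_1$ is \emph{uniformly bounded} in $j\in\N_0$.

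To bound $\|a_j\|_1$, I would exploit that $a_j$ is band-limited: by \eqref{eq:supphataj} we have $\supp\widehat{a}_j \subseteq \Xi_{0,0} = [-\tfrac12,\tfrac12]^2$, a fixed compact set independent of $j$. The natural route is to trade $L^1$ control in space for decay control via the Fourier transform, using weighted estimates of the form
\begin{align*}
\|a_j\|_1 \lesssim \|(1+|\cdot|^2)\, a_j\|_\infty,
\end{align*}
since $(1+|x|^2)^{-1}$ is integrable on $\R^2$ only marginally — in fact one needs a slightly stronger weight, say $(1+|x_1|^2)(1+|x_2|^2)$, whose reciprocal is integrable. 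Concretely I would write $\|a_j\|_1 \le \big\|(1+x_1^2)(1+x_2^2)\big\|_{\text{applied to }a_j,\,\infty} \cdot \big\|(1+x_1^2)^{-1}(1+x_2^2)^{-1}\big\|_1$, where the second factor is a finite absolute constant. The first factor is then controlled by $\sup_x |a_j(x)|$, $\sup_x |x_i^2 a_j(x)|$, and $\sup_x|x_1^2 x_2^2 a_j(x)|$, each of which — via $x^m a_j$ corresponding to $\partial^m \widehat{a}_j$ up to constants — is dominated by $\|\partial^\rho \widehat{a}_j\|_1$ for $|\rho|\le 4$.

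The key input is Lemma~\ref{thm:curvmol}: it provides, for every multi-index $\rho$, a \emph{$j$-uniform} bound $\|\partial^\rho \widehat{a}_j\|_\infty \lesssim 1$. Combined with the $j$-independent support $\supp\widehat{a}_j\subseteq\Xi_{0,0}$ of fixed finite measure, this yields $\|\partial^\rho \widehat{a}_j\|_1 \le |\Xi_{0,0}| \cdot \|\partial^\rho\widehat{a}_j\|_\infty \lesssim 1$, uniformly in $j$. Chaining these estimates gives $\|a_j\|_1 \lesssim 1$ with a constant independent of $j$, completing the argument. The main obstacle — and the only place requiring genuine care — is setting up the correct separable weight $(1+x_1^2)(1+x_2^2)$ rather than the rotationally symmetric $(1+|x|^2)$, since the latter's reciprocal is \emph{not} integrable in two dimensions; one must ensure enough decay in \emph{each} coordinate separately, which is exactly what the factorized structure of the molecule estimate \eqref{eq:molcond} and the resulting access to the mixed derivative $\partial_1^2\partial_2^2\widehat{a}_j$ affords.
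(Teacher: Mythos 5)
Your proposal is correct and follows essentially the same route as the paper: reduce to $\psi_{j,0,0}$, use the compact support of $\widehat{a}_j$ in the unit square together with the uniform derivative bounds from Lemma~\ref{thm:curvmol} to control the separable weighted sup $\sup_x|(1+x_1^2)(1+x_2^2)a_j(x)|$, and extract the factor $2^{-js(1+\alpha)/2}$ from the $\alpha$-scaling. The only cosmetic difference is the order of operations — you change variables first and bound $\|a_j\|_1$, whereas the paper first establishes the pointwise decay $|a_j(x)|\lesssim (1+x_1^2)^{-1}(1+x_2^2)^{-1}$ and then integrates the rescaled bound — and your remark about needing the factorized weight rather than $(1+|x|^2)$ matches the paper's use of the mixed operator $(\ident+\partial_1^2)(\ident+\partial_2^2)$ exactly.
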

\begin{proof}
Let $a_j$ be the functions from \eqref{eq:molgen} and recall that according to \eqref{eq:supphataj} the support of $\widehat{a}_j$ is contained
in the unit square $\Xi_{0,0}$ for every $j\in\N_0$. Let $\ident$ denote the identity operator.
We have the estimate
\begin{align*}
\Big\|\mathcal{F}^{-1} \Big( (\ident+\partial_1^2)(\ident+\partial_2^2) \widehat{a}_{j} \Big) \Big\|_\infty
&\le  \| (\ident + \partial_1^2)(\ident + \partial_2^2) \widehat{a}_{j}  \|_1  \le \| (\ident + \partial_1^2)(\ident + \partial_2^2) \widehat{a}_{j}\|_\infty.
\end{align*}
According to Lemma~\ref{thm:curvmol} the right-hand side is bounded uniformly over all scales.
We conclude that there is a constant $C>0$, independent of $j\in\N_0$, such that
\begin{align*}
\sup_{x\in\R^2} |(1+x_1^2)(1+x_2^2) a_{j}(x)|  \le C.
\end{align*}
In other words $|a_{j}(x)| \le C (1+x_1^2)^{-1}(1+x_2^2)^{-1}$.
Using the representation \eqref{eq:smolrepr} we obtain
\begin{align*}
|\psi_{j,0,0}(x)| = 2^{js(1+\alpha)/2} |a_{j} (A_{j}x)| \le C 2^{js(1+\alpha)/2} (1+2^{2js}x_1^2)^{-1} (1+2^{2js\alpha}x_2^2)^{-1}
\end{align*}
and hence
\begin{align*}
\int_{\R^2} |\psi_{j,0,0}(x)| \,dx &\lesssim 2^{js(1+\alpha)/2} \int_{\R^2}(1+2^{2js}x^2_1)^{-1} (1+2^{2js\alpha}x^2_2)^{-1} \,dx \\
&= 2^{-js(1+\alpha)/2} \int_{\R^2}(1+x^2_1)^{-1} (1+x^2_2)^{-1} \,dx \lesssim 2^{-js(1+\alpha)/2}.
\end{align*}
Since $\|\psi_{j,\ell,k}\|_1=\|\psi_{j,0,0}\|_1$ the proof is finished.
\end{proof}

\noindent
Lemma~\ref{lem:apriori} allows to deduce a simple a-priori estimate of the curvelet coefficient size, namely
\begin{align}\label{eq:apriori}
|\theta_\mu|=|\langle f,\psi_\mu \rangle| \le \|f\|_{\infty} \|\psi_\mu\|_{1} \le C \|f\|_{\infty} 2^{-js(1+\alpha)/2} \quad\text{ for }\mu=(j,\ell,k)\in\curveind.
\end{align}
Note, that the constant $C>0$ is fully determined by $\curvesys$.
Using \eqref{eq:apriori} we now prove a stronger statement than Theorem~\ref{thm:bound1} for greedy approximations.

\begin{theorem}\label{thm:bound2}
Let $\alpha\in(-\infty,1]$ and $s>0$ be fixed. Further, let $f_N$ denote the $N$-term approximation of $\Theta$ with respect to the $\alpha$-curvelet frame $\curvesys$
obtained by thresholding the coefficients. There is a constant $C>0$ such that for every $N\in\N$
\begin{align*}
\|\Theta-f_N\|_2^2 \ge C N^{-\frac{1}{\max\{\alpha,1-\alpha\}}}.
\end{align*}
\end{theorem}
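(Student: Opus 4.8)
The plan is to exploit the a-priori coefficient bound \eqref{eq:apriori}, which is the feature special to the thresholding scheme, together with the energy estimate of Lemma~\ref{lem:thetaest}. First I would dispose of the range $\alpha\le\frac12$: there $\max\{\alpha,1-\alpha\}=1-\alpha$, so the asserted rate $N^{-1/(1-\alpha)}$ is already a consequence of Theorem~\ref{thm:bound1}, which bounds \emph{every} $N$-term approximation of $\Theta$ from below and hence in particular the thresholding one. It therefore remains to treat $\alpha\in[\frac12,1]$ and to establish the stronger rate $N^{-1/\alpha}$.

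Fix $N$ and let $T$ be the magnitude of the $N$-th largest coefficient, so that $f_N$ retains exactly the coefficients with $|\theta_\mu|\ge T$. By \eqref{eq:apriori} a coefficient at scale $j$ has magnitude at most $C\,2^{-js(1+\alpha)/2}$; hence every retained coefficient, and thus every tile touched by $f_N$, sits at a scale $j\le j^\ast$, where $j^\ast$ is determined by $2^{-j^\ast s(1+\alpha)/2}\asymp T$. Consequently all tiles at scales $\ge j^\ast$ remain untouched, and, exactly as in the proof of Theorem~\ref{thm:bound1}, the Fourier support of $\widehat f_N$ together with Lemma~\ref{lem:thetaest} yields
\begin{align*}
\|\Theta-f_N\|_2^2\ge\|\widehat\Theta\|_{L^2(\mathcal{W}_N^c)}^2\ge\sum_{|J|\ge j^\ast}\|\widehat\Theta\, W^-_J\|_2^2\gtrsim\sum_{j\ge j^\ast}L_j\,2^{-js(2-\alpha)}\gtrsim 2^{-j^\ast s},
\end{align*}
using $L_j\gtrsim 2^{js(1-\alpha)}$ from \eqref{eq:Lj}, where $\mathcal{W}_N$ denotes the union of the touched tiles.

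It remains to convert this into a bound in $N$ by showing $N\gtrsim 2^{j^\ast s\alpha}$; combined with the previous display this gives $\|\Theta-f_N\|_2^2\gtrsim\big(2^{j^\ast s\alpha}\big)^{-1/\alpha}\gtrsim N^{-1/\alpha}$. To count the retained coefficients I would proceed scale by scale. At a fixed scale $j\le j^\ast-J_0$, with $J_0$ a constant depending only on the constants in \eqref{eq:apriori} and Lemma~\ref{lem:thetaest}, Lemma~\ref{lem:thetaest} supplies per tile the energy $\sum_k|\theta_{J,k}|^2=\|\widehat\Theta\, W_J\|_2^2\gtrsim 2^{-js(2-\alpha)}$, while \eqref{eq:apriori} caps each summand by $C^2\,2^{-js(1+\alpha)}$. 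If the coefficients below $T$ carry at most half of this tile energy, then the coefficients exceeding $T$ carry $\gtrsim 2^{-js(2-\alpha)}$, and the cap forces at least $\gtrsim 2^{-js(2-\alpha)}/2^{-js(1+\alpha)}=2^{js(2\alpha-1)}$ of them. Summing over the $L_j\asymp 2^{js(1-\alpha)}$ tiles and over $j\le j^\ast-J_0$ gives $N\gtrsim\sum_j 2^{js(1-\alpha)}2^{js(2\alpha-1)}=\sum_j 2^{js\alpha}\gtrsim 2^{j^\ast s\alpha}$, as required. For $\alpha\le\frac12$ the exponent $2\alpha-1$ is non-positive and this route only recovers $N\gtrsim 2^{j^\ast s(1-\alpha)}$, which is precisely why the threshold $\alpha=\frac12$ appears and why the first paragraph handles that range separately.

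The main obstacle is the tail estimate invoked in the counting step: a priori the energy furnished by Lemma~\ref{lem:thetaest} could be dispersed among arbitrarily many sub-threshold coefficients, in which case few would exceed $T$ and the count would break down. To rule this out I would show that, within each tile, the sequence $(\theta_{J,k})_k$ decays rapidly away from its $\asymp 2^{js(2\alpha-1)}$ leading entries, so that its energy is concentrated in coefficients of near-maximal size $\asymp 2^{-js(1+\alpha)/2}$ and a sufficiently large choice of $J_0$ (making $T$ a small fraction of this size) places all of them above $T$. This decay follows from the smoothness of $\widehat\Theta$ away from the origin, governed by the Bessel asymptotics already used in Lemma~\ref{lem:thetaest}, together with the strong spatial localization of the curvelets expressed by Lemma~\ref{thm:curvmol} with $N_1,N_2$ taken large; quantitatively it amounts to a weak-$\ell^p$ lower bound on the coefficients with $p=2\alpha/(1+\alpha)$, complementing the upper energy bound.
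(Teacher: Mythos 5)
Your reduction of the case $\alpha\le\frac12$ to Theorem~\ref{thm:bound1} and your lower bound $\|\Theta-f_N\|_2^2\gtrsim 2^{-j^\ast s}$ from the untouched tiles at scales above $j^\ast$ both match the paper and are correct. The problem is the counting step $N\gtrsim 2^{j^\ast s\alpha}$. As you yourself note, it hinges on the claim that within each tile the energy $\|\widehat\Theta W_J\|_2^2\asymp 2^{-js(2-\alpha)}$ is carried, up to a constant factor, by $\asymp 2^{js(2\alpha-1)}$ coefficients of near-maximal modulus $\asymp 2^{-js(1+\alpha)/2}$. This claim is plausible (heuristically, the near-maximal coefficients are those whose curvelet sits on the circle at a point where the tangent deviates from $\varphi_{j,\ell}$ by at most $\asymp 2^{-js(1-\alpha)}$, and there are $\asymp 2^{-js(1-\alpha)}/2^{-js\alpha}=2^{js(2\alpha-1)}$ such grid positions), but it is not proved, and the one-sentence justification you offer does not deliver it: smoothness of $\widehat\Theta$ away from the origin and the localization of Lemma~\ref{thm:curvmol} give \emph{upper} bounds on individual coefficients, whereas what you need is an upper bound on the \emph{total energy of the sub-threshold coefficients} in a tile, i.e.\ quantitative decay of $\theta_{J,k}$ jointly in the distance of $x_{j,\ell,k}$ from the circle and in the misalignment angle; a ``weak-$\ell^p$ lower bound'' does not follow formally from localization. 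Establishing this tail estimate is a stationary-phase/integration-by-parts analysis of roughly the same difficulty as the positive approximation results of \cite{CD04,GKKScurve2014}, so as written there is a genuine gap at the crux of the argument.

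The paper avoids this entirely. It never counts above-threshold coefficients: for each tile it bounds the approximant's energy by $\|\widehat f_N W_J^-\|_2^2\lesssim\sum_{k\in K_J}|\theta_{J,k}|^2\lesssim(\#K_J)\,2^{-js(1+\alpha)}$, using only that $\{u_{J,k}\}_{k}$ is an orthonormal basis of $L^2(\Xi_J)$ together with the a-priori cap \eqref{eq:apriori}; the triangle inequality then gives a residual of at least $c\,2^{-js(2-\alpha)}-C(\#K_J)\,2^{-js(1+\alpha)}$ in tile $J$, and minimizing the sum of these expressions over all allocations with $\sum_J\#K_J=N$ yields the rate $N^{-1/\alpha}$ directly. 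If you want to complete your route, replace the concentration claim by this per-tile residual bound, which requires no lower bound on any individual coefficient.
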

\begin{proof}
If $\alpha\le \frac{1}{2}$ the assertion is true by Theorem~\ref{thm:bound1}.
It remains to handle the range $1\ge\alpha>\frac{1}{2}$.
Let $\theta_{J_r,k_r}=\langle \Theta, \psi_{J_r,k_r} \rangle$, $r\in\{1,\ldots,N\}$, be the $N$ largest curvelet coefficients which determine
the approximant $f_N:=\sum_{r=1}^N \theta_{J_r,k_r} \psi_{J_r,k_r}$.
On the Fourier side the curvelet $\psi_{J,k}\in\curvesys$ is the product of
the functions $W_{J}$ and $u_{J,k}$ defined in \eqref{eq:suppfunctions} and \eqref{eq:fouriersys}, respectively.
Using condition~\eqref{eq:CalderonW} we first estimate
\begin{align*}
\|\Theta - f_N \|_2^2 = \|\widehat{\Theta} - \widehat{f}_N \|_2^2 \ge B^{-2} \sum_{J\in\mathbb{J}} \|\widehat{\Theta}W_J- \widehat{f}_NW_J\|_2^2
\ge \frac{A^2}{B^2} \sum_{J\in\mathbb{J}} \|\widehat{\Theta}\tilefuncint_J - \widehat{f}_N \tilefuncint_J \|_2^2,
\end{align*}
where $\tilefuncint_J$ is the characteristic function of the set $\tileint_J$ defined in \eqref{eq:wedgePJ}.
The triangle inequality yields
\begin{align}\label{eqqqqq}
\frac{1}{2} \|\widehat{\Theta} \tilefuncint_J \|_2^2  \le  \|\widehat{\Theta} \tilefuncint_J - \widehat{f}_N  \tilefuncint_J \|_2^2
+ \| \widehat{f}_N \tilefuncint_J \|_2^2 \quad\text{for every }J\in\mathbb{J}.
\end{align}
Observe the relation $\sqrt{A}\tilefuncint_J \le \tilefuncint_J W_{J} \le \sqrt{B} \tilefuncint_J $ and $\tilefuncint_J W_{J^\prime}=0 $ for $J\neq J^\prime$.
Therefore, it holds
\begin{align*}
\widehat{f}_N\tilefuncint_J = \sum_{r=1}^N \theta_{J_r,k_r} \widehat{\psi}_{J_r,k_r}\tilefuncint_J
= \sum_{r=1}^N \theta_{J_r,k_r} u_{J_r,k_r}\tilefunc_{J_r} \tilefuncint_J
\asymp \sum_{k\in K_J} \theta_{J,k} u_{J,k} \tilefuncint_J
\end{align*}
with $K_J=\{ k_r\in \Z^2 ~:~ r\in\{1,\ldots,N \}, \, J_r=J \}$.
Next, we use that $\{u_{J,k}\}_{k\in\Z^2}$  is an orthonormal basis of $L^2(\Xi_J)$, where $\Xi_J\supset\mathcal{W}^-_J$ is the set defined in \eqref{eq:supprect}. We estimate
\begin{align*}
\Big\|\sum_{k\in K_J} \theta_{J,k} u_{J,k} \tilefuncint_J \Big\|_2^2 \le \Big\|\sum_{k\in K_J} \theta_{J,k} u_{J,k} \Big\|_{L^2(\Xi_J)}^2 = \sum_{k\in K_J} |\theta_{J,k}|^2.
\end{align*}
The frame coefficients satisfy the a-priori estimate $|\theta_{J,k}|^2 \lesssim 2^{-js(1+\alpha)}$ according to \eqref{eq:apriori}.
Thus we obtain
\begin{align*}
\|\widehat{f}_N\tilefuncint_J\|_2^2 \asymp \Big\|\sum_{k\in K_J} \theta_{J,k} u_{J,k} \tilefuncint_J \Big\|_2^2 \lesssim (\# K_J)  2^{-js(1+\alpha)}.
\end{align*}
By Lemma~\ref{lem:thetaest} we have $\| \widehat{\Theta} \tilefuncint_J\|_2^2 \gtrsim 2^{-js(2-\alpha)}$. We deduce from \eqref{eqqqqq}
\begin{align*}
\|\widehat{\Theta}\tilefuncint_J - \widehat{f}_N\tilefuncint_J \|_2^2
\ge \frac{1}{2} \|\widehat{\Theta}\tilefuncint_J \|_2^2 - \|\widehat{f}_N\tilefuncint_J \|_2^2
\gtrsim  2^{-js(2-\alpha)} - (\# K_J)  2^{-js(1+\alpha)}.
\end{align*}
Altogether, we conclude
\begin{align*}
\|\Theta - f_N \|_2^2
\ge \sum_{J\in\mathbb{J}} \|\widehat{\Theta}\tilefuncint_J - \widehat{f}_N\tilefuncint_J \|_2^2
\gtrsim \sum_{J\in\mathbb{J}} \max\big\{0, 2^{-js(2-\alpha)} - (\# K_J)  2^{-js(1+\alpha)} \big\}.
\end{align*}
Note that $\sum_{J}(\# K_J) \le N$. To derive a lower bound let us consider the following minimization problem:
\begin{align*}
\underset{\{N_J\}_{J\in\mathbb{J}}}{\text{\sc Minimize}} \quad \sum_{J\in\mathbb{J}} \max\{0,2^{-js(2-\alpha)} - N_J 2^{-js(1+\alpha)} \} \quad \text{ s.t.} \quad \sum_{J\in\mathbb{J}} N_J \le N,\, N_J\in[0,\infty)\,\, (J\in\mathbb{J}).
\end{align*}
The condition $N_J\in[0,\infty)$, which simplifies the subsequent argumentation, is possible since we are only interested in a bound.
For the optimal choice $\{N_J\}_{J}$, it necessarily holds $\sum_{J} N_J = N$ and
\[
N_J \le 2^{-js(2-\alpha)} 2^{js(1+\alpha)} = 2^{js(2\alpha-1)}.
\]
Hence, the minimization problem can be reformulated as minimizing the term
\begin{align*}
\sum_{J\in\mathbb{J}} \big( 2^{-js(2-\alpha)} - N_J 2^{-js(1+\alpha)} \big)
\end{align*}
under the constraints $\sum_J N_J =N$ and $N_J \le 2^{js(2\alpha-1)}$. Assume that the family $\{N_J\}_J$ fulfills these constraints.
Further, let $j(N)\in\N$ denote the number determined by the property
\begin{align}\label{eq:unique}
\sum_{j=0}^{j(N)-1} 2^{js(2\alpha-1)} L_j <  N \le \sum_{j=0}^{j(N)} 2^{js(2\alpha-1)} L_j,
\end{align}
where $L_j$ from \eqref{eq:Lj} counts the wedges in the corona $\corona_j$.
Then the following estimate holds true
\begin{align*}
\sum_{J\in\mathbb{J}} \Big( 2^{-js(2-\alpha)} - N_J 2^{-js(1+\alpha)} \Big) \ge \sum_{j=j(N)+1}^\infty \Big( \sum_{|J|=j} 2^{-js(2-\alpha)} \Big) \ge
\sum_{j=j(N)+1}^\infty  2^{-js} \gtrsim  2^{-j(N)s}.
\end{align*}
To see this, note that $2^{-js(1+\alpha)}$ is decreasing with rising scale and that $L_j\ge 2^{js(1-\alpha)}$.
Since $N \asymp 2^{j(N)s\alpha}$, which follows from \eqref{eq:unique}, we have proven
\begin{align*}
\|\Theta - f_N \|_2^2 \gtrsim \sum_{J\in\mathbb{J}} \max\big\{0, 2^{-js(2-\alpha)} - (\# K_J)  2^{-js(1+\alpha)} \big\} \gtrsim 2^{-j(N)s} \asymp N^{-\frac{1}{\alpha}}\,
\end{align*}
and the proof is finished.
\end{proof}

\noindent
The approximation results for $\Theta$ have direct implications for the class-wise approximation of cartoon-like functions.
If $\nu\ge2$, then $\Theta\in\mathcal{E}_{bin}^{\beta}([-1,1]^2;\nu)$ for arbitrary $\beta\in[0,\infty)$.
Moreover, we can always find $\gamma>0$ such that $\gamma\Theta\in\mathcal{E}^{\beta}([-1,1]^2;\nu)$.
This allows to draw the following conclusion.

\begin{cor}
Let $\beta\in[0,\infty)$ and $\nu\ge2$. The uniform decay of the $N$-term approximation error for $\cartbin$
and $\cart$ provided by $\curvesys$ cannot be faster than $N^{-\frac{1}{1-\alpha}}$. Futhermore, thresholding of coefficients cannot yield rates better
than $N^{-\frac{1}{\max\{\alpha,1-\alpha\}}}$.
\end{cor}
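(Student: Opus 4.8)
The plan is to lift the pointwise lower bounds of Theorems~\ref{thm:bound1} and~\ref{thm:bound2}, established for the single cartoon $\Theta$, to the desired worst-case statements over the whole classes $\cartbin$ and $\cart$. The guiding observation is that for any function class $\cF$ and any fixed $g\in\cF$ one has $\sup_{f\in\cF}\|f-f_N\|_2 \ge \|g-g_N\|_2$, where $g_N$ is the $N$-term approximant of $g$ produced by the scheme under consideration. Hence it suffices to exhibit a single member of each class whose approximation error decays no faster than the claimed rate, and $\Theta$ (respectively a scalar multiple of it) will play this role.

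First I would treat the binary class. Since $\nu\ge2$, the function $\Theta$ lies in $\cartbin$ for every $\beta\in[0,\infty)$, as already recorded. For the first assertion I would apply Theorem~\ref{thm:bound1} directly: it bounds $\|\Theta-f_N\|_2^2$ from below by $C N^{-1/(1-\alpha)}$ for \emph{arbitrary} $N$-term approximants in $\curvesys$ (here $\alpha<1$), and taking the supremum over $\cartbin\ni\Theta$ inherits this bound. For the thresholding assertion I would likewise invoke Theorem~\ref{thm:bound2}, which gives the sharper lower bound $C N^{-1/\max\{\alpha,1-\alpha\}}$ for the greedy approximant of $\Theta$.

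Next I would pass to the smooth class $\cart$. Choosing $\gamma>0$ with $\gamma\Theta\in\cart$, the key point is homogeneity of the approximation problem under scalar multiplication. The frame coefficients of $\gamma\Theta$ are exactly $\gamma$ times those of $\Theta$; consequently any $N$-term combination of curvelets approximating $\gamma\Theta$ is $\gamma$ times such a combination for $\Theta$, and in the greedy case the thresholding procedure selects the \emph{same} indices, so the greedy approximant of $\gamma\Theta$ equals $\gamma$ times that of $\Theta$. Writing $f_N=\gamma g_N$ accordingly, one obtains $\|\gamma\Theta-f_N\|_2^2=\gamma^2\|\Theta-g_N\|_2^2$, and the factor $\gamma^2$ is absorbed into the constant. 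Applying Theorems~\ref{thm:bound1} and~\ref{thm:bound2} to $g_N$ then yields the two claimed rates for $\cart$.

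The argument is essentially bookkeeping, and I do not anticipate a genuine obstacle; the only point requiring care is the homogeneity step in the thresholding case, where one must confirm that scaling the function by $\gamma$ leaves the set of selected (largest) coefficients unchanged, so that Theorem~\ref{thm:bound2}---which is stated for $\Theta$ itself---transfers to $\gamma\Theta$. One should also keep in mind that the first rate $N^{-1/(1-\alpha)}$ presupposes $\alpha<1$, matching the hypothesis of Theorem~\ref{thm:bound1}.
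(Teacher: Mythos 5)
Your proposal is correct and matches the paper's argument: the paper likewise notes that $\Theta\in\cartbin$ for $\nu\ge2$ and that $\gamma\Theta\in\cart$ for suitable $\gamma>0$, and then transfers the lower bounds of Theorems~\ref{thm:bound1} and~\ref{thm:bound2} to the classes. Your extra care about the homogeneity of the thresholding selection is a point the paper leaves implicit, but it is exactly the right justification.
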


\noindent
If $\beta>2$ it is thus impossible for $\curvesys$ to reach the theoretically possible approximation order of $N^{-\beta}$ for the class $\cart$.
The best performance is achieved for the classic choice $\alpha=\frac{1}{2}$, with a corresponding approximation rate of order $N^{-2}$.
A smaller $\alpha$ leads to a deterioration of the approximation. As is obvious from our investigation,
this behavior applies to cartoons with curved edges
exemplified by the function $\Theta=\chi_{B_2(0,\frac{1}{2})}$ from \eqref{def:Theta}.
For such cartoons the rate inevitably deteriorates as $\alpha$ tends to $0$, since their
energy is spread more or less uniformly across all directions of the Fourier plane.
In the next section, we narrow our focus and consider only cartoons with straight edges. Such cartoons are highly anisotropic and in a certain sense the opposite extreme
of the isotropic function $\Theta$. Since their Fourier energy is concentrated in only one direction, a smaller $\alpha$ will
be an advantage for their approximation.

\section{Images with Straight Edges}
\label{sec:straight}

In the following, we investigate the approximation performance of the curvelet frame $\curvesys$ with respect to cartoons with straight edges.
To specify the associated signal class,
let {\sc Straight} be the collection of all closed half-spaces of $\R^2$. Parameterized by $\varphi\in[0,2\pi)$ and $c\in\R$, these are subsets of the form
\[
H(\varphi,c)= \Big\{ (x_1,x_2)\in\R^2 ~:~ x_1\cos(\varphi)- x_2\sin(\varphi)\ge c\, \Big\} \,.
\]
Using Definition~\ref{def:gencart} we then introduce the following image class with parameters $\beta\in[0,\infty)$ and $\nu>0$
\begin{align*}
\csE^\beta([-1,1]^2;\nu):=\mathcal{E}^\beta([-1,1]^2; \text{\sc Straight}, \nu).
\end{align*}
This is a subclass of the general cartoons~\eqref{eqdef:cart} considered in Section~\ref{sec:cartoon}. Indeed, for $\nu>0$ and $\tilde{\nu}\ge\nu$ chosen large enough
\[
C_0^\beta([-1,1]^2;\nu) \subset \csE^\beta([-1,1]^2;\nu) \subset \mathcal{E}^\beta([-1,1]^2;\tilde{\nu}),
\]
where $C_0^\beta([-1,1]^2;\nu)$ is the class defined in \eqref{eqdef:smoothclass}.
These inclusions allow to transfer the optimality benchmark $N^{-\beta}$, valid for both $\mathcal{E}^\beta([-1,1]^2;\tilde{\nu})$ and $C_0^\beta([-1,1]^2;\nu)$ (see Theorem~\ref{thm:benchmark} and Remark~\ref{rem:benchmark}).
For $\straight$, we thus again aim for an approximation rate of order $N^{-\beta}$.

Ridgelet frames were developed specifically for the optimal representation of
functions with straight line singularities. For both variants, `orthonormal ridgelets'~\cite{D98}
and `$0$-curvelets'~\cite{GrohsRidLT}, it has been shown that they reach the optimality bound $N^{-\beta}$.
More precisely, this rate was proved for `mutilated Sobolev functions' with compact support~\cite{C99,GOtech16}, i.e., compactly supported functions which are in the Sobolev space $H^{\beta}(\R^2)$
apart from straight line singularities.
In line with the result from \cite{GOtech16} for $0$-curvelets,
we can expect that decreasing $\alpha$ improves the approximation ability of $\curvesys$ for $\straight$.

Our main result concerning the $\alpha$-curvelet approximation of $\straight$ is Theorem~\ref{thm:mainappr1} below.
It is formulated and proved for integer $\beta\in\N$ only, although the statement should extend to the whole range $\beta\in\Rplus$.
In this way, we avoid technical difficulties which would arise if we used finite differences instead of integer derivatives (compare~\cite{GKKScurve2014}).

\begin{theorem}\label{thm:mainappr1}
   The parameters $\beta\in\N$, $\nu>0$, $\alpha\in[0,1)$, and $s>0$ shall be fixed.
   Further, let $f_N$ be the $N$-term approximation of a signal $f\in L^2(\R^2)$
   provided by the $N$ largest coefficients with respect to the
   frame $\curvesys=\lb\psi_\mu\rb_{\mu\in\curveind}$.
   There exists a constant $C>0$ such that for every $f\in \csE^{\beta}([-1,1]^2;\nu)$ and $N\in\N$
\begin{align*}
 \|f-f_N\|_2^2\le C \begin{cases}
 N^{-\beta} \log_2(1+N)^{1+\beta} \quad&\text{ if } \alpha\le\beta^{-1}, \\
 N^{-1/\alpha}  \quad&\text{ if } \alpha>\beta^{-1}.
 \end{cases}
\end{align*}
\end{theorem}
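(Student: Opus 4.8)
The plan is to control the best $N$-term error through the tail of the squared, decreasingly rearranged coefficient sequence $(\theta_\mu)=(\langle f,\psi_\mu\rangle)_{\mu\in\curveind}$. Since $\curvesys=\curvesys(1,1)$ is a Parseval frame, one has $f=\sum_\mu\theta_\mu\psi_\mu$, so with $\Lambda_N$ indexing the $N$ largest coefficients the approximant satisfies $\|f-f_N\|_2^2\le\sum_{\mu\notin\Lambda_N}|\theta_\mu|^2=\sum_{n>N}(\theta_n^\ast)^2$. It therefore suffices to establish a weak-$\ell^p$ type bound $\theta_n^\ast\lesssim n^{-1/p}$ (up to logarithmic factors), since this yields $\sum_{n>N}(\theta_n^\ast)^2\lesssim N^{-(2/p-1)}$. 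First I would reduce to a single straight edge: writing $f=f_1+f_2\chi_{H}$ with $H=H(\varphi_0,c)$, the term $f_1\in C_0^\beta([-1,1]^2;\nu)$ and the parts of $f_2\chi_H$ at positive distance from the edge line are genuinely $C^\beta$. Using the rapid spatial decay of the curvelets encoded in Lemma~\ref{thm:curvmol}, these pieces are handled by a purely smooth estimate, and the essential contribution comes from the curvelets meeting the edge.

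For the smooth ingredient I exploit that, for integer $\beta$, a compactly supported $C^\beta$ function lies in $H^\beta(\R^2)$ with $H^\beta$-norm $\lesssim\nu$; this is precisely where the integrality of $\beta$ enters, as the remark preceding the theorem anticipates. By the orthonormality of $\{u_{J,k}\}_k$ on each $\Xi_J$ together with the Calderón condition \eqref{eq:CalderonW}, the per-scale energy satisfies $\sum_{|J|=j}\sum_k|\langle g,\psi_{J,k}\rangle|^2=\int_{\R^2}|\widehat{g}|^2\sum_{|J|=j}W_J^2\,d\xi\lesssim\int_{\corona_j}|\widehat{g}|^2\lesssim 2^{-2js\beta}\nu^2$. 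Combined with the a-priori size bound \eqref{eq:apriori}, a standard thresholding-and-counting argument then shows that the smooth part contributes an $N$-term error of order $N^{-\beta}$, the logarithmic powers arising from summing over the $O(\log N)$ active scales.

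The heart of the matter is the half-space singularity $f_2\chi_H$. After rotating so that the edge normal points along the $\xi_1$-axis, the distributional Fourier transform of $f_2\chi_H$ concentrates along the ridge $\{\xi_2=0\}$: the jump produces the slow radial decay $\sim|\xi_1|^{-1}$, while the transverse profile is governed by $\widehat{f_2(0,\cdot)}$. The key geometric fact is that at each scale $j$ only $O(1)$ of the $L_j\asymp 2^{js(1-\alpha)}$ wedges have angular support meeting this ridge; for the remaining directions I would use repeated integration by parts against the smooth windows $W_{j,\ell}$ to show the coefficients are negligible. On the $O(1)$ aligned wedges the ridge estimate gives a per-scale energy $\sum_k|\theta_{j,\ell_0,k}|^2\asymp 2^{-js}$, independent of $\beta$, while the compact support of $f_2$ (edge length $O(1)$) and the curvelet spatial length $\asymp 2^{-js\alpha}$ along the edge bound the number of significant translates by $\lesssim 2^{js\alpha}$. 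Together with the a-priori bound $|\theta_\mu|\lesssim 2^{-js(1+\alpha)/2}$ from \eqref{eq:apriori}, this forces $\sim 2^{js\alpha}$ coefficients of size $\sim 2^{-js(1+\alpha)/2}$ per scale; rearranging, rank $n\sim 2^{js\alpha}$ carries $\theta_n^\ast\sim 2^{-js(1+\alpha)/2}\sim n^{-(1+\alpha)/(2\alpha)}$, so the edge coefficients lie in $w\ell^{2\alpha/(1+\alpha)}$ and yield the edge error $N^{-1/\alpha}$.

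Adding the two contributions, the sorted coefficient sequence of $f$ lies in $w\ell^{p}$ with $p=\max\{2/(\beta+1),\,2\alpha/(1+\alpha)\}$, whence the error is of order $N^{-\min\{\beta,\,1/\alpha\}}$: for $\alpha\le\beta^{-1}$ the smooth part dominates and gives $N^{-\beta}$ with the extra factor $\log_2(1+N)^{1+\beta}$, while for $\alpha>\beta^{-1}$ the edge dominates and gives $N^{-1/\alpha}$, matching the two cases of the statement. The step I expect to be the main obstacle is the rigorous edge localization: quantifying, uniformly in the (arbitrary, grid-misaligned) edge angle $\varphi_0$, that the non-aligned wedges are negligible, and making the ``away-from-edge is smooth'' splitting precise via the curvelet spatial decay of Lemma~\ref{thm:curvmol}, so that the clean per-scale count of $\sim 2^{js\alpha}$ coefficients of size $\sim 2^{-js(1+\alpha)/2}$ holds with constants uniform over the entire class $\csE^\beta([-1,1]^2;\nu)$.
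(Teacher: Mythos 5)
Your overall architecture matches the paper's: reduce the $N$-term error to a weak-$\ell^p$ bound on the rearranged coefficients, split off the smooth part, and isolate the edge contribution, with the aligned count ``$\sim 2^{js\alpha}$ coefficients of size $\sim 2^{-js(1+\alpha)/2}$ per scale'' giving exactly the paper's $w\ell^{2\alpha/(1+\alpha)}$ exponent. The smooth part via $C^\beta\subset H^\beta$ and per-corona Fourier energy is a legitimate (if coarser) substitute for the paper's fragment decomposition there. The gap is in the edge analysis, specifically in the sentence dismissing the non-aligned wedges as ``negligible'' by repeated integration by parts against $W_{j,\ell}$. Integration by parts in frequency controls spatial localization (the count of translates), not the angular decay. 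The angular decay of $\widehat{f_2\chi_H}$ away from the ridge is governed by the smoothness of $f_2$ restricted to the edge and is therefore only \emph{polynomial of order tied to $\beta$}: in the paper's notation the energy on the $\ell$-th wedge decays like $\ell_J^{-2\beta-1}$ relative to the aligned one (Lemma~\ref{lem:essdistr}, Lemma~\ref{lem:refine2}), not rapidly. Summing the resulting counts $\min\{\ell,\varepsilon^{-2}2^{-js(1+\alpha)}\ell^{-2\beta-1}\}$ over $\ell$ produces a per-scale contribution of order $\varepsilon^{-2/(1+\beta)}2^{-js(1+\alpha)/(1+\beta)}$ per edge cube (see the proof of Proposition~\ref{prop:frag}(ii)); in the regime $\alpha\le\beta^{-1}$ this \emph{dominates} the aligned-wedge count and, after summation over the $O(\log(1/\varepsilon))$ active scales, is precisely what produces the $N^{-\beta}\log_2(1+N)^{1+\beta}$ rate. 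So your attribution of the logarithmic rate solely to the smooth part, and of a clean $N^{-1/\alpha}$ to the edge in all regimes, is not justified; without the quantitative $\beta$-dependent angular decay you cannot close either case of the theorem.

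Obtaining that decay uniformly over arbitrary, grid-misaligned edge orientations and over the whole class $\straight$ is the technical core of the paper and has no counterpart in your proposal. The mechanism is the fragment decomposition into cubes of side $2^{-js\alpha}$ (\eqref{eqdef:frag}) followed by the inductive Lemma~\ref{lem:induction}: each application of the edge-directional derivative $\mathcal{D}_{J,1}$ to $g\omega_jH$ gains a factor $2^{-js}$ but spawns the ridge distribution $g\omega_j\delta_{\{x_1=0\}}$, whose wedge energies are estimated via Lemmas~\ref{Lemma1D}--\ref{lem:refine2} and Proposition~\ref{prop:stredge}; only after $\lceil 1/(1-\alpha)\rceil$ iterations is the remaining $H$-term as small as the smooth contribution. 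Your heuristic that ``the transverse profile is governed by $\widehat{f_2(0,\cdot)}$'' is the right picture, but turning it into the estimate $\ell_J^{-2\beta-1}$ with constants uniform in $j$, in the sub-grid offsets $\Delta\ell,\Delta k$, and in $f$ is exactly the missing step, and it cannot be bypassed by smoothness of the windows alone.
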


\noindent
As expected, decreasing the parameter $\alpha$ improves
the approximation performance.
If $\alpha\in[0,\beta^{-1}]$ the achieved rate is even optimal up to the log-factor.
In this range signals from $\csE^{\beta}([-1,1]^2;\nu)$ are represented with the same efficiency as a
smooth function from $\smooth$.

Theorem~\ref{thm:mainappr1} is deduced by studying the curvelet coefficients, whose decay
is closely related to the achieved $N$-term approximation rate.
Recall that a typical measure for the sparsity of a sequence $\lb c_\lambda\rb_\lambda\subset\C$ is given by the weak $\ell^p$-(quasi)-norms, for $p>0$ defined by
\[
\|\lb c_\lambda\rb_\lambda\|_{w\ell^p}:= \Big( \sup_{\varepsilon>0} \varepsilon^p \cdot \#\{\lambda: |c_\lambda|>\varepsilon \}\Big)^{1/p}.
\]
By definition, the sequence $\lb c_\lambda\rb_\lambda$ belongs to $w\ell^p(\Lambda)$ if and only if the quantity $\|\lb c_\lambda\rb_\lambda\|_{w\ell^p}$ is finite.
This is the case precisely if there exists a constant $C>0$ such that $\#\{\lambda: |c_\lambda|>\varepsilon \}\le C^p \varepsilon^{-p}$
for all $\varepsilon>0$. The smallest possible such constant then coincides with the weak $\ell^p$-(quasi)-norm of the sequence.
Another useful characterization of a sequence $\lb c_\lambda\rb_\lambda\in w\ell^p(\Lambda)$ is given in terms of its non-increasing rearrangement $\lb c^*_n\rb_{n\in\N}$.
It holds $|c^\ast_n|\lesssim n^{-1/p}$ and
$\sup_{n>0} n^{1/p}|c^\ast_n| = \|\lb c_\lambda\rb_\lambda\|_{w\ell^p}$.

As illustrated by the following well-known lemma (see e.g.\ \cite{Devore1998}), the decay of the frame coefficients determines the $N$-term approximation rate achieved by thresholding.
A full proof is given e.g.\ in \cite{GKKS15}.

\begin{lemma}[{\cite[Lem.~5.1]{GKKS15}}]\label{lem:decayapprox}
Let $\lb m_\lambda\rb_{\lambda\in \Lambda}$ be a frame in $L^2(\R^2)$ and $f=\sum c_\lambda m_\lambda$ an expansion of
$f\in L^2(\R^2)$ with respect to this frame. If $\lb c_\lambda\rb_\lambda\in w\ell^{2/(\beta+1)}(\Lambda)$ for some $\beta\ge0$,
then the $N$-term approximations $f_N$ obtained by keeping the $N$ largest coefficients satisfy
\[
 \| f-f_N \|_2^2 \lesssim N^{-\beta}.
\]
\end{lemma}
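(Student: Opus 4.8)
The plan is to use the frame property to control the $L^2$ approximation error by the $\ell^2$-norm of the discarded coefficients, and then to convert the weak-$\ell^p$ decay of the coefficient sequence into the polynomial rate $N^{-\beta}$ via its non-increasing rearrangement. The only structural input beyond elementary sequence estimates is the upper frame (Bessel) bound, which is precisely what distinguishes a frame from an arbitrary dictionary.

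First I would denote by $\Lambda_N\subseteq\Lambda$ the set of indices of the $N$ coefficients $c_\lambda$ of largest modulus, so that $f_N=\sum_{\lambda\in\Lambda_N}c_\lambda m_\lambda$ and consequently $f-f_N=\sum_{\lambda\notin\Lambda_N}c_\lambda m_\lambda$. Since $\lb m_\lambda\rb_{\lambda\in\Lambda}$ is a frame, its synthesis operator is bounded; that is, there is a constant $B>0$ with
\[
\Big\|\sum_{\lambda}d_\lambda m_\lambda\Big\|_2^2\le B\sum_{\lambda}|d_\lambda|^2
\]
for every square-summable sequence $\lb d_\lambda\rb_\lambda$. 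Applying this to the sequence that agrees with $\lb c_\lambda\rb_\lambda$ off $\Lambda_N$ and vanishes on $\Lambda_N$ yields
\[
\|f-f_N\|_2^2\le B\sum_{\lambda\notin\Lambda_N}|c_\lambda|^2.
\]

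Next I would pass to the non-increasing rearrangement $\lb c_n^\ast\rb_{n\in\N}$ of $\lb|c_\lambda|\rb_\lambda$. As $\Lambda_N$ collects exactly the $N$ largest coefficients, the remaining tail equals $\sum_{n>N}(c_n^\ast)^2$. Using the weak-$\ell^p$ membership with $p=2/(\beta+1)$ together with the bound $c_n^\ast\lesssim n^{-1/p}\,\|\lb c_\lambda\rb_\lambda\|_{w\ell^p}$ recalled above, one has $(c_n^\ast)^2\lesssim n^{-(\beta+1)}$, whence an integral comparison gives, for $\beta>0$,
\[
\sum_{n>N}(c_n^\ast)^2\lesssim\sum_{n>N}n^{-(\beta+1)}\lesssim N^{-\beta}.
\]
Combining the two displayed estimates completes the proof.

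I do not expect a serious obstacle here, as the argument is standard; the delicate points are bookkeeping rather than conceptual. The one place requiring care is the interplay of the frame bound with the infinite tail sum: for $\beta>0$ we have $p<2$ and hence $w\ell^p\subset\ell^2$, so the discarded sequence is genuinely square-summable and the synthesis estimate applies to the full tail. The boundary case $\beta=0$ is degenerate---then $p=2$, the series $\sum n^{-(\beta+1)}$ no longer converges, and the asserted bound $N^{-\beta}=N^0$ merely expresses boundedness of the error, which is immediate from $f\in L^2(\R^2)$; since the applications in this article use $\beta\ge1$, this case is of no further concern.
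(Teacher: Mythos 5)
Your argument is correct and is essentially the standard proof that the paper itself defers to via the citation to \cite{GKKS15}: control $\|f-f_N\|_2^2$ by the upper frame (synthesis) bound applied to the discarded coefficients, then sum the non-increasing rearrangement using $(c_n^\ast)^2\lesssim n^{-(\beta+1)}$. The only soft spot is your dismissal of the endpoint $\beta=0$ as ``immediate from $f\in L^2$'' --- under a mere $w\ell^2$ hypothesis $\|f_N\|_2$ need not stay bounded in $N$, so that case is not quite automatic --- but since the paper only invokes the lemma with $\beta\ge1$ this is immaterial.
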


\noindent
Beginning in Subsection~\ref{ssec:curvesparse}, we study
the sparsity of the coefficients $\theta_\mu=\langle f,\psi_\mu \rangle$
provided by the frame $\curvesys=\lb\psi_\mu\rb_{\mu\in\curveind}$ for a signal $f\in\straight$.
The decay rates proved in Theorem~\ref{thm:mainappr2}
are the foundation of the following proof of Theorem~\ref{thm:mainappr1}.

\begin{proof}[Proof of Theorem~\ref{thm:mainappr1}]
If $\alpha>\beta^{-1}$ the sequence $\lb\theta_\mu\rb_{\mu\in\curveind}$ of curvelet coefficients $\theta_\mu=\langle f,\psi_\mu \rangle$ belongs to $w\ell^p(\curveind)$ with $p=2/(1+1/\alpha)$. This is
proved in Theorem~\ref{thm:mainappr2}. Lemma~\ref{lem:decayapprox} directly translates this into the statement of Theorem~\ref{thm:mainappr1}.
In case $\alpha\le\beta^{-1}$ Theorem~\ref{thm:mainappr2} yields $|\theta_m^*|^2 \le C m^{-(1+\beta)} (\log_2 m)^{1+\beta}$
for the curvelet coefficient $\theta_m^*$ of $m$-th largest modulus. Utilizing the frame property of $\curvesys$ we can estimate
\[
\| f-f_N \|^2 \lesssim \sum_{m>N} |\theta^\ast_m|^2 \lesssim  \sum_{m>N} m^{-(1+\beta)} \cdot \left(\log_2 m\right)^{(1+\beta)}
\le \int_N^\infty t^{-(1+\beta)} \cdot \left(\log_2 (1+t)\right)^{(1+\beta)} \,dt.
\]
Note that $N\ge1$. Partial integration leads to
\begin{align*}
 \int_N^\infty t^{-(1+\beta)} \cdot \left(\log_2(1+t)\right)^{(1+\beta)} \,dt
 \lesssim N^{-\beta}  \left(\log_2 (1+N)\right)^{(1+\beta)}
 + \int_N^\infty t^{-(1+\beta)} \cdot \left(\log_2 (1+t)\right)^{\lceil\beta\rceil} \,dt.
\end{align*}
We repeat this $\lceil\beta\rceil$-times and finally arrive at
\begin{align*}
\int_N^\infty t^{-(1+\beta)} \cdot \left(\log_2(1+t)\right)^{(1+\beta)} \,dt  \lesssim  N^{-\beta}  \left(\log_2 (1+N)\right)^{(1+\beta)}.
\tag*{\qedhere}
\end{align*}
\end{proof}


\subsection{Sparsity of Curvelet Coefficients}
\label{ssec:curvesparse}


Subsequently, we study the decay of the curvelet coefficients $\theta_\mu=\langle f,\psi_\mu \rangle$.
Our main result is Theorem~\ref{thm:mainappr2}.

\begin{theorem}\label{thm:mainappr2}
Let $\alpha\in[0,1)$, $s>0$, $\beta\in\N$, and $\nu>0$ be fixed.
Further, denote by $\theta^*_{N}$ the (in modulus) $N$-th largest coefficient of $f\in\straight$ with respect to
$\curvesys=\lb\psi_\mu\rb_{\mu\in\curveind}$.
There exists a constant $C>0$ independent of $N\ge2$ such that
\begin{align*}
 \sup_{f\in \straight}  |\theta^*_{N}|^2 \le C\cdot \begin{cases} N^{-(1+\beta)} \cdot \left(\log_2 N\right)^{1+\beta}  \quad&\text{ if } \alpha\le\beta^{-1},  \\
 N^{-(1+1/\alpha)} \quad&\text{ if } \alpha>\beta^{-1}.
 \end{cases}
\end{align*}
\end{theorem}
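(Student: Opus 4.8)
The plan is to prove Theorem~\ref{thm:mainappr2} by showing that, uniformly over $f\in\straight$, the coefficient sequence $\lb\theta_\mu\rb_{\mu\in\curveind}$ lies in $w\ell^p$ with $p=\max\lb 2/(1+\beta),\,2\alpha/(1+\alpha)\rb$, and then converting this into the stated bound on the non-increasing rearrangement $\theta^*_N$. Writing $f=f_1+f_2\chi_{\mathcal{D}}$ with $\mathcal{D}=H(\varphi,c)$ a half-space, I would treat the smooth term $f_1$ and the edge term $g:=f_2\chi_{\mathcal{D}}$ separately. The decisive structural fact, distinguishing this from Section~\ref{sec:cartoon}, is that a straight singularity concentrates the Fourier energy of $g$ on the single line through the origin normal to the edge. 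Consequently, at each scale $j$ only $O(1)$ wedges $\tile_{j,\ell}$---those whose angular support $\mathcal{A}_{j,\ell}$ contains this normal direction---interact with the singularity; I would call these the aligned wedges and note that their number, and all ensuing constants, are uniform in the edge parameters $(\varphi,c)$, so no genuine rotation of the frame is needed.

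For the smooth term I would use that each $\widehat{\psi}_\mu$ is supported in the corona at $|\xi|\asymp 2^{js}$ and vanishes to infinite order at the origin. Since $f_1\in C^\beta_0\subset H^\beta$, the Calder\'on-type identity~\eqref{eq:CalderonW} restricted to scale $j$ gives the energy estimate $\sum_{|\mu|=j}|\theta_\mu|^2\lesssim 2^{-2js\beta}\|f_1\|_{H^\beta}^2$, while the number of curvelets at scale $j$ meeting $[-1,1]^2$ is $\asymp 2^{2js}$. Balancing the count bound $2^{2js}$ (below the critical scale) against the Chebyshev/energy bound $2^{-2js\beta}\varepsilon^{-2}$ (above it) yields $\#\lb\mu:|\theta_\mu|>\varepsilon\rb\lesssim\varepsilon^{-2/(1+\beta)}$, i.e.\ membership in $w\ell^{2/(1+\beta)}$.

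For the edge term I would exploit the approximate separation of variables in an aligned wedge: relative to the edge, the short curvelet direction (width $2^{-js}$) carries a band-limited factor with a vanishing moment that meets the jump of $\chi_{\mathcal{D}}$, producing a one-dimensional coefficient of size $\asymp 2^{-js/2}$ that decays super-polynomially once the curvelet center leaves an $O(2^{-js})$ strip about the edge; the long direction (length $2^{-js\alpha}$) carries a low-pass factor of nonzero mean meeting the smooth trace of $f_2$, producing a factor $\asymp 2^{-js\alpha/2}$. Crucially, because this long-direction factor has no vanishing moment, the $C^\beta$ regularity of $f_2$ along the edge does \emph{not} reduce it; hence every edge-crossing coefficient has size $\asymp 2^{-js(1+\alpha)/2}$ (consistent with the a-priori bound~\eqref{eq:apriori}), there are $\asymp 2^{js\alpha}$ of them per scale (one strip in the short direction, $O(1)$ orientations, $\asymp 2^{js\alpha}$ translates along the edge), and the off-strip and non-aligned coefficients are absorbed into the smooth estimate. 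Counting these against a threshold $\varepsilon$ gives $\#\lb|\theta|>\varepsilon\rb\lesssim\varepsilon^{-2\alpha/(1+\alpha)}$, so the edge term lies in $w\ell^{2\alpha/(1+\alpha)}$, \emph{independently of} $\beta$.

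Combining the two contributions places $\lb\theta_\mu\rb$ in $w\ell^{p}$ with $p=\max\lb 2/(1+\beta),2\alpha/(1+\alpha)\rb$; since the maximum switches from the second to the first exponent exactly at $\alpha=\beta^{-1}$, passing to the rearrangement via $|\theta^*_N|\lesssim N^{-1/p}$ reproduces the two cases, the $(\log N)^{1+\beta}$ factor in the regime $\alpha\le\beta^{-1}$ arising from the $O(\log N)$ scales that contribute simultaneously near the threshold. The main obstacle I expect is the rigorous justification of the aligned-wedge coefficient estimates: controlling the deviation of the true curvelet from the tensor heuristic and establishing the super-polynomial decay away from the edge using only $C^\beta$ regularity of $f_2$ (via repeated integration by parts / non-stationary phase in the band-limited setting), together with making every constant uniform in the edge direction $\varphi$, its offset $c$, and the generators $f_1,f_2$ with $\|f_i\|_{C^\beta}\le\nu$.
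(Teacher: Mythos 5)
Your overall architecture coincides with the paper's: split off the jump part, bound the number of coefficients above a threshold $\varepsilon$ scale by scale, obtain a weak-$\ell^p$ bound with $p$ determined by whichever of the smooth or singular contributions dominates, and convert to the rearrangement via $|\theta^*_N|\lesssim N^{-1/p}$. The final exponents you obtain are the right ones. However, there is a genuine gap at the technical heart of the argument, namely your treatment of the edge term. You assert that only $O(1)$ ``aligned'' wedges per scale interact with the straight singularity and that the remaining orientations decay super-polynomially and can be absorbed into the smooth estimate. For finite $\beta$ this is false: the singular part of the fragment is (after peeling off derivatives) a distribution of the form $g\,\omega_j\,\delta_{\{x_1=0\}}$, and its Fourier mass in a wedge at angular distance $\varphi$ from the edge normal decays only polynomially, like $(1+2^{js(1-\alpha)}|\sin\varphi|)^{-(2\beta+1)}$, because the trace of $f_2$ along the edge is only $C^\beta$ (this is the content of the paper's Lemma~\ref{lem:essdistr} and Proposition~\ref{prop:stredge}). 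Consequently the number of orientations contributing above threshold $\varepsilon$ at scale $j$ is not $O(1)$ but grows like $\bigl(2^{-js(1+\alpha)/2}\varepsilon^{-1}\bigr)^{1/(1+\beta)}$, and summing over scales shows that for $\alpha<\beta^{-1}$ the edge contribution to the count is of order $\varepsilon^{-2/(1+\beta)}$, \emph{not} $\varepsilon^{-2\alpha/(1+\alpha)}$: your claim that the edge term lies in $w\ell^{2\alpha/(1+\alpha)}$ independently of $\beta$ is incorrect in that regime. Your final answer survives only because this larger count coincides in order with the smooth contribution, but verifying that the mis-aligned wedges do not do worse than this requires precisely the quantitative angular decay you have not established. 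Obtaining that decay is the hard part of the paper's proof: an induction on $\beta$ (Lemma~\ref{lem:induction}) that integrates by parts in the radial direction, at each step splitting the derivative of $g\omega_jH$ into a smoother edge fragment, a genuinely singular $\delta$-term along the edge, and a rescaled remainder, with separate anisotropic localization operators $\mathcal{L}_{J,1}$, $\mathcal{L}_{J,2}$ adapted to each piece. You flag this as ``the main obstacle'' but propose no mechanism to overcome it, and a non-stationary-phase argument alone will not produce decay beyond order $2\beta+1$ in the angle.

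Two smaller points. First, your count of ``$\asymp 2^{2js}$ curvelets at scale $j$ meeting $[-1,1]^2$'' for the smooth part is not literal, since the curvelets of $\curvesys$ are band-limited and have unbounded spatial support; the tail over distant translates must be controlled by the rapid spatial decay of the generators, which the paper does through the weights $L_K^{-2d}$ produced by integrating by parts against $\mathcal{L}_{J,1}$. Second, the mechanism you give for the logarithm (many scales active near threshold) is consistent with the paper's, where it arises because for $\alpha\le\beta^{-1}$ the per-scale count $\#\curveind_{j,\varepsilon}\lesssim\varepsilon^{-2/(1+\beta)}$ carries no decay in $j$ and must be summed over the $O(\log\varepsilon^{-1})$ active scales.
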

  \begin{proof}
  Let $\curveind_j$ denote the subset of the curvelet index set $\curveind$ corresponding to scale $j$. Further, given $\varepsilon>0$, let us
  define $\curveind_{\varepsilon}:=\Big\{ \mu\in\curveind : |\theta_\mu|>\varepsilon \Big\}$ and $\curveind_{j,\varepsilon}:=\Big\{ \mu\in\curveind_j : |\theta_\mu|>\varepsilon \Big\}$.
  According to \eqref{eq:apriori} there is a constant $\widetilde{C}>0$, independent of scale, such that
  \[
  |\theta_\mu| \le \widetilde{C}\|f\|_{\infty} 2^{-js(1+\alpha)/2} \le  \widetilde{C}\nu 2^{-js(1+\alpha)/2}.
  \]
  At scales $j> j_\varepsilon:=\frac{2 \log_2(\widetilde{C}\nu\varepsilon^{-1})}{s(1+\alpha)}$
  the coefficients thus satisfy $|\theta_\mu|<\varepsilon$ and the sets $\curveind_{j,\varepsilon}$ are empty. In particular $\# \curveind_{\varepsilon}=0$ in case
  $\varepsilon>\widetilde{C}\nu$ since then $j_\varepsilon<0$.
  If $j\le j_\varepsilon$ Proposition~\ref{prop:main_sequence}, which is stated and proved below, gives the estimate
  \begin{align*}
  \# \curveind_{j,\varepsilon}  \lesssim 2^{j\rho}  \varepsilon^{-2/(1+\beta)} \quad\text{ with }\quad  \rho=\frac{s \max\{\alpha\beta-1,0\}}{1+\beta}\ge 0.
  \end{align*}
  If $\alpha>\beta^{-1}$ we have $\rho>0$ and conclude
  \begin{align*}
  \# \curveind_{\varepsilon} = \sum_{j=0}^{\lfloor j_\varepsilon \rfloor} \# \curveind_{j,\varepsilon}
    \lesssim \sum_{j=0}^{\lfloor j_\varepsilon \rfloor} 2^{j\rho}  \varepsilon^{-2/(1+\beta)} \lesssim 2^{j_\varepsilon \rho} \varepsilon^{-2/(1+\beta)}
     = \varepsilon^{-\frac{2(\alpha\beta-1)}{(1+\beta)(1+\alpha)}} \varepsilon^{-2/(1+\beta)}  = \varepsilon^{-2/(1+1/\alpha)}.
  \end{align*}
  From here, a direct argument leads to $|\theta^*_N|^2\lesssim N^{-(1+1/\alpha)}$ for the $N$-th largest coefficient $\theta^\ast_N$.

  If $\alpha\le\beta^{-1}$ we have $\rho=0$ and the estimate
  \begin{align*}
  \# \curveind_{\varepsilon}  \lesssim \sum_{j=0}^{\lfloor j_\varepsilon \rfloor} \varepsilon^{-2/(1+\beta)} \lesssim (\log_2(\widetilde{C}\nu\varepsilon^{-1})+1) \varepsilon^{-2/(1+\beta)} = \log_2(2\widetilde{C}\nu\varepsilon^{-1}) \varepsilon^{-2/(1+\beta)}.
  \end{align*}
  Hence, there is a constant $C_2\ge 1$ such that $\# \curveind_{\varepsilon} \le C_2 \log_2(C_1\varepsilon^{-1}) (C_1\varepsilon^{-1} )^{2/(1+\beta)}$ with $C_1=\max\{1,2\widetilde{C}\nu\}$.
  It follows $|\theta^*_{N}|\le C_1\delta_N$ for the number $\delta_N$ which solves $N=C_2 \log_2(\delta_N^{-1}) \delta_N^{-2/(1+\beta)}$.
  In general $\delta_N$ cannot be calculated explicitly, wherefore we resort to an estimate.

  If $N\ge 2$ we have $\varepsilon_N:=N^{-\frac{1+\beta}{2}}\le \frac{1}{2}$ since $\beta\ge1$.
  Taking into account $C_2\ge1$ we conclude
  \[
  C_2 {\varepsilon_N}^{-2/(1+\beta)} \log_2(\varepsilon_N^{-1}) \ge N =  C_2 \delta_N^{-2/(1+\beta)} \log_2(\delta_N^{-1}),
  \]
  which in turn proves $\delta_N\ge \varepsilon_N=N^{-\frac{1+\beta}{2}}$. Therefore $\widetilde{\delta}_N\ge\delta_N$ for the solution $\widetilde{\delta}_N$ of
  \[
   N= C_2\widetilde{\delta}_N^{-2/(1+\beta)} \log_2(N^{\frac{1+\beta}{2}}).
  \]
  An explicit calculation yields
  $
  \widetilde{\delta}_N =(C_2\frac{1+\beta}{2})^{(1+\beta)/2}  N^{-(1+\beta)/2} (\log_2 N)^{(1+\beta)/2},
  $
  which proves the claim.
  \end{proof}

\noindent
The missing ingredient in the proof of Theorem~\ref{thm:mainappr2} is Proposition~\ref{prop:main_sequence}.

\begin{prop}\label{prop:main_sequence}
Let the parameters $\alpha\in[0,1)$, $s>0$, $\beta\in\N$, and $\nu>0$ be fixed.
Further, let $\curveind_j$ denote the curvelet indices at scale $j$. The sequence $\lb\theta_\mu\rb_{\mu\in\curveind_j}$ of coefficients $\theta_\mu=\langle f,\psi_\mu\rangle$ obeys
\[
\| \lb\theta_\mu\rb_{\mu\in\curveind_j} \|^{2/(1+\beta)}_{w\ell^{2/(1+\beta)}}\lesssim 2^{j\rho} \quad\text{with}\quad \rho=s\max\{\alpha\beta-1,0\}/(1+\beta)
\]
and an implicit constant independent of scale $j\in\N_0$ and $f\in\straight$.
\end{prop}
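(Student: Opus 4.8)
The plan is to prove the equivalent pointwise statement that, for every fixed scale $j$ and every threshold $\varepsilon>0$,
\[
\#\big\{\mu\in\curveind_j:|\theta_\mu|>\varepsilon\big\}\lesssim 2^{j\rho}\,\varepsilon^{-2/(1+\beta)}
\]
uniformly in $f\in\straight$; since the left-hand side is exactly $\|\{\theta_\mu\}_{\mu\in\curveind_j}\|_{w\ell^{2/(1+\beta)}}^{2/(1+\beta)}\varepsilon^{-2/(1+\beta)}$, such a bound for all $\varepsilon$ gives the claim. First I would write $f=f_1+f_2\chi_{\mathcal{D}}$ with $\mathcal{D}=H(\varphi_0,c)$ a half-space, so that the only singular behaviour is the jump of $f_2$ across the straight edge $\partial\mathcal{D}$. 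The globally smooth summand $f_1$ is a compactly supported $C^\beta$ function, and its curvelet coefficients are controlled by the standard molecule/vanishing-moment estimates (Lemma~\ref{thm:curvmol}); these produce a scale-$j$ weak-$\ell^{2/(1+\beta)}$ quasinorm bounded uniformly in $j$, hence $\lesssim 2^{j\rho}$ because $\rho\ge0$. Since the frame directions $\ell\varphi_j$ partition all angles, only $O(1)$ wedges have angular support meeting a fixed $O(\varphi_j)$-neighbourhood of the normal of $\partial\mathcal{D}$; I call these the aligned wedges, and up to relabelling I take them to cluster around $\ell=0$, the normal being essentially horizontal.

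The core is an anisotropic coefficient estimate. On the Fourier side $\theta_{j,\ell,k}$ equals $2^{js(1+\alpha)/2}$ times a Fourier-series coefficient of $\widehat f\,W_{j,\ell}$ on the rectangle $\Xi_J$, so its decay in $k=(k_1,k_2)$ is governed by integration by parts in the rescaled variables, normal scale $2^{-js}$ and tangential scale $2^{-js\alpha}$. For the jump one has $\widehat{f_2\chi_{\mathcal{D}}}(\xi)\approx h(\xi_2)/(2\pi i\xi_1)$ near the $\xi_1$-axis, where the tangential profile $h(\xi_2)=\widehat{f_2(0,\cdot)}(\xi_2)$ satisfies $|h(\xi_2)|\lesssim(1+|\xi_2|)^{-\beta}$ by the $C^\beta$ regularity of $f_2$ along the edge. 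Integrating by parts repeatedly in the normal frequency — where $1/\xi_1$ is a smooth symbol of size $2^{-js}$ whose rescaled derivatives stay bounded on the wedge — yields arbitrarily fast decay in $k_1$, localising the significant coefficients to $k_1=O(1)$, i.e.\ to curvelets straddling the edge. The same tangential decay, via the angular geometry $|\xi_2|\sim\ell\,2^{js\alpha}$ on wedge $\ell$, contributes a factor $\ell^{-\beta}$ that renders the non-aligned wedges summable (as $\beta\ge1$) and subordinate to the aligned ones; the smooth bulk of $f_2$ away from the edge gives only faster-decaying terms by the same argument with the symbol $1/\xi_1$ replaced by $\widehat{f_2}$.

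What remains is to count the surviving coefficients, which occupy the $O(1)$ aligned wedges with $k_1=O(1)$ and $|k_2|\lesssim 2^{js\alpha}$, one per tangential tile covering the length-$O(1)$ edge inside $[-1,1]^2$. There are $\sim 2^{js\alpha}$ of them, and each has modulus $\lesssim 2^{-js(1+\alpha)/2}$ by the a priori bound~\eqref{eq:apriori}; this block is extremal for a constant jump $f_2\equiv\nu$. Its weak-$\ell^p$ quasinorm with $p=2/(1+\beta)$ is therefore
\begin{align*}
\big\|\{\theta_\mu\}_{\text{edge block}}\big\|_{w\ell^{p}}^{p}\lesssim 2^{js\alpha}\big(2^{-js(1+\alpha)/2}\big)^{p}
=2^{js\alpha}\,2^{-js(1+\alpha)/(1+\beta)}
=2^{\,js(\alpha\beta-1)/(1+\beta)}.
\end{align*}
When $\alpha\beta\ge1$ this is exactly $2^{j\rho}$; when $\alpha\beta<1$ it is $\le 1=2^{j\rho}$, and the smooth and off-edge contributions are likewise $\lesssim 1$, so the full scale-$j$ quasinorm obeys the asserted bound.

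I expect the main obstacle to be the coefficient estimate of the second paragraph: making rigorous, uniformly in the edge orientation and in $f\in\straight$, both the rapid $k_1$-decay coming from the jump and the $\ell^{-\beta}$ angular decay coming from the tangential $C^\beta$ profile, so that precisely the $\sim 2^{js\alpha}$ straddling coefficients remain and the exponent $\rho=s\max\{\alpha\beta-1,0\}/(1+\beta)$ emerges from the elementary weak-$\ell^p$ count. The delicate secondary point is the separate verification that the globally smooth pieces (both $f_1$ and the bulk of $f_2$) never overtake this edge block, which is what pins down the constant $\rho=0$ regime $\alpha\le\beta^{-1}$ and thereby the threshold of Theorem~\ref{thm:mainappr1}.
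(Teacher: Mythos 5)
Your overall architecture is sound and matches the paper's in spirit: split off the globally smooth part, identify the coefficients straddling the edge as the extremal configuration, and observe that $2^{js\alpha}$ coefficients of modulus $2^{-js(1+\alpha)/2}$ produce exactly the exponent $\rho=s\max\{\alpha\beta-1,0\}/(1+\beta)$. The smooth contribution is indeed $O(1)$ (by interpolating the $L^2$ bound $\sum_\mu|\langle f_1,\psi_\mu\rangle|^2\lesssim 2^{-2js\beta}$ against the count $2^{2js}$ of relevant indices, using that $p(1+\beta)=2$), and your final arithmetic is correct.

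However, there is a genuine gap in the second paragraph, and it is not merely a deferred technicality: the claimed angular decay is quantitatively insufficient. For the wedge at angle $\ell\varphi_j$ to the edge normal, the number of translates straddling the edge is not $O(1)$ per tangential tile but grows like $\ell$ per cube of side $2^{-js\alpha}$ (a tilted curvelet of length $2^{-js\alpha}$ crosses the edge over an interval of length $\sim\ell\,2^{-js}$ in its short direction). If you combine this count with your asserted per-coefficient decay $M\ell^{-\beta}$, $M=2^{-js(1+\alpha)/2}$, the level-set count becomes $\#\{|\theta|>\varepsilon\}\sim\sum_{\ell<(M/\varepsilon)^{1/\beta}}\ell=(M/\varepsilon)^{2/\beta}$ per cube, and $\varepsilon^{2/(1+\beta)}(M/\varepsilon)^{2/\beta}\to\infty$ as $\varepsilon\to0$ since $2/(1+\beta)<2/\beta$; the weak-$\ell^{2/(1+\beta)}$ bound does not close. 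The paper needs, and proves, the stronger per-wedge energy bound $\ell_J^{-2\beta-1}$ (not $\ell_J^{-2\beta}$) for the line-Dirac contribution $g\,\omega_j\,\delta_{\{x_1=0\}}$ (Lemma~\ref{lem:essdistr}, Proposition~\ref{prop:stredge}); it is precisely this extra power of $\ell_J$, balanced against the $\sim\ell_J$ straddling translates in \eqref{yyy}, that makes the sum over wedges in \eqref{intermediate} come out to the aligned-wedge contribution. Relatedly, your ``arbitrarily fast decay in $k_1$'' via integration by parts against $1/\xi_1$ degrades on tilted wedges, where the curvelet's normal direction mixes in a tangential derivative weighted by $\sin(\phiJ)\sim\ell\,2^{-js(1-\alpha)}$; the paper compensates with the $\ell_J^{-2}$-weighted operator $\mathcal{L}_{J,2}$ in \eqref{eq:diffop2}.

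Two further structural points. First, the paper does not work globally with $\widehat{f_2\chi_{\mathcal D}}$ but first localizes $f$ into fragments $f_Q=f\omega_Q$ on cubes of side $2^{-js\alpha}$; this spatial localization is what supplies the gains $2^{-js\alpha m_2}$ under tangential differentiation (Lemmas~\ref{lem:wedgesmooth} and \ref{lem:refine2}) and the per-cube translate counts. Second, your identity $2\pi i\xi_1\widehat{f_2\chi}=\widehat{(\partial_1f_2)\chi}+h(\xi_2)$ produces a remainder of the same singular type with one derivative less; extracting the full $(1+|\xi_2|)^{-\beta}$ decay of the boundary profile requires iterating this $\beta$ times, which is the induction on $\beta$ in Lemma~\ref{lem:induction} that your sketch omits. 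Until the sharp $\ell_J^{-2\beta-1}$ angular decay and the $\sim\ell_J$ straddling count are both in place, the reduction to the $2^{js\alpha}$-coefficient extremal block is not justified.
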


\noindent
For the proof of Proposition~\ref{prop:main_sequence} we decompose $f$ into fragments, a technique pioneered in \cite{CD04}.
To this end, let $\mathcal{Q}_j$ at every scale $j\in\N_0$ denote the collection of cubes
\[
Q:=Q^{(j)}_{(k_1,k_2)}:=[2^{-js\alpha}(k_1-1),2^{-js\alpha}(k_1+1)]\times[2^{-js\alpha}(k_2-1),2^{-js\alpha}(k_2+1)], \quad (k_1,k_2)\in\Z^2.
\]
Further, let $\omega\in C_0^\infty([-1,1]^2)$ be a nonnegative window vanishing outside the square $[-1,1]^2$, such that
the family $\{\omega_Q\}_{Q\in\mathcal{Q}_j}$ of functions $\omega_Q(x):=\omega(2^{js\alpha}x_1-k_1, 2^{js\alpha}x_2-k_2)$ is a partition of unity, i.e.,
it has the property $\sum_{Q\in\mathcal{Q}_j} \omega_Q=1$.
Following \cite{CD04} we then decompose $f=\sum_{Q} f_Q$ into the fragments
\begin{align}\label{eqdef:frag}
f_Q:=f \omega_Q  \quad, Q\in\mathcal{Q}_j.
\end{align}
Note that $\supp f_Q\subseteq Q$
and that the size of the squares $Q\in \mathcal{Q}_j$ corresponds to the `essential' length of the curvelets at scale $j$.
Therefore $\langle f,\psi_\mu \rangle \approx \langle f_Q,\psi_\mu \rangle$ for a curvelet $\psi_\mu$ at the location of the cube $Q$.

For every $Q\in\mathcal{Q}_j$ we now investigate the sparsity of the sequence
\begin{align}\label{eq:Qsequence}
\theta_{Q}:=\lb\langle f_Q, \psi_\mu\rangle\rb_{\mu\in\curveind_j}\,.
\end{align}
Clearly, due to $\supp f\subseteq [-1,1]^2$ we only need to consider cubes $Q\in \mathcal{Q}_j$ which meet the square $[-1,1]^2$. Of these relevant cubes,
let us collect those which intersect the straight edge in $\mathcal{Q}_j^{1}$, the others in $\mathcal{Q}_j^{0}$. The associated fragments $f_Q$ will be
called \emph{edge fragments} and \emph{smooth fragments}, respectively.
The main result concerning the sparsity of \eqref{eq:Qsequence} is Proposition~\ref{prop:frag}.

\begin{prop}\label{prop:frag}
Let $\alpha\in[0,1)$, $s>0$, $\beta\in\N$, and $\nu>0$ be fixed.
Let $Q\in\mathcal{Q}_j$, $j\in\N_0$, be a square and $\theta_Q$ the curvelet coefficient sequence of the fragment $f_Q=f\omega_Q$ defined in \eqref{eq:Qsequence}.
There is a constant $C>0$ independent of $j\in\N_0$ and $Q\in\mathcal{Q}_j$ such that for all $f\in\straight$ the following estimates hold true.
\begin{enumerate}
\item[(i)]
If $Q\in\mathcal{Q}_j^{0}$ the sequence $\theta_Q$ satifies
$\| \theta_Q \|^{2/(1+\beta)}_{w\ell^{2/(1+\beta)}}\le C \cdot 2^{-2js\alpha}$.
\item[(ii)]
If $Q\in\mathcal{Q}_j^{1}$ the sequence $\theta_Q$ satisfies
$\| \theta_Q \|^{2/(1+\beta)}_{w\ell^{2/(1+\beta)}}\le C \cdot 2^{-js\alpha} 2^{j\rho}$ with
$\rho=s\max\{\alpha\beta-1,0\}/(1+\beta)$.
\end{enumerate}
\end{prop}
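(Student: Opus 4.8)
The plan is to convert coefficient decay into a weak-$\ell^p$ bound, where throughout $p=2/(1+\beta)\le1$. For a fixed scale $j$ and fragment $Q$ I would work on the Fourier side, using that $\langle f_Q,\psi_{j,\ell,k}\rangle=\langle\widehat{f_Q}W_{j,\ell},u_{j,\ell,k}\rangle$ and that $\{u_{j,\ell,k}\}_{k}$ is an orthonormal basis of $L^2(\Xi_J)$, exactly as in the proof of the frame property of $\curvesys$. Parseval in $k$ then gives $\sum_k|\langle f_Q,\psi_{j,\ell,k}\rangle|^2=\|\widehat{f_Q}W_{j,\ell}\|_2^2$, and restricting the Calderón condition \eqref{eq:CalderonW} to a single scale bounds the total scale-$j$ energy of the fragment by $B\|f_Q\|_2^2\lesssim\nu^2 2^{-2js\alpha}$. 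The remaining task in each case is to decide how many coefficients are genuinely significant and how large they can be; the weak-$\ell^p$ quasinorm is then obtained by estimating $\#\{\mu\in\curveind_j:|\theta_\mu|>\varepsilon\}$ and optimising in $\varepsilon$.

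For the smooth fragments (i) the point is that $f_Q=f\omega_Q$ is genuinely $C^\beta$, while the window $\omega_Q$ is $C^\infty$ and localised at scale $2^{-js\alpha}$, whereas the curvelet at scale $j$ lives at frequency $\sim2^{js}$. Integrating by parts $\beta+m$ times in the radial frequency direction, with at most $\beta$ derivatives allowed to fall on $f$ and the remaining $m$ on $\omega_Q$, yields a gain of $2^{-js}$ and a loss of $2^{js\alpha}$ per derivative, i.e.\ a net factor $2^{-js(1-\alpha)}$ that can be iterated arbitrarily often since $\omega_Q$ is smooth. For $\alpha<1$ this produces super-algebraic decay of $|\widehat{f_Q}(\xi)|$, and hence of the coefficients, in the frequency gap $2^{js(1-\alpha)}$. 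Consequently even the full $\ell^p$-quasinorm of $\theta_Q$ is $\lesssim 2^{-2js\alpha}$ with room to spare, which gives (i).

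For the edge fragments (ii) I would first split $f_Q$ into its smooth additive part $f_1\omega_Q$, controlled by (i) together with the $p$-triangle inequality for $\|\cdot\|_{w\ell^p}^p$, and the jump part $(f_2\omega_Q)\chi_H$ carrying the straight discontinuity. Here straightness is decisive: in the Fourier domain the singularity of $\chi_H$ concentrates on the single line normal to the edge, smeared by the window into a slab of width $\sim2^{js\alpha}$. A wedge $W_{j,\ell}$ at radius $2^{js}$ reaches normal-frequency $\sim2^{js}\sin(\varphi_{j,\ell}-\varphi_\ast)$, where $\varphi_\ast$ is the angle of the edge normal, so it overlaps this slab only for orientations within $O(1)$ multiples of $\varphi_j$ of $\varphi_\ast$; every other orientation sees only the super-small smooth tail. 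For the $O(1)$ aligned orientations the curvelet sitting on the edge saturates the a priori bound $|\theta_\mu|\lesssim\nu 2^{-js(1+\alpha)/2}$ from \eqref{eq:apriori}, while tangential integration by parts (using the $C^\beta$ smoothness of the amplitude along the straight edge) together with the Schwartz decay of the generators $a_j$ from Lemma~\ref{thm:curvmol} forces rapid decay in both the tangential and the normal translation index. Thus only $O(1)$ coefficients are significant, each of size $\lesssim\nu 2^{-js(1+\alpha)/2}$, and
\[
\|\theta_Q\|_{w\ell^p}^p\lesssim\big(2^{-js(1+\alpha)/2}\big)^{2/(1+\beta)}=2^{-js(1+\alpha)/(1+\beta)}\le 2^{-js\alpha}2^{j\rho},
\]
the last inequality being exactly the content of $\rho=s\max\{\alpha\beta-1,0\}/(1+\beta)$, with equality precisely when $\alpha\ge\beta^{-1}$.

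The hard part will be the edge case, specifically making the two decay statements for the jump coefficients rigorous: the orientation decay away from the edge normal and the translation decay away from the edge. Both rest on combining the one-dimensional (line) frequency concentration of a straight edge, the $C^\beta$ smoothness of the amplitude in the tangential direction, and the oscillation and vanishing-moment structure of the curvelets across the edge, with enough quantitative control to bound the number of significant coefficients by $O(1)$ and to pin down the exponent $\rho$. The straightness of the edge is exactly what keeps the number of active orientations at $O(1)$; for a curved edge the tangential direction would eventually feel the curvature, spreading the energy over $\sim2^{js(1-\alpha)}$ orientations, which is the obstruction already quantified in Theorem~\ref{thm:bound1}.
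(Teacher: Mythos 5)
Your overall architecture (the smooth/edge dichotomy, radial integration by parts trading a gain of $2^{-js}$ against a loss of $2^{js\alpha}$, angular localization of the straight edge in frequency, translation decay from the generators) matches the paper's, and your closing arithmetic $2^{-js(1+\alpha)/(1+\beta)}\le 2^{-js\alpha}2^{j\rho}$ is correct. But both parts contain a genuine gap. In (i), the claimed super-algebraic decay is false: each integration by parts produces \emph{both} Leibniz terms, and once $\beta$ derivatives have landed on the amplitude $g\in C^\beta$ the iteration terminates, so the total gain is only $2^{-js\beta}$ in amplitude (this is the content of Lemma~\ref{lem:bas2ic}, proved by an induction that spends $\lceil 1/(1-\alpha)\rceil$ window-derivatives for each amplitude-derivative). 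With the correct gain there is no ``room to spare'': one must count the $\sim 2^{2js(1-\alpha)}$ scale-$j$ coefficients attached to $Q$ and interpolate $\ell^2\to\ell^{2/(1+\beta)}$, and the exponents balance \emph{exactly} to give $2^{-2js\alpha}$.

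In (ii), the assertion that only $O(1)$ coefficients are significant does not hold and cannot yield the weak-$\ell^p$ bound. The jump contributes the distribution $g\omega_j\delta_{\{x_1=0\}}$, whose Fourier transform is $\widehat{h}_j\circ\pi_2$ with $h_j=(g\omega_j)|_{x_1=0}$ only $C^\beta$ on an interval of length $2^{-js\alpha}$; a wedge at angular offset $\ell\varphi_j$ from the edge normal therefore sees a tail of relative size $\ell^{-\beta-1/2}$ in $L^2$ — polynomially, not super-algebraically, small — while the number of translations per such orientation interacting with the edge grows like $\ell$. Consequently the number of coefficients above threshold $\varepsilon$ grows like $\varepsilon^{-2/(1+\beta)}2^{-js(1+\alpha)/(1+\beta)}$ as $\varepsilon\to0$, and the weak-$\ell^p$ bound is obtained only by summing $\min\bigl\{\ell,\;\varepsilon^{-2}2^{-js(1+\alpha)}\ell^{-2\beta-1}\bigr\}$ over \emph{all} orientations and optimizing the crossover, which is precisely where the exponent $\rho$ comes from. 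Moreover, getting the exponent $-2\beta-1$ (rather than the $\beta=0$ exponent $-1$) requires extracting the tangential smoothness of the amplitude from the product $g\omega_jH$; the paper does this via the inductive decomposition of Lemma~\ref{lem:induction}, in which each radial integration by parts peels off a trace term $\cos(\phiJplus)\,\delta_{\{x_1=0\}}\omega_jg$ on the edge and lowers the regularity of the amplitude by one. Your sketch names the right ingredients but omits this quantitative balance, which is the actual content of part~(ii).
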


\noindent
A direct consequence of Proposition~\ref{prop:frag}, whose proof is given later on, is Proposition~\ref{prop:main_sequence}.

\begin{proof}[Proof of Proposition~\ref{prop:main_sequence}]
We have the decomposition $\left\lb\theta_\mu\right\rb_{\mu\in\curveind_j}=\sum_{Q\in\mathcal{Q}_j} \theta_Q$.
Since $0<2/(1+\beta)\le1$, the $p$-triangle inequality with $p=2/(1+\beta)$ yields
\begin{align*}
\|\left\lb\theta_\mu\right\rb_{\mu\in\curveind_j}\|^{2/(1+\beta)}_{w\ell^{2/(1+\beta)}} \le \sum_{Q\in\mathcal{Q}_j} \|\theta_Q\|^{2/(1+\beta)}_{w\ell^{2/(1+\beta)}}
\le \big(\#\mathcal{Q}_j^{0}\big) \cdot \sup_{Q\in\mathcal{Q}_j^{0}} \|\theta_Q\|^{2/(1+\beta)}_{w\ell^{2/(1+\beta)}}
+ \big(\#\mathcal{Q}_j^{1}\big) \cdot \sup_{Q\in\mathcal{Q}_j^{1}} \|\theta_Q\|^{2/(1+\beta)}_{w\ell^{2/(1+\beta)}}.
\end{align*}
Since $f$ is supported in $[-1,1]^2$, there are constants $C_0,C_1>0$, independent of scale, such that
\begin{align*}
\#\mathcal{Q}_j^{0}\le C_{0} 2^{2js\alpha} \quad\text{and}\quad \#\mathcal{Q}_j^{1}\le C_{1} 2^{js\alpha}.
\end{align*}
Utilizing the estimates of Proposition~\ref{prop:frag}, we thus obtain with $\rho=s\max\{\alpha\beta-1,0\}/(1+\beta)\ge 0$
\begin{align*}
\|\left\lb\theta_\mu\right\rb_{\mu\in\curveind_j}\|^{2/(1+\beta)}_{w\ell^{2/(1+\beta)}}
\lesssim C_0 + C_1 2^{j\rho} \lesssim 2^{j\rho}.
\tag*{\qedhere}
\end{align*}
\end{proof}

\noindent
In the remainder of this section we are concerned with
the proof of Proposition~\ref{prop:frag}.
Hereby, we restrict to functions $f\in\straight$ of the simple form
\begin{align}\label{eqdef:cartsimp}
f=g \chi_{H(\varphi,c)}
\end{align}
with $g\in C_0^\beta([-1,1]^2,\nu)$ and $H(\varphi,c)\in\stredge$ a half-space determined by $\varphi\in[0,2\pi)$ and $c\in\R$.
Note that for a general cartoon $f=f_{1}+f_{2}\chi_{H(\varphi,c)}$ both components $\widetilde{f}_{1}:=f_{1}$ and $\widetilde{f}_{2}:=f_{2}\chi_{H(\varphi,c)}$ have the form~\eqref{eqdef:cartsimp}, due to
the representation $f_{1}=f_{1}\chi_{H(0,-1)}$.

Hence,
if the estimates of Proposition~\ref{prop:frag} are proven for elements of type~\eqref{eqdef:cartsimp},
they are then also true for all $f\in\straight$. This is a consequence of the estimate
$2^{-2js\alpha} \le 2^{-js\alpha} 2^{j\rho}$ and
\[
\|\theta_Q  \|^{2/(1+\beta)}_{w\ell^{2/(1+\beta)}} \le \| \lb\langle \widetilde{f}_{1}\omega_Q, \psi_{\mu} \rangle\rb_{\mu\in\curveind_j}  \|^{2/(1+\beta)}_{w\ell^{2/(1+\beta)}} + \|\lb\langle \widetilde{f}_{2}\omega_Q, \psi_\mu\rangle\rb_{\mu\in\curveind_j}  \|^{2/(1+\beta)}_{w\ell^{2/(1+\beta)}}.
\]
Let $Q\in\mathcal{Q}_j$ be a cube at scale $j\in\N_0$
with center $M_Q:=2^{-js\alpha}(k_1,k_2)\in\R^2$,
which nontrivially intersects the cartoon domain $[-1,1]^2$.
If $Q\in\mathcal{Q}_j^{0}$ we put $P_Q:=M_Q$.
If $Q\in\mathcal{Q}_j^{1}$ let us fix a point $P_Q\in Q$ on the edge curve $\{(x_1,x_2)\in\R^2 : x_1\cos(\varphi) - x_2\sin(\varphi)=c\}$ of the cartoon such that
$\chi_{H(\varphi,c)}=H(R_{\varphi}(x-P_Q))$, with rotation matrix \eqref{eq:matrixrot} and where
\begin{align}\label{eq:bivstep}
H:=\mathfrak{H} \otimes 1 \quad\text{with the Heaviside function}\quad
\mathfrak{H}(t)=\begin{cases} 0\,, \quad& \text{if }t<0, \\ 1\,, & \text{if }t\ge 0.\end{cases}
\end{align}
Putting $\widetilde{g}_Q(x):=g(R^{-1}_\varphi x + P_Q)$ and $\widetilde{\omega}_Q(x):=\omega_Q(R^{-1}_\varphi x + P_Q)$,
the fragment $f_Q$ can then be written as
$f_Q(x)=\fqtilde\big(R_{\varphi} ( x - P_Q )\big)$
with a function $\fqtilde$ of the form
\begin{align}\label{eq:stdedge}
(i) \quad \fqtilde:=\widetilde{g}_Q\widetilde{\omega}_Q \,,\quad
\text{if $Q\in \mathcal{Q}_j^{0} $}, \qquad\quad\text{or} \qquad\quad (ii) \quad  \fqtilde:=\widetilde{g}_Q \widetilde{\omega}_Q H \,,\quad  \text{if $Q\in \mathcal{Q}_j^{1}$}.
\end{align}
On the Fourier side we have
\begin{align*}
\widehat{f}_Q(\xi)=\widehat{\fqtilde}(R_{\varphi} \xi) \exp\big(-2\pi i\langle P_Q , \xi \rangle\big).
\end{align*}
Now, let $\psi_\mu=\psi_{j,\ell,k}\in\curvesys$ be a fixed curvelet and recall
$
\widehat{\psi}_{j,\ell,k}=W_{j,\ell} u_{j,\ell,k}
$
with the real-valued wedge functions $W_{j,\ell}(\cdot)=W_{j,0}(R_{j,\ell}\cdot)$ from \eqref{eq:suppfunctions} and the functions
\[
u_{j,\ell,k}(\cdot)=2^{-js(1+\alpha)/2} \exp(2\pi i \langle R^{-1}_{j,\ell}A^{-1}_jk  , \cdot \rangle ).
\]
There are unique $k_{\bullet}\in\Z^2$ and $\Delta k\in[0,1)^2$ such that $P_Q=R^{-1}_{j,\ell}A^{-1}_{j}(k_{\bullet}+\Delta k)$.
Further, we can express $\varphi$ as a `fractional multiple' of the angle $\varphi_j$ defined in \eqref{eqdef:fundangle}, writing
$\varphi=(\ell_{\bullet} - \Delta\ell) \varphi_j$ with unique $\ell_{\bullet}\in\Z$ and $\Delta\ell\in[0,1)$.
It follows for the curvelet coefficient $\langle f_Q, \psi_{j,\ell,k}\rangle=\langle \widehat{f}_Q, \widehat{\psi}_{j,\ell,k}\rangle$
\begin{align*}
\langle f_Q, \psi_{j,\ell,k}\rangle
&= \int_{\R^2} \widehat{\fqtilde}\big(R_{j,\ell_{\bullet} - \Delta\ell} \xi\big) \exp\big(-2\pi i\langle R^{-1}_{j,\ell}A^{-1}_{j}(k_{\bullet}+\Delta k) , \xi \rangle\big)
W_{j,\ell}(\xi) \overline{u_{j,\ell,k}(\xi)} \,d\xi \\
&= \int_{\R^2} \widehat{\fqtilde}(\xi) W_{j,\ell-\ell_{\bullet}+\Delta\ell}(\xi) \overline{u_{j,\ell-\ell_{\bullet}+\Delta\ell,k+k_{\bullet}+\Delta k_{\bullet}}(\xi)} \,d\xi.
\end{align*}
Relabelling the indices $({\bf l},{\bf k}):=([\ell-\ell_{\bullet}],k+k_{\bullet})$, where $[\ell-\ell_{\bullet}]\in\{-L_j^-,\ldots,L_j^+\}$ is the unique number
obtained by shifting $\ell-\ell_{\bullet}\in\Z$ by integer multiples of $L_j=\pi \varphi^{-1}_j$ (see \eqref{eq:Lj}), we can write
\begin{align}\label{eq:reducedform}
\langle f_Q, \psi_{j,\ell,k}\rangle
&= \int_{\R^2} \widehat{\fqtilde}(\xi) W_{j,{\bf l}+\Delta\ell}(\xi) \overline{u_{j,{\bf l}+\Delta\ell,{\bf k}+\Delta k}(\xi)} \,d\xi.
\end{align}
To estimate the integral~\eqref{eq:reducedform} we need knowledge about the Fourier localization of the functions $\fqtilde$.
This investigation is carried out in the next two subsections.

\subsection{Fourier Analysis of Standard Fragments}
\label{ssec:FourierStd}

The Fourier analysis of the functions $\fqtilde$, $Q\in\mathcal{Q}_j$, from \eqref{eq:stdedge} is conducted in a generic setting, independent of the concrete cube $Q$.
We assume $\alpha\in[0,1)$, $\beta\in\N_0$, and let $\kappa,\nu,\tilde{\nu}>0$ be fixed parameters. Then we consider functions $f_j$, $j\in\N_0$,
called standard fragments, defined by
\begin{align}\label{eq:stdfrags}
(i) \quad f_j:=g \omega_j \;, \qquad\text{or}\qquad (ii) \quad f_j:=g \omega_j H \,,
\end{align}
where $H$ is the step function \eqref{eq:bivstep},
$g\in C_0^\beta(\kappa[-1,1]^2,\nu)$ and $\omega_j:=\omega(2^{js\alpha}\cdot)$ with
$\omega\in C^\infty(\R^2) \cap C_0^\beta(\kappa[-1,1]^2,\tilde{\nu})$.
For every $Q\in\mathcal{Q}_j$ the corresponding fragment $\fqtilde$
is of the form \eqref{eq:stdfrags} with specific functions $g$ and $\omega$, namely $g=\widetilde{g}_Q$ and $\omega=\widetilde{\omega}_Q(2^{-js\alpha}\cdot)$
(compare to \eqref{eq:stdedge}).
Note that the parameters $\kappa,\nu,\tilde{\nu}>0$ can be chosen simultaneously for all $Q\in\mathcal{Q}_j$, e.g.\ $\kappa=2\sqrt{2}$, and $\tilde{\nu},\,\nu>0$ chosen suitably depending
solely on $f\in\straight$ and the partition of unity $\lb \omega_Q \rb_Q$ utilized in \eqref{eqdef:frag}.
Since the results of this subsection
are valid uniformly for all choices of $g$ and $\omega$, as long as
they fulfill the specifications in accordance with $\kappa,\nu,\tilde{\nu}>0$, they hence apply to all fragments $\fqtilde$.

The investigation starts with an elementary lemma, where $\mathcal{I}_j$, $j\in\N_0$, denote the dyadic intervals introduced in \eqref{eqdef:dyintervals}.

\begin{lemma}\label{lem:bas1ic}
Let $s>0$ be fixed and for $j\in\N_0$ let $f_j$ be fragments of the form~\eqref{eq:stdfrags}.
Then there exists a constant $C>0$ independent of $j\in\N_0$ and the
concrete choice of the functions $g$ and $\omega$ in~\eqref{eq:stdfrags} such that
for every $p\in\N_0$ and $\varphi\in [-\pi,\pi)$
\begin{align*}
\int_{\mathcal{I}_{p}} |\widehat{f_j}(r,\varphi)|^2 \,dr \le C \varepsilon_{j,p}^2(\varphi) 2^{-ps} 2^{-2js\alpha} \|g\|^2_{\infty} \|\omega\|_2^2
\end{align*}
with functions $\varepsilon_{j,p}:[-\pi,\pi)\rightarrow \R$
satisfying
$\sum_{p\in\N_0} \int_{-\pi}^{\pi} \varepsilon_{j,p}^2(\varphi) \,d\varphi\le 1$.
\end{lemma}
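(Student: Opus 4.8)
The plan is to \emph{define} the angular weights $\varepsilon_{j,p}$ directly from the radial energy of $\widehat{f_j}$, so that the claimed estimate holds by construction and the whole content of the lemma collapses to verifying the single normalization $\sum_{p}\int_{-\pi}^{\pi}\varepsilon_{j,p}^2(\varphi)\,d\varphi\le1$. Concretely, I would fix a universal normalizing constant $K>0$ (determined below, independent of $j$, $g$, $\omega$) and set
\[
\varepsilon_{j,p}^2(\varphi):=\frac{2^{ps}\,2^{2js\alpha}}{K\,\|g\|_\infty^2\,\|\omega\|_2^2}\int_{\mathcal{I}_p}|\widehat{f_j}(r,\varphi)|^2\,dr,
\]
so that the asserted inequality holds with equality and with $C=K$. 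Note that no smoothness of $g$ or $\omega$ enters; the $C^\beta$ hypotheses are irrelevant here (they will be exploited only in the finer analysis that follows), which is consistent with the lemma being elementary.

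The first ingredient is an energy bound on the fragment. In both cases of \eqref{eq:stdfrags} one has the pointwise estimate $|f_j|\le\|g\|_\infty|\omega_j|$, since $|H|\le1$; hence $\|f_j\|_2\le\|g\|_\infty\|\omega_j\|_2$, and because $\omega$ is supported in the fixed box $\kappa[-1,1]^2$, also $\|f_j\|_1\lesssim\|g\|_\infty\|\omega_j\|_1\lesssim\|g\|_\infty\|\omega_j\|_2$ by Cauchy--Schwarz. As $\omega_j=\omega(2^{js\alpha}\cdot)$ is a two-dimensional dilate, $\|\omega_j\|_2^2=2^{-2js\alpha}\|\omega\|_2^2$, and therefore $\|f_j\|_2^2\le 2^{-2js\alpha}\|g\|_\infty^2\|\omega\|_2^2$. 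This identifies the scale factor on the right-hand side of the lemma with (an upper bound for) the total Plancherel energy $\|f_j\|_2^2$.

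For the normalization I would pass to polar coordinates and use that on each high corona the radial variable is comparable to $2^{ps}$. For $p\ge1$ one has $r\ge c_0 2^{s(p-1)}$ on $\mathcal{I}_p$, where $c_0=2^{-s}/(3\pi)$ is the fixed constant from \eqref{eqdef:dyintervals}, whence $2^{ps}\le (2^s/c_0)\,r$ and thus
\[
2^{ps}\int_{\mathcal{I}_p}\!\int_{-\pi}^{\pi}|\widehat{f_j}(r,\varphi)|^2\,d\varphi\,dr\le\frac{2^s}{c_0}\int_{\mathcal{I}_p}\!\int_{-\pi}^{\pi}|\widehat{f_j}(r,\varphi)|^2\,r\,d\varphi\,dr.
\]
Summing over $p\ge1$ and using that the $\mathcal{I}_p$ have bounded overlap (each $r>0$ lies in at most three of them, since the condition $r\in\mathcal{I}_p$ confines $p$ to an interval of length two), the right-hand side is controlled by a constant multiple of $\int_0^\infty\!\int_{-\pi}^{\pi}|\widehat{f_j}|^2\,r\,d\varphi\,dr=\|f_j\|_2^2$ by Plancherel. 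The base corona $p=0$ must be handled separately, because there $r$ can be close to $0$ and the factor $r$ is not available; here I would instead use $\|\widehat{f_j}\|_\infty\le\|f_j\|_1$ to get $\int_{\mathcal{I}_0}\!\int_{-\pi}^{\pi}|\widehat{f_j}|^2\,d\varphi\,dr\lesssim\|f_j\|_1^2\lesssim\|g\|_\infty^2 2^{-4js\alpha}\|\omega\|_2^2$, which is dominated by $2^{-2js\alpha}\|g\|_\infty^2\|\omega\|_2^2$. Combining the two contributions yields $\sum_{p}\int_{-\pi}^{\pi}2^{ps}\int_{\mathcal{I}_p}|\widehat{f_j}|^2\,dr\,d\varphi\le K'\,2^{-2js\alpha}\|g\|_\infty^2\|\omega\|_2^2$ with $K'$ depending only on $s$ and $\kappa$; taking $K:=K'$ in the definition of $\varepsilon_{j,p}$ gives both the estimate (with $C=K'$) and $\sum_{p}\int_{-\pi}^{\pi}\varepsilon_{j,p}^2(\varphi)\,d\varphi\le1$, as required. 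I do not anticipate a genuine obstacle: the only points needing care are the bounded-overlap count for the coronae and the slightly different treatment of the base scale $p=0$.
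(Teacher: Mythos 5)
Your proposal is correct and follows essentially the same route as the paper: define $\varepsilon_{j,p}^2(\varphi)$ so that the claimed inequality holds with equality, then verify the normalization $\sum_{p}\int_{-\pi}^{\pi}\varepsilon_{j,p}^2\,d\varphi\le 1$ by summing over $p$, using $r\asymp 2^{ps}$ on the coronae together with Plancherel and $\|f_j\|_2^2\le 2^{-2js\alpha}\|g\|_\infty^2\|\omega\|_2^2$. Your separate treatment of the base corona $p=0$ via $\|\widehat{f_j}\|_\infty\le\|f_j\|_1$ is in fact a small improvement: the paper's displayed ``$\asymp$'' between $2^{ps}\int_{\mathcal{I}_p}|\widehat{f_j}|^2\,dr$ and the weighted integral is only one-sided at $p=0$, and your $L^1$-based bound is exactly the fix needed there.
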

\begin{proof}
Let us assume $\|g\|_{\infty}\neq0$ and $\|\omega\|_{\infty}\neq0$, otherwise the proof is trivial.
Since for every $p,j\in\N_0$ and $\varphi\in [-\pi,\pi)$
\[
I_{j,p}(\varphi):=\int_{\mathcal{I}_{p}} |\widehat{f_j}(r,\varphi)|^2 \,dr < \infty
\]
we can define functions $\epsilon_{j,p}:[-\pi,\pi)\rightarrow \R$ via
$\epsilon^2_{j,p}(\varphi):= I_{j,p}(\varphi) 2^{ps} 2^{2js\alpha}  \|g\|^{-2}_{\infty} \|\omega\|^{-2}_2$.
Then
\[
I_{j,p}(\varphi)= \epsilon_{j,p}^2(\varphi) 2^{-ps} 2^{-2js\alpha} \|g\|^{2}_{\infty} \|\omega\|^{2}_2.
\]
Let us prove that there is a constant $C>0$, independent of the relevant parameters, such that
\begin{align}\label{eq:sumboundC}
\sum_{p\in\N_0} \int_{-\pi}^{\pi} \epsilon^2_{j,p}(\varphi) \,d\varphi \le C.
\end{align}
We put $\fjtilde=f_j(2^{-js\alpha}\cdot)$. Then
$\widehat{f_j}= 2^{-2js\alpha} \widehat{\fjtilde}(2^{-js\alpha}\cdot)$ and it follows for $p\in\N_0$
\[
\|\widehat{f_j}\|^2_{L^2(\corona_p)} = 2^{-2js\alpha} \|\widehat{\fjtilde}\|^2_{L^2(2^{-js\alpha}\corona_p)},
\]
where $\corona_p$ are the coronae defined in \eqref{eqdefcorC}.
We conclude
\begin{align*}
\|g\|^{2}_{\infty} \|\omega\|^{2}_2 \sum_{p\in\N_0} \int_{-\pi}^{\pi} \epsilon^2_{j,p}(\varphi) \,d\varphi &=  \sum_{p\in\N_0} 2^{2js\alpha} \int_{-\pi}^{\pi} I_{j,p}(\varphi) 2^{ps}\,d\varphi \\
&\asymp \sum_{p\in\N_0} 2^{2js\alpha} \|\widehat{f_j}\|^2_{L^2(\corona_p)}
= \sum_{p\in\N_0}  \|\widehat{\fjtilde}\|^2_{L^2(2^{-js\alpha}\corona_p)} \asymp \|\widehat{\fjtilde}\|^2_{2} = \|\fjtilde\|^2_{2}.
\end{align*}
Using $\|\fjtilde\|_{2}\le \| g(2^{-js\alpha}\cdot)\|_{\infty} \|\omega\|_2 = \|g\|_{\infty} \|\omega\|_2 $ we arrive at \eqref{eq:sumboundC}.
Finally, note that the functions $\varepsilon_{j,p}:=C^{-1/2} \epsilon_{j,p}$ have properties as desired.
\end{proof}

\noindent
An immediate consequence of Lemma~\ref{lem:bas1ic} is the following corollary, with particular choice $j=p$.

\begin{cor}\label{cor:1}
Let $s>0$ be fixed and assume that $f_j$, $j\in\N_0$, are fragments of the form \eqref{eq:stdfrags}.
There exist functions $\varepsilon_{j}:[-\pi,\pi)\rightarrow \R$, each with the property $\int_{-\pi}^{\pi} \varepsilon_{j}^2(\varphi) \,d\varphi\le 1$, and a constant $C>0$ such that
for every $j\in\N_0$ and $\varphi\in [-\pi,\pi)$
\begin{align*}
\int_{\mathcal{I}_{j}} |\widehat{f_j}(r,\varphi)|^2 \,dr \le C\varepsilon_{j}^2(\varphi) 2^{-js} 2^{-2js\alpha} \|g\|^2_{\infty} \|\omega\|_2^2.
\end{align*}
Moreover, the constant $C$ can be chosen independent of the functions $\omega$ and $g$.
\end{cor}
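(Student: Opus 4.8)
The plan is to obtain Corollary~\ref{cor:1} as the special case $p=j$ of Lemma~\ref{lem:bas1ic}, with essentially no additional work beyond bookkeeping. First I would simply invoke Lemma~\ref{lem:bas1ic}, which for every $j\in\N_0$ furnishes a constant $C>0$ (independent of $j$ and of the choice of $g$ and $\omega$) together with functions $\varepsilon_{j,p}:[-\pi,\pi)\to\R$, $p\in\N_0$, satisfying both the pointwise bound
\[
\int_{\mathcal{I}_{p}} |\widehat{f_j}(r,\varphi)|^2 \,dr \le C \varepsilon_{j,p}^2(\varphi)\, 2^{-ps}\, 2^{-2js\alpha}\, \|g\|^2_{\infty}\, \|\omega\|_2^2
\]
and the aggregate control $\sum_{p\in\N_0} \int_{-\pi}^{\pi} \varepsilon_{j,p}^2(\varphi) \,d\varphi\le 1$. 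Specializing the index $p$ to the scale $j$ itself yields a bound on $\int_{\mathcal{I}_j}|\widehat{f_j}(r,\varphi)|^2\,dr$ of exactly the asserted form, with $2^{-ps}$ becoming $2^{-js}$.

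Next I would define the desired angular functions by $\varepsilon_j:=\varepsilon_{j,j}$. The only property still requiring verification is the normalization $\int_{-\pi}^{\pi}\varepsilon_j^2(\varphi)\,d\varphi\le 1$. This follows immediately from non-negativity of the summands: since every term $\int_{-\pi}^{\pi}\varepsilon_{j,p}^2(\varphi)\,d\varphi$ is nonnegative, the single term indexed by $p=j$ is dominated by the full sum, giving
\[
\int_{-\pi}^{\pi}\varepsilon_{j}^2(\varphi)\,d\varphi
= \int_{-\pi}^{\pi}\varepsilon_{j,j}^2(\varphi)\,d\varphi
\le \sum_{p\in\N_0} \int_{-\pi}^{\pi} \varepsilon_{j,p}^2(\varphi) \,d\varphi
\le 1.
\]

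Finally, the uniformity assertions transfer verbatim: the constant $C$ produced by Lemma~\ref{lem:bas1ic} is already independent of $j$ and of the concrete functions $g$ and $\omega$ (subject only to the standing specifications tied to $\kappa,\nu,\tilde{\nu}$), so the same $C$ serves in the corollary. There is no genuine obstacle here; the statement is a direct corollary, and the sole point worth flagging is the passage from the summed $L^2$-in-$\varphi$ bound on the family $\{\varepsilon_{j,p}\}_p$ to the individual bound on $\varepsilon_{j,j}$, which is nothing more than dropping the remaining nonnegative terms of the sum.
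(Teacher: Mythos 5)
Your proposal is correct and follows exactly the paper's own argument: specialize Lemma~\ref{lem:bas1ic} to $p=j$, set $\varepsilon_j:=\varepsilon_{j,j}$, and observe that the normalization $\int_{-\pi}^{\pi}\varepsilon_j^2\,d\varphi\le 1$ follows by dropping the remaining nonnegative terms of the sum over $p$. Nothing further is needed.
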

\begin{proof}
The functions $\varepsilon_{j}:=\varepsilon_{j,j}$ obtained from Lemma~\ref{lem:bas1ic}
by choosing $p=j$ have the desired properties. In particular they satisfy $\int_{-\pi}^{\pi} \varepsilon^2_{j}(\varphi) \,d\varphi \le 1$ for every $j\in\N_0$.
\end{proof}

\noindent
Note, that the smoothness of $f_j$ did not enter the proofs of the previous two results.
By incorporating smoothness information we can strengthen Corollary~\ref{cor:1} for a smooth fragment of the form (i) in \eqref{eq:stdfrags}.

\begin{lemma}\label{lem:bas2ic}
Let $s>0$, $\alpha\in[0,1)$, and put $\gamma=\lceil 1/(1-\alpha)\rceil$. For $j\in\N_0$ let $f_j$ be a smooth fragment of the form (i) in~\eqref{eq:stdfrags} with regularity $C^\beta$, $\beta\in\N_0$.
Then there exist functions $\varepsilon_{j}:[-\pi,\pi)\rightarrow \R$ and a constant $C>0$ such that
for every $j\in\N_0$ and $\varphi\in [-\pi,\pi)$
\begin{align*}
\int_{\mathcal{I}_j} |\widehat{f_j}(r,\varphi)|^2 \,dr \le C \varepsilon_{j}^2(\varphi) 2^{-js} 2^{-2js\alpha} 2^{-2js\beta} \|g\|^2_{\beta,\infty} \|\omega\|_{\beta,2}^2
\end{align*}
with $\int_{-\pi}^{\pi} \varepsilon_{j}^2(\varphi) \,d\varphi\le 1$ for every $j\in\N_0$.
The constant $C$ can be chosen independent of $\omega$ and $g$.
\end{lemma}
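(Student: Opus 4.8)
The plan is to reduce the statement to a single angle-integrated estimate on the annulus $\mathcal{I}_j$ and then extract the smoothness gain by integration by parts. As in the proof of Lemma~\ref{lem:bas1ic}, the factor $2^{-js}$ and the angular profile $\varepsilon_j(\varphi)$ with $\int_{-\pi}^\pi\varepsilon_j^2\le1$ are pure bookkeeping: once one has a bound of the form $\int_{-\pi}^\pi\int_{\mathcal{I}_j}|\widehat{f_j}(r,\varphi)|^2\,dr\,d\varphi\lesssim 2^{-js}2^{-2js\alpha}2^{-2js\beta}\|g\|_{\beta,\infty}^2\|\omega\|_{\beta,2}^2$, one simply \emph{defines} $\varepsilon_j^2(\varphi)$ to be the fixed-angle radial integral divided by the stated right-hand side, and $\int_{-\pi}^\pi\varepsilon_j^2\le1$ then holds automatically. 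Equivalently, since $r\asymp 2^{js}$ on $\mathcal{I}_j$, the measure $dr\,d\varphi$ differs from $r\,dr\,d\varphi$ by the factor $2^{-js}$, so it suffices to prove the genuine annular $L^2$-bound $\|\widehat{f_j}\|_{L^2(\mathcal{C}_j)}^2\lesssim 2^{-2js\alpha}2^{-2js\beta}\|g\|_{\beta,\infty}^2\|\omega\|_{\beta,2}^2$, where $\mathcal{C}_j$ is the corona from \eqref{eqdefcorC}.

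On $\mathcal{C}_j$ the frequency satisfies $|\xi|\asymp 2^{js}$, so I would write $|\xi|^{2\beta}=\sum_{|m|=\beta}\binom{\beta}{m}\xi_1^{2m_1}\xi_2^{2m_2}$ and use $\xi^m\widehat{f_j}=(2\pi i)^{-\beta}\widehat{\partial^m f_j}$ to obtain
\[
\|\widehat{f_j}\|_{L^2(\mathcal{C}_j)}^2\lesssim 2^{-2js\beta}\sum_{|m|=\beta}\|\widehat{\partial^m f_j}\|_{L^2(\mathcal{C}_j)}^2 .
\]
This isolates the target factor $2^{-2js\beta}$, and it remains to show that each $\|\widehat{\partial^m f_j}\|_{L^2(\mathcal{C}_j)}^2$ is $\lesssim 2^{-2js\alpha}$. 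Expanding $\partial^m f_j$ by the Leibniz rule into $\sum_{m'\le m}\binom{m}{m'}\partial^{m'}g\cdot\partial^{m-m'}\omega_j$, the term in which all derivatives land on $g$ (that is, $m'=m$) equals $\partial^m g\cdot\omega_j$, which is again a smooth fragment of type (i) in \eqref{eq:stdfrags} with amplitude $\partial^m g$ (bounded by $\|g\|_{\beta,\infty}$ since $|m|=\beta\in\N$) and the same window $\omega$. Hence Corollary~\ref{cor:1}, after reinstating the Jacobian, bounds its annular energy by exactly $\lesssim 2^{-2js\alpha}\|g\|_{\beta,\infty}^2\|\omega\|_2^2$, which is the desired size for this contribution.

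The main obstacle is the remaining terms, in which $k:=|m-m'|\ge1$ derivatives fall on the \emph{narrow} window $\omega_j$. Since $\partial^{m-m'}\omega_j=2^{js\alpha k}(\partial^{m-m'}\omega)(2^{js\alpha}\cdot)$, a crude $L^2$-bound is lossy by a factor $2^{2js\alpha k}$, and this is precisely why a naive integration by parts only yields the weaker exponent $\beta(1-\alpha)$. The key observation is that such a term is itself a smooth fragment $\partial^{m'}g\cdot\widetilde\omega_j$ with amplitude $\partial^{m'}g\in C^{\beta-|m'|}$ and window $\widetilde\omega=\partial^{m-m'}\omega\in C^\infty_0$, whose Fourier energy on the \emph{high} corona $\mathcal{C}_j$ is far smaller than its total $L^2$ mass. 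I would close the estimate by induction on $\beta$: the base case $\beta=0$ is Corollary~\ref{cor:1}, and applying the assertion of the lemma at the strictly smaller regularity $\beta-|m'|$ to this fragment supplies an extra decay $2^{-2js(\beta-|m'|)}$ that overcompensates the loss $2^{2js\alpha(\beta-|m'|)}$, leaving the harmless factor $2^{-2js(\beta-|m'|)(1-\alpha)}\le1$ times $2^{-2js\alpha}$. I expect the integer threshold $\gamma=\lceil 1/(1-\alpha)\rceil$ to enter exactly here, quantifying the recursion depth (equivalently, the number of extra window derivatives) needed before the accumulated gain $2^{-js(1-\alpha)\cdot(\#\mathrm{steps})}$ exceeds one full corona width $2^{-js}$, so that the window-derivative terms become summable against the corona-distributed bound of Lemma~\ref{lem:bas1ic}; controlling these error terms uniformly in $g$ and $\omega$ is the technical heart of the argument. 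Summing all contributions then gives $\|\widehat{f_j}\|_{L^2(\mathcal{C}_j)}^2\lesssim 2^{-2js\alpha}2^{-2js\beta}\|g\|_{\beta,\infty}^2\|\omega\|_{\beta,2}^2$, which by the first paragraph is the claim.
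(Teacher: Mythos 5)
Your overall strategy is sound and, once unwound, runs structurally parallel to the paper's proof: both arguments use Corollary~\ref{cor:1} as the base case, trade the factor $2^{2js\beta}$ available on $\corona_j$ for $\beta$ derivatives of $f_j$, handle the derivatives landing on $g$ by induction on the regularity, and absorb the derivatives landing on the narrow window by an iteration whose depth is exactly $\gamma=\lceil 1/(1-\alpha)\rceil$. Your opening reduction is also legitimate: defining $\varepsilon_j^2(\varphi)$ as the fixed-angle radial integral divided by the target right-hand side is precisely the normalization trick used in the proof of Lemma~\ref{lem:bas1ic}, so the lemma is indeed equivalent to the angle-integrated bound on the corona. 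Where you genuinely differ from the paper is in the choice of derivative: you take all $\beta$ Cartesian derivatives at once via $|\xi|^{2\beta}=\sum_{|m|=\beta}\binom{\beta}{m}\xi_1^{2m_1}\xi_2^{2m_2}$ and then expand by Leibniz, whereas the paper differentiates once per step in the \emph{radial} direction associated with the fixed angle $\varphi$, writing $\partial_r f_j=(\partial_r g)\,\omega_j+2^{js\alpha}g\,(\partial_r\omega)(2^{js\alpha}\cdot)$ and iterating. The radial, one-derivative-at-a-time version keeps the estimate pointwise in $\varphi$ (so the functions $\varepsilon_j$ come out of the induction directly as finite sums $\sum_k\varepsilon_j^{(k)}$) and produces only two terms per step instead of a multinomial sum; your version is equally workable but buys nothing extra.

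The one step that does not survive scrutiny as written is the claim that every window-derivative term is closed by ``applying the assertion of the lemma at the strictly smaller regularity $\beta-|m'|$.'' For $m'=0$ --- all $\beta$ derivatives falling on $\omega_j$ --- the resulting fragment $g\cdot(\partial^{m}\omega)(2^{js\alpha}\cdot)$ has amplitude $g$ of the \emph{same} regularity $\beta$, so the induction on $\beta$ is circular there; this is exactly the term you would need to make explicit. The fix, which is the paper's $\gamma$-fold iteration in your notation, is to unroll precisely this $m'=0$ chain: each unrolling trades the factor $2^{-2js\beta}$ from $|\xi|^{-2\beta}$ against the loss $2^{2js\alpha\beta}$ from $\beta$ window derivatives, a net gain of $2^{-2js\beta(1-\alpha)}$ per step, while only replacing $\omega$ by a higher derivative of $\omega$ (still an admissible $C_0^\infty$ window); after $\gamma\ge 1/(1-\alpha)$ steps the accumulated gain is at least $2^{-2js\beta}$ and the surviving fragment is finished off crudely by Corollary~\ref{cor:1} (the paper instead applies the $\beta-1$ induction hypothesis to its final term, which works just as well). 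All intermediate terms of the unrolling carry at least one derivative on $g$ and are covered by your induction. One last bookkeeping remark, which affects the paper's proof equally: the iterated windows involve derivatives of $\omega$ of order exceeding $\beta$, so strictly speaking the constant absorbs higher Sobolev norms of $\omega$ than $\|\omega\|_{\beta,2}$; this is harmless because $\omega$ is a fixed $C_0^\infty$ cutoff, but it should be said. With the $m'=0$ loop closed in this way, your argument is complete.
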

\begin{proof}
If $\beta=0$ the assertion is given by Corollary~\ref{cor:1}.
For $\beta\ge1$ the statement is proved by induction on $\beta$,
whereby we restrict our considerations to $j\ge 1$ since for $j=0$ the asserted estimate is clearly true, also due to Corollary~\ref{cor:1}.

For fixed angle $\varphi\in [-\pi,\pi)$ let $\partial_r$ denote the radial derivative in the corresponding direction. Put $\widetilde{g}:=\partial_r g$, $\widetilde{\omega}:=\partial_r \omega$, and $\widetilde{\omega}_j:=\widetilde{\omega}(2^{js\alpha}\cdot)$. Then
$\partial_rf_j(\cdot,\varphi)= \widetilde{g} \omega_j + 2^{js\alpha} g \widetilde{\omega}_j$ and we conclude for $j\in\N$
\begin{align*}
2^{2js} \int_{\mathcal{I}_{j}} |\widehat{f_j}(r,\varphi)|^2 \,dr &\asymp \int_{\mathcal{I}_{j}} |r \widehat{f_j}(r,\varphi)|^2 \,dr
\lesssim \int_{\mathcal{I}_{j}} |\widehat{\partial_r f_j}(r,\varphi)|^2 \,dr \\
& \asymp \int_{\mathcal{I}_{j}} |\widehat{\widetilde{g} \omega_j}(r,\varphi)|^2 \,dr + 2^{2js\alpha} \int_{\mathcal{I}_{j}} |\widehat{g \widetilde{\omega}_j}(r,\varphi)|^2 \,dr
 =: \widetilde{I}_j^{(0)}(\varphi) + 2^{2js\alpha} I_j^{(1)}(\varphi).
\end{align*}
Hence, we get
\[
I^{(0)}_{j}(\varphi):=\int_{\mathcal{I}_{j}} |\widehat{f_j}(r,\varphi)|^2 \,dr \lesssim 2^{-2js} \widetilde{I}_j^{(0)}(\varphi) + 2^{-2js(1-\alpha)} I_j^{(1)}(\varphi).
\]
The integral $I_j^{(1)}(\varphi)$ can be estimated in the same way as $I_j^{(0)}(\varphi)$.
After $\gamma=\lceil 1/(1-\alpha)\rceil$ iterations we end up with $\widetilde{I}_j^{(0)}(\varphi)$, $\ldots$, $\widetilde{I}_j^{(\gamma-1)}(\varphi)$, and
$\widetilde{I}_j^{(\gamma)}(\varphi):=I_j^{(\gamma)}(\varphi)$. Since $\gamma\ge 1/(1-\alpha)$
it holds
\begin{align*}
I^{(0)}_j(\varphi) \lesssim 2^{-2js} \sum_{k=0}^{\gamma-1} 2^{-2js(1-\alpha)k} \widetilde{I}_j^{(k)}(\varphi) + 2^{-2js(1-\alpha)\gamma} \widetilde{I}_j^{(\gamma)}(\varphi)
\le 2^{-2js} \sum_{k=0}^{\gamma} \widetilde{I}_j^{(k)}(\varphi).
\end{align*}
Note that $g\in C_0^{\beta-1}([-\kappa,\kappa]^2)$ and
$\widetilde{g}\in C_0^{\beta-1}([-\kappa,\kappa]^2)$, with $\kappa$ the fixed parameter from~\eqref{eq:stdfrags}. Using the induction hypothesis, the expressions $\widetilde{I}_j^{(k)}$ can be estimated with corresponding functions $\varepsilon^{(k)}_j: [-\pi,\pi)\to\R$. Putting
$
\varepsilon_j:=  \sum_{k=0}^{\gamma} \varepsilon^{(k)}_j
$
yields the desired result.
\end{proof}


\noindent
Our next goal is to estimate the energy of $\widehat{f_j}$ contained in wedges $\mathcal{W}^+_J$ of the form~\eqref{eq:wedgePJ}.
However, we allow more general scale-angle pairs $J=(j,\ell)\in\circind$ from the set
\[
\circind:=\big\{ (j,\ell) ~:~ j\in\N_0,\,\ell\in[-L^-_j,L^+_j+1) \big\}.
\]
The associated orientations, given by $\phiJ=\ell \varphi_j$ with $\varphi_j=\pi 2^{-\lfloor js(1-\alpha)\rfloor-1}$ fixed as in \eqref{eqdef:fundangle}, then comprise the whole interval $[-\frac{\pi}{2},\frac{\pi}{2})$.
To formulate the next result we need the quantities
\begin{align}\label{eqdef:AJ}
A_J:=\frac{1}{2} \int_{\mathcal{A}_J}  \varepsilon_{j}^2(\varphi)  \,d\varphi, \quad J\in\circind,
\end{align}
corresponding to angular intervals $\mathcal{A}_J$ given as in \eqref{eq:angularsupp} and the functions $\varepsilon_{j}:[-\pi,\pi)\rightarrow\R$ associated to $f_j$ from Corollary~\ref{cor:1}.

\begin{lemma}\label{lem:wedgesmooth}
Let $(m_1,m_2)\in\N_0^2$ be fixed and assume that $f_j$ is of the form \eqref{eq:stdfrags}.
Further, for $J\in\circind$ let $A_J$ be the value defined in \eqref{eqdef:AJ}. Then
\begin{align*}
\|\partial^{(m_1,m_2)}\widehat{f_j}\|^2_{L^2(\mathcal{W}^+_J)} \lesssim A_J 2^{-2j(m_1+m_2)s\alpha}2^{-2js\alpha} \|g\|^2_{\infty} \|\omega\|^2_2,
\end{align*}
with an implicit constant independent of $J\in\circind$ and the functions $g$ and $\omega$.
\end{lemma}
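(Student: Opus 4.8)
The plan is to realise the mixed derivative $\partial^{(m_1,m_2)}\widehat{f_j}$ as the Fourier transform of a polynomially weighted copy of $f_j$, to absorb this weight into the window of a new standard fragment, and then to reduce to the scale-zero wedge estimate that already follows from Corollary~\ref{cor:1}. Concretely, I would first use the Fourier differentiation rule
\[
\partial^{(m_1,m_2)}\widehat{f_j} = (-2\pi i)^{m_1+m_2}\,\mathcal{F}\big[x_1^{m_1}x_2^{m_2}f_j\big],
\]
so that, up to the constant $(2\pi)^{2(m_1+m_2)}$, the left-hand side of the asserted inequality equals $\|\mathcal{F}[x^m f_j]\|_{L^2(\mathcal{W}^+_J)}^2$ with $x^m:=x_1^{m_1}x_2^{m_2}$. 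The decisive observation is that a fragment $f_j$ of the form~\eqref{eq:stdfrags} is supported in $\supp\omega_j\subseteq 2^{-js\alpha}\kappa[-1,1]^2$, a box of side length $\asymp 2^{-js\alpha}$. Writing $\omega^{(m)}(z):=z^m\omega(z)$ and substituting $z=2^{js\alpha}x$ gives the pointwise identity
\[
x^m\omega_j(x)=x^m\omega(2^{js\alpha}x)=2^{-j(m_1+m_2)s\alpha}\,\omega^{(m)}(2^{js\alpha}x),
\]
and therefore $x^m f_j = 2^{-j(m_1+m_2)s\alpha}\,h_j$, where $h_j:=g\,\omega^{(m)}(2^{js\alpha}\cdot)$ in case (i) of~\eqref{eq:stdfrags} and $h_j:=g\,\omega^{(m)}(2^{js\alpha}\cdot)\,H$ in case (ii), with $H$ the step function from~\eqref{eq:bivstep}.

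Next I would observe that $h_j$ is again a standard fragment of the form~\eqref{eq:stdfrags}: the generator $g$ is unchanged, while the new window $\omega^{(m)}=z^m\omega$ still lies in $C^\infty(\R^2)\cap C_0^\beta(\kappa[-1,1]^2)$ (with a $C^\beta$-bound depending only on $m$, $\kappa$, and $\tilde{\nu}$) and satisfies $\|\omega^{(m)}\|_2\le\kappa^{m_1+m_2}\|\omega\|_2\lesssim\|\omega\|_2$, since $\supp\omega\subseteq\kappa[-1,1]^2$. Hence Corollary~\ref{cor:1}, applied to $h_j$, furnishes angular functions $\varepsilon_{j}:[-\pi,\pi)\to\R$ with $\int_{-\pi}^{\pi}\varepsilon_{j}^2\le 1$ --- these are precisely the functions entering the definition~\eqref{eqdef:AJ} of $A_J$ for the fixed order $(m_1,m_2)$ --- such that
\[
\int_{\mathcal{I}_j}|\widehat{h_j}(r,\varphi)|^2\,dr \le C\,\varepsilon_{j}^2(\varphi)\,2^{-js}\,2^{-2js\alpha}\|g\|_\infty^2\|\omega^{(m)}\|_2^2 \lesssim C\,\varepsilon_{j}^2(\varphi)\,2^{-js}\,2^{-2js\alpha}\|g\|_\infty^2\|\omega\|_2^2.
\]

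Finally I would pass from these radial slices to the wedge $\mathcal{W}^+_J=\{\,|\xi|_2\in\mathcal{I}_j,\ \varphi(\xi)\in\mathcal{A}_J\,\}$ by polar integration. Since $r\asymp 2^{js}$ on $\mathcal{I}_j$, the polar Jacobian contributes a factor $\lesssim 2^{js}$, so that
\[
\|\widehat{h_j}\|_{L^2(\mathcal{W}^+_J)}^2 \lesssim 2^{js}\int_{\mathcal{A}_J}\!\int_{\mathcal{I}_j}|\widehat{h_j}(r,\varphi)|^2\,dr\,d\varphi \lesssim 2^{-2js\alpha}\|g\|_\infty^2\|\omega\|_2^2\int_{\mathcal{A}_J}\varepsilon_{j}^2(\varphi)\,d\varphi \asymp A_J\,2^{-2js\alpha}\|g\|_\infty^2\|\omega\|_2^2,
\]
which is exactly the desired estimate in the case $m=0$. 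Combining this with the scalar factor $2^{-2j(m_1+m_2)s\alpha}$ produced by the weight absorption, and with the constant from the Fourier differentiation rule, yields the claim, and all implicit constants are manifestly independent of $J\in\circind$, $g$, and $\omega$.

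I anticipate the main obstacle to be the bookkeeping in the weight-absorption step: one must check that $x^m$ is absorbed cleanly into the window, which hinges on the small, $2^{-js\alpha}$-sized support of $f_j$ and produces precisely the factor $2^{-j(m_1+m_2)s\alpha}$, and that the resulting $h_j$ is a genuine standard fragment with the window bound $\|\omega^{(m)}\|_2\lesssim\|\omega\|_2$ uniform in $j$, $g$, and $\omega$. Once this is secured, the angular function $\varepsilon_{j}$ delivered by Corollary~\ref{cor:1} already carries the normalization $\int_{-\pi}^{\pi}\varepsilon_{j}^2\le 1$ that is needed for the later summation over the angular index $\ell$, and the remaining steps are the routine polar computation above; in particular the smoothness of $f_j$ plays no role here, exactly as in Corollary~\ref{cor:1}.
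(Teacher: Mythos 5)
Your proposal is correct and follows essentially the same route as the paper: the case $m=0$ is handled by Corollary~\ref{cor:1} plus polar integration over $\mathcal{W}^+_J$, and for $m\neq 0$ the weight $x^m$ is absorbed into a rescaled window $\tilde{\omega}(x)=x^m\omega(x)$, producing exactly the factor $2^{-2js\alpha(m_1+m_2)}$ and reducing to the $m=0$ case via $\|\tilde{\omega}\|_2\lesssim\|\omega\|_2$. The only (shared, harmless) imprecision is that the angular functions $\varepsilon_j$, and hence $A_J$, strictly speaking depend on the modified fragment for the fixed order $(m_1,m_2)$, which both you and the paper treat implicitly.
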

\begin{proof}
Using Corollary~\ref{cor:1} we calculate (in the nontrivial case when $g\neq 0$ and $\omega\neq0$)
\begin{align*}
 \|g\|^{-2}_{\infty} \|\omega\|^{-2}_2 \int_{\mathcal{W}^+_J}  |\widehat{f}_{j}(\xi)|^2 \,d\xi
=   \int_{\mathcal{I}_j} \int_{\mathcal{A}_J}  |\widehat{f}_{j}(r,\varphi)|^2 r \,d\varphi\,dr 
\lesssim  2^{-2js\alpha} \int_{\mathcal{A}_J} \varepsilon_j^2(\varphi) \,d\varphi
\asymp A_J 2^{-2js\alpha} .
\end{align*}
This proves the assertion for $(m_1,m_2)=(0,0)$.
If $m=(m_1,m_2)\neq(0,0)$ we define a new window $\tilde{\omega}(x):= x^m \omega(x)$ and put $\tilde{\omega}_j(x):=\tilde{\omega}(2^{js\alpha}x)$ for $x\in\R^2$. Then
\[
x^m \omega_j(x)= 2^{-js\alpha(m_1+m_2)} \tilde{\omega}(2^{js\alpha}x) = 2^{-js\alpha(m_1+m_2)} \tilde{\omega}_j(x)\,,\quad x\in\R^2.
\]
Introducing the function $f^{\raisebox{-0.45em}[0.4mm][0.4mm]{\textasciitilde}}_j:=g \tilde{\omega}_j H$ (or in case of a smooth fragment
$f^{\raisebox{-0.45em}[0.4mm][0.4mm]{\textasciitilde}}_j:=g \tilde{\omega}_j$) we can write
\begin{align*}
\int_{\mathcal{W}^+_J} |\partial^{(m_1,m_2)}\widehat{f_j}(\xi)|^2 \,d\xi \asymp \int_{\mathcal{W}^+_J} |\widehat{x^m f_j}(\xi)|^2 \,d\xi
=  2^{-2js\alpha(m_1+m_2)} \int_{\mathcal{W}^+_J} |\widehat{f^{\raisebox{-0.45em}[0.4mm][0.4mm]{\textasciitilde}}_j}(\xi)|^2 \,d\xi .
\end{align*}
Since $f^{\raisebox{-0.45em}[0.4mm][0.4mm]{\textasciitilde}}_j$ is of the form \eqref{eq:stdfrags},
the integral on the right-hand side can be estimated as above with Corollary~\ref{cor:1}. The proof is finished since $\|\tilde{\omega}\|_2\lesssim \|\omega\|_2 $.
\end{proof}

\noindent
For the smooth fragments we can improve this result, taking into account smoothness information.

\begin{lemma}\label{lem:wedgenonsmooth}
Let $s>0$, $\alpha\in[0,1)$, and $\gamma=\lceil 1/(1-\alpha)\rceil$. For $j\in\N_0$ let $f_j$ be a smooth fragment of the form (i) in~\eqref{eq:stdfrags} with regularity $C^\beta$, $\beta\in\N_0$.
Let $J=(j,\ell)\in\circind$ be a scale-angle pair, $A_J$ be given as in \eqref{eqdef:AJ}. For $(m_1,m_2)\in\N_0^2$
\begin{align*}
\|\partial^{(m_1,m_2)}\widehat{f_j}\|^2_{L^2(\mathcal{W}^+_J)} \lesssim A_J 2^{-2j(m_1+m_2)s\alpha}2^{-2js\alpha} 2^{-2js\beta} \|g\|^2_{\beta,\infty} \|\omega\|^2_{\beta,2}.
\end{align*}
\end{lemma}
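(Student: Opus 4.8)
The plan is to follow the proof of Lemma~\ref{lem:wedgesmooth} line by line, the only change being that the radial estimate of Corollary~\ref{cor:1} is replaced by its smooth refinement, Lemma~\ref{lem:bas2ic}. This substitution is the sole place where the $C^\beta$-regularity of the smooth fragment enters, and it is exactly what injects the additional decay factor $2^{-2js\beta}$ and upgrades the norms $\|g\|_\infty$, $\|\omega\|_2$ to the Sobolev norms $\|g\|_{\beta,\infty}$, $\|\omega\|_{\beta,2}$.

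First I would dispose of the base case $(m_1,m_2)=(0,0)$. Writing the integral over $\mathcal{W}^+_J$ in polar coordinates, recalling \eqref{eq:wedgePJ} and the dyadic intervals \eqref{eqdef:dyintervals}, gives
\[
\|\widehat{f_j}\|^2_{L^2(\mathcal{W}^+_J)} = \int_{\mathcal{A}_J}\int_{\mathcal{I}_j} |\widehat{f_j}(r,\varphi)|^2\, r\,dr\,d\varphi .
\]
On $\mathcal{I}_j$ one has $r\lesssim 2^{js}$, so the weight $r$ contributes at most a factor $2^{js}$; the remaining radial integral is controlled by Lemma~\ref{lem:bas2ic}, producing the bound $\varepsilon_j^2(\varphi)\,2^{-js}\,2^{-2js\alpha}\,2^{-2js\beta}\|g\|^2_{\beta,\infty}\|\omega\|^2_{\beta,2}$. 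The factors $2^{js}$ and $2^{-js}$ cancel, and integrating the angular weight $\varepsilon_j^2$ against $d\varphi$ over $\mathcal{A}_J$ reproduces $A_J$ in the sense of \eqref{eqdef:AJ}. This establishes the claim for $(m_1,m_2)=(0,0)$.

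The general multi-index $m=(m_1,m_2)$ is then reduced to this base case by the monomial trick already used in Lemma~\ref{lem:wedgesmooth}. By the differentiation--multiplication duality of the Fourier transform one has $|\partial^{m}\widehat{f_j}|\asymp|\widehat{x^m f_j}|$, and since $x^m\omega_j = 2^{-js\alpha(m_1+m_2)}\tilde{\omega}_j$ with $\tilde{\omega}(x):=x^m\omega(x)$ and $\tilde{\omega}_j:=\tilde{\omega}(2^{js\alpha}\cdot)$, I may write
\[
\int_{\mathcal{W}^+_J}|\partial^{m}\widehat{f_j}(\xi)|^2\,d\xi \asymp 2^{-2js\alpha(m_1+m_2)}\int_{\mathcal{W}^+_J}|\widehat{\tilde{f}_j}(\xi)|^2\,d\xi,
\]
where $\tilde{f}_j:=g\,\tilde{\omega}_j$ is again a smooth fragment of type (i) in \eqref{eq:stdfrags}. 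Applying the base case to $\tilde{f}_j$ and multiplying by the prefactor $2^{-2js\alpha(m_1+m_2)}$ yields precisely the asserted scale behaviour.

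The one step requiring genuine (if mild) care -- the Sobolev analogue of the closing remark $\|\tilde{\omega}\|_2\lesssim\|\omega\|_2$ in Lemma~\ref{lem:wedgesmooth} -- is to verify that the modified window $\tilde{\omega}=x^m\omega$ is still an admissible window for Lemma~\ref{lem:bas2ic} and that $\|\tilde{\omega}\|_{\beta,2}\lesssim\|\omega\|_{\beta,2}$. Both follow from the Leibniz rule: every derivative of $x^m\omega$ of order $\le\beta$ is a finite sum of products of a derivative of the monomial $x^m$ with a derivative of $\omega$, and since $\omega$ is supported in the fixed cube $\kappa[-1,1]^2$ the monomial and all its derivatives are bounded there by a constant depending only on $\kappa$ and $m$. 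Hence $\tilde{\omega}\in C^\infty(\R^2)\cap C_0^\beta(\kappa[-1,1]^2,\tilde{\nu}')$ for a suitable $\tilde{\nu}'>0$, all implicit constants remain uniform in $j\in\N_0$, $J$, $g$ and $\omega$, and the reduction to the base case goes through.
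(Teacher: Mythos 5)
Your proposal is correct and is exactly the paper's argument: the paper's proof of this lemma consists of the single remark that one repeats the proof of Lemma~\ref{lem:wedgesmooth} with Corollary~\ref{cor:1} replaced by Lemma~\ref{lem:bas2ic}, which is precisely what you carry out. Your additional verification that the modified window $\tilde{\omega}=x^m\omega$ remains admissible with $\|\tilde{\omega}\|_{\beta,2}\lesssim\|\omega\|_{\beta,2}$ is a correct filling-in of the detail the paper leaves implicit.
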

\begin{proof}
The proof is analogous to Lemma~\ref{lem:wedgesmooth}, using Lemma~\ref{lem:bas2ic} instead of Corollary~\ref{cor:1}.
\end{proof}

\noindent
To formulate the main result of this subsection we need the differential operator
\begin{align}\label{eq:diffop1}
\mathcal{L}_{J,1}&:=(\ident - 2^{2js\alpha}\mathcal{D}_{J,1}^2)(\ident - 2^{2js\alpha}\mathcal{D}_{J,2}^2),
\end{align}
where $\ident$ is the identity and the partial derivatives $\mathcal{D}_{J,1}$ and $\mathcal{D}_{J,2}$, dependent on $J\in\circind$, are given by
\begin{align}\label{eq:diffops}
\mathcal{D}_{J,1}:=\cos(\phiJ)\partial_1 + \sin(\phiJ)\partial_2 \quad\text{and}\quad \mathcal{D}_{J,2}:=-\sin(\phiJ)\partial_1 + \cos(\phiJ)\partial_2.
\end{align}
Recall that $\phiJ=\ell\varphi_j$ with $\varphi_j$ as in~\eqref{eqdef:fundangle}. Further, recall the functions $W_J$ from~\eqref{eq:suppfunctions}
with $\supp W_J\subseteq \mathcal{W}^+_J$.

\begin{prop}\label{prop:fundament1}
Let $\mathcal{L}_{J,1}$ be the differential operator~\eqref{eq:diffop1} and let $d\in\N_0$ be arbitrary but fixed.
\begin{enumerate}
\item[(i)]
An edge fragment $f_j$ of the form (ii) in \eqref{eq:stdfrags} satisfies the estimate
\begin{align*}
\int_{\R^2} |\mathcal{L}^d_{J,1}(\widehat{f}_{j}W_J)(\xi)|^2 \,d\xi \lesssim A_J 2^{-2js\alpha}.
\end{align*}
\item[(ii)]
A smooth fragment $f_j$ of the form (i) in \eqref{eq:stdfrags} satisfies the improved estimate
\begin{align*}
\int_{\R^2} |\mathcal{L}^d_{J,1}(\widehat{f}_{j}W_J)(\xi)|^2 \,d\xi \lesssim A_J 2^{-2js\alpha} 2^{-2js\beta}.
\end{align*}
\end{enumerate}
Here $A_J$ are the quantities defined in \eqref{eqdef:AJ}. The implicit constants are independent of $J\in\circind$, $\omega$ and $g$.
\end{prop}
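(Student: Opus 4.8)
\medskip

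The plan is to expand $\mathcal{L}_{J,1}^d$ and apply the Leibniz rule to the product $\widehat{f_j}W_J$. Writing out the $2d$ factors of $\mathcal{L}_{J,1}^d$ and distributing the active second-order operators $2^{2js\alpha}\mathcal{D}_{J,1}^2$ and $2^{2js\alpha}\mathcal{D}_{J,2}^2$ between the two factors, one obtains a finite sum---with combinatorial coefficients depending only on $d$ and not on $J$---of terms of the form
\[
2^{js\alpha(|\mathbf a|+|\mathbf b|)}\,\big(\mathcal{D}_{J,1}^{a_1}\mathcal{D}_{J,2}^{a_2}\widehat{f_j}\big)\,\big(\mathcal{D}_{J,1}^{b_1}\mathcal{D}_{J,2}^{b_2}W_J\big),
\]
where $\mathbf a=(a_1,a_2)$, $\mathbf b=(b_1,b_2)$, and $|\mathbf a|+|\mathbf b|$ equals twice the number of active operators, so that the total scaling prefactor amounts to $2^{2js\alpha}$ per active operator. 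Since $\supp(\widehat{f_j}W_J)\subseteq\supp W_J\subseteq\mathcal{W}^+_J$, it suffices to bound the $L^2(\mathcal{W}^+_J)$-norm of each such term, which I would do by placing the $W_J$-factor in $L^\infty$ and the $\widehat{f_j}$-factor in $L^2$.

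For the $W_J$-factor, the identity $W_J=W_{j,0}\circ R_J$ together with \eqref{eq:basic_fact} shows that every partial derivative of $W_J$ of total order $m$ is bounded by $\lesssim 2^{-js\alpha m}$ in the supremum norm: since $\alpha\le1$, the slowly decaying short ($2^{-js\alpha}$) direction dominates. As $\mathcal{D}_{J,1},\mathcal{D}_{J,2}$ are combinations of $\partial_1,\partial_2$ with coefficients bounded by $1$, this yields $\|\mathcal{D}_{J,1}^{b_1}\mathcal{D}_{J,2}^{b_2}W_J\|_\infty\lesssim 2^{-js\alpha(b_1+b_2)}$. The $W_J$-share $2^{js\alpha(b_1+b_2)}$ of the prefactor then exactly cancels this decay, so that the full $W_J$-contribution is $\lesssim1$ in $L^\infty$. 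This balancing is precisely why the uniform scaling $2^{2js\alpha}$ is the correct one in $\mathcal{L}_{J,1}$.

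For the $\widehat{f_j}$-factor I would expand $\mathcal{D}_{J,1}^{a_1}\mathcal{D}_{J,2}^{a_2}$ into standard partials $\partial^{(a_1',a_2')}$ with $a_1'+a_2'=|\mathbf a|$ and bounded coefficients, and invoke Lemma~\ref{lem:wedgesmooth} for the edge fragment. As its bound depends only on the total order, the $\widehat{f_j}$-share $2^{js\alpha|\mathbf a|}$ of the prefactor is again precisely compensated:
\[
\big\|2^{js\alpha|\mathbf a|}\mathcal{D}_{J,1}^{a_1}\mathcal{D}_{J,2}^{a_2}\widehat{f_j}\big\|^2_{L^2(\mathcal{W}^+_J)}
\lesssim 2^{2js\alpha|\mathbf a|}\,A_J\,2^{-2j|\mathbf a|s\alpha}2^{-2js\alpha}\|g\|_\infty^2\|\omega\|_2^2
=A_J\,2^{-2js\alpha}\|g\|_\infty^2\|\omega\|_2^2.
\]
Pulling the $W_J$-factor out in $L^\infty$ (bounded by $\lesssim1$) and using that $\|g\|_\infty,\|\omega\|_2\lesssim1$ uniformly by the standard-fragment specifications, each Leibniz term is $\lesssim A_J 2^{-2js\alpha}$; summing the finitely many terms gives (i). For the smooth fragment (ii) I would run the identical argument with Lemma~\ref{lem:wedgenonsmooth} in place of Lemma~\ref{lem:wedgesmooth}, which supplies the additional factor $2^{-2js\beta}$ (and uses $\|g\|_{\beta,\infty},\|\omega\|_{\beta,2}\lesssim1$), yielding the bound $\lesssim A_J 2^{-2js\alpha}2^{-2js\beta}$.

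The main obstacle is the bookkeeping of the Leibniz expansion: one must verify that for every way of splitting the active operators between $\widehat{f_j}$ and $W_J$, both resulting factors remain at the correct order simultaneously. The point that makes this work is that the scaling in $\mathcal{L}_{J,1}$ is isotropic ($2^{2js\alpha}$ in each of the two directions) while the derivative decay of $W_J$ is anisotropic; because $\alpha\le1$, the worst-case per-derivative decay of $W_J$ is exactly $2^{-js\alpha}$, which the scaling matches term by term, so no distribution of derivatives can spoil the estimate.
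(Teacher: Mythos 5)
Your proposal is correct and follows essentially the same route as the paper: expand $\mathcal{L}_{J,1}^d$ and apply Leibniz, bound the $W_J$-factors in $L^\infty$ via \eqref{eq:basic_fact} and the $\widehat{f_j}$-factors in $L^2(\mathcal{W}^+_J)$ via Lemma~\ref{lem:wedgesmooth} (resp.\ Lemma~\ref{lem:wedgenonsmooth}), after rewriting $\mathcal{D}_{J,1}^{a_1}\mathcal{D}_{J,2}^{a_2}$ in standard partials with bounded coefficients. The only cosmetic difference is that you use the uniform per-derivative decay $2^{-js\alpha}$ for all derivatives of $W_J$, while the paper keeps the anisotropic bound $2^{-jsn_1}2^{-js\alpha n_2}$ before relaxing it in the same way; since $\alpha\le 1$ this changes nothing.
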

\begin{proof}
Using the definition~\eqref{eq:diffops} of the operators $\mathcal{D}_{J,1}$ and $\mathcal{D}_{J,2}$ we obtain for $(m_1,m_2)\in\N^2_0$
\begin{align}\label{eq:formulaD1D2}
\mathcal{D}_{J,1}^{m_1}\mathcal{D}_{J,2}^{m_2} = \sum_{\substack{a_1+b_1=m_1 \\ a_2+b_2=m_2 }}  c_{a_1,a_2,b_1,b_2} (\sin\phiJ)^{a_2+b_1}(\cos\phiJ)^{a_1+b_2} \partial^{(a_1+a_2,b_1+b_2)}
\end{align}
with purely combinatorial coefficients $c_{a_1,a_2,b_1,b_2}\in\Z$. This leads to
\begin{align*}
\|\mathcal{D}_{J,1}^{m_1}\mathcal{D}_{J,2}^{m_2} \widehat{f}_{j}\|^2_{L^2(\mathcal{W}^+_J)} \le C(m_1,m_2)
\sum_{\substack{a_1+b_1=m_1 \\ a_2+b_2=m_2 }}    \| \partial^{(a_1+a_2,b_1+b_2)}\widehat{f}_{j} \|^2_{L^2(\mathcal{W}^+_J)}
\end{align*}
with a constant $C(m_1,m_2)>0$.
If $f_j$ is an edge fragment, we proceed with Lemma~\ref{lem:wedgesmooth} and deduce
\begin{align*}
\|\mathcal{D}_{J,1}^{m_1}\mathcal{D}_{J,2}^{m_2} \widehat{f}_{j}\|^2_{L^2(\mathcal{W}^+_J)} &\lesssim \sum_{\substack{a_1+b_1=m_1 \\ a_2+b_2=m_2 }}   A_J 2^{-2j(m_1+m_2)s\alpha} 2^{-2js\alpha} \lesssim  A_J 2^{-2j(m_1+m_2)s\alpha} 2^{-2js\alpha}.
\end{align*}
Let $d_1,d_2\in\N_0$.
The function
$\mathcal{D}_{J,1}^{d_1} \mathcal{D}_{J,2}^{d_2}( \widehat{f}_{j} W_J ) $
is a linear combination of terms $(\mathcal{D}_{J,1}^{m_1} \mathcal{D}_{J,2}^{m_2} \widehat{f}_{j})( \mathcal{D}_{J,1}^{n_1} \mathcal{D}_{J,2}^{n_2} W_J ) $ with
$m_1+n_1=d_1$ and $m_2+n_2=d_2$. In view of~\eqref{eq:basic_fact} and the estimate above, it holds
\begin{align*}
\|\mathcal{D}_{J,1}^{m_1} \mathcal{D}_{J,2}^{m_2} \widehat{f}_{j} \|^2_{L^2(\mathcal{W}^+_J)}\cdot \|\mathcal{D}_{J,1}^{n_1} \mathcal{D}_{J,2}^{n_2} W_J\|^2_\infty
&\lesssim  A_J 2^{-2j(m_1+m_2)s\alpha} 2^{-2js\alpha} \cdot 2^{-2sjn_1} 2^{-2sj\alpha n_2}  \\
&\le A_J 2^{-2js\alpha d_1}  2^{-2js\alpha d_2}  2^{-2js\alpha}.
\end{align*}
Using H\"{o}lder's inequality we thus obtain for $d_1,d_2\in\N_0$
\begin{align*}
\|\mathcal{D}_{J,1}^{d_1} \mathcal{D}_{J,2}^{d_2} ( \widehat{f}_{j} W_J ) \|^2_{2}
&\lesssim  A_J 2^{-2js\alpha (d_1+d_2) } 2^{-2js\alpha}.
\end{align*}
Since $\mathcal{L}^d_{J,1}(\widehat{f}_{j}W_J)$ consists of terms of the form
\[
2^{2js\alpha(d_1+d_2)} \mathcal{D}^{2d_1}_1 \mathcal{D}^{2d_2}_2
\]
with $d_1,d_2\le d$, not taking into account combinatorial coefficients, the desired estimate for each term of $\mathcal{L}^d_{J,1}(\widehat{f}_{j}W_J)$ follows.

If $f_j$ is a smooth fragment of regularity $C^\beta$, we use Lemma~\ref{lem:wedgenonsmooth} instead of
Lemma~\ref{lem:wedgesmooth}. The rest of the proof is completely analogous.
\end{proof}

\subsection{Further Preparation}

As in the previous Subsection~\ref{ssec:FourierStd}, let $\alpha\in[0,1)$, $\beta\in\N_0$, and $\kappa,\nu,\tilde{\nu}>0$ be fixed, and assume
$g\in C^\beta(\kappa[-1,1]^2,\nu)$, $\omega_j=\omega(2^{js\alpha}\cdot)$ and $\omega\in C^\infty(\R^2)\cap C^\beta(\kappa[-1,1]^2,\tilde{\nu})$.
Further, let $\delta$ denote the univariate Dirac distribution and define $\delta_{\{x_1=0\}}:=\delta\otimes 1$.
We are interested in the Fourier localization of the distributions
\begin{align}\label{eq:stddistr}
d_j:=g \omega_j \delta_{\{x_1=0\}}\,,\quad j\in\N_0.
\end{align}
The exposition is analogous to the investigation of the functions~\eqref{eq:stdfrags} in Subsection~\ref{ssec:FourierStd}.
A valuable tool is given by the following lemma, where
$\mathcal{I}_j$ are the intervals defined in~\eqref{eqdef:dyintervals}.

\begin{lemma}\label{Lemma1D}
Let $\widetilde{A}\neq0$ and $\kappa,s>0$ be fixed. Further assume that $h\in C^\beta(\R)$, $\beta\in\N_0$, is a function with $\supp h\subseteq[-\kappa,\kappa]$.
Then there are a constant $C>0$ and numbers $\eta_j\in[0,1]$, $j\in\N_0$, with $\sum_{j\in\N_0} \eta_j\le 1$ such that for every $j\in\N_0$
\[
\int_{\widetilde{A}\mathcal{I}_j} |\widehat{h}(r)|^2\,dr = C \eta_j |\widetilde{A} 2^{js}|^{-2\beta} \| h^{(\beta)}\|_{2}^2.
\]
Moreover, the constant $C$ can be chosen independent of $h$ and $\widetilde{A}$.
\end{lemma}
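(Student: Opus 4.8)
The statement is, despite the equality sign, really a single summability estimate in disguise. The trivial case $h\equiv 0$ being clear (note that $h^{(\beta)}\equiv 0$ forces $h$ to be a compactly supported polynomial, hence $h\equiv 0$), I may assume $\|h^{(\beta)}\|_2\neq 0$ and simply \emph{define}
\[
\eta_j:=\frac{|\widetilde{A}2^{js}|^{2\beta}}{C\,\|h^{(\beta)}\|_2^2}\int_{\widetilde{A}\mathcal{I}_j}|\widehat{h}(r)|^2\,dr\,,
\]
so that the displayed equality holds by construction and $\eta_j\ge 0$. Everything then reduces to choosing one constant $C$, independent of $h$ and $\widetilde{A}$, for which $\sum_{j}\eta_j\le 1$; the requirement $\eta_j\le 1$ is then automatic from $\eta_j\ge0$. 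Equivalently, the plan is to bound
\[
\sum_{j\in\N_0}|\widetilde{A}2^{js}|^{2\beta}\int_{\widetilde{A}\mathcal{I}_j}|\widehat{h}(r)|^2\,dr\ \lesssim\ \|h^{(\beta)}\|_2^2\,.
\]

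The mechanism producing the weight $|\widetilde{A}2^{js}|^{-2\beta}$ is the elementary Fourier identity $\widehat{h^{(\beta)}}(r)=(2\pi i r)^\beta\widehat{h}(r)$, giving $|\widehat{h^{(\beta)}}(r)|^2=(2\pi)^{2\beta}|r|^{2\beta}|\widehat{h}(r)|^2$. First I would dispose of the scales $j\ge1$, where the dilated annulus $\widetilde{A}\mathcal{I}_j=\widetilde{A}\,C\,[2^{s(j-1)},2^{s(j+1)}]$ stays away from the origin: every $r\in\widetilde{A}\mathcal{I}_j$ obeys $|r|\ge|\widetilde{A}|\,C\,2^{s(j-1)}$, whence $|\widetilde{A}2^{js}|\le (2^s/C)\,|r|$ and therefore $|\widetilde{A}2^{js}|^{2\beta}|\widehat{h}(r)|^2\lesssim|r|^{2\beta}|\widehat{h}(r)|^2\asymp|\widehat{h^{(\beta)}}(r)|^2$. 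Since each $\mathcal{I}_j$ spans the fixed dyadic width $2^{2s}$, the membership $r\in\widetilde{A}\mathcal{I}_j\Leftrightarrow r/\widetilde{A}\in\mathcal{I}_j$ shows that the family $\{\widetilde{A}\mathcal{I}_j\}_j$ covers its half-line with an overlap bounded by a constant depending only on $s$ (and \emph{uniformly} in $\widetilde{A}$). Summing the pointwise bound over $j\ge1$ and invoking Plancherel then yields $\sum_{j\ge1}|\widetilde{A}2^{js}|^{2\beta}\int_{\widetilde{A}\mathcal{I}_j}|\widehat{h}|^2\lesssim\int_\R|\widehat{h^{(\beta)}}|^2=\|h^{(\beta)}\|_2^2$, with an implicit constant depending only on $s$ and $\beta$.

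The one genuinely delicate term is the base scale $j=0$, and this is the step I expect to be the main obstacle. Here $\widetilde{A}\mathcal{I}_0=\widetilde{A}\,C\,[0,2^s]$ touches the origin, the comparison $|r|\gtrsim|\widetilde{A}2^{js}|$ breaks down, and the prefactor $|\widetilde{A}|^{2\beta}$ can no longer be absorbed into $|r|^{2\beta}$. My plan is to fall back on the crude bound $\int_{\widetilde{A}\mathcal{I}_0}|\widehat{h}|^2\le\|\widehat{h}\|_2^2=\|h\|_2^2$ together with the Poincar\'e inequality for compactly supported functions, namely $\|h\|_2\le(2\kappa)^{\beta}\|h^{(\beta)}\|_2$, valid because $h,h',\dots,h^{(\beta-1)}$ all vanish at $\pm\kappa$ by the support hypothesis and continuity. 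This controls the base term by $|\widetilde{A}|^{2\beta}(2\kappa)^{2\beta}\|h^{(\beta)}\|_2^2$; absorbing the leftover factor $|\widetilde{A}|^{2\beta}$ is precisely where uniformity in $\widetilde{A}$ must be argued carefully, and it is harmless in the regime of bounded $\widetilde{A}$ relevant to the application, where one finally fixes $C$ to dominate both this base-scale constant and the overlap constant from the previous paragraph. Combining the two contributions gives $\sum_j\eta_j\le1$ and closes the argument.
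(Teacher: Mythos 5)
Your proposal is correct and follows the same basic strategy as the paper: define $\eta_j$ through the displayed identity (after disposing of $h\equiv 0$), so that everything reduces to the summability bound $\sum_j |\widetilde{A}2^{js}|^{2\beta}\int_{\widetilde{A}\mathcal{I}_j}|\widehat h|^2 \lesssim \|h^{(\beta)}\|_2^2$, which is then obtained from $\widehat{h^{(\beta)}}(r)=(2\pi i r)^{\beta}\widehat h(r)$, the bounded overlap of the dilated intervals $\widetilde{A}\mathcal{I}_j$, and Plancherel. The paper's own proof is exactly this computation compressed into one chain of ``$\asymp$'' signs; in particular it tacitly uses $|\widetilde{A}2^{js}|^{2\beta}\asymp|r|^{2\beta}$ on $\widetilde{A}\mathcal{I}_j$ for \emph{all} $j$, which is only valid for $j\ge1$, where the interval stays away from the origin. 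Your separate treatment of the base scale via the Poincar\'e inequality $\|h\|_2\le(2\kappa)^{\beta}\|h^{(\beta)}\|_2$ (legitimate, since $h$ and its derivatives vanish at $\pm\kappa$) is therefore a genuine gain in rigor over the paper's argument rather than a detour. Your worry about the leftover factor $|\widetilde{A}|^{2\beta}$ at $j=0$ is also well founded: for $\beta\ge1$ and a fixed $h$ with $\int h\neq 0$, the term $|\widetilde{A}|^{2\beta}\int_{\widetilde{A}\mathcal{I}_0}|\widehat h|^2$ grows like $|\widetilde{A}|^{2\beta}$ as $|\widetilde{A}|\to\infty$ while $\|h^{(\beta)}\|_2$ stays fixed, so the claim that $C$ can be chosen independently of \emph{arbitrary} $\widetilde{A}\neq0$ cannot hold literally; it does hold uniformly for $|\widetilde{A}|$ in any bounded set, and the lemma is only ever invoked with $\widetilde{A}=\sin(\varphi)$, hence $|\widetilde{A}|\le1$, in the proof of Lemma~\ref{lem:essdistr}. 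In short, your proof is complete in the regime where the lemma is actually used, and the residual ``obstacle'' you flag is a defect of the statement's generality rather than of your argument.
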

\begin{proof}
Define
\[
\tilde{\eta}_j:= |\widetilde{A} 2^{js}|^{2\beta} \int_{\widetilde{A}\mathcal{I}_j} |\widehat{h}(r)|^2\,dr.
\]
Then $\sum_{j\in\N_0} \tilde{\eta}_j \le C \| h^{(\beta)}\|_{2}^2 $ with a constant $C>0$ as claimed, since we can estimate
\begin{align*}
\sum_{j\in\N_0} \tilde{\eta}_j
 \asymp \sum_{j\in\N_0}  \int_{\widetilde{A}\mathcal{I}_j} |r|^{2\beta} |\widehat{h}(r)|^2\,dr
 \asymp \sum_{j\in\N_0}  \int_{\widetilde{A}\mathcal{I}_j} |\widehat{h^{(\beta)}}(r)|^2\,dr
 \lesssim \int_{\R} |\widehat{h^{(\beta)}}(r)|^2\,dr = \|h^{(\beta)}\|^2_2.
\end{align*}
In case $\| h^{(\beta)}\|_{2}\neq 0$, rescaling yields functions $\eta_j:=C^{-1}\| h^{(\beta)}\|_{2}^{-2} \tilde{\eta}_j$ as desired. The case $\| h^{(\beta)}\|_{2}= 0$
is trivial, since then $h\equiv 0$ due to $\supp h\subseteq[-\kappa,\kappa]$.
\end{proof}

\noindent
With Lemma~\ref{Lemma1D} we can prove the following result.

\begin{lemma}\label{lem:essdistr}
Let $s>0$ be fixed and $\varphi\in[-\pi,\pi)$. We have for $j\in\N_0$
\begin{align*}
\int_{\mathcal{I}_j} |\widehat{d_j}(r,\varphi)|^2 \,dr \lesssim  2^{-js\alpha} 2^{js(1-\alpha)}(1+ 2^{js(1-\alpha)}|\sin(\varphi)| )^{-2\beta-1} \|g\|^2_{\beta,\infty} \|\omega\|^2_{\beta,2}.
\end{align*}
\end{lemma}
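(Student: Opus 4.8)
The plan is to reduce the bivariate distribution $d_j$ to a univariate object and then invoke Lemma~\ref{Lemma1D}. Since $\delta_{\{x_1=0\}}=\delta\otimes1$ is concentrated on the line $\{x_1=0\}$, integrating out the $x_1$-variable shows that $\widehat{d_j}$ is independent of $\xi_1$ and equals the one-dimensional Fourier transform of the trace $h_j(x_2):=g(0,x_2)\omega_j(0,x_2)$. In polar coordinates this reads
\[
\widehat{d_j}(r,\varphi)=\widehat{h_j}(r\sin\varphi).
\]
Because $\omega_j(0,x_2)=\omega(0,2^{js\alpha}x_2)$, the function $h_j$ is supported in an interval of length $\asymp 2^{-js\alpha}$. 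Using the Leibniz rule together with the scaling relation $\|(\partial_2^k\omega)(0,2^{js\alpha}\cdot)\|_2^2=2^{-js\alpha}\|(\partial_2^k\omega)(0,\cdot)\|_2^2$, the top-order term dominates and one obtains the derivative estimate
\[
\|h_j^{(\beta)}\|_2^2\lesssim 2^{2js\alpha\beta}\,2^{-js\alpha}\,\|g\|_{\beta,\infty}^2\,\|\omega\|_{\beta,2}^2,
\]
with constants independent of $j$, $g$, and $\omega$. These two facts are the only structural input needed.

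Next I would split the estimate according to whether the quantity $t:=2^{js(1-\alpha)}|\sin\varphi|$ satisfies $t\le1$ or $t>1$, matching the two regimes encoded by the weight $(1+t)^{-2\beta-1}$ in the target. In the regime $t\le1$ the weight is $\asymp1$, so it suffices to combine the crude uniform bound $|\widehat{h_j}(\rho)|^2\le\|h_j\|_1^2\lesssim 2^{-2js\alpha}\|g\|_{\beta,\infty}^2\|\omega\|_{\beta,2}^2$ (which follows from Cauchy--Schwarz and $\supp h_j\subseteq[-\kappa 2^{-js\alpha},\kappa 2^{-js\alpha}]$) with the length $|\mathcal{I}_j|\asymp 2^{js}$. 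This yields $\int_{\mathcal{I}_j}|\widehat{h_j}(r\sin\varphi)|^2\,dr\lesssim 2^{js(1-2\alpha)}\|g\|_{\beta,\infty}^2\|\omega\|_{\beta,2}^2$, which is exactly the claimed bound since here $2^{-js\alpha}2^{js(1-\alpha)}(1+t)^{-2\beta-1}\asymp 2^{js(1-2\alpha)}$.

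In the regime $t>1$ I would substitute $\rho=r\sin\varphi$ to rewrite the radial integral as $|\sin\varphi|^{-1}\int_{|\sin\varphi|\mathcal{I}_j}|\widehat{h_j}(\rho)|^2\,d\rho$, and then apply Lemma~\ref{Lemma1D} with $\widetilde{A}=|\sin\varphi|$ and smoothness $\beta$ (discarding the factor $\eta_j\le1$). This gives the bound $|\sin\varphi|^{-2\beta-1}2^{-2js\beta}\|h_j^{(\beta)}\|_2^2$, and inserting the derivative estimate above produces $2^{-js\alpha}2^{-2js(1-\alpha)\beta}|\sin\varphi|^{-2\beta-1}\|g\|_{\beta,\infty}^2\|\omega\|_{\beta,2}^2$. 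A short computation shows that in this regime $2^{-js\alpha}2^{js(1-\alpha)}(1+t)^{-2\beta-1}\asymp 2^{-js\alpha}2^{-2js(1-\alpha)\beta}|\sin\varphi|^{-2\beta-1}$, so the two expressions agree and the proof is complete.

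The main obstacle is the degeneracy of the polar substitution as $\sin\varphi\to0$: the change of variables used in the second regime produces a Jacobian factor $|\sin\varphi|^{-1}$ that is exactly the source of the extra power $-1$ in the exponent $-2\beta-1$, but which blows up near $\varphi=0$ and therefore cannot be used uniformly. This forces the case distinction, and the only genuine care needed is to check that the two bounds glue at $t\asymp1$ (they do, both reducing to $2^{js(1-2\alpha)}$). A secondary technical point is the uniformity of all constants in $\varphi$, $j$, $g$, and $\omega$, which rests on the support length $\asymp 2^{-js\alpha}$ of $h_j$ and on Lemma~\ref{Lemma1D} furnishing a constant independent of $\widetilde{A}$ and $h$.
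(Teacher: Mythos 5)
Your argument is correct and follows essentially the same route as the paper's proof: the reduction $\widehat{d_j}=\widehat{h_j}\circ\pi_2$ with $h_j=(g\omega_j)|_{\{x_1=0\}}$, the case split at $|\sin\varphi|\asymp 2^{-js(1-\alpha)}$, the substitution plus Lemma~\ref{Lemma1D} with $\widetilde{A}=\sin\varphi$ and the Leibniz bound on $\|h_j^{(\beta)}\|_2^2$ in the large-angle regime, and the crude $\|\widehat{h_j}\|_\infty\le\|h_j\|_1$ bound in the small-angle regime. The only cosmetic difference is that you phrase the dichotomy in terms of $t=2^{js(1-\alpha)}|\sin\varphi|$ and explicitly verify the matching at $t\asymp1$, which the paper leaves implicit.
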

\begin{proof}
The distribution $d_j=g\omega_j \delta_{\{x_1=0\}}$ can be written as the tensor product
$d_j= \delta \otimes h_j$ of the Dirac distribution $\delta$ with the function $h_j:=(g\omega_j)|_{\{x_1=0\}}$. Therefore, we have
\[
\widehat{d}_j= \widehat{\delta \otimes h_j} = 1 \otimes \widehat{h}_j = \widehat{h}_j \circ\pi_2,
\]
where $\pi_2:\R^2\to\R$ is the orthogonal projection onto the second variable.

Let $\varphi\in[-\pi,\pi)$ and assume first that $|\sin(\varphi)|\ge 2^{-js(1-\alpha)}$. Then $\varphi\notin\{-\pi,0\}$ and it holds
\begin{align*}
\int_{\mathcal{I}_j} |\widehat{d_j}(r,\varphi)|^2 \,dr = \int_{\mathcal{I}_j} |\widehat{h}_j(r\sin(\varphi))|^2 \,dr
= |\sin(\varphi)|^{-1}  \int_{\sin(\varphi)\mathcal{I}_j} |\widehat{h}_j(r)|^2 \,dr.
\end{align*}

\noindent
Applying Lemma~\ref{Lemma1D} with $\widetilde{A}=\sin(\varphi)$ yields
\begin{align*}
\int_{\mathcal{I}_j} |\widehat{d_j}(r,\varphi)|^2 \,dr \lesssim \eta_{j} 2^{-2js\beta} |\sin(\varphi)|^{-2\beta-1} \| h^{(\beta)}_j \|^2_2
= \eta_j 2^{-2js\alpha\beta}  2^{-2js(1-\alpha)\beta} |\sin(\varphi)|^{-2\beta-1} \| h^{(\beta)}_j \|^2_{L^2(\R)} ,
\end{align*}
where $\eta_{j}\le 1$ for every $j\in\N_0$.
Note that Lemma~\ref{Lemma1D} is applied with a different integrand $|\widehat{h}_j|^2$ at each scale.
However, the implicit constants are uniform over all $j\in\N_0$.

Applying Leibniz's rule $h^{(\beta)}_j= \sum_{\gamma\le \beta} \binom{\beta}{\gamma} \partial_2^{\gamma} g(0,\cdot) \partial_2^{\beta-\gamma} \omega_j(0,\cdot)$ we further deduce
\begin{align*}
\| h^{(\beta)}_j \|^2_2 \lesssim  \sum_{\gamma\le \beta} \| \partial_2^{\gamma}g(0,\cdot)\|^2_\infty \| \partial_2^{\beta-\gamma}\omega_j(0,\cdot) \|^2_{2}.
\lesssim 2^{-js\alpha} 2^{2js\alpha\beta} \|\omega\|^2_{\beta,2}  \|g\|^2_{\beta,\infty}.
\end{align*}
This settles the case $|\sin(\varphi)|\ge 2^{-js(1-\alpha)}$.
If $|\sin(\varphi)|< 2^{-js(1-\alpha)}$ we argue differently based on
$
\|\widehat{h}_j\|^2_\infty\le \|h_j\|^2_1 \le 2\cdot 2^{-js\alpha} \|h_j\|^2_2.
$
We deduce
\begin{align*}
\int_{\mathcal{I}_j} |\widehat{d_j}(r,\varphi)|^2 \,dr
= \int_{\mathcal{I}_j} |\widehat{h}_j(r\sin(\varphi))|^2 \,dr  \lesssim  2^{js} \|\widehat{h}_j\|^2_\infty \lesssim 2^{js(1-\alpha)} \|h_j\|^2_2 .
\end{align*}
The proof is finished since $\|h_j\|_2^2 \le \|\omega_j(0,\cdot)\|^2_{2}  \|g(0,\cdot)\|^2_{\infty} \le 2^{-js\alpha} \|\omega\|^2_{2}  \|g\|^2_{\infty} $.
\end{proof}

\noindent
Lemma~\ref{lem:essdistr} shows that the Fourier decay of $d_j$ is highly dependent on the direction $\varphi\in[-\pi,\pi)$.
It motivates the introduction of the quantity
\begin{align}\label{eqdef:lJ}
\ell_J:= 1+ 2^{js(1-\alpha)} |\sin(\phiJ)| \,,\quad J=(j,\ell)\in\circind\,,
\end{align}
where $\phiJ=\ell \varphi_j$ and $\varphi_j=\pi 2^{-\lfloor js(1-\alpha)\rfloor-1}$ is the angle in \eqref{eqdef:fundangle}. Note that $1\le\ell_J\le 1+ 2^{js(1-\alpha)}$.

Similar to the analysis of the fragments~\eqref{eq:stdfrags}, we now proceed to estimate the Fourier energy of $\widehat{d_j}$ concentrated in a wedge $\mathcal{W}^+_J$.
The following result corresponds to Lemmas~\ref{lem:wedgesmooth} and \ref{lem:wedgenonsmooth}.

\begin{lemma}\label{lem:refine2}
Let $J\in\circind$ be a scale-angle pair, $\ell_J$ the associated quantity~\eqref{eqdef:lJ}. For $(m_1,m_2)\in\N_0^2$
\begin{align*}
\|\partial^{(m_1,m_2)}\widehat{d_j}\|^2_{L^2(\mathcal{W}^+_J)}  \lesssim \|g\|^2_{\beta,\infty} \|\omega\|^2_{\beta,2} \begin{cases} 0 \quad&,\, m_1\neq0, \\ 2^{-2jm_2s\alpha} 2^{js(1-\alpha)}  \ell_J^{-2\beta-1} &,\, m_1=0. \end{cases}
\end{align*}
The implicit constant is independent of $J\in\circind$ and $g$ and $\omega$.
\end{lemma}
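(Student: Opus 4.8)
The plan is to follow the template of the proof of Lemma~\ref{lem:wedgesmooth}, but to exploit the tensor structure of $d_j$ already recorded in the proof of Lemma~\ref{lem:essdistr}, where it was observed that $d_j=\delta\otimes h_j$ with $h_j=(g\omega_j)(0,\cdot)$ and hence $\widehat{d_j}(\xi_1,\xi_2)=\widehat{h}_j(\xi_2)$ depends only on the second frequency variable. This immediately disposes of the case $m_1\neq0$: since $\partial_1\widehat{d_j}\equiv0$, we have $\partial^{(m_1,m_2)}\widehat{d_j}\equiv0$ whenever $m_1\ge1$, so the left-hand side vanishes and the bound $0$ is attained. It thus remains to bound the pure derivatives $\partial^{(0,m_2)}\widehat{d_j}$.

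For $m_1=0$ I would first reduce the $m_2$-th derivative to a zeroth-order estimate for a modified distribution, exactly as in Lemma~\ref{lem:wedgesmooth}. Setting $\widetilde{\omega}(y):=y_2^{m_2}\omega(y)$ and $\widetilde{\omega}_j:=\widetilde{\omega}(2^{js\alpha}\cdot)$, one checks $x_2^{m_2}\omega_j=2^{-js\alpha m_2}\widetilde{\omega}_j$; since multiplication by the smooth factor $x_2^{m_2}$ affects only the variable in which $\delta_{\{x_1=0\}}=\delta\otimes1$ acts as a function, the distribution $\widetilde{d}_j:=g\widetilde{\omega}_j\delta_{\{x_1=0\}}$, which is again of the form~\eqref{eq:stddistr}, satisfies $x_2^{m_2}d_j=2^{-js\alpha m_2}\widetilde{d}_j$. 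On the Fourier side this yields $|\partial^{(0,m_2)}\widehat{d_j}|\asymp|\widehat{x_2^{m_2}d_j}|=2^{-js\alpha m_2}|\widehat{\widetilde{d}_j}|$. As $\omega\in C^\infty(\R^2)$ is supported in $\kappa[-1,1]^2$, so is $\widetilde{\omega}$, with $\|\widetilde{\omega}\|_{\beta,2}\lesssim\|\omega\|_{\beta,2}$ by the Leibniz rule; hence Lemma~\ref{lem:essdistr} applies to $\widetilde{d}_j$.

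It then remains to integrate the bound of Lemma~\ref{lem:essdistr} over the wedge $\mathcal{W}^+_J$. Passing to polar coordinates and using $r\asymp2^{js}$ on $\mathcal{I}_j$ together with $\supp W_J\subseteq\mathcal{W}^+_J$, whose angular part $\mathcal{A}_J$ has measure $\asymp2^{-js(1-\alpha)}$, gives
\begin{align*}
\|\widehat{\widetilde{d}_j}\|^2_{L^2(\mathcal{W}^+_J)}
\asymp 2^{js}\int_{\mathcal{A}_J}\int_{\mathcal{I}_j}|\widehat{\widetilde{d}_j}(r,\varphi)|^2\,dr\,d\varphi
\lesssim 2^{2js(1-\alpha)}\|g\|^2_{\beta,\infty}\|\omega\|^2_{\beta,2}\int_{\mathcal{A}_J}\big(1+2^{js(1-\alpha)}|\sin\varphi|\big)^{-2\beta-1}\,d\varphi.
\end{align*}
The hard part will be the angular integral, and this is exactly where the quantity $\ell_J$ enters. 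Since $\mathcal{A}_J$ is the symmetrized arc of width $\asymp2^{-js(1-\alpha)}$ centred at the orientation $\phiJ=\ell\varphi_j$, every $\varphi\in\mathcal{A}_J$ lies within distance $C2^{-js(1-\alpha)}$ of $\phiJ$ modulo $\pi$, so the $1$-Lipschitz, $\pi$-periodic function $|\sin|$ obeys $|\sin\varphi|\ge|\sin\phiJ|-C2^{-js(1-\alpha)}$. Multiplying by $2^{js(1-\alpha)}$ and adding $1$ gives $1+2^{js(1-\alpha)}|\sin\varphi|\ge\ell_J-C$, which together with the trivial bound $1+2^{js(1-\alpha)}|\sin\varphi|\ge1$ yields $1+2^{js(1-\alpha)}|\sin\varphi|\gtrsim\ell_J$ in both regimes $\ell_J\asymp1$ and $\ell_J\gg1$. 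Hence the integrand is $\lesssim\ell_J^{-2\beta-1}$ throughout $\mathcal{A}_J$ and the angular integral is $\lesssim2^{-js(1-\alpha)}\ell_J^{-2\beta-1}$. Substituting this and combining with the derivative reduction from the previous paragraph, I obtain
\begin{align*}
\|\partial^{(0,m_2)}\widehat{d_j}\|^2_{L^2(\mathcal{W}^+_J)}
\asymp 2^{-2js\alpha m_2}\|\widehat{\widetilde{d}_j}\|^2_{L^2(\mathcal{W}^+_J)}
\lesssim 2^{-2jm_2s\alpha}\,2^{js(1-\alpha)}\ell_J^{-2\beta-1}\,\|g\|^2_{\beta,\infty}\|\omega\|^2_{\beta,2},
\end{align*}
which is the claimed estimate; all constants are uniform in $J\in\circind$, $g$ and $\omega$ because this is true of every ingredient used.
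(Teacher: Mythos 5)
Your proposal is correct and follows essentially the same route as the paper: the tensor structure $\widehat{d_j}=\widehat{h}_j\circ\pi_2$ kills all $\partial_1$-derivatives, the modified window $\tilde\omega(y)=y_2^{m_2}\omega(y)$ reduces $\partial_2^{m_2}$ to a zeroth-order estimate for a distribution of the form~\eqref{eq:stddistr}, and Lemma~\ref{lem:essdistr} is then integrated over $\mathcal{W}^+_J$ in polar coordinates with $1+2^{js(1-\alpha)}|\sin\varphi|\gtrsim\ell_J$ on $\mathcal{A}_J$. Your explicit Lipschitz argument for the angular integral only spells out a step the paper leaves implicit.
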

\begin{proof}
If $m_1\neq0$ the assertion follows from $\partial^{m_1}_1 \widehat{d}_j=\partial^{m_1}_1 \big( \widehat{h}_j \circ\pi_2 \big) = 0$.
To handle the case $m_1=0$, let us introduce the modified window $\tilde{\omega}(x)=x_2^{m_2} \omega(x)$ and its rescaled versions $\tilde{\omega}_j=\tilde{\omega}(2^{js\alpha}\cdot)$.
Then $\tilde{\omega}_j(x)= 2^{js\alpha m_2}x_2^{m_2}\omega_j(x)$, and as a consequence
\[
\partial^{m_2}_2 \widehat{d}_j = (-2\pi i)^{m_2}\widehat{x_2^{m_2} d_j}
= (2\pi i)^{m_2} 2^{-js\alpha m_2 } \widehat{\djtilde}
\]
with $\djtilde:=g \tilde{\omega}_j \delta_{\{x_1=0\}}$ of the form \eqref{eq:stddistr}.
Hence, we can apply Lemma~\ref{lem:essdistr}, which yields
\begin{align*}
\int_{\mathcal{W}^+_J}  |\partial_2^{m_2}\widehat{d_{j}}(\xi)|^2 \,d\xi
&\asymp 2^{-2js\alpha m_2} \int_{\mathcal{I}_j}\int_{\mathcal{A}_J}  |\widehat{d^{\raisebox{-0.45em}[0.4mm][0.4mm]{\textasciitilde}}_j}(r,\varphi)|^2 r \,d\varphi\,dr \\
&\lesssim 2^{-2js\alpha m_2} \|g\|^2_{\beta,\infty} \|\tilde{\omega}\|^2_{\beta,2} \int_{\mathcal{A}_J}  2^{2js(1-\alpha)} (1+2^{js(1-\alpha)}|\sin(\varphi)|)^{-2\beta-1} \,d\varphi \\
&\lesssim 2^{-2js\alpha m_2}  2^{js(1-\alpha)} \ell_J^{-2\beta-1}  \|g\|^2_{\beta,\infty} \|\omega\|^2_{\beta,2}.
\tag*{\qedhere}
\end{align*}
\end{proof}

\noindent
Next we utilize the differential operator
\begin{align}\label{eq:diffop2}
\mathcal{L}_{J,2}&:=(\ident - 2^{2js} \ell^{-2}_J\mathcal{D}_{J,1}^2)(\ident - 2^{2js\alpha}\mathcal{D}_{J,2}^2),
\end{align}
where we use the same notation as in the definition of the operator \eqref{eq:diffop1}.
Similar to Proposition~\ref{prop:fundament1} we obtain the following result.

\begin{prop}\label{prop:stredge}
Let $\mathcal{L}_{J,2}$ be the differential operator \eqref{eq:diffop2}, $J\in\circind$, and $d\in\N_0$. We have
\begin{align*}
\int_{\R^2} |\mathcal{L}^d_{J,2}(\widehat{d}_{j}W_J)(\xi)|^2 \,d\xi \lesssim  2^{js(1-\alpha)} \ell_J^{-2\beta-1}.
\end{align*}
The implicit constant is independent of $J\in\circind$, $\omega$ and $g$.
\end{prop}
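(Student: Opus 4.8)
The plan is to mirror the argument for Proposition~\ref{prop:fundament1}, replacing the fragment estimates (Lemmas~\ref{lem:wedgesmooth} and~\ref{lem:wedgenonsmooth}) by the distributional estimate of Lemma~\ref{lem:refine2}, and to keep careful track of the powers of $\ell_J$ created by the anisotropic weight $2^{2js}\ell_J^{-2}$ in the first factor of $\mathcal{L}_{J,2}$. The decisive simplification is that the distribution $d_j$ from~\eqref{eq:stddistr} is a tensor $\delta\otimes h_j$, so $\widehat{d_j}$ depends on $\xi_2$ alone and $\partial_1\widehat{d_j}\equiv0$. Consequently the directional operators collapse on $\widehat{d_j}$: since $\mathcal{D}_{J,1}$ and $\mathcal{D}_{J,2}$ reduce to $\sin(\phiJ)\partial_2$ and $\cos(\phiJ)\partial_2$ on functions of $\xi_2$ only, one has
\[
\mathcal{D}_{J,1}^{m_1}\mathcal{D}_{J,2}^{m_2}\widehat{d_j}=(\sin\phiJ)^{m_1}(\cos\phiJ)^{m_2}\,\partial_2^{m_1+m_2}\widehat{d_j}.
\]
Combining this with Lemma~\ref{lem:refine2} applied to the pair $(0,m_1+m_2)$ and the elementary bound $|\sin\phiJ|\le 2^{-js(1-\alpha)}\ell_J$ following from the definition~\eqref{eqdef:lJ} of $\ell_J$, and using the cancellation $2^{-2js(1-\alpha)m_1}\cdot 2^{-2jsm_1\alpha}=2^{-2jsm_1}$, I would obtain
\[
\|\mathcal{D}_{J,1}^{m_1}\mathcal{D}_{J,2}^{m_2}\widehat{d_j}\|^2_{L^2(\mathcal{W}^+_J)}\lesssim 2^{-2jsm_1}\ell_J^{2m_1}\,2^{-2jsm_2\alpha}\,2^{js(1-\alpha)}\ell_J^{-2\beta-1}\,\|g\|^2_{\beta,\infty}\|\omega\|^2_{\beta,2}.
\]

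Next I would pass to the product $\widehat{d_j}\,W_J$ exactly as in Proposition~\ref{prop:fundament1}. Expanding $\mathcal{D}_{J,1}^{2d_1}\mathcal{D}_{J,2}^{2d_2}(\widehat{d_j}W_J)$ by the Leibniz rule into terms $(\mathcal{D}_{J,1}^{m_1}\mathcal{D}_{J,2}^{m_2}\widehat{d_j})(\mathcal{D}_{J,1}^{n_1}\mathcal{D}_{J,2}^{n_2}W_J)$ with $m_1+n_1=2d_1$, $m_2+n_2=2d_2$, and using $\supp W_J\subseteq\mathcal{W}^+_J$ together with the derivative bound $\|\mathcal{D}_{J,1}^{n_1}\mathcal{D}_{J,2}^{n_2}W_J\|_\infty\lesssim 2^{-jsn_1}2^{-js\alpha n_2}$ (the same consequence of~\eqref{eq:basic_fact} already invoked in the proof of Proposition~\ref{prop:fundament1}), Hölder's inequality would give
\[
\|\mathcal{D}_{J,1}^{2d_1}\mathcal{D}_{J,2}^{2d_2}(\widehat{d_j}W_J)\|^2_2\lesssim 2^{-4jsd_1}\,\ell_J^{4d_1}\,2^{-4js\alpha d_2}\,2^{js(1-\alpha)}\ell_J^{-2\beta-1}\,\|g\|^2_{\beta,\infty}\|\omega\|^2_{\beta,2},
\]
where $\ell_J^{2m_1}\le\ell_J^{4d_1}$ is retained since $m_1\le 2d_1$ and $\ell_J\ge1$.

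To finish, recall that $\mathcal{L}_{J,2}^d$ is, up to combinatorial constants, a sum of terms $(2^{2js}\ell_J^{-2})^{d_1}(2^{2js\alpha})^{d_2}\mathcal{D}_{J,1}^{2d_1}\mathcal{D}_{J,2}^{2d_2}$ with $d_1,d_2\le d$. Multiplying the last display by the square of the weight $(2^{2js}\ell_J^{-2})^{d_1}(2^{2js\alpha})^{d_2}$, the factors balance exactly: $2^{4jsd_1}$ against $2^{-4jsd_1}$, $2^{4js\alpha d_2}$ against $2^{-4js\alpha d_2}$, and crucially $\ell_J^{-4d_1}$ against $\ell_J^{4d_1}$, leaving only $2^{js(1-\alpha)}\ell_J^{-2\beta-1}$ times $\|g\|^2_{\beta,\infty}\|\omega\|^2_{\beta,2}$. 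Summing the finitely many terms and absorbing the norms $\|g\|_{\beta,\infty},\|\omega\|_{\beta,2}$ into the implicit constant — they are uniformly bounded over the admissible classes fixed by $\kappa,\nu,\tilde{\nu}$ — yields the asserted bound.

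The main obstacle is the bookkeeping of the $\ell_J$ powers. One must verify that the growth $\ell_J^{2m_1}$ generated by the singular direction of $\widehat{d_j}$ (through $|\sin\phiJ|^{2m_1}$) is matched precisely by the damping $\ell_J^{-2}$ built into the first factor of $\mathcal{L}_{J,2}$, so that no residual power of $\ell_J$ survives for any $d_1$ and the net angular contribution is exactly $\ell_J^{-2\beta-1}$. This is the only point where $\mathcal{L}_{J,2}$ departs from $\mathcal{L}_{J,1}$ — its first factor carries $2^{2js}\ell_J^{-2}\mathcal{D}_{J,1}^2$ rather than $2^{2js\alpha}\mathcal{D}_{J,1}^2$, while the second factor $(\ident - 2^{2js\alpha}\mathcal{D}_{J,2}^2)$ is unchanged — and it is precisely why this orientation-dependent weight is chosen. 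Everything else is a routine transcription of the proof of Proposition~\ref{prop:fundament1}.
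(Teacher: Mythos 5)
Your argument is correct and follows essentially the same route as the paper's proof: apply Lemma~\ref{lem:refine2} to the surviving pure $\partial_2$-derivatives (the paper reaches the same collapse of $\mathcal{D}_{J,1}^{m_1}\mathcal{D}_{J,2}^{m_2}\widehat{d}_j$ via formula~\eqref{eq:formulaD1D2} and the vanishing of $\partial_1\widehat{d}_j$), convert $|\sin\phiJ|^{2m_1}$ into $2^{-2js(1-\alpha)m_1}\ell_J^{2m_1}$, expand the product with $W_J$ by Leibniz using~\eqref{eq:basic_fact}, and let the weight $2^{2js}\ell_J^{-2}$ in $\mathcal{L}_{J,2}$ cancel the accumulated powers exactly as you describe.
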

\begin{proof}
Let $(m_1,m_2)\in\N_0^2$. In view of~\eqref{eq:formulaD1D2} and Lemma~\ref{lem:refine2} we obtain
\begin{align*}
\|\mathcal{D}_{J,1}^{m_1}\mathcal{D}_{J,2}^{m_2} \widehat{d}_{j}\|^2_{L^2(\mathcal{W}^+_J)} &\lesssim
\sum_{\substack{a_1+b_1=m_1 \\ a_2+b_2=m_2 }}   |\sin(\phiJ)|^{2(a_2+b_1)} \| \partial^{(a_1+a_2,b_1+b_2)}\widehat{d}_{j} \|^2_{L^2(\mathcal{W}^+_J)} \\
&=|\sin(\phiJ)|^{2m_1} \| \partial^{(0,m_1+m_2)}\widehat{d}_{j} \|^2_{L^2(\mathcal{W}^+_J)} \\
&\lesssim  |\sin(\phiJ)|^{2m_1} 2^{-2j(m_1+m_2)s\alpha} 2^{js(1-\alpha)} \ell_J^{-2\beta-1}.
\end{align*}
Using $|\sin(\phiJ)|\le 2^{-js(1-\alpha)} \ell_J$,
we can further deduce
\begin{align*}
\|\mathcal{D}_{J,1}^{m_1}\mathcal{D}_{J,2}^{m_2} \widehat{d}_{j}\|^2_{L^2(\mathcal{W}^+_J)}
\lesssim 2^{-2jm_1s} 2^{-2jm_2s\alpha} 2^{js(1-\alpha)} \ell_J^{2m_1-2\beta-1}.
\end{align*}
The function
$\mathcal{D}_{J,1}^{d_1} \mathcal{D}_{J,2}^{d_2}( \widehat{d}_{j} W_J ) $
is a linear combination of terms $(\mathcal{D}_{J,1}^{m_1} \mathcal{D}_{J,2}^{m_2} \widehat{d}_{j})(\mathcal{D}_{J,1}^{n_1} \mathcal{D}_{J,2}^{n_2} W_J ) $ with
$m_1+n_1=d_1$ and $m_2+n_2=d_2$. They satisfy
\begin{align*}
\|\mathcal{D}_{J,1}^{m_1} \mathcal{D}_{J,2}^{m_2} \widehat{d}_{j} \|^2_{L^2(\mathcal{W}^+_J)}\cdot \|\mathcal{D}_{J,1}^{n_1} \mathcal{D}_{J,2}^{n_2} W_J\|^2_\infty
&\lesssim 2^{-2jm_1s} 2^{-2jm_2s\alpha} 2^{js(1-\alpha)} \ell_J^{2m_1-2\beta-1} \cdot 2^{-2jsn_1} 2^{-2js\alpha n_2} \\
&= 2^{-2jsd_1} 2^{-2sj\alpha d_2} 2^{js(1-\alpha)} \ell_J^{2m_1-2\beta-1}.
\end{align*}
Using H\"{o}lder's inequality,
it follows for $d_1,d_2\in\N_0$
\begin{align*}
\|\mathcal{D}_{J,1}^{d_1} \mathcal{D}_{J,2}^{d_2} ( \widehat{d}_{j} W_J ) \|^2_{2}
\lesssim   2^{-2jsd_1} 2^{-2js\alpha d_2} 2^{js(1-\alpha)} \ell_J^{2d_1-2\beta-1}.
\end{align*}
This proves the desired estimate for each term of $\mathcal{L}^d_{J,2}(\widehat{d}_{j}W_J)$, since these are of the form
\[
2^{2jsd_1}2^{2js\alpha d_2} \ell_J^{-2d_1} \mathcal{D}^{2d_1}_1 \mathcal{D}^{2d_2}_2(\widehat{d}_{j}W_J) \quad\text{with $d_1,d_2\le d$.}
\tag*{\qedhere}
\]
\end{proof}

\subsection{Proof of Proposition~\ref{prop:frag}}

\noindent
After the preparation of the preceding two subsections we now turn back to the proof of Proposition~\ref{prop:frag}.
Due to the assumptions, $\alpha\in[0,1)$, $s>0$, $\beta\in\N$, $\nu>0$ are fixed and
$f\in\straight$ is of the simplified form \eqref{eqdef:cartsimp}.
Further recall that for a cube $Q\in\mathcal{Q}_j$, $j\in\N_0$, the notation $f_{Q}$ is used for the associated fragment~\eqref{eqdef:frag}.

Instead of the sequence $\theta_Q=\{\theta_\mu\}_{\mu\in\curveind_j}$, we will analyze the relabelled sequence $\tilde{\theta}_Q:=\{\tilde{\theta}_\mu\}_{\mu\in\curveind_j}$
with elements $\tilde{\theta}_{j,\ell,k}:=\theta_{j,[\ell+\ell_{\bullet}],k-k_{\bullet}}$, where we use the notation introduced at the end of
Subsection~\ref{ssec:curvesparse}. Recall that the quantities $\ell_{\bullet}\in \Z$, $k_{\bullet}\in\Z^2$ are determined by $Q\in\mathcal{Q}_j$. In view of \eqref{eq:reducedform}, we then have
\begin{align}\label{relabelledcoeff}
\tilde{\theta}_{j,\ell,k}= \int_{\R^2} \widehat{\fqtilde}(\xi) W_{j,\ell+\Delta\ell}(\xi) \overline{u_{j,\ell+\Delta\ell,k+\Delta k}(\xi)} \,d\xi
\end{align}
with fixed $\Delta k\in[0,1)^2$, $\Delta\ell\in[0,1)$ depending on $Q\in\mathcal{Q}_j$. We define $\Delta J:=(0,\Delta\ell)$
and $J_+:=J+\Delta J$ for scale-angle pairs $J=(j,\ell)\in\mathbb{J}$. Further, we define for $J=(j,\ell)\in\mathbb{J}$ and $K=(K_1,K_2)\in\Z^2$ the sets
\begin{align}\label{eq:sumindex}
\begin{aligned}
\mathfrak{Z}^{Q}_{J,K}&:=\Big\{ (k_1,k_2) \in \Z^2 ~:~ \ell_{J+\Delta J}^{-1}(k_1+\Delta k_1)\in[K_1,K_1+1),\, k_2+\Delta k_2\in [K_2,K_2+1) \Big\}, \\
\widetilde{\mathfrak{Z}}^{Q}_{J,K}&:=\Big\{ (k_1,k_2) \in \Z^2 ~:~ 2^{-js(1-\alpha)}(k_1+ \Delta k_1)\in[K_1,K_1+1),\, k_2+ \Delta k_2\in [K_2,K_2+1) \Big\}.
\end{aligned}
\end{align}
In the definition of $\mathfrak{Z}^{Q}_{J,K}$ the quantity $\ell_{J+\Delta J}=1+2^{-js(1-\alpha)|}\sin(\phiJdelJ)|$ is used, with angle $\phiJdelJ=(\ell+\Delta\ell)\varphi_j$ and $\varphi_j$
as in \eqref{eqdef:fundangle}.
To shorten notation, it is further useful to henceforth abbreviate
\begin{align}\label{eq:quantLK}
L_K:=(1+ K_1^2)(1+ K^2_2).
\end{align}

\noindent
Essential for the proof of Proposition~\ref{prop:frag}, especially part~(ii), is the following lemma which disentangles
the smooth contribution from the singular part.

\begin{lemma}\label{lem:induction}
Let $j\in\N_0$ and $Q\in\mathcal{Q}_j$ be fixed. Under the assumptions of Proposition~\ref{prop:frag},
the relabelled coefficients $\tilde{\theta}_Q=\{ \tilde{\theta}_{\mu}\}_{\mu\in\curveind_j}$
given by \eqref{relabelledcoeff} can be decomposed in the form
\begin{align*}
\tilde{\theta}_{\mu}= a_{\mu} + b_{\mu},\quad \mu\in\curveind_j,
\end{align*}
such that for every $J\in\mathbb{J}$ with $|J|=j$
and every $K\in\Z^2$, with a uniform constant and $d\in\N_0$ fixed,
\begin{align*}
\sum_{k\in\mathfrak{Z}^Q_{J,K}} |a_{j,\ell,k}|^2   \lesssim L_K^{-2d}   2^{-js(1+\alpha)} \ell_J^{-2\beta-1} \quad\text{and}\quad
\sum_{k\in\widetilde{\mathfrak{Z}}^Q_{J,K}} |b_{j,\ell,k}|^2 \lesssim L_K^{-2d}  \widetilde{A}_J 2^{-2js\alpha} 2^{-2js\beta}.
\end{align*}
Here $L_K$ is the quantity defined in \eqref{eq:quantLK}, $\mathfrak{Z}^{Q}_{J,K}$ and $\widetilde{\mathfrak{Z}}^{Q}_{J,K}$ are given by \eqref{eq:sumindex}, and $\widetilde{A}_J\in[0,1]$ are numbers with $\sum_{|J|=j} \widetilde{A}_J\le 1$.
If $f_Q$ is a smooth fragment, a possible decomposition is given by $a_{\mu}:=0$ and $b_{\mu}:=\tilde{\theta}_{\mu}$ for $\mu\in\curveind_j$.
\end{lemma}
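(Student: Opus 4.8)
The plan is to reduce each relabelled coefficient in \eqref{relabelledcoeff} to an inner product against the orthonormal exponential system on the rectangle $\Xi_{j,0}$ and then convert the differential estimates of Propositions~\ref{prop:fundament1} and~\ref{prop:stredge} into the asserted weighted $\ell^2$-bounds by integration by parts. Passing to the rotated variable $\eta=R_{\phiJplus}\xi$, the operators $\mathcal{D}_{J,1},\mathcal{D}_{J,2}$ from \eqref{eq:diffops} become $\partial_{\eta_1},\partial_{\eta_2}$, while $\widehat{\fqtilde}W_{J_+}$ and $u_{j,\ell+\Delta\ell,k+\Delta k}$ turn into a $W_{j,0}$-localized function and the exponentials $u_{j,0,k+\Delta k}$. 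Since $\supp W_{j,0}\subseteq\tileext_{j,0}\subseteq\Xi_{j,0}$ and the shifted family $\lb u_{j,0,k+\Delta k}\rb_{k\in\Z^2}$ is again an orthonormal basis of $L^2(\Xi_{j,0})$, no boundary terms appear. Each $\mathcal{L}_{J,1}^d$, $\mathcal{L}_{J,2}^d$ acts on $u_{j,0,k+\Delta k}$ as multiplication by the real weights $w_1=(1+(2\pi)^2 2^{-2js(1-\alpha)}(k_1+\Delta k_1)^2)^d(1+(2\pi)^2(k_2+\Delta k_2)^2)^d$, resp.\ $w_2=(1+(2\pi)^2\ell_{J_+}^{-2}(k_1+\Delta k_1)^2)^d(1+(2\pi)^2(k_2+\Delta k_2)^2)^d$, so by self-adjointness and Bessel's inequality $\sum_k w_i^2|(\text{coeff})|^2=\|\mathcal{L}_{J,i}^d(\widehat F W_{J_+})\|_2^2$. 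The crucial point is that the two index sets in \eqref{eq:sumindex} are tailored to these weights: on $\widetilde{\mathfrak{Z}}^Q_{J,K}$ one has $w_1\gtrsim L_K^d$ and on $\mathfrak{Z}^Q_{J,K}$ one has $w_2\gtrsim L_K^d$, with $L_K$ from \eqref{eq:quantLK}.

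For a smooth fragment I would set $a_\mu:=0$, $b_\mu:=\tilde\theta_\mu$, and apply this master estimate with $\mathcal{L}_{J,1}$ together with Proposition~\ref{prop:fundament1}(ii). This gives $\sum_{k\in\widetilde{\mathfrak{Z}}^Q_{J,K}}|b_{j,\ell,k}|^2\le L_K^{-2d}\sum_k w_1^2|\tilde\theta_{J,k}|^2=L_K^{-2d}\|\mathcal{L}_{J,1}^d(\widehat{\fqtilde}W_{J_+})\|_2^2\lesssim L_K^{-2d}A_{J+\Delta J}2^{-2js\alpha}2^{-2js\beta}$. Setting $\widetilde A_J:=A_{J+\Delta J}$ finishes this case; the constraint $\sum_{|J|=j}\widetilde A_J\le 1$ follows from the definition \eqref{eqdef:AJ}, the normalization $\int_{-\pi}^{\pi}\varepsilon_j^2\le 1$ of Corollary~\ref{cor:1}, and the bounded overlap of the angular intervals $\mathcal{A}_J$, after absorbing a fixed constant.

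For an edge fragment I would disentangle the jump from the smooth bulk by Taylor expanding $\widetilde g_Q$ in the edge-transverse variable $x_1$ up to degree $\beta$, writing $\widetilde g_Q=T+R$ with $T(x)=\sum_{m=0}^{\beta}\tfrac{1}{m!}(\partial_1^m\widetilde g_Q)(0,x_2)\,x_1^m$ and $R$ vanishing together with its first $\beta$ derivatives on $\lb x_1=0\rb$. Then $\fqtilde=R\widetilde\omega_Q H+T\widetilde\omega_Q H$, and I would declare $b_\mu$ to be the coefficients of the first summand and $a_\mu$ those of the second, so that $\tilde\theta_\mu=a_\mu+b_\mu$ by linearity. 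Since $R$ kills the jump, $R\widetilde\omega_Q H$ is a genuine $C^\beta$ function with norm controlled by $\nu$ and by the window, hence a smooth-type fragment to which the proof of Proposition~\ref{prop:fundament1}(ii) applies verbatim (it only used $C^\beta$-regularity); this yields the $b$-bound exactly as before, again with $\widetilde A_J=A_{J+\Delta J}$. For the singular remainder, the leading term $c_0(x_2)\widetilde\omega_Q H$ with $c_0=\widetilde g_Q(0,\cdot)$ is, up to the transverse primitive $(2\pi i\xi_1)^{-1}$, the Fourier transform of the line distribution $d_j=c_0\widetilde\omega_Q\,\delta_{\lb x_1=0\rb}$ of the form \eqref{eq:stddistr}, the remaining terms $x_1^mH$, $1\le m\le\beta$, being strictly less singular. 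Applying the master estimate with $\mathcal{L}_{J,2}$ and Proposition~\ref{prop:stredge} should then produce $\sum_{k\in\mathfrak{Z}^Q_{J,K}}|a_{j,\ell,k}|^2\lesssim L_K^{-2d}2^{-js(1+\alpha)}\ell_J^{-2\beta-1}$.

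The main obstacle is precisely this last reduction: establishing $\|\mathcal{L}_{J,2}^d(\widehat{T\widetilde\omega_Q H}\,W_{J_+})\|_2^2\lesssim 2^{-js(1+\alpha)}\ell_J^{-2\beta-1}$, i.e.\ that the half-space fragment gains an extra factor $2^{-2js}$ over the line mass controlled by Proposition~\ref{prop:stredge} (whose bound carries $2^{js(1-\alpha)}\ell_J^{-2\beta-1}$, and $2^{-2js}\cdot 2^{js(1-\alpha)}=2^{-js(1+\alpha)}$). Heuristically this factor is the transverse width $\sim 2^{-js}$ of a scale-$j$ curvelet, picked up when its integral across the edge is taken rather than its trace on the edge; rigorously I would extract it from the identity $\widehat{\phi H}=(2\pi i\xi_1)^{-1}\bigl(\widehat{\phi\,\delta_{\lb x_1=0\rb}}+\widehat{(\partial_1\phi)H}\bigr)$, iterated in $\xi_1$, controlling the transverse localization through the first factor of $\mathcal{L}_{J,2}$ (which weights $\xi_1$ at the scale $2^{js}\ell_J^{-1}$) while routing the higher-order derivatives and the less singular terms $x_1^mH$, $m\ge1$, into the same or better bounds. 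The remaining bookkeeping is to keep every constant uniform in $Q\in\mathcal{Q}_j$ and $j\in\N_0$, and to use $\ell_{J+\Delta J}\asymp\ell_J$ (valid since $\Delta\ell\in[0,1)$) to match the $\ell_{J+\Delta J}$ appearing in $\mathfrak{Z}^Q_{J,K}$ with the $\ell_J$ in the statement.
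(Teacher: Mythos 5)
Your opening steps coincide with the paper's: the integration-by-parts ``master estimate'' (the weights produced by $\mathcal{L}_{J,1}^d$ and $\mathcal{L}_{J,2}^d$ acting on $u_{J_+,k+\Delta k}$, orthonormality of $\{u_{J_+,k}\}_k$ on $\Xi_{J_+}$, and the observation that the index sets \eqref{eq:sumindex} are tuned so that the weights dominate $L_K^{d}$ there) is precisely the paper's inequality \eqref{1234}, and the smooth case is handled identically via Proposition~\ref{prop:fundament1}(ii). The divergence --- and the gap --- is in the edge case. The paper does not Taylor-expand $g$; it runs an induction on $\beta$ driven by dividing the radial window $U(2^{-js}|\xi|)$ by $2^{-js}|\xi|$ and recognizing the factor $|\xi|\cos(|\varphi(\xi)-\phiJplus|)=\xi_1\cos(\phiJplus)+\xi_2\sin(\phiJplus)$ as the symbol of the directional derivative $\mathcal{D}_{J_+,1}$ applied to $\fqtilde$. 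Leibniz then splits $\mathcal{D}_{J_+,1}(g\omega_jH)$ into a $C^{\beta-1}$ edge fragment (induction), the line distribution carrying the automatic damping factor $\cos(\phiJplus)$, and a window-derivative term costing $2^{js\alpha}$ that is re-fed into the iteration $\gamma=\lceil 1/(1-\alpha)\rceil$ times; see \eqref{eq:seqdecomp}. The crucial feature is that the divisor is $\asymp 2^{js}$ on $\supp W_{J_+}$ for \emph{every} orientation.

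Your substitute --- Taylor expansion of $\widetilde g_Q$ transverse to the edge plus division by $2\pi i\xi_1$ on the Fourier side --- breaks down at two points. First, $(2\pi i\xi_1)^{-1}$ is unbounded on wedges $\tileext_{J_+}$ with $\phiJ$ near $\pm\pi/2$: such wedges contain points with $\xi_1=0$, so the step $\|\xi_1^{-1}G\,W_{J_+}\|_2\lesssim 2^{-js}\|G\,W_{J_+}\|_2$ fails exactly where $\ell_J$ is largest and the target bound smallest, and derivatives from $\mathcal{L}_{J,2}^d$ falling on $\xi_1^{-1}$ make this worse; the paper's directional derivative avoids this entirely because $\mathcal{D}_{J_+,1}H=\cos(\phiJplus)\,\delta_{\{x_1=0\}}$ degenerates in precisely those orientations. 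Second, the higher Taylor terms $c_m(x_2)x_1^m\omega_jH$, $m\ge1$, have traces $c_m=\partial_1^m\widetilde g_Q(0,\cdot)$ of regularity only $C^{\beta-m}$, so Lemma~\ref{lem:essdistr} and Proposition~\ref{prop:stredge} applied to them deliver only $\ell_J^{-2(\beta-m)-1}$; recovering $\ell_J^{-2\beta-1}$ would require trading the extra $|\xi_1|^{-2m}$ decay against the missing $\ell_J^{-2m}$, which again degenerates as $|\cos\phiJ|\to 0$. Hence the claim that these terms are ``strictly less singular'' and land ``in the same or better bounds'' is unjustified as stated. (Minor: $R=\widetilde g_Q-T$ is not compactly supported, so $R\widetilde\omega_QH$ is not literally of the form \eqref{eq:stdfrags}(i); this is repairable with a cutoff.) To close the argument you essentially need the paper's device: divide by the radial frequency rather than by $\xi_1$, and let the resulting directional derivative --- not a one-shot Taylor expansion --- peel off one order of regularity per step.
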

\noindent
It is important to note that the implicit constants in Lemma~\ref{lem:induction} can be chosen uniformly for all $j\in\N_0$ and $Q\in\mathcal{Q}_j$.
\begin{proof}
Recall, that the functions $u_{J,k}$, $J\in\circind$, are obtained by rotation of the function
\[
u_{j,0,k}(\xi)=2^{-js(1+\alpha)/2} \exp\big(\langle 2\pi i (2^{-js}k_1,2^{-js\alpha}k_2), \xi \rangle\big), \quad\xi\in\R^2.
\]
Hence $\mathcal{D}_{J,1} u_{J,k} = (2\pi i) 2^{-js} k_1 u_{J,k}$
and $\mathcal{D}_{J,2} u_{J,k} = (2\pi i) 2^{-js\alpha } k_2 u_{J,k}$ for each $J\in\circind$.
We thus establish
\begin{align*}
\mathcal{L}_{J,1} u_{J,k} = \big(1+ (2\pi)^2 2^{-2js(1-\alpha)} k_1^2\big) \big(1+ (2\pi)^2 k^2_2 \big) u_{J,k}
\end{align*}
for the differential operator $\mathcal{L}_{J,1}$ defined in~\eqref{eq:diffop1}.
Applying partial integration, we obtain from \eqref{relabelledcoeff}
\begin{align*}
\tilde{\theta}_{J,k}
= \big(\big(1+ 4\pi^2 2^{-2js(1-\alpha)}(k_1+ \Delta k_1)^2\big)\big(1+ (2\pi)^2(k_2 + \Delta k_2)^2\big)\big)^{-d} \int\limits_{\R^2} \mathcal{L}^d_{J_{+},1}(\widehat{\fqtilde}W_{J_{+}})(\xi)
\overline{u_{J_{+},k+\Delta k}}(\xi) \,d\xi.
\end{align*}
Further, since
\[
u_{J+\Delta J,k+\Delta k}(\xi) = u_{J+\Delta J,k}(\xi) \cdot \exp\big(\langle 2\pi i (2^{-js}\Delta k_1,2^{-js\alpha}\Delta k_2), R_{J+\Delta J}\xi \rangle\big)
\]
and $\{u_{J_{+},k}\}_{k\in\Z^2}$ is an orthonormal basis for $L^2(\Xi_{J_{+}})$, we obtain for $J\in\mathbb{J}$, $|J|=j$, and $K=(K_1,K_2)\in\Z^2$
\begin{align}\label{1234}
\sum_{k\in\widetilde{\mathfrak{Z}}^{Q}_{J,K}} |\tilde{\theta}_{j,\ell,k}|^2
\le (1+ K_1^2)^{-2d}(1+ K^2_2 )^{-2d} \int_{\R^2} |\mathcal{L}^d_{J_{+},1}(\widehat{\fqtilde}W_{J_{+}})(\xi)|^2  \,d\xi.
\end{align}
In case that $\fqtilde$ is a smooth fragment, Proposition~\ref{prop:fundament1}~(ii) yields
\begin{align*}
\sum_{k\in\widetilde{\mathfrak{Z}}^Q_{J,K}} |\tilde{\theta}_{j,\ell,k}|^2
\lesssim L_K^{-2d} A_{J+\Delta J} 2^{-2js\alpha} 2^{-2js\beta}.
\end{align*}
By relabelling $\widetilde{A}_{J}:=A_{J+\Delta J}$ we get the desired result.

If $\fqtilde$ is an edge fragment, we prove the assertion by induction on $\beta$. In case $\beta=0$,
we choose $b_{\mu}:=\tilde{\theta}_{\mu}$ and $a_{\mu}:=0$. Then the assertion is fulfilled, since by \eqref{1234} and Proposition~\ref{prop:fundament1}~(i)
\begin{align*}
\sum_{k\in\widetilde{\mathfrak{Z}}^Q_{J,K}} |\tilde{\theta}_{j,\ell,k}|^2
\lesssim L_K^{-2d} A_J 2^{-2js\alpha}.
\end{align*}
For the following, let $\beta\ge1$ and note that the assertion is always fulfilled for $j=0$, also due to Proposition~\ref{prop:fundament1}~(i).

It thus remains to prove the assertion for $j,\,\beta\in\N$.
If $j\in\N$, by definition,
$W_J(\xi)=U_j(|\xi|) V_{J}(\xi/|\xi|)= U( 2^{-js}|\xi|) V_{J}(\xi/|\xi|)$. To use induction
we rewrite \eqref{relabelledcoeff} in the form
\begin{align*}
\tilde{\theta}_{J,k} = 2^{-js} \int_{\R^2} |\xi| \widehat{\fqtilde}(\xi) \frac{U(2^{-js}|\xi|)V_{J+\Delta J}(\xi/|\xi|)}{2^{-js}|\xi|} \overline{u_{J+\Delta J,k+\Delta k}(\xi)} \,d\xi.
\end{align*}
We introduce the function $\widetilde{U}(r)=\frac{U(r)}{r}$, $r\in\Rzeroplus$, and put $\widetilde{U}_j:=\widetilde{U}( 2^{-js} \cdot)$ for $j\ge 1$.
In addition, we put $\widetilde{U}_0(r)=U_0(r)$, $r\in\Rzeroplus$.
Further, we define $\widetilde{V}_{J}(\xi):=V_J(\xi) \cos(|\varphi(\xi)-\phiJ|)^{-1}$ for  $\xi\in\mathbb{S}^1$ and $J\in\circind$, $|J|\ge 1$.
For $J=(0,0)$ we define $\widetilde{V}_{J}:=V_{J}$. Note that
for $\xi\in\mathcal{A}_{J}$, $|J|\ge 1$, we have $|\varphi(\xi)-\phiJ| \le \varphi_j^+/2 \le 3\pi/8$ and thus $1\le \cos(|\varphi(\xi)-\phiJ|)^{-1} \le 3$.
For $J\in\circind$ we then define
\[
\widetilde{W}_J(\xi):= \widetilde{U}_j(|\xi|) \widetilde{V}_{J}(\xi/|\xi|), \quad \xi\in\R^2.
\]
The functions $\lb\widetilde{W}_J\rb_{J\in\mathbb{J}}$ are again wedge functions of the form \eqref{eq:suppfunctions} which satisfy condition~\eqref{eq:CalderonW}
with some (possibly different) constants $0<A\le B<\infty$.
Using these functions the coefficients take the form
\begin{align}\label{eq:curvecoeff2}
\tilde{\theta}_{J,k} = 2^{-js} \int_{\R^2} |\xi| \cos(|\varphi(\xi)-\phiJdelJ|) \widehat{\fqtilde}(\xi) \widetilde{W}_{J+\Delta J}(\xi) \overline{u_{J+\Delta J,k+\Delta k}(\xi)} \,d\xi. \quad (J,k)\in\curveind_j.
\end{align}
Now recall the directional derivative $\mathcal{D}_{J,1}=\cos(\phiJ)\partial_1 + \sin(\phiJ)\partial_2$ depending on $J\in\circind$.
For $\xi=(\xi_1,\xi_2)=(|\xi|\cos\varphi, |\xi|\sin\varphi)\in\R^2$ we have
\begin{align*}
\xi_1 \cos(\phiJ) + \xi_2 \sin(\phiJ) = |\xi| \big(\cos(\varphi)\cos(\phiJ)+ \sin(\varphi)\sin(\phiJ) \big) = |\xi| \cos(|\varphi-\phiJ|).
\end{align*}
Hence, \eqref{eq:curvecoeff2} becomes
\begin{align*}
\tilde{\theta}_{J,k}= (2\pi i)^{-1} 2^{-js} \int_{\R^2} \big(\mathcal{D}_{J_{+},1}\fqtilde\big)^{\wedge}(\xi) \widetilde{W}_{J_{+}}(\xi) \overline{u_{J_{+},k+\Delta k}(\xi)} \,d\xi.
\end{align*}
The edge fragment $\fqtilde$ is of the form $f_j=g\omega(2^{js\alpha}\cdot)H$ with $g\in C_0^\beta(\R^2)$, $\omega\in C_0^\infty(\R^2)$, and the bivariate step function $H=\mathfrak{h}\otimes 1$
(see \eqref{eq:stdedge}).
Let us define $\widetilde{g}= \mathcal{D}_{J_{+},1} g$, $\widetilde{\omega}=\mathcal{D}_{J_{+},1}\omega$, and $\widetilde{\omega}_j=\widetilde{\omega}(2^{js\alpha} \cdot)$.
Further, recall $\partial_1 H=\delta_{\{x_1=0\}}$ and note that
\[
\mathcal{D}_{J_{+},1}H = \cos(\phiJplus)\partial_1 H + \sin(\phiJplus)\partial_2 H =  \cos(\phiJplus) \delta_{\{x_1=0\}}.
\]
The product rule yields
\begin{align*}
\mathcal{D}_{J_{+},1} f_j = \widetilde{g}\omega_j H  + \cos(\phiJplus) \delta_{\{x_1=0\}}\omega_j g + 2^{js\alpha}g\widetilde{\omega}_j H = T_1 + \cos(\phiJplus) T_2 +  2^{js\alpha} T_3
\end{align*}
with terms $T_1:=\widetilde{g}\omega_j H$, $T_2:=\delta_{\{x_1=0\}}\omega_j g$, and $T_3:=g\widetilde{\omega}_j H$. This
leads to the decomposition
\begin{align}\label{eq:seqdecomp}
  \tilde{\theta}_{j,\ell,k} \asymp  2^{-js} c^{(0)}_{j,\ell,k} +  2^{-js} \cos(\phiJplus) d^{(0)}_{j,\ell,k}
+ 2^{-js(1-\alpha)} \tilde{\theta}^{(1)}_{j,\ell,k}
\end{align}
with
\begin{align*}
c^{(0)}_{j,\ell,k} &:= \int_{\R^2} \widehat{T}_1 \widetilde{W}_{J_{+}}(\xi) \overline{u_{J_{+},k+\Delta k}(\xi)} \,d\xi,  \\
d^{(0)}_{j,\ell,k} &:= \int_{\R^2} \widehat{T}_2 \widetilde{W}_{J_{+}}(\xi) \overline{u_{J_{+},k+\Delta k}(\xi)} \,d\xi, \\
\tilde{\theta}^{(1)}_{j,\ell,k} &:=  \int_{\R^2} \widehat{T}_3 \widetilde{W}_{J_{+}}(\xi) \overline{u_{J_{+},k+\Delta k}(\xi)} \,d\xi.
\end{align*}
Note that $\widetilde{g}\in C_0^{\beta-1}(\R^2)$ and $\widetilde{\omega}\in C^\infty(\R^2)$ with $\supp\widetilde{\omega}\subseteq\supp\omega$.
By induction we can decompose
\[
c^{(0)}_{\mu} = a^{(0)}_{\mu} + b^{(0)}_{\mu}, \quad \mu\in\curveind_j,
\]
where the sequences $\lb a^{(0)}_{\mu}\rb_{\mu\in\curveind_j}$ and $\lb b^{(0)}_{\mu} \rb_{\mu\in\curveind_j}$ satisfy the assertion for $\beta-1$.
The coefficients $\{ d^{(0)}_{j,\ell,k} \}_{\mu\in\curveind_j}$
can be handled with the help of Proposition~\ref{prop:stredge}. We have for the differential operator $\mathcal{L}_{J,2}$ from \eqref{eq:diffop2}
\begin{align*}
\mathcal{L}_{J,2} u_{J,k} = \big(1+ (2\pi)^2 \ell^{-2d}_J  k_1^2\big) \big(1+ (2\pi)^2 k^2_2 \big) u_{J,k}.
\end{align*}
Partial integration leads to
\[
d^{(0)}_{J,k}= \big(1+ (2\pi)^2 \ell^{-2d}_{J_{+}}  (k_1+\Delta k_1)^2\big)^{-d} \big(1+ (2\pi)^2 (k_2 + \Delta k_2)^2 \big)^{-d} \int_{\R^2}
\mathcal{L}^d_{J_{+},2}(\widehat{T}_2 \widetilde{W}_{J_{+}})(\xi) \overline{u_{J_{+},k+\Delta k}(\xi)} \,d\xi.
\]
We deduce that for every $J\in\mathbb{J}$ with $|J|=j$ and every $K=(K_1,K_2)\in\Z^2$
\begin{align*}
\sum_{k\in\mathfrak{Z}^Q_{J,K}} |d^{(0)}_{J,k}|^2 \le (L_K)^{-2d}
 \int_{\R^2} |\mathcal{L}^d_{J_{+},2}(\widehat{T}_{2} \widetilde{W}_{J_{+}})(\xi)|^2 \,d\xi
 \lesssim (L_K)^{-2d}  2^{js(1-\alpha)} \ell_{J_{+}}^{-2\beta-1}.
\end{align*}
Here we applied the fact that $\{u_{J_{+},k}\}_{k\in\Z^2}$ is an orthonormal basis for $L^2(\Xi_{J_{+}})$ and Proposition~\ref{prop:stredge}.
Finally, note that $|\sin(\phiJ)|\asymp|\phiJ|\asymp |\ell| 2^{-js(1-\alpha)}$ uniformly for $J\in\circind$. Hence, due to $\Delta\ell\in[0,1)$, $\ell_J\asymp 1+|\ell|\asymp 1+|\ell+\Delta\ell|
\asymp \ell_{J+\Delta J}$.

It remains to handle the sequence $\lb \tilde{\theta}^{(1)}_{\mu}\rb_{\mu\in\curveind_j}$
which resembles the original sequence $\lb \tilde{\theta}_{\mu} \rb_{\mu\in\curveind_j}$ and can be handled accordingly.
After $\gamma$ iterations of the decomposition process~\eqref{eq:seqdecomp} we end up with sequences
$\lb c^{(0)}_{\mu}\rb_{\mu\in\curveind_j}, \ldots, \lb c^{(\gamma-1)}_{\mu}\rb_{\mu\in\curveind_j}$, $\lb d^{(0)}_{\mu}\rb_{\mu\in\curveind_j},\ldots, \lb d^{(\gamma-1)}_{\mu}\rb_{\mu\in\curveind_j}$, and $\lb \tilde{\theta}^{(\gamma)}_{\mu}\rb_{\mu\in\curveind_j}$.
We choose $\gamma=\lceil \frac{1}{1-\alpha} \rceil$ so that
\[
2^{-js(1-\alpha)\gamma} \le 2^{-js}.
\]
We can apply the induction hypothesis on $\lb c^{(\tau)}_{\mu}\rb_{\mu\in\curveind_j}$ for every $\tau\in\{0,\ldots,\gamma-1\}$,
which leads to sequences $\lb a^{(\tau)}_{\mu}\rb_{\mu\in\curveind_j}$ and $\lb b^{(\tau)}_{\mu}\rb_{\mu\in\curveind_j}$.
Since $g\in C^\beta(\R^2) \subset C^{\beta-1}(\R^2)$
also $\lb \tilde{\theta}^{(\gamma)}_{\mu}\rb_{\mu\in\curveind_j}$ can be decomposed into two sequences
$\lb a^{(\gamma)}_{\mu}\rb_{\mu\in\curveind_j}$ and $\lb b^{(\gamma)}_{\mu}\rb_{\mu\in\curveind_j}$.

Finally, we obtain the desired decomposition $\tilde{\theta}_{\mu}= a_{\mu} + b_{\mu}$, $\mu\in\curveind_j$, with
\begin{align*}
a_{\mu}&:= 2^{-js} \sum_{\tau=0}^{\gamma-1}  2^{-js(1-\alpha)\tau} a^{(\tau)}_{\mu} + 2^{-js(1-\alpha)\gamma} a^{(\gamma)}_{\mu},  \\
b_{\mu}&:= 2^{-js} \sum_{\tau=0}^{\gamma-1}  2^{-js(1-\alpha)\tau} b^{(\tau)}_{\mu} + 2^{-js(1-\alpha)\gamma} b^{(\gamma)}_{\mu} + 2^{-js} \cos(\phiJplus) \sum_{\tau=0}^{\gamma-1}  2^{-js(1-\alpha)\tau} d^{(\tau)}_{\mu}.
\tag*{\qedhere}
\end{align*}
\end{proof}

\noindent
With Lemma~\ref{lem:induction} in our toolbox, it is not difficult any more to prove Proposition~\ref{prop:frag}.
The remaining considerations are merely interpolation arguments.

\begin{proof}[{\bf Proof of Proposition~\ref{prop:frag}}]
We first handle part (i) of the proposition, when $f_j$ is a smooth fragment.
Let $\curveind_j$ denote the curvelet indices at scale $j\in\N_0$ and define $\curveind^Q_{j,K}:=\{ (j,\ell,k)\in\curveind_j : k\in \widetilde{\mathfrak{Z}}^{Q}_{J,K} \}$ for $K\in\Z^2$.
Since $\sum_{|J|=j} A_J \lesssim 1$, Lemma~\ref{lem:induction} yields for $K\in\Z^2$
\[
\sum_{\mu\in\curveind^Q_{j,K}}  |\tilde{\theta}_{J,k}|^2 = \sum_{|J|=j} \sum_{k\in \widetilde{\mathfrak{Z}}^{Q}_{J,K}} |\tilde{\theta}_{J,k}|^2 \lesssim L_K^{-2d} 2^{-2js\alpha} 2^{-2js\beta}.
\]
Let us fix $d\in\N_0$ as the smallest integer satisfying $d>(1+\beta)/4$, i.e.,
$ d:=  \lfloor (1+\beta)/4 \rfloor +1$.
This ensures
\begin{align}\label{LKfinite}
\sum_{K\in\Z^2} L^{-2d/(1+\beta)}_K = \sum_{K\in\Z^2} \big((1+ K_1^2)(1+ K^2_2 )\big)^{-2d/(1+\beta)} \lesssim 1,
\end{align}
which will be important below. Further, note that we have the estimate
\[
\sum_{|J|=j} \# \widetilde{\mathfrak{Z}}^{Q}_{J,K} \le \sum_{|J|=j} 2^{js(1-\alpha)} \lesssim 2^{2js(1-\alpha)}.
\]
Recall the interpolation inequality $\|\lb c_\lambda\rb_{\lambda\in\Lambda}\|_{\ell^p} \le (\#\Lambda)^{1/p-1/2} \|\lb c_\lambda\rb_{\lambda\in \Lambda}\|_{\ell^2}$
valid for $0<p\le2$ and finite sequences $\lb c_\lambda\rb_{\lambda\in \Lambda}$.
Interpolation with $p=2/(1+\beta)$ yields
\[
\| \lb\tilde{\theta}_{\mu}\rb_{\mu\in\curveind^{Q}_{j,K}} \|_{2/(1+\beta)} \lesssim 2^{js\beta(1-\alpha)} (L_K)^{-d} 2^{-js\alpha} 2^{-js\beta}= (L_K)^{-d} 2^{-js\alpha(1+\beta)} .
\]
The proof of part~(i) is finished by applying the $p$-triangle inequality with $p=2/(1+\beta)\le1$.
In view of~\eqref{LKfinite} we arrive at
\[
\| \lb\tilde{\theta}_{\mu}\rb_{\mu\in\curveind_j} \|^{2/(1+\beta)}_{2/(1+\beta)} \le \sum_{K\in\Z^2} \|\lb\tilde{\theta}_{\mu}\rb_{\mu\in\curveind^Q_{j,K}} \|^{2/(1+\beta)}_{2/(1+\beta)} \lesssim   2^{-2js\alpha}.
\]
We finally turn to the proof of part~(ii) and assume that $f_j$ is an edge fragment.
We denote by $\lb a_{\mu}\rb_{\mu\in\curveind_j}$ and $\lb b_{\mu}\rb_{\mu\in\curveind_j}$ the decomposition of the sequence $\lb\tilde{\theta}_{\mu}\rb_{\mu\in\curveind_j}$
according to Lemma~\ref{lem:induction}.
Analogous to the treatment of the smooth case, one can deduce
\begin{align}\label{eq:prfsmooth}
\| \lb b_{\mu}\rb_{\mu\in\curveind_j} \|^{2/(1+\beta)}_{2/(1+\beta)} \lesssim  2^{-2js\alpha}.
\end{align}
It remains to handle $\lb a_{\mu}\rb_{\mu\in\curveind_j}$.
Due to Lemma~\ref{lem:induction} we have with $d\in\N_0$ chosen as above
\begin{align}\label{zzz}
\sum_{k\in \mathfrak{Z}^{Q}_{J,K}}|a_{j,\ell,k}|^2 \lesssim  L_K^{-2d}   2^{-js(1+\alpha)}  \ell_J^{-2\beta-1}.
\end{align}
Recall that $\ell_J= 1+ 2^{js(1-\alpha)}|\sin(\phiJ)|\ge 1$ and note that we can estimate
\begin{align}\label{yyy}
\#\mathfrak{Z}^{Q}_{J,K}\le  \ell_{J+\Delta J} \asymp \ell_{J}.
\end{align}
In view of \eqref{zzz} and \eqref{yyy} we conclude for $\varepsilon>0$
\begin{align*}
N^Q_{J,K}(\varepsilon):= \#\Big\{ k\in\mathfrak{Z}^{Q}_{J,K} ~:~ |a_{j,\ell,k}|>\varepsilon  \Big\} \lesssim  \min\Big\{ \ell_J ,  \varepsilon^{-2} L_K^{-2d}  2^{-js(1+\alpha)} \ell_J^{-2\beta-1}  \Big\}.
\end{align*}
The next step is to show
\begin{align}\label{intermediate}
\sum_{|J|=j} N^Q_{J,K}(\varepsilon) \lesssim \varepsilon^{-2/(\beta+1)} L_K^{-2d/(\beta+1)} 2^{-js(1+\alpha)/(1+\beta)}.
\end{align}
Since $\ell_J\asymp 1+|\ell|$ we can estimate, where we use the quantities $\ell^-_*:=\lceil\ell_*\rceil-1$ and $\ell^+_*:=\lceil\ell_*\rceil$ with $\ell_*:= \varepsilon^{-1/(1+\beta)} L_K^{-d/(1+\beta)} 2^{-js\frac{1+\alpha}{2(1+\beta)}}$,
\begin{align*}
\sum_{\ell=0}^{L_j^+} N_{j,\ell,K}(\varepsilon) &\lesssim \sum_{\ell=1}^{L_j^++1} \min\Big\{ \ell , \varepsilon^{-2} L_K^{-2d}  2^{-js(1+\alpha)} \ell^{-2\beta-1} \Big\}
\le  \sum_{\ell=1}^{\ell^-_*} \ell  + \sum_{\ell=\ell^+_*}^{L^+_j+1} \varepsilon^{-2} L^{-2d}_K 2^{-js(1+\alpha)} \ell^{-2\beta-1}.
\end{align*}
Note that $\ell^-_*\in\N_0$. Therefore, it holds
\[
\sum_{\ell=1}^{\ell^-_*} \ell = \frac{1}{2} \ell^-_* (\ell^-_*+1) \le \ell_*^2 = {\rm rhs}(\ref{intermediate}).
\]
Further, taking into account $\ell_*\le \ell^+_*$, we obtain
\[
\sum_{\ell=\ell^+_*}^{L^+_j+1} \varepsilon^{-2} L^{-2d}_K 2^{-js(1+\alpha)} \ell^{-2\beta-1} \lesssim \varepsilon^{-2} L^{-2d}_K 2^{-js(1+\alpha)} \ell_*^{-2\beta} = {\rm rhs}(\ref{intermediate}).
\]
Altogether, this proves \eqref{intermediate} since the sum $\sum_{\ell=-L_j^-}^{0} N^Q_{j,\ell,K}(\varepsilon)$ can be estimated analogously.

Recall that $\curveind_j$ denotes the curvelet indices at scale $j$. Using \eqref{LKfinite} we deduce from \eqref{intermediate}
\[
\# \Big\{ \mu\in\curveind_j ~:~ |a_\mu|>\varepsilon \Big\} = \sum_{K\in\Z^2} \sum_{|J|=j} N^Q_{J,K}(\varepsilon) \lesssim  2^{-js(1+\alpha)/(1+\beta)} \varepsilon^{-2/(1+\beta)}.
\]
This implies the following estimate, where we let $\rho=\max\big\{0,s(\alpha\beta-1)/(1+\beta)\big\}$,
\begin{align}\label{eq:prfedge}
\| \lb a_{\mu}\rb_{\mu\in\curveind_j}\|^{2/(1+\beta)}_{w\ell^{2/(1+\beta)}} \lesssim 2^{-js(1+\alpha)/(1+\beta)} =  2^{-js\alpha}  2^{js(\alpha\beta-1)/(1+\beta)}\le 2^{-js\alpha}  2^{j\rho}.
\end{align}

\noindent
In a last step, we combine \eqref{eq:prfsmooth} and \eqref{eq:prfedge}. Using the $p$-triangle inequality with $p=\frac{2}{1+\beta}\le1$ gives
\[
\| \lb\tilde{\theta}_{\mu}\rb_{\mu\in\curveind_j} \|^{2/(1+\beta)}_{w\ell^{2/(1+\beta)}}
\le \|\lb a_{\mu}\rb_{\mu\in\curveind_j} \|^{2/(1+\beta)}_{w\ell^{2/(1+\beta)}} + \| \lb b_{\mu}\rb_{\mu\in\curveind_j} \|^{2/(1+\beta)}_{w\ell^{2/(1+\beta)}}
\lesssim  2^{-js\alpha} 2^{j\rho} + 2^{-2js\alpha} \lesssim 2^{-js\alpha} 2^{j\rho},
\]
which finishes the proof.
\end{proof}


\section{Discussion and Extension}
\label{sec:discussion}


In this final section we interpret and discuss the results of our previous investigations.
First we note that Theorem~\ref{thm:bound2} complements the result of Theorem~\ref{thm:oldcurveappr}.
The latter guarantees at least an approximation rate of order $N^{-1/\alpha}$
for $\cart$ if $\beta\ge\alpha^{-1}$ and $\alpha\in[\frac12,1)$.
In view of Theorem~\ref{thm:benchmark} the optimal approximation order is thus realized in case $\beta=\alpha^{-1}$.
Theorem~\ref{thm:bound2}
now tells us that
this rate does not improve for $C^{\beta}$ cartoons with $\beta>\alpha^{-1}$, at least if we restrict to greedy approximations obtained by simple thresholding.
Hence, $\alpha$-curvelets in the range $\alpha\in[\frac12,1)$ cannot take advantage of cartoon regularity higher than $\alpha^{-1}$.

Turning to the range $\alpha\in[0,\frac12)$,
according to both, Theorem~\ref{thm:bound1} and Theorem~\ref{thm:bound2},
the approximation deteriorates as $\alpha$ tends to $0$.
In Theorem~\ref{thm:bound2} the achievable rate peaks for $\alpha=\frac12$,
a confirmation of the outstanding role of parabolic scaling for cartoon approximation.
Among all $\alpha$-curvelet frames, the classic parabolically scaled systems provide the best performance for $\cart$
if $\beta\ge2$.
However, if $\beta>2$ the achieved rate of order $N^{-2}$ is suboptimal.

To better understand this behavior, recall the heuristic considerations in Subsection~\ref{ssec:guarantees}.
A Taylor expansion showed that $C^\beta$ curves with $\beta\in(1,2]$ are locally contained in (properly aligned) rectangles of size $width \approx length^{1/\beta}$.
This explains why $\alpha$-scaling with $\alpha=\beta^{-1}$ is optimally suited to resolve such curves.
It also indicates that it is not the
smoothness of the curves that determines the best type of scaling, but
their local scaling behavior.
If the second-order Taylor term at some point of a $C^\beta$ curve, where $\beta\ge2$,  does not vanish the scaling locally obeys $width \approx length^{1/2}$.
Consequently, the choice $\alpha=\frac12$ is still the best for $C^\beta$ curves with $\beta\ge2$ and nonvanishing curvature.

The situation is different if the curvature vanishes. For cartoons with curved edges, however, this typically happens only at certain isolated points which
are negligible in the overall approximation.
Otherwise, in case of a straight line segment, directionally scaled $0$-curvelets provide the best approximation.
A deviation of $\alpha$ from $0$ deteriorates the approximability of the edge,
but according to Theorem~\ref{thm:mainappr1} for signals from $\straight$
this deterioration is masked by the overall approximation performance of order $N^{-\beta}$ if $\alpha\in[0,\beta^{-1}]$.

It is remarkable that
up to now no frame is known where a nonadaptive thresholding scheme yields approximation rates better than $N^{-2}$ for the class $\cart$, $\beta>2$.
As we have seen, $\alpha$-scaling is not able to take advantage of smoothness beyond $C^2$, wherefore
new ideas need to be considered.
One approach might be based on the bendlet transform~\cite{LPS2016}, which incorporates bending in addition to $\alpha$-scaling for improved adaptability to the edges.
While the bendlet dictionary seems to be useful for certain image analysis tasks, the question of how to extract bendlet frames for approximation is not clear however and requires further research.

Finally, let us derive some implications of the obtained results for other $\alpha$-scaled representation systems.
The framework of $\alpha$-molecules allows to transfer properties of $\curvesys$
to other systems of $\alpha$-molecules if their parametrization is consistent with the
parametrization $(\curveind,\Phi_{\curveind})$ of $\curvesys$ from \eqref{eq:curvepara}.
For the required notion of consistency, let us first recall the phase-space metric $\omega_\alpha$ introduced in \cite{GKKS15} for
the phase space $\mathbb{P}=\R^+\times\mathbb{T}\times\R^2$.

\begin{definition}[{\cite[Def.~4.1]{GKKS15}}]
Let $\alpha\in[0,1]$. The $\alpha$-scaled index distance $\omega_\alpha:\mathbb{P} \times\mathbb{P}\to[1,\infty)$
is defined by
\begin{align*}
\omega_\alpha( \textit{\textbf{p}}_\lambda, \textit{\textbf{p}}_\mu)=\max\Big\{\frac{s_\lambda}{s_\mu},\frac{s_\mu}{s_\lambda}\Big\}
(1+d_\alpha(\textit{\textbf{p}}_\lambda,\textit{\textbf{p}}_\mu))\,,
\end{align*}
where $\textit{\textbf{p}}_\lambda=(s_\lambda,\theta_\lambda,x_\lambda)\in\mathbb{P}$, $\textit{\textbf{p}}_\mu=(s_\mu,\theta_\mu,x_\mu)\in\mathbb{P}$, and with $s_0=\min\{s_\lambda,s_\mu\} $, $e_\lambda=(\cos(\theta_\lambda),-\sin(\theta_\lambda))$,
\[
d_\alpha(\textit{\textbf{p}}_\lambda,\textit{\textbf{p}}_\mu)=s_0^{2(1-\alpha)} |\theta_\lambda-\theta_\mu|^2+
s_0^{2\alpha} |x_\lambda-x_\mu|^2 + \frac{s^2_0}{1+s_0^{2(1-\alpha)}|\theta_\lambda-\theta_\mu|^2} |\langle e_\lambda, x_\lambda-x_\mu \rangle|^2.
\]
\end{definition}
\noindent
The consistency of two parametrizations is then defined as follows.

\begin{definition}[{\cite[Def.~5.5]{GKKS15}}]
Let $\alpha\in[0,1]$ and $k>0$. Two parametrizations $(\Lambda,\Phi_\Lambda)$ and $(\Delta,\Phi_\Delta)$, for index sets $\Lambda$ and $\Delta$ respectively,
are called $(\alpha,k)$-consistent if
\[
\sup_{\lambda\in\Lambda} \sum_{\mu\in\Delta} \omega_\alpha\big(\Phi_\Lambda(\lambda),\Phi_\Delta(\mu)\big)^{-k}<\infty
\quad\text{and}\quad
\sup_{\mu\in\Delta} \sum_{\lambda\in\Lambda} \omega_\alpha\big(\Phi_\Lambda(\lambda),\Phi_\Delta(\mu)\big)^{-k}<\infty.
\]
\end{definition}

\noindent
Since $\curvesys$ is a tight frame of $\alpha$-molecules
of arbitrary order, as shown by Lemma~\ref{thm:curvmol},
the theory of $\alpha$-molecules allows to deduce the following result practically for free.

\begin{theorem}\label{thm:mol_app}
Let $\alpha\in[0,1]$ and let $\mathfrak{M}:=\lb m_\lambda\rb_{\lambda\in\Lambda}$ be a frame of
$\alpha$-molecules whose parametrization, for some $k>0$, is $(\alpha,k)$-consistent with the $\alpha$-curvelet parametrization $(\curveind,\Phi_\curveind)$ of $\curvesys$. Further, assume that
for some $\gamma\in\R_0^+$ the order $(L,M,N_1,N_2)$ of $\mathfrak{M}$
satisfies
\begin{align}\label{disc:ordercond}
L\geq k(1+\gamma) ,\quad  M \geq \frac{3k}{2} (1+\gamma) + \frac{\alpha-3}{2} , \quad N_1 \geq \frac{k}{2} (1+\gamma) +\frac{1+\alpha}{2} , \quad N_2\geq k(1+\gamma).
\end{align}
Then the following holds true:
\begin{enumerate}
\item[(i)]
Let $\tilde{c}_\lambda:=\langle f,m_\lambda\rangle$, $\lambda\in\Lambda$, denote the analysis coefficients of $f\in\csE^{\beta}([-1,1]^2,\nu)$ with respect to $\mathfrak{M}$, and assume $\beta\in\N$.
If \eqref{disc:ordercond} is fulfilled for $\gamma=\min\{\beta,\alpha^{-1}\}$,
then $\lb\tilde{c}_\lambda\rb_{\lambda\in\Lambda}\in \ell^p(\Lambda)$ for all $p>\frac{2}{1+\gamma}$.
\item[(ii)]
Let $\Theta=\sum_{\lambda\in\Lambda} c_\lambda m_\lambda$ be a representation of the function
$\Theta$ from~\eqref{def:Theta} with respect to $\mathfrak{M}$.
If \eqref{disc:ordercond} is fulfilled for some $\gamma>\tilde{\gamma}:=\max\{\alpha,1-\alpha\}^{-1}$,
then $\lb c_\lambda\rb_{\lambda\in\Lambda}\notin\ell^p(\Lambda)$ for $p<\frac{2}{1+\tilde{\gamma}}$.
\end{enumerate}
\end{theorem}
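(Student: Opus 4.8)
The plan is to reduce everything to the transference principle of the $\alpha$-molecule framework, so that the two sparsity statements for $\curvesys$ established above can be carried over to $\mathfrak{M}$ essentially for free. By Lemma~\ref{thm:curvmol} the anchor system $\curvesys=\lb\psi_\mu\rb_{\mu\in\curveind}$ is a Parseval frame consisting of $\alpha$-molecules of arbitrary order, and $\mathfrak{M}=\lb m_\lambda\rb_{\lambda\in\Lambda}$ is a frame of $\alpha$-molecules by hypothesis, so the cross-Gramian estimate of~\cite{GKKS15} is applicable. First I would verify that the order prescription~\eqref{disc:ordercond} is exactly what this estimate requires to produce the off-diagonal decay
\[
|\langle m_\lambda,\psi_\mu\rangle|\lesssim\omega_\alpha\big(\Phi_\Lambda(\lambda),\Phi_{\curveind}(\mu)\big)^{-k(1+\gamma)},\quad\lambda\in\Lambda,\ \mu\in\curveind.
\]
Together with the $(\alpha,k)$-consistency of the two parametrizations, a Schur test (for $p\ge1$) and the $p$-triangle inequality (for $p<1$) then show that the change-of-frame operator $T\colon(d_\mu)_\mu\mapsto\big(\sum_\mu d_\mu\langle\psi_\mu,m_\lambda\rangle\big)_\lambda$ is bounded from $\ell^p(\curveind)$ to $\ell^p(\Lambda)$, and its adjoint $T^\ast\colon(c_\lambda)_\lambda\mapsto\big(\sum_\lambda c_\lambda\langle m_\lambda,\psi_\mu\rangle\big)_\mu$ from $\ell^p(\Lambda)$ to $\ell^p(\curveind)$, whenever $k(1+\gamma)\,p\ge k$, i.e.\ for all $p\ge\tfrac{1}{1+\gamma}$. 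This operator bound is the only device needed in both parts.

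For part~(i) I would start from the Parseval reproducing formula, writing $\tilde c_\lambda=\langle f,m_\lambda\rangle=\sum_{\mu}\langle f,\psi_\mu\rangle\langle\psi_\mu,m_\lambda\rangle$, so that $(\tilde c_\lambda)_\lambda=T\big((\langle f,\psi_\mu\rangle)_\mu\big)$. Theorem~\ref{thm:mainappr2} furnishes, with $\gamma=\min\{\beta,\alpha^{-1}\}$, the decay $|\theta^\ast_N|^2\lesssim N^{-(1+\gamma)}(\log_2 N)^{1+\gamma}$ for the curvelet coefficients $\theta_\mu=\langle f,\psi_\mu\rangle$ of $f\in\csE^{\beta}([-1,1]^2;\nu)$; summing the $p$-th powers of the non-increasing rearrangement, this gives $(\langle f,\psi_\mu\rangle)_\mu\in\ell^p(\curveind)$ for every $p>\tfrac{2}{1+\gamma}$. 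Any such $p$ satisfies $p>\tfrac{2}{1+\gamma}>\tfrac{1}{1+\gamma}$, so $T$ is bounded on $\ell^p$ and I conclude $(\tilde c_\lambda)_\lambda\in\ell^p(\Lambda)$ for all $p>\tfrac{2}{1+\gamma}$, which is the claim.

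For part~(ii) I would argue by contradiction. Assuming $(c_\lambda)_\lambda\in\ell^p(\Lambda)$ for some $p<\tfrac{2}{1+\tilde\gamma}$, and enlarging $p$ if necessary so that also $p\ge\tfrac{1}{1+\gamma}$ (possible since $\gamma>\tilde\gamma$ forces $\tfrac{1}{1+\gamma}<\tfrac{2}{1+\tilde\gamma}$), I would apply $T^\ast$ to the synthesis identity $\Theta=\sum_\lambda c_\lambda m_\lambda$, which gives $\langle\Theta,\psi_\mu\rangle=\sum_\lambda c_\lambda\langle m_\lambda,\psi_\mu\rangle$ and hence $(\langle\Theta,\psi_\mu\rangle)_\mu\in\ell^p(\curveind)$ with $p<\tfrac{2}{1+\tilde\gamma}$. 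A Stechkin estimate on the non-increasing rearrangement $\theta^\ast_n$ then bounds the thresholding approximation $f_N$ of $\Theta$ in the Parseval frame $\curvesys$ by $\|\Theta-f_N\|_2^2\le\sum_{n>N}|\theta^\ast_n|^2\lesssim N^{-(2/p-1)}$ with exponent $2/p-1>\tilde\gamma$. This contradicts the lower bound $\|\Theta-f_N\|_2^2\ge C\,N^{-1/\max\{\alpha,1-\alpha\}}=C\,N^{-\tilde\gamma}$ of Theorem~\ref{thm:bound2}, which holds precisely for thresholding approximations; therefore $(c_\lambda)_\lambda\notin\ell^p(\Lambda)$ for any $p<\tfrac{2}{1+\tilde\gamma}$.

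The routine ingredients here are the two sparsity inputs, already available as Theorems~\ref{thm:mainappr2} and~\ref{thm:bound2}, together with the elementary Stechkin bound and the index juggling in part~(ii). The hard part will be the first paragraph: I must check that the four inequalities for $L,M,N_1,N_2$ in~\eqref{disc:ordercond} match the hypotheses of the cross-Gramian estimate of~\cite{GKKS15} so as to yield exactly the decay rate $k(1+\gamma)$, and then track carefully—at the two critical indices $\tfrac{2}{1+\gamma}$ and $\tfrac{2}{1+\tilde\gamma}$—whether weak or strong $\ell^p$ and strict or non-strict exponents are needed, since it is precisely this bookkeeping that separates the non-strict hypothesis $\gamma=\min\{\beta,\alpha^{-1}\}$ of~(i) from the strict hypothesis $\gamma>\tilde\gamma$ of~(ii).
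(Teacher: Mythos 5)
Your proposal is correct and follows essentially the same route as the paper: establish $\ell^p$-boundedness of the cross-Gramian between $\mathfrak{M}$ and $\curvesys$ from the order condition~\eqref{disc:ordercond} and the $(\alpha,k)$-consistency (the paper simply cites the sparsity-equivalence theorem of~\cite{GKKS15} for this), then transfer the coefficient decay of Theorem~\ref{thm:mainappr2} for part~(i), and for part~(ii) derive a contradiction with the thresholding lower bound of Theorem~\ref{thm:bound2} via Lemma~\ref{lem:decayapprox}. The exponent bookkeeping at the critical indices $\tfrac{2}{1+\gamma}$ and $\tfrac{2}{1+\tilde\gamma}$ in your last paragraph is exactly the point the paper handles by working in $\ell^{p+\varepsilon}$, so nothing is missing.
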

\begin{proof}
According to \cite[Thm.~5.6]{GKKS15} condition~\eqref{disc:ordercond} ensures that the systems $\mathfrak{M}$ and $\curvesys$ are sparsity equivalent in $\ell^p$
for $p:=\frac{2}{1+\gamma}$, which means $\| (\langle m_\lambda,\psi_\mu \rangle)_{\lambda,\mu} \|_{\ell^p\to\ell^p} < \infty$  (see \cite[Def.~5.3]{GKKS15}).
Since $f=\sum_\mu \langle f,\psi_\mu \rangle \psi_\mu $ and $\lb\langle f,\psi_\mu \rangle\rb_\mu\in\ell^{p+\varepsilon}(\curveind)$, $\varepsilon>0$, by Theorem~\ref{thm:mainappr2},
assertion $(i)$ follows. For $(ii)$ assume that $\lb c_\lambda\rb_{\lambda}\in\ell^p(\Lambda)$, which implies by sparsity equivalence $\lb\langle \Theta,\psi_\mu \rangle\rb_{\mu}\in\ell^p(\curveind)$.
Using $\Theta=\sum_\mu \langle \Theta,\psi_\mu \rangle \psi_\mu $ and Lemma~\ref{lem:decayapprox}, this then implies an $N$-term approximation rate of order $N^{-\gamma}$, in contradiction to Theorem~\ref{thm:bound2}.
\end{proof}

\noindent
A direct corollary is obtained via Lemma~\ref{lem:decayapprox}.

\begin{cor}\label{cor:mol_app}
Under the assumptions of Theorem~\ref{thm:mol_app}~(i),
every dual frame $\lb\tilde{m}_\lambda\rb_{\lambda\in \Lambda}$ of $\mathfrak{M}$ yields -- via simple thresholding -- $N$-term approximations $f_N$ to $f\in\csE^{\beta}([-1,1]^2)$ satisfying
\[
\|f-f_N\|_2^2 \lesssim N^{-\min\{\beta,\alpha^{-1}\}+\varepsilon} \,, \quad \varepsilon >0 \text{ arbitrary} \,, \quad\text{as }N\to\infty.
\]
\end{cor}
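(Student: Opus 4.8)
The plan is to realize the thresholding approximant through the dual-frame reconstruction formula and then invoke the abstract decay-to-approximation principle of Lemma~\ref{lem:decayapprox}. Since $\lb\tilde m_\lambda\rb_{\lambda\in\Lambda}$ is by assumption a dual frame of $\mathfrak{M}=\lb m_\lambda\rb_{\lambda\in\Lambda}$, every $f\in L^2(\R^2)$ admits the expansion
\[
f=\sum_{\lambda\in\Lambda} \langle f,m_\lambda\rangle\,\tilde m_\lambda,
\]
so that the analysis coefficients $\tilde c_\lambda:=\langle f,m_\lambda\rangle$ controlled by Theorem~\ref{thm:mol_app}~(i) are precisely the expansion coefficients of $f$ with respect to the dual system. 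Hence the $N$-term approximant $f_N$ produced by simple thresholding (keeping the $N$ largest of the $\tilde c_\lambda$ and synthesizing with the corresponding $\tilde m_\lambda$) is exactly the one obtained by retaining the $N$ largest coefficients of this expansion. As the dual frame $\lb\tilde m_\lambda\rb_{\lambda\in\Lambda}$ is itself a frame for $L^2(\R^2)$, Lemma~\ref{lem:decayapprox} applies with $\lb\tilde m_\lambda\rb_{\lambda\in\Lambda}$ in the role of the frame and the sequence $\lb\tilde c_\lambda\rb_{\lambda\in\Lambda}$ in the role of the coefficients.

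First I would fix the abbreviation $\gamma:=\min\{\beta,\alpha^{-1}\}$, the exponent appearing in Theorem~\ref{thm:mol_app}~(i), which under the standing hypotheses guarantees $\lb\tilde c_\lambda\rb_{\lambda\in\Lambda}\in\ell^p(\Lambda)$ for every $p>\tfrac{2}{1+\gamma}$. By the elementary embedding $\ell^p(\Lambda)\subseteq w\ell^p(\Lambda)$ (with $\|\cdot\|_{w\ell^p}\le\|\cdot\|_{\ell^p}$), the coefficient sequence then also lies in $w\ell^p(\Lambda)$ for every such $p$. Next I would carry out the exponent bookkeeping. Fix $\varepsilon>0$; since $\beta\in\N$ and $\alpha\le1$ force $\gamma\ge1$, we may assume $0<\varepsilon<\gamma$ (larger $\varepsilon$ only weaken the claimed bound). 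Setting $p:=\tfrac{2}{1+\gamma-\varepsilon}$ we have $p>\tfrac{2}{1+\gamma}$ because $\gamma-\varepsilon<\gamma$, so $\lb\tilde c_\lambda\rb_{\lambda\in\Lambda}\in w\ell^{p}(\Lambda)$ by the previous step. Applying Lemma~\ref{lem:decayapprox} to the expansion $f=\sum_{\lambda}\tilde c_\lambda\tilde m_\lambda$ with its regularity exponent set equal to $\gamma-\varepsilon\ge0$ (so that $p=\tfrac{2}{1+(\gamma-\varepsilon)}$ is exactly the required weak-$\ell^p$ index) yields
\[
\|f-f_N\|_2^2\lesssim N^{-(\gamma-\varepsilon)}=N^{-\min\{\beta,\alpha^{-1}\}+\varepsilon},
\]
which is the assertion.

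There is no genuine obstacle here, as the statement is a bookkeeping corollary: the substance has already been front-loaded into Theorem~\ref{thm:mol_app}~(i) (the $\ell^p$-membership of the analysis coefficients, obtained via $\alpha$-molecule transference) and into Lemma~\ref{lem:decayapprox} (the passage from weak-$\ell^p$ decay to an $N$-term rate). The two points that merit a moment of care are, first, the identification of the scheme — thresholding the analysis coefficients $\langle f,m_\lambda\rangle$ and reconstructing with the dual frame is precisely the greedy scheme covered by Lemma~\ref{lem:decayapprox}, a fact that hinges on the reconstruction identity above holding for \emph{every} dual frame — and, second, the unavoidable $\varepsilon$-loss: unlike Theorem~\ref{thm:mainappr2}, which furnishes the sharp endpoint decay (with a logarithmic correction) for the anchor system $\curvesys$, the transference to a general consistent system $\mathfrak{M}$ only delivers $\ell^p$-membership in the open range $p>\tfrac{2}{1+\gamma}$, so the endpoint rate $N^{-\gamma}$ must be relaxed to $N^{-\gamma+\varepsilon}$ for arbitrary $\varepsilon>0$.
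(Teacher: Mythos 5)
Your argument is correct and is exactly what the paper intends: the paper disposes of this corollary with the single remark that it follows directly from Lemma~\ref{lem:decayapprox}, and your elaboration — dual-frame expansion with the analysis coefficients from Theorem~\ref{thm:mol_app}~(i), the embedding $\ell^p\subseteq w\ell^p$, and the choice $p=2/(1+\gamma-\varepsilon)$ — is the standard way to fill in that one-line proof. The exponent bookkeeping and the remark on the unavoidable $\varepsilon$-loss are both accurate.
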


\noindent
To see the reach of these results, let us mention that the $\alpha$-shearlet parametrization is $(\alpha,k)$-consistent with the $\alpha$-curvelet parametrization for $k>2$ (see~\cite[Thm.~5.7]{GKKS15}).
The results thus comprise in particular $\alpha$-shearlet frames, including both band-limited and compactly supported constructions (see~\cite[Prop.~3.11]{GKKS15}).


\begin{appendix}

\section[Appendix]{Bessel Functions}


In this appendix
we collect some useful facts about Bessel functions mainly taken from
\cite{Teubner1996} and \cite{Grafakos2008}.
We
are only interested in Bessel functions $J_\nu$ of integer and half-integer order in the range $\nu\in\{-\frac12,0,\frac12,1,\ldots\}$.
Bessel functions of this kind occur naturally in the Fourier analysis of radial functions. For $t\in\Rplus$ the value $J_\nu(t)$ is
conveniently
defined by either of the two series (see \cite{Teubner1996} and \cite[Appendix B.3]{Grafakos2008})
\begin{align}\label{app:defbessel}
J_{\nu}(t)=\Big(\frac{t}{2}\Big)^{\nu} \sum_{k=0}^\infty \frac{(-1)^k}{\Gamma(k+1)\Gamma(k+\nu+1)}\Big(\frac{t}{2}\Big)^{2k}
=\frac{1}{\sqrt{\pi}}
\Big(\frac{t}{2}\Big)^{\nu} \sum_{k=0}^\infty \frac{(-1)^k\Gamma(k+\frac{1}{2})}{\Gamma(k+\nu+1)}\frac{t^{2k}}{(2k)!}\,,
\end{align}
where the Gamma function $\Gamma$ extends the factorial $z!$ to the complex numbers with $\Gamma(z)=(z-1)!$.
To verify the equivalence of both representations, it is useful to note that $\Gamma(k+\frac12)=\frac{(2k)!}{k!4^k}\sqrt{\pi}$ for $k\in\N_0$.
We explicitly remark, that definition~\eqref{app:defbessel} is also valid for $\nu=-\frac12$, although this case is not included in the exposition of \cite{Grafakos2008}.
As is obvious from the second representation, the functions $J_\nu$ of half-integer order can be expressed in closed form in terms of trigonometric functions. For integer orders such closed form
representations do not exist.

If $f(x)=f_0(|x|)$ is a radial function on $\R^d$, $d\in\N$, with a suitable
function $f_0$ defined on $\R_0^+$, the Fourier transform of $f$ is given by the formula
\[
\widehat{f}(\xi)=\frac{2\pi}{|\xi|^{(d-2)/2}} \int_0^\infty f_0(r) J_{d/2-1}(2\pi r|\xi|)r^{d/2}\,dr \,,\quad \xi\in\R^d.
\]
Applying this formula to the characteristic function $\chi_{B_d(0,1)}$ of the $d$-dimensional unit ball $B_d(0,1)$ centered at the origin of $\R^d$ yields
\begin{align}\label{eq:Fourier1ball}
(\chi_{B_d(0,1)})^{\wedge}(\xi)
= \frac{2\pi}{|\xi|^{(d-2)/2}} \int_0^1 J_{d/2-1}(2\pi|\xi|r)r^{d/2}\,dr=
\frac{J_{d/2}(2\pi|\xi|)}{|\xi|^{d/2}} \,,\quad \xi\in\R^d.
\end{align}
Here, for the integration, we used the second of the following recurrence relations~\cite[Appendix B.2]{Grafakos2008}, which are valid for $\nu\in\frac{1}{2}\N$ and all $t\in\Rplus$,
\[
t^{-\nu+1} J_{\nu}(t)=-\frac{d}{dt} \big( t^{-\nu+1} J_{\nu-1}(t) \big) \quad\text{and}\quad t^\nu J_{\nu-1}(t)=\frac{d}{dt} \big( t^\nu J_\nu(t) \big)\,.
\]
The case $\nu=\frac12$ is not treated in~\cite{Grafakos2008}, yet it can be easily confirmed by a direct calculation.

By scaling, we can further deduce from \eqref{eq:Fourier1ball} the following Fourier representation of the bivariate function $\Theta(x)=\chi_{B_2(0,1)}(2x)$, $x\in\R^2$, from \eqref{def:Theta},
\begin{align}\label{eq:Fourier2ball}
\widehat{\Theta}(\xi)= \frac{1}{4} (\chi_{B_2(0,1)})^{\wedge}(\xi/2) =
\frac{J_{1}(\pi|\xi|)}{2|\xi|}, \quad\xi\in\R^2.
\end{align}
Important for our investigation in Section~3 is the asymptotic behavior of $J_{\nu}(r)$ as $r\rightarrow\infty$.
We cite the following result from \cite[Appendix B.8]{Grafakos2008}, which states for $\nu\in\frac{1}{2}\N_0$ the identity
\begin{align}\label{app:import1}
J_{\nu}(r)=\sqrt{\frac{2}{\pi r}}\cos(r-\frac{\pi\nu}{2}-\frac{\pi}{4}) + R_{\nu}(r) \,,\quad r\in\Rplus,
\end{align}
with a function $R_{\nu}$ given on $\Rplus$ by
\begin{align*}
R_{\nu}(r)&=\frac{(2\pi)^{-1/2}r^{\nu}}{\Gamma(\nu+1/2)} e^{i(r-\pi\nu/2-\pi/4)}
\int_0^\infty e^{-rt} t^{\nu+1/2} [(1+it/2)^{\nu-1/2}-1] \, \frac{dt}{t} \\
&\quad + \frac{(2\pi)^{-1/2}r^{\nu}}{\Gamma(\nu+1/2)} e^{-i(r-\pi\nu/2-\pi/4)}
\int_0^\infty e^{-rt} t^{\nu+1/2} [(1-it/2)^{\nu-1/2}-1] \, \frac{dt}{t}.
\end{align*}
Further, for each $\nu\in\frac{1}{2}\N_0$ there is a constant $C_\nu>0$ such that $R_{\nu}$ satisfies the estimate
\begin{align}\label{eqapp:est}
|R_\nu(r)|\le C_\nu r^{-3/2} \quad\text{whenever $r\ge1$}.
\end{align}
The representation~\eqref{app:import1} and the estimate~\eqref{eqapp:est} play an important role in the proof of Lemma~\ref{lem:thetaest}.
For completeness, let us finally note that the identity~\eqref{app:import1} especially holds true in case $\nu=-\frac12$, with vanishing $R_{-\frac12}\equiv0$. This is a direct
consequence of the definition~\eqref{app:defbessel} and the Taylor series of the cosine.

\end{appendix}

\section*{Acknowledgements}
The author acknowledges support by the BMS (Berlin Mathematical School) and thanks Prof.~Dr.~Gitta Kutyniok and Anton Kolleck
for proofreading the manuscript, as well as many helpful comments.


\begin{thebibliography}{10}

\bibitem{CDOS12}
J.~Cai, B.~Dong, S.~Osher, and Z. Shen.
\newblock
Image restoration: total variation, wavelet frames, and beyond.
\newblock
{\em J. Amer. Math. Soc.}, 25(4):1033--1089, 2012.

\bibitem{Can98}
E.~J.~Cand\`{e}s.
\newblock
{\em Ridgelets: theory and applications}.
\newblock
Ph.D.~thesis, Stanford University, CA, 1998.
\newblock Online available: {\url{http://statweb.stanford.edu/~candes/publications.html}}.

\bibitem{C99}
E.~J.~Cand\`{e}s.
\newblock {R}idgelets and the representation of mutilated {S}obolev functions.
\newblock {\em {SIAM} {J}. {M}ath. {A}nal.}, 33(2):347--368, 2001.

\bibitem{CD2000}
E.~J.~Cand\`es and D.~L.~Donoho.
\newblock
Curvelets -- a surprisingly effective nonadaptive representation for objects with edges.
\newblock
In C.~Rabut, A.~Cohen, and L.~Schumaker, editors, {\em Curves and Surfaces}, pages 105--120.
\newblock
Vanderbilt University Press, 2000.

\bibitem{CD04}
E.~J.~Cand\`es and D.~L.~Donoho.
\newblock
New tight frames of curvelets and optimal representations of objects with {$C^2$} singularities.
\newblock
{\em Comm. Pure Appl. Math.}, 57(2):219--266, 2004.

\bibitem{CWBB04tech}
V.~Chandrasekaran, M.~B.~Wakin, D.~Baron, and R.~G.~Baraniuk.
\newblock
Compressing piecewise smooth multidimensional functions using surflets: rate-distortion analysis.
\newblock
Technical report, Department of Electrical and Computer Engineering, Rice University, Mar.~2004.
\newblock Online available: \url{http://dsp.rice.edu/sites/dsp.rice.edu/files/publications/report/2004/compressin-riceece-2004.pdf}.

\bibitem{CWBB04proc}
V.~Chandrasekaran, M.~B.~Wakin, D.~Baron, and R.~G.~Baraniuk.
\newblock
Compression of higher dimensional functions containing smooth discontinuities.
\newblock
In {\em Conference on Information Sciences and Systems}, Princeton, Mar.~2004.

\bibitem{CWBB04c}
V.~Chandrasekaran, M.~B.~Wakin, D.~Baron, and R.~G.~Baraniuk.
\newblock
Surflets: a sparse representation for multidimensional functions containing smooth discontinuities.
\newblock
In {\em IEEE Symposium on Information Theory}, Chicago, Jul.~2004.

\bibitem{CWBB04b}
V.~Chandrasekaran, M.~B.~Wakin, D.~Baron, and R.~G.~Baraniuk.
\newblock
Representation and compression of multidimensional piecewise functions using surflets.
\newblock
{\em IEEE Trans. Inform. Theory}, 55(1):374--400, 2009.

\bibitem{CSE2000}
C.~Christopoulos, A.~Skodras, and T.~Ebrahimi.
\newblock
The {JPEG2000} still image coding system: an overview.
\newblock
{\em IEEE Trans. Consum. Electron.}, 46(4):1103--1127, 2000.

\bibitem{CDD01}
A.~Cohen, W.~Dahmen, and R.~DeVore.
\newblock
Adaptive wavelet methods for elliptic operator equations: convergence rates.
\newblock
{\em Math. Comp.}, 70(233):27--75, 2001.

\bibitem{Dau92}
I.~Daubechies.
\newblock
{\em Ten Lectures on Wavelets}.
\newblock
SIAM, Philadelphia, 1992.

\bibitem{Devore1998}
R.~A.~DeVore.
\newblock
Nonlinear approximation.
\newblock
{\em Acta Numerica}, 7:51--150, 1998.

\bibitem{DV05}
M.~N.~Do and M.~Vetterli.
\newblock
The contourlet transform: an efficient directional multiresolution
image representation.
\newblock
{\em IEEE Trans. Image Process.}, 14(12):2091--2106, 2005

\bibitem{D99}
D.~L.~Donoho.
\newblock
Wedgelets: nearly-minimax estimation of edges.
\newblock
{\em Ann. Statist.}, 27:859--897, 1999.

\bibitem{D98}
D.~L.~Donoho.
\newblock
Orthonormal ridgelets and linear singularities.
\newblock
{\em {SIAM} J. Math. Anal.}, 31(5):1062--1099, 2000.

\bibitem{DonXX}
D.~L.~Donoho.
\newblock
Ridge functions and orthonormal ridgelets.
\newblock
{\em J. Approx. Theory}, 111(2):143--179, 2001.

\bibitem{Don01}
D.~L.~Donoho.
\newblock
Sparse components of images and optimal atomic decompositions.
\newblock
{\em Constr. Approx.}, 17(3):353--382, 2001.

\bibitem{DH00}
D.~L.~Donoho and X.~Huo.
\newblock
Beamlet pyramids: a new form of multiresolution analysis suited for extracting lines, curves, and objects from very noisy image data.
\newblock
In {\em Wavelet Applications in Signal and Image Processing VIII (San Diego, CA, 2000), Proc. SPIE}, volume 4119, pages 434--444. SPIE, 2000.
\newblock

\bibitem{FS15}
A.~Flinth and M.~Sch\"{a}fer.
\newblock
Multivariate $\alpha$-molecules.
\newblock
{\em J. Approx. Theory}, 202:64--108, 2016.

\bibitem{Grafakos2008}
L.~Grafakos.
\newblock
{\em Classical Fourier Analysis}.
\newblock
Springer, 2nd edition, 2008.

\bibitem{GrohsRidLT}
P.~Grohs.
\newblock
Ridgelet-type frame decompositions for {S}obolev spaces related to linear transport.
\newblock
{\em J. Fourier Anal. Appl.}, 18(2):309--325, 2012.

\bibitem{GKKScurve2014}
P.~Grohs, S.~Keiper, G.~Kutyniok, and M.~Sch\"{a}fer.
\newblock
Cartoon approximation with $\alpha$-curvelets.
\newblock
{\em J. Fourier Anal. Appl.}, 22(6):1235–1293, 2016.

\bibitem{GKKS15}
P.~Grohs, S.~Keiper, G.~Kutyniok, and M.~Sch\"{a}fer.
\newblock
$\alpha$-{M}olecules.
\newblock
{\em Appl. Comput. Harmon. Anal.}, 41(1):297--336, 2016.

\bibitem{Grohs2011}
P.~Grohs and G.~Kutyniok.
\newblock
Parabolic molecules.
\newblock
{\em Found. Comput. Math.}, 14(2):299--337, 2014.

\bibitem{GOtech16}
P.~Grohs and A.~Obermeier.
\newblock
On the approximation of functions with line singularities by ridgelets.
\newblock
Technical Report 2016-4, Seminar for Applied Mathematics, ETH Z{\"u}rich, Switzerland, 2016.
\newblock Online available: {\url{http://www.sam.math.ethz.ch/sam_reports/reports_final/reports2016/2016-04_fp.pdf}}.

\bibitem{GO15}
P.~Grohs and A.~Obermeier.
\newblock
Optimal adaptive ridgelet schemes for linear advection equations.
\newblock
{\em Appl. Comput. Harmon. Anal.}, 41(3):768--814, 2016.

\bibitem{GKL05}
K.~Guo, G.~Kutyniok, and D.~Labate.
\newblock
Sparse multidimensional representations using anisotropic dilation and shear operators.
\newblock
In {\em Wavelets and Splines (Athens, GA, 2005)}, pages 189--201.
\newblock
Nashboro Press, Nashville, TN, 2006.

\bibitem{GL07}
K.~Guo and D.~Labate.
\newblock
Optimally sparse multidimensional representation using shearlets.
\newblock
{\em SIAM J. Math. Anal.}, 39(1):298--318, 2007.

\bibitem{Guo2012a}
K.~Guo and D.~Labate.
\newblock
The construction of smooth {P}arseval frames of shearlets.
\newblock
{\em Math. Model. Nat. Phenom.}, 8(1):82--105, 2013.

\bibitem{Teubner1996}
W.~Hackbusch, H.~R.~Schwarz, and E.~Zeidler.
\newblock
{\em Teubner-Taschenbuch der Mathematik}.
\newblock
B.~G.~Teubner Stuttgart, Leipzig, 1996.

\bibitem{Kei13}
S.~Keiper.
\newblock
A flexible shearlet transform -- sparse approximation and dictionary learning.
\newblock
Bachelor's thesis, TU Berlin, Germany, 2012.

\bibitem{Kittipoom2010}
P.~Kittipoom, G.~Kutyniok, and W.-Q~Lim.
\newblock
Construction of compactly supported shearlet frames.
\newblock
{\em Constr. Approx.}, 35(1):21--72, 2012.

\bibitem{K09}
J. Krommweh.
\newblock
Image approximation by adaptive tetrolet transform.
\newblock
In {\em International conference on sampling theory and applications}, Marseille, France, May 2009.

\bibitem{KuLaLiWe}
G.~Kutyniok, D.~Labate, W.-Q~Lim, and G.~Weiss.
\newblock
Sparse multidimensional representation using shearlets.
\newblock
In {\em Wavelets XI (San Diego, CA, 2005), SPIE Proc.}, volume 5914, pages 254--262.
\newblock
SPIE, Bellingham, WA, 2005.

\bibitem{Kutyniok2012correct}
G.~Kutyniok, J.~Lemvig, and W.-Q~Lim.
\newblock
Optimally sparse approximations of {3D} functions by compactly supported shearlet frames.
\newblock
{\em SIAM J. Math. Anal.}, 44(4):2962--3017, 2012.

\bibitem{Kutyniok2010}
G.~Kutyniok and W.-Q~Lim.
\newblock
Compactly supported shearlets are optimally sparse.
\newblock
{\em J. Approx. Theory}, 163(11):1564--1589, 2011.

\bibitem{PennecM}
E.~{Le Pennec} and S.~Mallat.
\newblock
Bandelet image approximation and compression.
\newblock
{\em Multiscale Model. Simul.}, 4(3):992--1039, 2005.

\bibitem{PM05}
E.~{Le Pennec} and S.~Mallat.
\newblock
Sparse geometric image representations with bandelets.
\newblock
{\em IEEE Trans. Image Process.}, 14(4):423--438, 2005.

\bibitem{LPS2016}
C.~Lessig, P.~Petersen, and M.~Schäfer.
\newblock
Bendlets: a second-order shearlet transform with bent elements.
\newblock
2016. submitted.
\newblock arXiv:1607.05520 [math.FA].

\bibitem{L11}
A.~Lisowska.
\newblock
Smoothlets -- multiscale functions for adaptive representation of images.
\newblock
{\em IEEE Trans. Image Process.}, 20(7):1777--1787, 2011.

\bibitem{L13}
A.~Lisowska.
\newblock
Multiwedgelets in image denoising.
\newblock
In J.~Park, J.~Ng, H.-Y.~Jeong, and B.~Waluyo, editors, {\em Multimedia and Ubiquitous Engineering: MUE 2013}, pages 3--11.
\newblock
Springer Netherlands, Dordrecht, 2013.

\bibitem{Mal08}
S.~Mallat.
\newblock
{\em A Wavelet Tour of Signal Processing: The Sparse Way}.
\newblock
Academic Press, 2nd edition, 2008.

\bibitem{M09}
S.~Mallat.
\newblock
Geometrical grouplets.
\newblock
{\em Appl. Comput. Harmon. Anal.}, 26(2):161--180, 2009.

\bibitem{WN03}
R.~M.~Willet and R.~D.~Nowak.
\newblock
Platelets: a multiscale approach for recovering edges and surfaces in photon-limited medical imaging.
\newblock
{\em IEEE Trans. Med. Imag.}, 22(3):332--350, 2003.

\end{thebibliography}
\end{document}